\pdfoutput=1
\documentclass{amsart}

\usepackage{lmodern}
\usepackage[utf8]{inputenc}
\usepackage[mathscr]{eucal}% So-called Euler fonts, allows \mathscr
\usepackage{amssymb}% allows special arrows like >-> e.g.
\usepackage{stmaryrd}
\usepackage[usenames,dvipsnames]{xcolor}
\usepackage{xspace}%allows smart spacing after period
\usepackage{amsthm}% allows theorem* , e.g.
\usepackage{bbold}% allows \unit \mathbb{1}
\usepackage[normalem]{ulem}% allows \sout to cross-out text.
\usepackage{float}
\usepackage{adjustbox}
\usepackage{enumitem}% allows easy change of label
\setlist{leftmargin=*, wide, labelindent=0pt}
\setlist[enumerate]{label*=(\alph*),ref=\alph*}

\usepackage{array}% allows array
\usepackage{amsmath}% allows pmatrix
\usepackage{wasysym}% allows LEFTCIRCLE
\usepackage{etoolbox}% allows for \ifblank
\usepackage{mathdots}% allows diagonal dots
\usepackage{attrib}
\usepackage[style=alphabetic,maxbibnames=99,backend=biber]{biblatex}
\usepackage[disable]{todonotes}

\numberwithin{equation}{section}% makes equat numb contain the section
\usepackage[all]{xy}
\SelectTips{cm}{}
\newdir{ >}{{}*!/-10pt/\dir{>}}
\usepackage{xr-hyper}
\usepackage[colorlinks=true,linkcolor={Black},citecolor={Black},urlcolor={Black}]{hyperref}
\usepackage[nameinlink]{cleveref}% allows references via \cref and \Cref which automatically repeat the name (Thm, Prop, etc) of the ref.

\newtheorem{thmx}{Theorem}

\crefname{Thm}{Theorem}{Theorems}
\crefname{Rem}{Remark}{Remarks}
\crefname{Prop}{Proposition}{Propositions}
\crefname{Cor}{Corollary}{Corollaries}
\crefname{Cons}{Construction}{Constructions}
\crefname{Exa}{Example}{Examples}
\crefname{Lem}{Lemma}{Lemmas}
\crefname{Rec}{Recollection}{Recollections}

\def\makeautorefname#1#2{\expandafter\def\csname#1autorefname\endcsname{#2}}
\def\equationautorefname~#1\null{(#1)\null}
\makeautorefname{footnote}{footnote}%
\makeautorefname{item}{item}%
\makeautorefname{figure}{Figure}%
\makeautorefname{table}{Table}%
\makeautorefname{part}{Part}%
\makeautorefname{appendix}{Appendix}%
\makeautorefname{chapter}{Chapter}%
\makeautorefname{section}{Section}%
\makeautorefname{subsection}{Section}%
\makeautorefname{subsubsection}{Section}%
\makeautorefname{theorem}{Theorem}%
\makeautorefname{corollary}{Corollary}%
\makeautorefname{lemma}{Lemma}%
\makeautorefname{proposition}{Proposition}%
\makeautorefname{pro}{Property}
\makeautorefname{conj}{Conjecture}%
\makeautorefname{definition}{Definition}%
\makeautorefname{notn}{Notation}
\makeautorefname{notns}{Notations}
\makeautorefname{rem}{Remark}%
\makeautorefname{quest}{Question}%
\makeautorefname{example}{Example}%
\theoremstyle{plain}  % default
\newtheorem{theorem}{Theorem}[section]
\newtheorem{proposition}{Proposition}[section]
\newtheorem{lemma}{Lemma}[section]
\newtheorem{corollary}{Corollary}[section]

\newtheorem*{Conj*}{Conjecture}

\theoremstyle{definition}
\newtheorem{definition}{Definition}[section]
\newtheorem{notation}{Notation}[section]
\newtheorem{example}{Example}[section]
\newtheorem{remark}{Remark}[section]

\newtheorem{construction}{Construction}[section]
\makeatletter
\let\c@corollary=\c@theorem
\let\c@proposition=\c@theorem
\let\c@strategy=\c@theorem
\let\c@warning=\c@theorem
\let\c@construction=\c@theorem
\let\c@notation=\c@theorem
\let\c@lemma=\c@theorem
\let\c@definition=\c@theorem
\let\c@example=\c@theorem
\let\c@remark=\c@theorem
\numberwithin{equation}{section}

\newcommand{\nc}{\newcommand}
\nc{\dmo}{\DeclareMathOperator}

\dmo{\coker}{coker}
\dmo{\cone}{cone}
\dmo{\Der}{D}% ground notation for derived categories
\dmo{\DAM}{\DAMbig^{\geom}}%
\dmo{\DAMbig}{DAM}%
\dmo{\Ext}{Ext}
\dmo{\Gal}{Gal}
\dmo{\Hm}{H}% (co)homology
\dmo{\Hom}{Hom}
\dmo{\Id}{Id}
\dmo{\Ind}{Ind}
\dmo{\Infl}{Infl}
\dmo{\Ker}{Ker}
\dmo{\Mod}{Mod}% modules
\dmo{\opname}{op}
\dmo{\perm}{perm}% finite permutation modules
\dmo{\Perm}{Perm}% permutation modules
\dmo{\SH}{SH}% ground name for cat of compact spectra
\dmo{\SHmot}{SH^{\mathrm{c}}_{\bbA^{\!1}}}% ground name for cat of compact motivic spectra
\dmo{\Spc}{Spc}
\dmo{\Spec}{Spec}
\dmo{\Spech}{\Spec^{h}}
\dmo{\stab}{stab}% stable category of fin. gen. mod.
\dmo{\Stab}{Stab}% stable category of non-fin. gen. mod.
\dmo{\supp}{Supp}
\dmo{\supph}{\supp^{h}}
\dmo{\thick}{thick}
\dmo{\Locname}{Loc}% for localizing subcat (w/o tensor)

\nc{\Inj}{\mathrm{Inj}}% injective complexes
\nc{\cat}[1]{\mathscr{#1}}%or: \nc{\cat}[1]{\mathcal{#1}}
\nc{\cK}{\cat{K}}
\nc{\cL}{\cat{L}}
\nc{\colim}{\mathop{\mathrm{colim}}}
\nc{\cofib}{\mathop{\mathrm{cofib}}}
\nc{\cP}{\cat{P}}
\nc{\cQ}{\cat{Q}}
\nc{\cS}{\cat{S}}
\nc{\cT}{\cat{T}}
\nc{\CAlg}{\mathrm{CAlg}}
\nc{\Algn}{\mathrm{Alg}_{\bb{E}_n}}
\nc{\eg}{{\sl e.g.}\@\xspace}
\nc{\gp}{\mathfrak{p}}% prime p
\nc{\gq}{\mathfrak{q}}% prime q
\nc{\hook}{\hookrightarrow}
\nc{\ie}{{\sl i.e.}\@\xspace}
\nc{\into}{\mathop{\rightarrowtail}}
\nc{\inv}{^{-1}}
\nc{\kk}{k}%{\Bbbk}
\nc{\kkG}{\kk G}% common misprint
\nc{\Loc}[1]{\Locname(#1)}
\nc{\loccit}{{\sl loc.\ cit.}\xspace}
\nc{\Mid}{\,\big|\,}
\nc{\onto}{\mathop{\twoheadrightarrow}}
\nc{\op}{^{\opname}}
\nc{\sminus}{\smallsetminus}
\nc{\potimes}[1]{^{\otimes #1}}% tensor power
\nc{\sbull}{{\scriptscriptstyle\bullet}}
\nc{\SET}[2]{\big\{\,#1\Mid#2\,\big\}}
\nc{\unit}{\mathbb{1}}% unit for \otimes
\newcommand{\bb}[1]{\mathbb{#1}}

\newcommand{\mc}[1]{\mathcal{#1}}
\newcommand{\mf}[1]{\mathfrak{#1}}
\newcommand{\mo}[1]{\operatorname{#1}}
\newcommand{\bdot}{\text{\textbullet}}
\nc{\W}{\mathbb{W}}
\nc{\ho}{\mathrm{ho}}
\dmo{\sta}{sta}% for the Rickard functor and the quotient-to-stab functors
\nc{\rsd}[1]{\mathrm{rsd}_{#1}}% for residue field functors

\dmo{\End}{End}
\nc{\isoto}{\overset{\sim}{\,\to\,}}
\nc{\isofrom}{\overset{\sim}{\,\leftarrow\,}}
\let\le=\leqslant% to have "nicer" \le

\nc{\sto}{\rightsquigarrow}
\nc{\xisoto}[1]{\xrightarrow[\sim]{#1}}
\nc{\xto}[1]{\xrightarrow{#1}}
\nc{\xfrom}[1]{\xleftarrow{#1}}
\nc{\xinto}[1]{\overset{#1}{\,\into\,}}
\nc{\xonto}[1]{\overset{#1}{\,\onto\,}}
\nc{\lto}{\leftarrow}
\nc{\normaleq}{\trianglelefteqslant}

\nc{\normal}{\lhd}
\nc{\normaleop}{\mathop{\mathring{\trianglelefteqslant}}}%{\trianglelefteqslant^{\textup{o}}}
\dmo{\chara}{char}%
\dmo{\CoInd}{CoInd}
\dmo{\DMbig}{DM}% Voevodsky's triangulated category of motives
\dmo{\id}{id}
\dmo{\Img}{Im}
\dmo{\im}{im}
\dmo{\Komp}{K}% ground notation for htpy categories
\dmo{\proj}{proj}% f.g. projective modules
\dmo{\rmH}{H}
\dmo{\Res}{Res}
\dmo{\smallb}{b}% ground exponent for ``bounded''
\dmo{\geom}{gm}% ground exponent for ``compact''
\dmo{\stabname}{stab}% stable category of fin. gen. mod.
\dmo{\comp}{comp}
\dmo{\Supp}{Supp}
\dmo{\kosname}{kos}
\dmo{\subname}{Sub}

\nc{\Sub}[1]{\subname_{#1}}
\nc{\Weyl}[2]{{#1}/\!\!/{#2}}%{\overline{#1}_{#2}}%{W_{\!#1}{#2}}% sometimes Weyl is N_G(H)/C_G(H) not mod H.
\nc{\WGH}{\Weyl{G}{H}}
\nc{\tInd}{{}^{\otimes\!}\Ind}% tensor-induction
\nc{\inn}{;}% could be replaced by {\le} or {,} or {;}
\nc{\Vee}[1]{V_{#1}}%
\nc{\Loctens}[1]{\Locname_{\otimes}(#1)}
\nc{\SpcKG}{\Spc(\cK(G))}% most used
\nc{\SpcKGk}{\Spc(\cK(G;\kk))}% most used
\nc{\SpcKE}{\Spc(\cK(E))}% most used
\nc{\Rall}{\rmH^{\sbull\sbull}}%{\rmH_{\scriptscriptstyle\blacktriangle}^{\sbull}}
\nc{\EA}[2]{\mathcal{E}_{#1}(#2)}
\nc{\EApp}[1]{\EA{p}{#1}}

\nc{\kos}[2][]{\kosname_{#1}(#2)}% syntax \kos[ambient group]{subgroup}
\nc{\sKG}{\kos[G]{K}}% most used

\nc{\Zp}{\hat{\bbZ}_p}% p-adic integers

\nc{\adh}[1]{\overline{#1}}% adherence
\nc{\adj}{\dashv}
\nc{\apriori}{{\sl a priori}\xspace}
\nc{\bs}{\backslash}
\nc{\cf}{{\sl cf.}\ }
\nc{\Db}{\Der_{\smallb}}% derived bounded category%\scriptscriptstyle
\nc{\D}{\Der}% derived category
\nc{\FFsep}{\overline{\FF}}
\nc{\Fp}{\bbF_{\!p}}% finite field with p elements
\nc{\gm}{\mathfrak{m}}% prime m
\mathchardef\mhyphen="2D
\nc{\ideal}[1]{\langle #1\rangle}
\nc{\Kb}{\Komp_{\smallb}}% htpy bounded category%\scriptscriptstyle
\nc{\K}{\Komp}% htpy category
\nc{\leop}{\mathop{\mathring{\le}}}%{\le^{\textup{o}}}
\nc{\Lotimes}{\otimes^{\rmL}}
\nc{\To}{\Rightarrow}
\nc{\cSpec}{\mathsf{Spec}}
\nc{\Top}{\mathsf{Top}}
\nc{\Sp}{\mathsf{Sp}^{\omega}}
\nc{\ttCat}{2-\mo{Ring}^{\text{rig}}}
\nc{\FrEinfz}{\mo{Free}_{\bb{E}_{\infty}/\bb{E}_0}}
\nc{\X}[2]{X^{#1,#2}} %macro for whatever notation means X^{otimes i} otimes X^{vee otimes j}
\nc{\A}{\bb{A}^1} %macro for the free thing, possibly to be modified later
\nc{\Apoi}{\bb{A}^{1,+}} 
\nc{\hCob}{\bb{Q}[\mo{hCob}^{1d,or}]_{\eta}}
\nc{\hCobpoi}{\bb{Q}[\mo{hCob}^{1d,or}_{+}]_{\eta}}
\nc{\Speccons}{\mo{Spec}^{\mo{cons}}}
\newcommand{\Seg}{\mathrm{Seg}}
\newcommand{\Fin}{\mathrm{Fin}}
\newcommand{\inj}{\mathrm{inj}}
\newcommand{\tr}{\mathrm{tr}}
\let\theoldbibliography\thebibliography
\renewcommand{\thebibliography}[1]{%
	\theoldbibliography{#1}%
	\setlength{\parskip}{0ex}
	\setlength{\itemsep}{0.5ex plus 0.2ex minus 0.2ex}
	\small
}
\apptocmd{\thebibliography}{\raggedright}{}{}
\font\maljapanese=dmjhira at 2.5ex
\newcommand{\yo}{\textrm{\!\maljapanese\char"48}}

\usepackage{tikz}

\usetikzlibrary{patterns}
\usepackage{tikz-cd}
\usepackage{mathtools}
\usepackage{xspace}
\makeatletter
\let\ea\expandafter
\newcount\foreachcount

\def\foreachLetter#1#2#3{\foreachcount=#1
	\ea\loop\ea\ea\ea#3\@Alph\foreachcount
	\advance\foreachcount by 1
	\ifnum\foreachcount<#2\repeat}

\def\definebb#1{\ea\gdef\csname #1#1\endcsname{\ensuremath{\mathbb{#1}}\xspace}}
\foreachLetter{1}{27}{\definebb}

\makeatother

\date{\today}
\author{Tobias Barthel, Logan Hyslop, Maxime Ramzi}

\address{Tobias Barthel, Max Planck Institute for Mathematics, Vivatsgasse 7, 53111 Bonn, Germany}
\email{tbarthel@mpim-bonn.mpg.de}
\urladdr{https://sites.google.com/view/tobiasbarthel/}

\address{Logan Hyslop, Harvard University Department of Mathematics, 1 Oxford St, Cambridge, MA 02138, United States}
\email{loganrhyslop@gmail.com}
\urladdr{https://loganhyslop.github.io/}

\address{Maxime Ramzi, FB Mathematik und Informatik, Universität Münster, Einsteinstrasse 62}
\email{maxime.ramzi@uni-muenster.de}
\urladdr{https://sites.google.com/view/maxime-ramzi-en/home}

\hypersetup{pdfauthor={Tobias Barthel, Logan Hyslop, Maxime Ramzi},pdftitle={Geometric Points in Higher Zariski Geometry}}

\bibliography{Biblio.bib}
\begin{document}
	%------------------------------------------------------------------------------

    \title{Geometric Points in tensor triangular geometry}
    
	\begin{abstract}
    In this paper, we study geometric points in tensor triangular geometry. In doing so, we construct a counter-example to Balmer's Nerves of Steel conjecture using free constructions in higher Zariski geometry. We then go on to introduce and discuss constructible spectra in the context of tensor triangular geometry. For tensor triangulated categories satisfying a mild enhancement condition, we use these spectra to construct geometric incarnations of (homological or triangular) primes via maps to ``pointlike'' tensor triangulated categories.
	\end{abstract}

	\maketitle
 \vspace{-5pt}
 \begin{figure}[h]
   \includegraphics[scale=1.8]{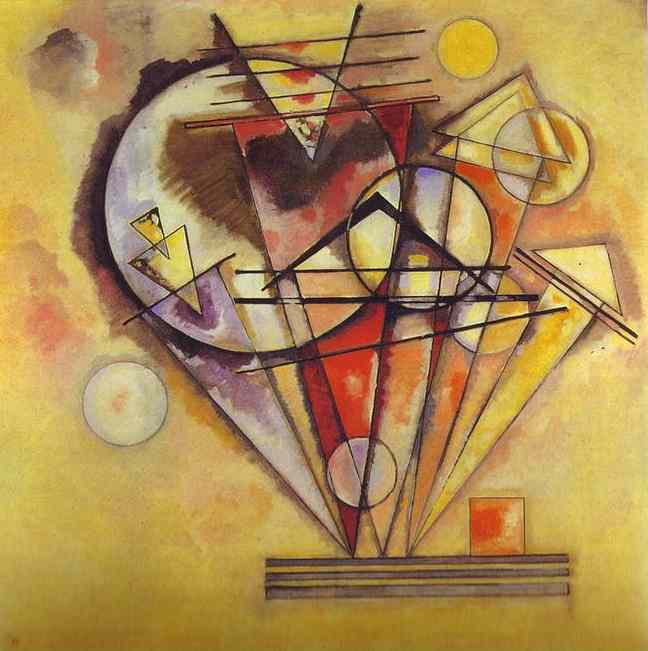}
   \caption[Cover Caption]{\emph{On the points,} Wassily Kandinsky, 1928\footnotemark}
 \end{figure}
\footnotetext{\url{https://www.wikiart.org/en/wassily-kandinsky/on-the-points-1928} (visited on 03/13/2026)}  
\newpage
\tableofcontents

\vspace{-1.4cm}
	\section{Introduction}

\subsection{Context and main results}

Hilbert's Nullstellensatz led to the idea that the prime ideals of a commutative ring $R$ should correspond to the points of the affine scheme $\Spec(R)$, thus forming the origin of modern algebraic geometry. One categorical level up, tensor triangular geometry aims to develop an analogous geometric theory by replacing commutative rings by tensor triangulated categories (henceforth, tt-categories), see \cite{Balmer2005,aoki2025higherzariskigeometry}. The role of the Zariski spectrum for a tt-category $\cat C$ is played by the Balmer spectrum $\Spc(\cat C)$, which is built from prime tt-ideals in $\cat C$ together with a suitable topology akin to the Zariski topology.  
This motivates the question, first raised in \cite{BalmerICM2010}:
    \vspace{0.5em}
    \begin{center}
        \emph{So, what are tt-fields?}
    \vspace{.5em}
    \end{center}

Specifically, this calls for a definition of a theory of fields in tensor triangular geometry which allows us to realize the points of the spectrum of any tensor triangulated category $\cat C$ geometrically, i.e., through residue functors $\cat C \to \cat F$. The minimal desiderata therefore are to give a definition of \emph{tt-fields} such that:
\begin{enumerate}
    \item[($\dagger$)] If $\cat F$ is a tt-field, then the spectrum $\Spc(\cat F) = \ast$ is a single point.
    \item[($\ddagger$)] For every prime $\cat P \in \cat C$, there exists a tt-functor $\cat C \to \cat F_{\cat P}$ whose image on spectra is precisely $\{\cat P\}$.
\end{enumerate}
These are the two conditions singled out by Balmer in \cite[Section 4.3]{BalmerICM2010}, but as noted there, we emphasize that both of them require sharpening: For instance, the first one does not rule out nil-extensions of fields, while the second one should be strengthened to a uniqueness statement up to a suitable equivalence relation.

Unfortunately, such a general theory of tt-fields remains elusive and various attempts at partial solutions have been considered. Guided by classical examples from commutative algebra, homotopy theory, and modular representation theory, two closely related definitions of tt-field were proposed in \cite{BalmerICM2010} and \cite{BKS2019}. While both of them satisfy Desideratum $(\dagger)$, it is much harder to verify $(\ddagger)$ for these.  In particular, we construct an example \Cref{prop:ultrattfields} that shows that the two proposed definitions of tt-fields given in \cite{BalmerICM2010} and \cite{BKS2019} do not coincide, and for which $(\ddagger)$ fails, at least if we are using the definition proposed in \cite{BKS2019}.

Changing perspective and restricting attention to rigid tt-categories, Balmer, Krause, and Stevenson \cite{BKS2019} relaxed the condition that the residue fields be tt-categories to instead study homological functors from $\cat{C}$ to \emph{abelian} residue fields. In a series of papers \cite{balmernilpotence,balmerhomological, BalmerCameron2021}, this idea has been developed into a theory of homological spectrum $\Spc^h(\cat C)$, homological primes, and homological support, paralleling the triangular theory from \cite{Balmer2005}. In particular, Balmer shows that there is a surjective map
    \[
        \phi\colon \Spc^h(\cat C)  \twoheadrightarrow \Spc(\cat C),
    \]
and that every homological prime $\cat{B}\in \Spc^h(\cat C)$ is detected by an abelian residue field. In addition, he proves an abstract nilpotence theorem for $\cat{C}$ based on $\Spc^h{\cat{C}}$. Since $\Spc(\cat C)$ parametrized thick tensor ideals of $\cat C$, from an abstract point of view the map $\phi$ expresses a formal relationship between a nilpotence theorem and a thick ideal theorem for $\cat C$. Inspired by a broad variety of examples, the next conjecture reconciles the two points of view: 

\begin{Conj*}[Balmer's Nerves of Steel Conjecture] 
For any rigid $tt$-category $\cat C$ the comparison map $\phi\colon \Spc^h(\cat C)  \twoheadrightarrow \Spc(\cat C)$ is bijective. 
\end{Conj*}

Besides it significance in the aforementioned program to construct residue fields in tt-geometry, this conjecture would also have other important applications, as reviewed in \Cref{sec:prelim}. The conjecture is known in all examples in which the spectrum has been computed (see e.g., \cite[Section 5]{balmernilpotence}) and enjoys a number of strong permanence (such as descent) properties \cite{BHS2023a,BHSZ2024pp}. Despite this evidence, our first main result (\Cref{thm:ENCfails}) disproves the Nerves of Steel conjecture:

\begin{thmx}\label{thmx:nos}
    The Nerves of Steel Conjecture is false. More precisely, the comparison map $\phi$ fails to be injective for the free rigid commutative 2-ring $\Apoi$ on a pointed object. 
\end{thmx}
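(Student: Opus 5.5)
We exhibit two distinct homological primes of $\Apoi$ that lie over the same triangular prime. Here $\Apoi$ is the free rigid commutative 2-ring on an object $X$ with a map $x\colon \unit \to X$, and we write $\Vee{}$ for its dual. By the universal property, a tt-functor $\Apoi \to \cat D$ into a rigid 2-ring is the same as a pointed object $\unit \to Y$ in $\cat D$.

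\textbf{Step 1: the triangular spectrum.} The first step is to compute, or at least locate, one prime of $\Spc(\Apoi)$ whose fiber under $\phi$ will turn out to have two points. A natural candidate is a prime $\cat P$ containing $\cofib(x)$ but not $X$. Its image under the functor $X\mapsto \unit$ is such a prime, but it is too rigid to have a large fiber. I will instead take $\cat P$ to be the kernel of the realization
\[
F_\eta\colon \Apoi \to \cat D, \qquad X\mapsto Y,
\]
where $Y$ is chosen so that $\cofib(x)$ and its dual become $\otimes$-nilpotent, or at least lie in the prime, while $X$ and $X^\vee$ do not vanish. Rationally, the free 2-ring on a dualizable object is governed by oriented 1-dimensional cobordisms, which is the content of the macro $\hCob$. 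Thick tensor ideals should therefore be controlled by the dimension $\dim X = \mathrm{tr}(\id_X)$ together with the nilpotence data of $x$. I expect $\cat P$ to be detected by the map $\unit \to X \to \unit$ induced by $x$ and by evaluation against a cotrace.

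\textbf{Step 2: two homological primes over $\cat P$.} Homological primes correspond to maximal Serre ideals of $\mathrm{mod}\text{-}\Apoi$, equivalently to "abelian residue fields". I will produce them as pullbacks along two tt-functors $F_1,F_2\colon \Apoi \to \cat F_i$ into categories whose homological spectrum is a single point, for example $\mathrm{D}^{\mathrm{perf}}(k)$, or suitable graded or ultraproduct variants. I will choose the pointed objects $(Y_1,y_1)$ and $(Y_2,y_2)$ so that both functors have kernel exactly $\cat P$, but with different homological behaviour of $x$. For instance, $y_1$ could be a split monomorphism, while $y_2$ is zero on homology but not $\otimes$-nilpotent after tensoring with $X$. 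The image of $\cofib(x)$ on homology then differs between the two: it is a pure injective quotient in one case and not in the other. Concretely, to separate $\cat B_1 \neq \cat B_2$ it suffices to find a morphism $f$ in $\Apoi$ such that $F_1(f)$ is nonzero on homology while $F_2(f)$ is zero there. This is because $\cat B_i$ is determined by which morphisms become "phantom" in the module category. Rigidity and the free universal property make constructing $F_i$ automatic. The real work is checking that both kernels equal $\cat P$. That requires knowing which thick ideals the objects $X^{\otimes i}\otimes (X^\vee)^{\otimes j}$ generate, via the notation $\X{i}{j}$.

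\textbf{Main obstacle.} The hard part is Step 1 combined with the kernel check: we need enough control over $\Spc(\Apoi)$ to see that two residue functors with visibly different homological behaviour have the same triangular kernel. My plan is to reduce to the rational, $\eta$-localized model $\hCobpoi$, where Hom-groups are computable via cobordism categories. I will then use the fact that $\phi$ is compatible with base change, citing the surjectivity and naturality of $\phi$ from \cite{balmerhomological}. This lets the inequality $\cat B_1\neq \cat B_2$ be detected after localization. If the direct computation of $\Spc$ proves too hard, the fallback is to show only that $\ker F_1 = \ker F_2$. I would do this by an explicit Morita-style argument that every object of $\Apoi$ is built from the $\X{i}{j}$ and cones on $x$, checking vanishing under $F_1$ and $F_2$ generator by generator.
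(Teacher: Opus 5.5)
Your overall shape is right: two residue functors out of $\Apoi$ that disagree on $x$ but have the same triangular kernel. In the paper the separating morphism is indeed $x$ itself. But the proposal has no mechanism for the equality of kernels, which is the entire content of the theorem, and your concrete choices cannot supply one.

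\emph{Step 1 contradicts Step 2.} In both versions of Step 1 your prime $\cat P$ contains $\cofib(x)$. Since $\Apoi$ is rigid, every thick $\otimes$-ideal is radical, so ``$\otimes$-nilpotent'' simply means zero. Hence every tt-functor with kernel $\cat P$ and nonzero target sends $x$ to an isomorphism. This is incompatible with your $F_2$, for which $F_2(x)$ is zero on homology.

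\emph{The proposed targets cannot work.} Take $D^{\mathrm{perf}}(k)$ or its $2$-periodic variants. There every pointing $\unit\to Y$ is either zero or split mono, and $\dim F(X)$ is an integer. So you sit over some $t=n$, precisely where $\mathrm{Rep}(GL_t)$ fails to be semisimple and which the paper avoids. Moreover, Schur functors detect super-dimensions, and this separates the kernels:
\begin{itemize}
\item If $\ker F_1=\ker F_2$, applying Schur functors to $X$ forces $F_1(X)$ and $F_2(X)$ to have the same super-dimension $(a|b)$, with $a\geq 1$ because $F_1(x)$ is split mono.
\item Then $F_1(\cofib x)$ has super-dimension $(a-1|b)$, while $F_2(\cofib x)\simeq F_2(X)\oplus\Sigma\unit$ has $(a|b+1)$.
\item So the Schur functor of the rectangle with $a$ rows of length $b+1$, applied to $\cofib(x)$, lies in $\ker F_1\setminus\ker F_2$. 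For $F_i(X)=k^n$ this is just $\Lambda^n(\cofib x)$.
\end{itemize}

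\emph{The fallback does not work either.} Checking vanishing generator by generator cannot determine a kernel. The kernel of an exact functor is not controlled by which $\X{i}{j}$ it kills, because the decisive objects are built from cones and idempotents, like the Schur functor above. What you need is a conservativity statement.

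That is exactly what the paper supplies, after passing to the generic point $\eta$ of $\Spech(\pi_*\unit)$ rather than an integer specialization.
\begin{itemize}
\item $\A_\eta$ is a semisimple tt-field, by Deligne's semisimplicity of $\mathrm{Rep}(GL_t)$ for non-integral $t$ (\Cref{thm:A1etasemisimple}).
\item The functor $p\colon\Apoi_\eta\to\A_\eta$, $x\mapsto 0$, is conservative (\Cref{proposition:conservative}). The proof grades $\X{i}{j}$ by $i-j$ and compares $\mo{Cob}$ with $\mo{Cob}^+$ via Barkan--Steinebrunner's Segalification formula (\Cref{proposition:omnibuscob+}). This shows the mapping spectra vanish for $r-s<i-j$ and are unchanged for $r-s=i-j$. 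So the fiber of the map on endomorphism rings of $\bigoplus_{i+j\leq n}\X{i}{j}$ sits in positive gradings, and \Cref{lemma:nilNakayama} applies.
\item The second residue functor is $p\circ\iota$, where $\iota$ is the involution of \Cref{construction:iota} exchanging the universal pointing $x$ with $g^\vee$, for $g$ the fiber of $x$.
\item Conservativity gives $\ker p=\ker(p\circ\iota)=(0)$, and this is a prime because $\A_\eta$ is local.
\item But $p(x)=0$ while $p\iota(x)=p(g)^\vee$ is split mono. So the exact-nilpotence condition fails for the universal fiber sequence (\Cref{thm:ENCfails}), giving two homological primes over one triangular prime.
\end{itemize}

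Your ``ultraproduct variant'' is not excluded. Ultraproducts of $D^{\mathrm{perf}}(k_n)$ with dimensions tending to infinity can reach non-integral $t$ (compare \Cref{rmk:NoSfailsgeneral}). However, identifying the two kernels there would require this same conservativity argument.
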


The key ingredients in the proof of this result are a generalized version of the 1-dimensional cobordism hypothesis and Deligne's semisimplicity theorem for his categories $\mathrm{Rep}(GL_t)$. 

In light of the disproof of the Nerves of Steel conjecture, our second main objective in this paper is a different approach to the constructing of residue objects in tt-geometry. Based on the notion of Nullstellensatzian object introduced in \cite{2022arXiv220709929B}, we construct a hierarchy of \emph{constructible spectra} $\Speccons_{\bb{E}_n}({\cat C})$ for $1\leq n \leq \infty$ along with suitable comparison maps to the homological spectrum of $\cat C$, provided the tt-category $\cat C$ admits a suitable enhancement as a rigid $\bb{E}_m$-2-ring for $1\leq n < m$. 

As a first proof of concept, for rational rigid commutative 2-rings $\cat C$, we show that Nullstellensatzian categories supply a satisfactory theory of residue fields. Generalizing prior work of Mathew \cite{mathewresidue} in the Noetherian case, \Cref{cor:Qttresidues} states:

\begin{thmx}\label{thmx:Q}
    If $R$ is a rational $\bb{E}_{\infty}$-ring, then $\mo{Perf}(R)$ has enough tt-fields, and points in the homological spectrum of $\mo{Perf}(R)$ are all witnessed by maps from $R$ into rational 2-periodic fields.  
\end{thmx}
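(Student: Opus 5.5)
The plan is to reduce the statement to two inputs. The first is Nullstellensatzian objects in rational $\bb{E}_\infty$-rings. The second is Balmer's description of homological primes of $\mo{Perf}(R)$ through maps to residue fields.

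\emph{Step 1: Nullstellensatzian objects.} Following \cite{2022arXiv220709929B}, I would first identify the Nullstellensatzian objects among rational $\bb{E}_\infty$-rings. These should be exactly the rational $2$-periodic fields, i.e. $K[u^{\pm 1}]$ with $|u|=2$ and $K$ an algebraically closed field of characteristic zero. Rationally, free $\bb{E}_\infty$-algebras are free graded-commutative algebras, so the usual Nullstellensatz arguments carry over. Every nonzero compact $\bb{E}_\infty$-$L$-algebra $A$ then admits an $L$-algebra map $A\to L$; for instance, one kills odd generators and inverts an even class of degree $2$ as needed.

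\emph{Step 2: enough maps.} Every nonzero rational $\bb{E}_\infty$-ring $R$ should admit a map to such an $L$. I would argue by a transfinite or colimit construction, using that $\bb{E}_\infty$-$\bb{Q}$-algebras are compactly generated and that Nullstellensatzian objects exist under any nonzero object. Then $\mo{Perf}(L)$ is a tt-field: its homotopy is a graded field, so every object is a sum of shifts of the unit. This gives residue functors $\mo{Perf}(R)\to\mo{Perf}(L)$.

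\emph{Step 3: matching homological primes.} This is the main obstacle. Given $\cat B\in\Spech(\mo{Perf}(R))$, Balmer's theory gives a pure-injective $E$ and a ``weak ring'' such that $\cat B$ is the kernel of the homological residue functor. I would instead proceed as follows.
\begin{enumerate}
\item Use that $\Spc(\mo{Perf}(R))$ is computed by $\Spec^h$ of $\pi_*R$, or more generally pass through the $\bb{E}_\infty$-structure.
\item Show that the map $\phi$ is bijective for rational $\bb{E}_\infty$-rings. Here I would use nilpotence: in the rational setting, homotopy groups of tensor products of $L$-modules behave like graded vector spaces, so nilpotence is detected by the family of maps $R\to L$, by a Hopkins–Neeman style argument generalizing Mathew's Noetherian case \cite{mathewresidue}.
\item Given $\cat B$ with image $\gp$, pick $R\to L$ hitting $\gp$. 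Then show that the homological prime pulled back from the unique homological prime of $\mo{Perf}(L)$ equals $\cat B$. I would use the uniqueness of homological primes over a given triangular prime, which follows from the detection of nilpotence.
\end{enumerate}

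If the nilpotence argument resists, I would try to show directly that for any homological residue field $\cat A$ of $\mo{Perf}(R)$, the associated $\bb{E}_\infty$ structure can be rigidified rationally. This should be possible because rational $\bb{E}_1$ ring objects in the module category can be upgraded, allowing one to produce an $\bb{E}_\infty$-map $R\to A$ with $A\neq 0$. Composing with a Nullstellensatzian $L$ under $A$ (Step 2) then witnesses $\cat B$.
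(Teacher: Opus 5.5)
Your Steps 1 and 2 are fine as citations of \cite{2022arXiv220709929B}, but Step 3 has a genuine gap. Your main route goes through two claims. The first is the identification $\Spc(\mo{Perf}(R))\cong\Spech(\pi_*R)$, which you only have under Noetherian-type hypotheses such as Mathew's. The second is bijectivity of $\phi\colon\Spc^h(\mo{Perf}(R))\to\Spc(\mo{Perf}(R))$. That is precisely the rational (monogenic) Nerves of Steel statement, which the paper does not prove; it attributes it to work in progress of others.

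Nor does ``uniqueness of homological primes over a triangular prime'' follow from nilpotence detection. A jointly nil-conservative family of functors to tt-fields only gives \emph{surjectivity} onto $\Spc^h$ (\Cref{thm:surjspech}). The local category $\Apoi_\eta$ of \Cref{thm:ENCfails} has more homological primes than Balmer primes. Your Hopkins--Neeman-style detection argument also has no mechanism that survives without Noetherianity. None of this is needed anyway: the statement only asks that each homological prime be hit by some map $R\to L$.

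Your fallback has the right shape but the wrong mechanism. Rational $\bb{E}_1$-rings do not in general carry $\bb{E}_\infty$-structures (think of $M_2(\bb{Q})$), and the object $E_{\cat B}$ attached to $\cat B$ is only a weak ring in any case. The missing idea is not to rigidify $E_{\cat B}$ but to map it, as a pointed object, into the free $\bb{E}_\infty$-$R$-algebra on its unit $f\colon R\to E_{\cat B}$.

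Rationally, $\FrEinfz(f)\simeq\varinjlim_n (E_{\cat B}^{\otimes n})_{h\Sigma_n}$. The map $f^{\otimes n}$ factors through $(f^{\otimes n})^{h\Sigma_n}\simeq(f^{\otimes n})_{h\Sigma_n}$, and the unit is compact. Hence this algebra vanishes if and only if $f$ is $\otimes$-nilpotent (\Cref{prop:CAlgDetection}). The unit of a nonzero weak ring is never $\otimes$-nilpotent, so $\FrEinfz(f)\neq 0$.

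Now choose a Nullstellensatzian $L$ under $\FrEinfz(f)$, i.e.\ an even $2$-periodic field with $\pi_0$ algebraically closed. For $\cat B'\neq\cat B$, the unit of the weak ring $E_{\cat B'}\otimes_R L$ factors through $E_{\cat B'}\otimes_R E_{\cat B}=0$. So $\mo{Perf}(R)\to\mo{Perf}(L)$ kills every $E_{\cat B'}$ with $\cat B'\neq\cat B$. Since $\mo{Perf}(L)$ is a nonzero tt-field with a single homological prime, its image on homological spectra is exactly $\{\cat B\}$. This is the surjectivity half of \Cref{thm:consequalshom}, and it never touches the Balmer spectrum.
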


As one application, we deduce that for any module over a rational $\bb{E}_{\infty}$-ring $R$, its naive homological support agrees with the genuine one, answering a question from \cite{BHSZ2024pp} in this case. The perspective employed in the proof of \Cref{thmx:Q} will also lead to a verification of the rational (in fact: finite height) monogenic Nerves of Steel conjecture in work in progress by  Burklund and separately Chedalavada.

Returning to the general case, the theory of $\bb{E}_n$-constructible spectra we develop along with an elaboration on Burklund's multiplicative structure theorem (\cite{burklund2022multiplicativestructuresmoorespectra}) leads to the next result. It says that, as long as the given tt-category has a suitably structured enhancement, every point in its homological spectrum is ``geometric'', that is, detected by a tt-category whose homological spectrum is a single point:

\begin{thmx}\label{thmx:pts}  
    Let $\cat C$ be a rigid tt-category which admits an enhancement and let $\mf{m}\in \Spc^h(\cat C)$ be a homological prime.  Then there exists a tt-functor $\cat C\to \cat K$ to a rigid tt-category $\cat K$ such that 
        \begin{enumerate}
            \item $\Spc^h(\cat K)=*$ a single point,
            \item and such that the map $\Spc^h(\cat K)\to \Spc^h(\cat C)$ has image exactly $\{\mf{m}\}$.
        \end{enumerate}
\end{thmx}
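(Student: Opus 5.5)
Given the homological prime $\mf{m}$, the plan is to realize it by an algebra object and then pass to a ``maximally large'' extension of it inside enhanced 2-rings. By Balmer's theory, $\mf{m}$ corresponds to a maximal Serre ideal of the module category $\mo{mod}\text{-}\cat C$. It is detected by a pure-injective weak ring $E_{\mf m}$ in the big category, with $\mf m=\Ker(\hat{E}_{\mf m}\otimes -)$ on $\cat C$.

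First, use the enhancement, say a rigid $\bb{E}_m$-2-ring lifting $\cat C$. We elaborate Burklund's multiplicative structure theorem to replace the weak ring $E_{\mf m}$ by a genuine $\bb{E}_n$-algebra $A$, with $n<m$, that has the same homological kernel. This works because the obstructions to multiplicativity are killed after tensoring with a suitable ``Moore-type'' object. The resulting tt-functor $\cat C\to \mo{Mod}_A(\cat C)$ then has image in $\Spc^h$ containing only $\mf m$; more precisely, every homological prime of the target lies over $\mf m$. The target is not compact, so next we restrict to rigid subcategories, for instance the thick subcategory generated by the image, or $\mo{Perf}(A)$ when $A$ is suitably compact. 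Base change of homological primes should then give the image statement~(b).

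Second, to get~(a), we form the constructible-spectrum construction. Consider the category of rigid $\bb{E}_n$-2-rings under $\cat C$ whose homological spectrum maps into $\{\mf m\}$, and take a Nullstellensatzian object in it. Such an object exists by the general existence of Nullstellensatzian objects in compactly generated categories from \cite{2022arXiv220709929B}, applied to the presentable category of enhanced 2-rings under $\cat C$ with the nonvanishing constraint encoded as ``not the zero ring''. Call it $\cat K$. We then argue that $\Spc^h(\cat K)=*$. Suppose $\cat K$ had two distinct homological primes $\mf n_1\neq\mf n_2$. Then there is an object $x$ detected by one prime but not the other, and quotienting by, or inverting, a suitable map gives a nonzero $\cat K$-algebra. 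Nullstellensatzianity forces this algebra's structure map to admit a retraction. That contradicts the distinction between $\mf n_1$ and $\mf n_2$. A nonzero $\cat K$ has at least one homological prime, so $\Spc^h(\cat K)$ is a single point. Its image is $\mf m$ because $\cat K$ receives a map from the algebra constructed in the first step.

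The main obstacle is the first step: producing an honest structured $\bb{E}_n$-algebra that detects exactly $\mf m$. This needs Burklund's theorem in an abstract, non-spectral setting, and one must check that the homological kernel is preserved. A secondary difficulty is ensuring that the Nullstellensatzian object is rigid and remains nonzero, so that $\Spc^h(\cat K)\neq\varnothing$. I would handle this by working with compact objects in the category of $\bb{E}_n$-2-rings and using that filtered colimits of nonzero rigid 2-rings are nonzero.
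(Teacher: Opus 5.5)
\paragraph{The gap is in your second step.} That step is where essentially all of the difficulty lies. Your first step is, as in \Cref{prop:EnHomFields}, a short consequence of Burklund's theorem: $E^q_{\mf m}$ is an $\bb{E}_{n+1}$-algebra with $\supph(E^q_{\mf m})=\{\mf m\}$. Hence $\mo{Perf}_{\cat C}(E^q_{\mf m})$ is a rigid $\bb{E}_n$-2-ring all of whose homological primes lie over $\mf m$; it is rigid for any algebra, and no compactness of $A$ is needed.

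You then assert that a Nullstellensatzian rigid $\bb{E}_n$-2-ring $\cat K$ has $\Spc^h(\cat K)=*$, but the argument you give cannot work.
\begin{itemize}
\item Nullstellensatzianity only constrains \emph{compact} algebras under $\cat K$, i.e.\ finitely many compact objects and maps. What it formally yields is the analogue of \Cref{lem:NSatzOnePtBalmer}, namely $\Spc(\cat K)=*$.
\item Once $\Spc(\cat K)=*$, every nonzero compact $x$ has $\supph(x)=\phi^{-1}(\supp(x))=\Spc^h(\cat K)$. So there is no object of $\cat K$ detected by $\mf n_1$ but not by $\mf n_2$.
\item Distinct homological primes are separated only by the big weak rings $E_{\mf n_1},E_{\mf n_2}$, via $E_{\mf n_1}\otimes E_{\mf n_2}=0$. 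Killing or inverting these produces non-compact algebras, about which the Nullstellensatz condition says nothing.
\end{itemize}
Passing from $\Spc=*$ to $\Spc^h=*$ is a nerves-of-steel type statement, and Part~I shows such statements cannot be taken for granted.

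In the rational $\bb{E}_\infty$ case the paper bridges this with \Cref{prop:CAlgDetection}: free $\bb{E}_\infty$-algebras on pointed objects detect $\otimes$-nilpotence. That is how \Cref{lem:NSianttField} works. In the $\bb{E}_n$ setting no such bridge exists. To use \Cref{thm:nconsequalshom} you would need $\bb{1}_{\cat K}$ to be Nullstellensatzian among $\bb{E}_k$-algebras in $\Ind(\cat K)$. Their module categories are only $\bb{E}_{k-1}$-monoidal, so they are not rigid $\bb{E}_n$-2-rings under $\cat K$, and Nullstellensatzianity of $\cat K$ does not transfer. The paper flags exactly this defect at the start of \Cref{sec:geompts}.

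Separately, ``rigid $\bb{E}_n$-2-rings under $\cat C$ whose $\Spc^h$ maps into $\{\mf m\}$'' is not evidently a weakly spectral category. The slice under $\mo{Perf}_{\cat C}(E^q_{\mf m})$ would be the right setting.

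\paragraph{What the paper does instead.} It avoids Nullstellensatzian objects here entirely and shows directly that $\cat K_q=\mo{Perf}_{\cat C}(E^q_{\mf m})$ has a single homological prime once $q\geq\max\{n+1,7\}$. The missing idea is \Cref{thm:technicalSqZero}, which proceeds as follows.
\begin{itemize}
\item Burklund's obstruction theory is run in the deformation $\mc D(\cat C,\cat E(v))$, where all uniqueness obstructions land in groups that vanish by \Cref{lem:DeformationRequiredBounds}.
\item This shows the $\bb{E}_3$-map $E^{q-3}_{\mf m}\to E^{q-3}_{\mf m}\otimes E^{q}_{\mf m}$ is a split square-zero extension, hence bijective on $\Spc^h$ by \Cref{lm:sqzspech}.
\item Consequently the multiplication map $E^{q-3}_{\mf m}\otimes E^{q}_{\mf m}\to E^{q-3}_{\mf m}$ is nil-conservative.
\end{itemize}
Now take nonzero weak rings $A,B$ in $\cat K_q$, with $B$ without loss of generality coming from $E^{q-3}_{\mf m}$. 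Then $A\otimes B\neq 0$ in $\cat C$, since both have support $\{\mf m\}$, and nil-conservativity forces $A\otimes_{E^q_{\mf m}}B\neq 0$. By \Cref{prop:weakRingtenszero}, this rules out two distinct homological primes.
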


As a consequence, we deduce that the residue tt-functors constructed in \Cref{thmx:pts} satisfy the desiderata ($\dagger$) and ($\ddagger$). However, we caution the reader that, as we will explain, the constructions that go into the proof of this theorem are not yet sufficient to supply a fully satisfying theory of fields in tt-geometry.

\subsection{Methodology}\label{ssec:methodology}

\subsubsection*{Free constructions and the counterexample to the Nerves of Steel conjecture}

The starting point for our disproof of the Nerves of Steel conjecture is the \emph{exact nilpotence condition}, due to Balmer \cite[Theorem A.1]{balmerhomological} and the second-named author \cite{logan}. This condition holding for all localizations of a given rigid tt-category is equivalent to the Nerves of Steel conjecture for said category but, since it does not make reference to the homological spectrum, it also makes sense outside the rigid context. The main theorem of \cite{logan} shows that it fails for the free (non-rigid) commutative 2-ring on a pointed object. Our strategy is to adapt this approach to the rigid context, which poses substantial additional difficulties.

There is a free rigid commutative 2-ring $\Apoi$ on a pointed object, as well as a free rigid commutative 2-ring $\A$ on an object.  The category $\A$ plays the role of the affine line in higher Zariski geometry \cite{aoki2025higherzariskigeometry}, in that it represents the global sections functor. To make our lives easier, we will implicitly assume that $\Apoi$ (resp. $\A$) is rational, working with the free rigid commutative 2-ring on a pointed object (resp.~on an object) over the derived category of the rationals $\cat{D}^b(\bb{Q})$.  

In contrast to the non-rigid case, $\A$ and $\Apoi$ are not local and have very large Balmer spectra, see \Cref{cor:BalmerSpcBig}  and \Cref{cor:A1pointsdescribe} for a description of many points of $\A$. In particular, if we wish to study the exact-nilpotence condition, we must content ourselves with attempting to study some of their localizations. This applies, in particular, to a certain generic point $\eta$ of $\A$ and likewise for $\Apoi$. 

Now taking the free pointed object to the pointed object $\bb{1}\xrightarrow{0} X$ induces a tt-functor $\mathrm{can}\colon \Apoi\to \A$. In order to analyze this functor as well as the categories it relates, we will make use of the 1-dimensional cobordism hypothesis due to Lurie (\cite{luriecob}, see also \cite{harpaz}) and its (forthcoming) generalization by Barkan--Steinebrunner \cite{barkansteinebrunner2}. It provides identifications
    \[
        \A = \mo{Fun}((\mo{Cob})^{op},\cat{D}(\bb{Q}))^{\omega} 
            \quad \text{ and } \quad
        \Apoi = \mo{Fun}((\mo{Cob}^+)^{op},\cat{D}(\bb{Q}))^{\omega},
    \]
resulting in a `generators and relations descriptions' of these categories. In particular, in \Cref{section:affineline} we are able to relate $\A$ to Deligne's category $\mathrm{Rep}(GL_t)$ (see \cite{deligne} and recalled in \Cref{sec:appendix}) and use his semisimplicity theorem. The key structural features we establish are summarized in the following result, collecting \Cref{thm:A1etasemisimple} and \Cref{proposition:conservative}: 

\begin{thmx}\label{thmx:nos2}
The functor $\mathrm{can}$ extends to a tt-functor $\mathrm{can}_{\eta}\colon\Apoi_{\eta}\to \A_{\eta}$ on generic points. Moreover, we have:
    \begin{enumerate}
        \item The functor $\mathrm{can}_{\eta}\colon\Apoi_{\eta}\to \A_{\eta}$ is conservative.
        \item The tt-category $\A_{\eta}$ is a semisimple tt-field.
    \end{enumerate}
\end{thmx}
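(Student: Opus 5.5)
We work throughout with the cobordism-hypothesis models $\A=\mo{Fun}(\mo{Cob}\op,\cat D(\bb Q))^\omega$ and $\Apoi=\mo{Fun}((\mo{Cob}^+)\op,\cat D(\bb Q))^\omega$, so that every morphism in these categories is a $\bb Q$-linear combination of (pointed) oriented 1-dimensional cobordisms.

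\textbf{Endomorphisms of the unit and the generic point.} The first step is to compute $\End(\unit)$ in both categories. In $\A$, $\End(\unit)$ is the polynomial ring $\bb Q[t]$, where $t=\dim X$ is the circle. In $\Apoi$ it is a polynomial ring on the circle together with the closed pointed cobordisms, namely circles carrying marked points. The generic point $\eta$ is then the prime obtained by inverting all nonzero elements of the image of these polynomial rings in graded endomorphisms of the unit, i.e.\ the Verdier localization at the corresponding cones. Since $\mathrm{can}$ sends $\unit\to X$ to $0$, it sends every pointed circle to $0$ and the circle to $t$. We must check that $\mathrm{can}$ carries the multiplicative set used for $\Apoi_\eta$ into units of $\A_\eta$. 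The plan is to define $\eta$ for $\Apoi$ as the preimage $\mathrm{can}^{-1}(\eta_{\A})$, so that $\Spc(\mathrm{can})$ sends it to $\eta$. The extension $\mathrm{can}_\eta$ then exists by the universal property of localization.

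\textbf{Semisimplicity of $\A_\eta$ (part (b)).} By the cobordism hypothesis, $\A_\eta$ is obtained by base change from the $\bb Q[t]$-linear oriented Brauer (walled Brauer) category, idempotent completed and tensored up to $\bb Q(t)$ with its degree-shift structure. This is Deligne's $\mathrm{Rep}(GL_t)$ over the field $\bb Q(t)$ with $t$ transcendental, and Deligne's theorem says it is semisimple abelian. Hence $\A_\eta\simeq\cat D^b(\mathrm{Rep}(GL_t)_{\bb Q(t)})$ is a derived category of a semisimple category, so every object is a sum of shifts of simples. The remaining work for (b) is to show that $\A_\eta$ is a tt-field. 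Every nonzero object $Y$ has $\dim Y\ne 0$ as a nonzero polynomial in $t$, hence is invertible in $\bb Q(t)$. Consequently $\unit$ is a retract of $Y\otimes Y^\vee$, which gives $\Spc=\ast$ and the field properties (tensoring with nonzero objects is faithful, and every object is a sum of simples).

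\textbf{Conservativity (part (a)).} This is the main obstacle, since $\mathrm{can}_\eta$ kills all pointed circles. It suffices to show that if $\mathrm{can}_\eta(Y)=0$ then $Y=0$. Use rigidity: $Y=0$ iff $\id_Y=0$ iff the unit-to-$Y\otimes Y^\vee$ coevaluation is zero, and by rigidity again it suffices to detect nonvanishing on objects of the form $X^{\otimes i}\otimes X^{\vee\otimes j}$ and their summands. The strategy is to filter $\Apoi_\eta$ by the number of marked points, so that the morphism spaces in $\Apoi$ are graded by point count with the $0$-graded part mapping isomorphically to $\A$. Then show that a nilpotent (positive-degree) ideal cannot kill an idempotent: if $e$ is an idempotent in $\End(Y)$ whose degree-$0$ part is zero, then $e=e^N$ lies in arbitrarily high filtration. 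We then need to argue that the filtration is exhaustive and Hausdorff on each finite hom space after localization. This requires a degree or trace bound, and that bound is what I expect to be the delicate computation, perhaps using that the localization only inverts degree-$0$ scalars. Once this is settled, a nonzero $Y$ yields a nonzero idempotent with nonzero image under $\mathrm{can}_\eta$, giving conservativity.
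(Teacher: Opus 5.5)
\textbf{There is a genuine gap.} Part (a) is not yet a proof, and the missing piece is more than the bound you flag. First, the reduction ``it suffices to detect nonvanishing on $\X{i}{j}$ and their summands'' is unjustified. A nonzero object of the stable category $\Apoi_\eta$ is in general built from generators by cones, not a retract of a sum of them, so idempotents in $\pi_0$ of endomorphism rings of generators cannot detect it.

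Second, the bound you are missing is not delicate once you grade by the charge $i-j$. In the geometric picture, the number of pointings in a map $\X{i}{j}\to\X{r}{s}$ is exactly $(r-s)-(i-j)$. By \Cref{proposition:omnibuscob+}:
\begin{itemize}
\item mapping spectra in $\Apoi_\eta$ vanish when $r-s<i-j$;
\item they agree with those of $\A_\eta$ when $r-s=i-j$;
\item in $\A_\eta$ they vanish unless $r-s=i-j$.
\end{itemize}
So for $G=\bigoplus_{i+j\le n}\X{i}{j}$, the map $S=\End_{\Apoi_\eta}(G)\to R=\End_{\A_\eta}(G)$ is a map of graded ring spectra. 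Here $S$ lives in gradings $0,\dots,2n$, and the fiber $I$ lives in gradings $\geq 1$. Hence $I^{\otimes_S k}\to S$ is null as a map of graded $S$-bimodules for $k>2n$, and no exhaustiveness or completeness question arises.

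The argument must then be run on modules rather than idempotents. By Schwede--Shipley, $\mathrm{can}_\eta$ restricted to the thick subcategory generated by $G$ is $R\otimes_S-$ on perfect $S$-modules. If $R\otimes_S M=0$, then $I\otimes_S M\simeq M$, so $I^{\otimes_S k}\otimes_S M\to M$ is both an equivalence and null, forcing $M=0$. These thick subcategories exhaust $\Apoi_\eta$. This is how the paper argues.

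Your model of the categories is also wrong in ways that affect the extension and part (b).
\begin{itemize}
\item \emph{Unit endomorphisms.} Mapping spectra in $\A$ are rational chains on non-discrete cobordism mapping spaces. Hence $\pi_*\End(\unit_{\A})=\bb Q[t,t_1,t_2,\ldots]$ with $|t_i|=2i$ (\Cref{lemma:endunit}), not $\bb Q[t]$.
\item \emph{The generic point.} $\eta$ inverts all nonzero homogeneous elements, so $\A_\eta$ is $2$-periodic over the graded field $\bb Q(t,t_1,\ldots)$. It is not $\cat D^b(\mathrm{Rep}(GL_t)_{\bb Q(t)})$. Inverting only $\bb Q[t]\setminus 0$ would give a category whose spectrum still surjects onto $\Spech(\bb Q(t)[t_1,t_2,\ldots])$, which is far from a tt-field.
\item \emph{The correct route to (b).} Graded endomorphism rings of sums of generators are base-changed from walled Brauer algebras over the field $\pi_0(\unit_{\A_\eta})$, in which $t$ is transcendental (\Cref{lemma:end=br}), and they sit in even degrees. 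Deligne's theorem plus periodicity then show every object is $y\oplus\Sigma z$ with semisimple endomorphism ring.
\item \emph{No ``pointed circles.''} $\Hom_{\mo{Cob}^+}(X,\unit)$ is empty, and $\mo{Cob}\to\mo{Cob}^+$ is an equivalence on $\End(\unit)$. This is exactly why $\mathrm{can}$ extends: the same elements are inverted on both sides, with no preimage construction needed.
\item \emph{Dimensions.} ``Every nonzero $Y$ has $\dim Y\neq 0$'' is false, since $\dim(Z\oplus\Sigma Z)=0$. Instead, for each simple $Z$, $\mathrm{coev}_Z$ is a nonzero map out of the simple unit in a semisimple category, hence split (\Cref{lem:ssimpleimpllocal}). $\otimes$-faithfulness then passes to any $Y$ having $Z$ as a summand.
\end{itemize}
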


As explained at the end of \Cref{sec:ComparisonFunctor}, this theorem suffices to conclude that the exact-nilpotence condition fails for $\Apoi_\eta$, for the (image of the) universal fiber sequence $Y\to \bb{1}\to X$, thereby proving \Cref{thmx:nos}.

\subsubsection*{$\bb{E}_\infty$-constructible spectra and the rational case}

The drawbacks of the known notions of residue fields in tt-geometry---as exhibited above---lead us to a novel approach, which is formulated in the setting of higher Zariski geometry \cite{aoki2025higherzariskigeometry}. This requires that the tt-category under consideration admits a suitable enhancement, i.e., arises as the homotopy category of a commutative 2-ring\footnote{by which we mean an (essentially small) stably symmetric monoidal idempotent-complete $\infty$-category}. Since all tt-categories in nature are of this form and because the salient operations preserve the enhancement, we view this is as a very mild assumption.

In \cite{2022arXiv220709929B} and motivated by applications to chromatic homotopy theory, Burklund, Schlank and Yuan introduced a general notion of Nullstellensatzian object which captures the essential features of algebraically closed field abstractly. Applying this concept to the category of commutative 2-rings leads to our definition of the \emph{constructible spectrum} $\Speccons(\cat{C})$ of $\cat C$, see \Cref{def:consspec}. This spectrum is a compact $T_1$-space whose points are given by Nullstellensatzian commutative 2-rings. In \Cref{cor:conscomparisonmap} we construct a natural comparison map    
    \[
        \psi\colon\Speccons(\cat{C}) \to \Spc(\cat C).
    \]
The significance of this map is then explained in \Cref{prop:ConsDetectFieldPts}: A point in $\Spc(\cat C)$ is in the image of $\psi$ if and only if it is can be detected by a map of commutative 2-rings $\cat C \to \cat D$ with $\Spc(\cat D) = \ast$. In other words, the constructible spectrum provides an abstract setting which captures Desiderata $(\dagger)$ and $(\ddagger)$. In general, however, the comparison map $\psi$ is not surjective, so not every prime ideal in the Balmer spectrum can be realized geometrically by a map of commutative 2-rings. We remark that the failure of realizability holds both in the finite positive height situation (\Cref{cor:nonsurj}) as well as at height $\infty$ (\Cref{ex:nosurj}), that is in characteristic $p$.

This is in sharp contrast to what happens in characteristic $0$. Working with a rational commutative 2-ring $\cat C$ for the remainder of this section, we first observe that Nullstellensatzian $\cat C$-algebras correspond under decategorification to Nullstellensatzian $\bb{1}_{\cat{C}}$-algebra: Indeed, \Cref{prop:consequalscons} establishes a homeomorphism 
    \[
        \Speccons(\cat{C})\simeq \Speccons_{\mo{CAlg}\left(\mo{Ind}\left(\cat{C}\right)\right)}(\bb{1}_{\cat{C}}).
    \]
This affords the construction of a comparison map $\psi^h\colon\Speccons(\cat{C}) \to \Spc^h(\cat C)$ for rational $\cat C$, which we show in \Cref{thm:consequalshom} to be a bijection: 

\begin{thmx}\label{thmx:Qnos}
     Let $\cat{C}$ be a rational rigid commutative 2-ring.  Then there is a natural isomorphism of sets $\psi^h\colon\Speccons(\cat{C})\simeq \Spc^{h}(\cat{C})$ between the constructible spectrum and the homological spectrum of $\cat{C}$.
\end{thmx}

With this result in hand, it is then not too difficult to deduce our applications to rational monogenic commutative 2-rings in \Cref{thmx:Q}.  At the same time, this construction gives a new topology on the homological spectrum of a rational rigid commutative 2-rings which is always compact $T_1$.  The question as to when it is compact Hausdorff can be checked by understanding Nullstellensatzian algebras in a given category, and is closely related to when the homological spectrum is a sheafification of the Balmer spectrum for a certain ``canonical topology'' (see \Cref{thm:NoSandENCNSian}).  It is possible that the topology on the homological spectrum induced by \Cref{thmx:Qnos} is always compact Hausdorff, a question which is equivalent to a uniform bound on the exact-nilpotence condition for \textit{Nullstellensatzian} rational rigid commutative 2-rings, and which reduces to separating two specified points in the constructible spectrum of our counter-example to the nerves of steel conjecture (see \Cref{prop:CHausmaybe} for a precise statement).

\subsubsection*{$\bb{E}_n$-constructible spectrum and geometric points}

As we have seen, outside the rational setting, points of the Balmer spectrum of a tt-category can in general not be realized geometrically through maps of commutative 2-rings. Surprisingly, a minor modification of the constructible spectrum resolves this issue entirely.

Generalizing the previous setup, suppose for the remainder of this section that $\cat C$ is a rigid $\bb{E}_m$-2-ring, i.e., an (essentially small) rigid stably $\bb{E}_m$-monoidal idempotent-complete $\infty$-category for some $m \geq 3$. The condition on $m$ guarantees that the homotopy category of $\cat C$ is a rigid tt-category. For any $1 \leq n<m$, we then define the $\bb{E}_n$-constructible spectrum of $\cat C$ as $\Speccons_{\bb{E}_n}(\bb{1}_{\cat C})$. This comes with a comparison map 
    \[
        \psi^h_n\colon \Speccons_{\bb{E}_n}(\bb{1}_{\cat C}) \to \Spc^h(\cat C)
    \]
induced by sending a Nullstellensatzian $\bb{E}_n$-algebra to its homological support. Our main theorem (\Cref{thm:nconsequalshom}) about the $\bb{E}_n$-constructible spectrum shows that $\psi^h_n$ is bijective: 

\begin{thmx}\label{thmx:pts2}
    Let $\mo{Ind}(\cat C)$ be the Ind category of a rigid $\bb{E}_m$-2-ring $\cat C$.  Then for all $1\leq n < m$, there is a natural bijection
        \[
            \psi^h_n\colon\Speccons_{\bb{E}_n}(\bb{1}_{\cat C}) \simeq \Spc^h(\cat C).
        \]
\end{thmx}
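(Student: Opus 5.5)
We will prove the two halves, well-definedness with injectivity and then surjectivity, by working with homological residue fields in $\mo{Ind}(\cat C)$. Recall Balmer's description: homological primes $\mf m\in\Spc^h(\cat C)$ correspond to maximal Serre ideals of $\mo{mod}\text{-}\cat C$. Each $\mf m$ has a pure-injective object $E_{\mf m}\in\mo{Ind}(\cat C)$, and $\mf m$ is recovered as the kernel of $\mo{Hom}(-,E_{\mf m})$. We will also use the fact that $\mf m$ is determined by the annihilator-type condition $\SET{x\in\cat C}{E_{\mf m}\otimes x=0}$ on the level of homological support.

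\textbf{Defining $\psi^h_n$.} Let $A$ be a Nullstellensatzian $\bb E_n$-algebra in $\mo{Ind}(\cat C)$. We want its homological support to be a single point. The main input is a ``field-like'' property of Nullstellensatzian $\bb E_n$-algebras.
\begin{enumerate}
\item Every nonzero $A$-module $M$ in $\mo{Ind}(\cat C)$ detects the same objects of $\cat C$ as $A$ does.
\item This is proved by a Nullstellensatz argument. If $x\otimes A\neq 0$ but $x\otimes M=0$, we build a nonzero $\bb E_n$-$A$-algebra that kills $x$: a free $\bb E_n$-cell attachment, or an $\bb E_n$-quotient of $A$ as in Burklund's multiplicative structure theorem, here using $n<m$. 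This must then admit a map back to $A$ by the Nullstellensatzian property (every nonzero compact-ish algebra has a section), forcing $x\otimes A=0$.
\end{enumerate}
From this we deduce that the kernel of $\cat C\to\mo{Mod}_A$ on the module category is a maximal Serre ideal. Concretely, $\mo{Hom}(-,A)$, or $\mo{Hom}(-,I)$ for an injective $A$-module $I$, gives a homological residue functor whose kernel is a homological prime $\mf m_A$. We set $\psi^h_n(A)=\mf m_A$. Naturality follows because equivalent Nullstellensatzian objects, those admitting a common algebra under both, detect the same prime.

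\textbf{Surjectivity.} Given $\mf m$, we form the $\bb E_n$-algebra of ``functions'' on the homological residue field. Using Burklund-type results, available since $n<m$, we realise a suitable quotient of $\bb 1$ by the ideal $\mf m$ as an $\bb E_n$-algebra $B_{\mf m}$ with $B_{\mf m}\otimes E_{\mf m}\neq 0$. More precisely, we take a filtered colimit of $\bb E_n$-quotients $\bb 1/\!/x$ over maps $x$ that become zero or invertible in the residue field. We then apply the Burklund–Schlank–Yuan existence theorem: every nonzero object in a compactly generated presentable category admits a map to a Nullstellensatzian object. This gives $B_{\mf m}\to A$ with $A$ Nullstellensatzian, and support considerations show $\mf m_A=\mf m$.

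\textbf{Injectivity.} Suppose $A,A'$ have $\mf m_A=\mf m_{A'}$. Then $A\otimes A'$, with its $\bb E_{n}$-structure (note $\otimes$ of $\bb E_n$-algebras is $\bb E_n$ as $n<m$), is nonzero. This is because both are detected by $E_{\mf m}$, and Balmer's tensor-product property of homological primes gives $A\otimes A'\otimes E_{\mf m}\neq 0$. Mapping $A\otimes A'$ to a Nullstellensatzian algebra then identifies $A$ and $A'$ as points of $\Speccons_{\bb E_n}$.

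\textbf{Main obstacle.} We expect the hardest step to be the field-like property in the definition of $\psi^h_n$, namely that the support is a single homological prime rather than merely a closed subset. This requires constructing $\bb E_n$-quotients with control over their nonvanishing. We would first try Burklund's multiplicative structure theorem applied to the cofiber of $x\otimes A$, using that $m>n$ provides enough room for the structure.
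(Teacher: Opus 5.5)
There is a genuine gap, and it is in the step you flagged as the main obstacle. The problem is conceptual rather than technical.

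\textbf{Your invariant only sees the Balmer spectrum.} Your way of attaching a homological prime to a Nullstellensatzian $\bb{E}_n$-algebra $A$ only tests $A$ against compact objects $x\in\cat C$, by asking whether $x\otimes A$ or $x\otimes M$ vanishes. The set $\{x\in\cat C : x\otimes A=0\}$ is at best a prime tt-ideal, i.e.\ a point of $\Spc(\cat C)$. The fact you invoke, that $\mf m$ is determined by $\{x\in\cat C : E_{\mf m}\otimes x=0\}=\phi(\mf m)$, is exactly injectivity of $\phi\colon\Spc^h(\cat C)\to\Spc(\cat C)$. That is the Nerves of Steel conjecture, which \Cref{thm:ENCfails} refutes.

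Here is a concrete instance. In $\Apoi_\eta$, the homological primes detected via $p$ and via $p\circ\iota$ are distinct, since one residue field kills $f$ and the other kills $g$. Both lie over the Balmer prime $(0)$, because both functors are conservative. Nullstellensatzian algebras representing these two points therefore annihilate exactly the same compact objects, namely only $0$. No invariant built from compact-object detection can separate them.

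Consequently two of your steps are unjustified: ``from this we deduce the kernel is a maximal Serre ideal'', and the claim that $\mo{Hom}(-,I)$ has maximal Serre kernel for an injective $A$-module $I$. A smaller issue: the Nullstellensatz splitting argument applies to algebras under $A$, not to arbitrary $A$-modules $M$.

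\textbf{The missing idea.} You need to test $A$ against the big weak rings $E_{\mf m}$ themselves, made into $\bb{E}_n$-algebras. The paper writes $E_{\mf m}=\mo{cofib}(v\colon I_{\mf m}\to\bb 1)$ and uses Burklund's theorem to give $E^n_{\mf m}=\mo{cofib}(v^{n+1})$ an $\bb{E}_n$-structure. For $\mf m'\neq\mf m$, the map $E_{\mf m'}\otimes v$ is an equivalence, so $\supph(E^n_{\mf m})=\{\mf m\}$ exactly (\Cref{prop:EnHomFields}).

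Combine this with $\supph(A\coprod B)=\supph(A)\cap\supph(B)$ for $\bb{E}_n$-algebras (\Cref{prop:pushoutsupportprop}). That identity comes from the $\bb{E}_n$-map $A\coprod B\to A\otimes B$, which is where $n<m$ enters, together with the tensor-product property. Now suppose $\mf m\neq\mf m'$ both lie in $\supph(L)$ for a Nullstellensatzian $L$. Then $L\coprod E^n_{\mf m}$ and $L\coprod E^n_{\mf m'}$ are nonzero algebras under $L$ whose pushout over $L$ vanishes. This contradicts $\Speccons_{\bb{E}_n}(L)$ being a single point.

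\textbf{Your surjectivity step.} The same object repairs it. Knowing $B_{\mf m}\otimes E_{\mf m}\neq 0$ only gives $\mf m\in\supph(B_{\mf m})$. Supports can only shrink along maps of weak rings (\Cref{lem:MappingSupport}). So to conclude that a Nullstellensatzian $A$ under $B_{\mf m}$ has support $\{\mf m\}$, you need $B_{\mf m}\otimes E_{\mf m'}=0$ for every $\mf m'\neq\mf m$. Your colimit of quotients by compact maps does not provide this; $E^n_{\mf m}$ does.

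Your injectivity argument is essentially the paper's: $A\otimes A'\neq 0$ together with the $\bb{E}_n$-maps $A\to A\otimes A'\leftarrow A'$ gives $A\coprod A'\neq 0$. It goes through once you know supports of Nullstellensatzian algebras are singletons.
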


In fact, \Cref{thm:nconsequalshomoptimal} provides an example that shows that the condition $n<m$ in the statement above is optimal. The key ingredient in the proof of \Cref{thmx:pts2} is the construction, for each homological prime $\mf{m}$, of $\bb{E}_n$-algebra variants $E_{\mf{m}}^{n}$ of the weak rings $E_{\mf{m}}$ introduced in \cite[Construction 2.11]{balmerhomological}. The latter play a distinguished role in the study of the homological spectrum, and likewise their structured analogues are essential in the analysis of $\Speccons_{\bb{E}_n}(\bb{1}_{\cat C})$. The construction of $E_{\mf{m}}^{n}$ relies essentially on Burklund's work \cite{burklund2022multiplicativestructuresmoorespectra} on multiplicative structures on quotient objects in higher algebra. 

In order to deduce \Cref{thmx:pts} from \Cref{thmx:pts2}  we need one additional step. While we cannot directly conclude that the homological spectrum of $E_{\mf{m}}^n$ is a point, we will prove in \Cref{sec:geompts} that $\Spc^h(\mo{Perf}_{\cat{C}}(E_{\mf{m}}^k))$ has the desired property for $k$ sufficiently large. This establishes a sufficient supplies of residue rigid $\bb{E}_n$-2-rings, which upon passage to homotopy categories gives \Cref{thmx:pts}. Finally, we remark that the overall structure of $\mo{Perf}_{\cat{C}}(E_{\mf{m}}^k)$ is not yet well-understood, so the extent to which these residue tt-categories should be considered fields is still under investigation. 

\subsection{Structure of the document}\label{ssec:structure}

There are two parts, consisting in the disproof of the Nerves of Steel conjecture and our analysis of the constructible spectrum, respectively. 

In the first part, we begin with some preliminaries on tt-geometry collected here for the convenience of the reader. Most of this material in \Cref{sec:prelim} is purely expositional, with the exception of the discussion of tt-fields. \Cref{sec:freeconstructions} deals with free constructions in higher Zariski geometry and the generalized 1-dimensional cobordism hypothesis, which is then applied in \Cref{section:affineline} to analyse the affine line and its pointed variant. This part culminates in \Cref{sec:ComparisonFunctor}, where the proof of \Cref{thmx:nos} is assembled.

The second part is concerned with the constructible spectrum and its applications to the construction of geometric points in higher Zariski geometry. After a brief recollection on the theory of Nullstellensatzian objects from \cite{2022arXiv220709929B}, \Cref{sec:SpectraSpectra} introduces the $\bb{E}_{\infty}$-constructible spectrum of rigid commutative 2-rings, establishes its basic properties, and discusses its relevance in tt-geometry. The end of this section also contains our counterexamples to the most optimistic guesses about its utility, motivating the in-depth study of the rational case in \Cref{sec:rational} and the proof of \Cref{thmx:Q}. Finally, \Cref{sec:enconsspec} and \Cref{sec:geompts} introduce the $\bb{E}_{n}$-constructible spectrum of rigid 2-rings and, using a modification of Burklund's approach to multiplicative structures, verify \Cref{thmx:pts}.

This paper concludes with an outline of a special case of Deligne's proof of the semi-simplicity of $\mo{Rep}(GL_t)$ from \cite{deligne} in \Cref{sec:appendix}.

\subsection{Conventions and notation}\label{ssec:conventions}

Throughout the main body of this work, we will work in the framework of higher Zariski geometry as developed in \cite{aoki2025higherzariskigeometry}, formulated in the language of $\infty$-categories (\cite{HTT,HA}) and enhancing tensor triangular geometry as developed by Balmer \cite{Balmer2005}. Our notations and terminology will conform to these references unless otherwise noted. In particular, \emph{tt-category} refers to the classical 1-categorical notion of a tensor triangulated category, while we use the term \emph{$\bb{E}_m$-2-ring} with $0 \leq m \leq \infty$ for an essentially small stably $\bb{E}_m$-monoidal $\infty$-category, usually assumed to be idempotent-complete. When $m \geq 3$, the homotopy category $\cat K = \ho(\cat C)$ of an $\bb{E}_m$-2-ring $\cat C$ is a tt-category; in this case, $\cat C$ is said to be an \emph{$\bb{E}_m$-enhancement} of $\cat K$.  For the remainder of this paper, tt-category will implicitly mean rigid tt-category, and the term 2-ring without any modifiers will implicitly mean commutative 2-ring.

\subsection*{Acknowledgements} 

We thank Paul Balmer, Anish Chedalavada, Akhil Mathew, Thomas Nikolaus, Phil Pützstück, Tomer Schlank, Robin Sroka, and Jan Steinebrunner for many helpful discussions. TB and LH were supported by the European Research Council (ERC) under Horizon Europe (grant No.~101042990) and are grateful to the Max Planck Institute for Mathematics in Bonn for its hospitality and financial support. MR is funded by the  Deutsche Forschungsgemeinschaft (DFG, German Research Foundation) – Project-ID 427320536 – SFB 1442, as well as under Germany's Excellence Strategy EXC 2044/2 –390685587, Mathematics Münster: Dynamics–Geometry–Structure.

\newpage
\part{A Counterexample to the Nerves of Steel Conjecture}
\section{Preliminaries on Tensor Triangular Geometry}\label{sec:prelim}

\subsection{Basic notions of tensor-triangular geometry}  Motivated by computations due to Devinatz--Hopkins--Smith \cite{DevHopSmith,hopkins1998nilpotence}, Hopkins--Neeman \cite{Hopkins1987,Neeman1992}, and Thomason \cite{Thomason1997}, Balmer \cite{Balmer2005} made the following definition, marking the beginning of the field now known as tensor triangular geometry.

\begin{definition}[\cite{Balmer2005}]
Let $\cat{C}$ be an essentially small tt-category.  Then the \textit{Balmer spectrum} of $\cat{C}$, $\Spc(\cat{C})$ is the set of prime $\otimes$-ideals in $\cat{C}$, topologized by defining distinguished closed subsets 
    \[
        \supp(x)\coloneqq \left\{\mc{P}\in\Spc(\cat{C})\colon x\notin \mc{P}\right\},
    \]
for objects $x\in\cat{C}$.
\end{definition}

This definition parallels the construction of the Zariski spectrum in algebraic geometry, and thus leads to a geometric perspective on tt-categories; for a recent account, see \cite{aoki2025higherzariskigeometry}. The Balmer spectrum comes equipped with natural maps to other spaces, such as the homogeneous spectrum of graded prime ideals in the graded endomorphism ring of the unit.

\begin{proposition}[{\cite[Theorem 5.3]{BalmerSpSpSp}}]\label{prop:comparisonUnit}  There exists a natural continuous map 
    \[
        \rho\colon\Spc(\cat{C})\to \Spech(\pi_*(\mo{End}_{\cat{C}}(\bb{1})))
    \] 
from the Balmer spectrum of a tt-category $\cat{C}$ to the spectrum of homogeneous prime ideals in the graded-commutative ring $\pi_*(\mo{End}_{\cat{C}}(\bb{1}))$. The map $\rho$ is order-reversing, in the sense that $\mc{P} \subseteq \mc{Q}$ if and only if $\rho(\mc{P}) \supseteq \rho(\mc{Q})$.
\end{proposition}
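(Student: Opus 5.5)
We will use Balmer's construction. For a prime $\mc{P}\in\Spc(\cat C)$ we set
\[
\rho(\mc{P}) \coloneqq \big\langle\, f\colon \Sigma^n\bb{1}\to\bb{1} \text{ homogeneous} \;\big|\; \cone(f)\notin\mc{P}\,\big\rangle \subseteq R_\ast \coloneqq \pi_*(\mo{End}_{\cat C}(\bb{1})),
\]
and we prove that the homogeneous elements of $\rho(\mc{P})$ are exactly those $f$ with $\cone(f)\notin\mc{P}$. The cleanest route is to pass to the Verdier quotient $q\colon\cat C\to\cat C/\mc{P}$. This is a tt-functor, and $q(x)=0$ if and only if $x\in\mc{P}$. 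Since $\mc{P}$ is prime, the zero ideal of $\cat C/\mc{P}$ is prime, so $x\otimes y\simeq 0$ in $\cat C/\mc{P}$ forces $x\simeq0$ or $y\simeq0$. Moreover $\cone(f)\in\mc{P}$ if and only if $q(f)$ is invertible. Hence the homogeneous elements with $\cone(f)\notin\mc{P}$ are precisely the preimages under $R_\ast\to\pi_*\mo{End}_{\cat C/\mc{P}}(\bb{1})$ of the non-invertible homogeneous elements.

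The key step, and the main obstacle, is to show that in a tt-category $\cat L$ whose zero ideal is prime, the non-invertible homogeneous elements of $\pi_*\mo{End}_{\cat L}(\bb{1})$ form an ideal. This ideal is then automatically the unique maximal homogeneous ideal, hence a homogeneous prime. We argue as follows.
\begin{itemize}
\item For $f\colon\Sigma^n\bb{1}\to\bb{1}$, the endomorphism $f\otimes\cone(f)$ of $\cone(f)$ squares to zero. This is the standard lemma obtained from the long exact sequence of the triangle $\Sigma^n\bb{1}\to\bb{1}\to\cone(f)$.
\item Closure under multiples: if $f$ is non-invertible and $g$ is arbitrary, then $gf$ is non-invertible. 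Indeed, if $gf$ were invertible, $f$ would be split mono with cone a summand of $\cone(gf)=0$, hence invertible. (Graded commutativity handles the other side.)
\item Closure under sums: let $f,g$ be non-invertible of the same degree. Then $\cone(f),\cone(g)\neq0$, so $\cone(f)\otimes\cone(g)\neq0$ by primeness of $0$. Both $f$ and $g$ act nilpotently on this object, and they commute up to sign, so $f+g$ acts nilpotently on it as well. If $f+g$ were invertible, it would act invertibly on a nonzero object while being nilpotent there, which is a contradiction.
\end{itemize}
Pulling back along $R_\ast\to\pi_*\mo{End}_{\cat C/\mc{P}}(\bb{1})$, we conclude that $\rho(\mc{P})$ is a homogeneous prime ideal whose homogeneous elements are exactly those $f$ with $\cone(f)\notin\mc{P}$.

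\emph{Continuity.} For homogeneous $f$, consider the basic open $D(f)=\{\gp \mid f\notin\gp\}$. By the description above,
\[
\rho^{-1}(D(f))=\{\mc{P}\mid \cone(f)\in\mc{P}\}=\Spc(\cat C)\sminus\supp(\cone(f)),
\]
which is open.

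\emph{Naturality.} For a tt-functor $F\colon\cat C\to\cat D$ we have $F(\cone(f))\simeq\cone(F(f))$ and $\Spc(F)(\mc{Q})=F^{-1}(\mc{Q})$. This gives compatibility with the induced map on $\Spech$ immediately.

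\emph{Order reversal.} If $\mc{P}\subseteq\mc{Q}$, then $\cone(f)\in\mc{P}$ implies $\cone(f)\in\mc{Q}$. Hence every homogeneous element of $\rho(\mc{Q})$ lies in $\rho(\mc{P})$, that is, $\rho(\mc{Q})\subseteq\rho(\mc{P})$. This is the implication established in \cite{BalmerSpSpSp}. The reverse implication cannot be expected in general, since $\rho$ need not be injective, so we will only use this direction.
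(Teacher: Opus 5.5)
Your argument is correct and is essentially the argument of \cite{BalmerSpSpSp}, which the paper only cites. You pass to $\cat{C}\to\cat{C}/\mc{P}$, use $f^{\otimes 2}\otimes\cone(f)=0$ together with primeness of $0$ to see that the non-invertible homogeneous elements of the graded central ring form its unique maximal homogeneous ideal, and pull back. There is one small blemish in the ``closure under multiples'' step: $\cone(f)$ is not a summand of $\cone(gf)$ in general. The right reason is that $(gf)^{-1}g$ is a left inverse of $f$ and graded commutativity supplies a right inverse.

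You are also right to prove only the forward implication of the order statement. The ``if and only if'' would force $\rho$ to be injective, and this already fails for $\mo{Perf}(\mathbb{P}^1_k)$: there $\pi_*\End(\bb{1})=k$, so $\rho$ is constant, while two distinct closed points give incomparable primes. So the paper's formulation overstates Balmer's result.
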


Equipped with a good notion of geometry for tt-categories, the familiar words and concepts of algebraic geometry, or at least some of them, can be adapted to the world of tensor triangular geometry.  A particularly important notion for the present paper is what it means to be local, that is, have the property that the spectrum has a unique closed point.  Pondering the definitions (\cite{BalmerSpSpSp}), closed points of the space $\Spc(\cat{C})$ correspond to minimal prime ideals in $\cat{C}$, and one learns that $\mo{Spc}(\cat{C})$ has a unique closed point if and only if the ideal $(0)$ is prime.  Collecting this into a definition, and rephrasing what it means for $(0)$ to be prime, we recall:

\begin{definition}  A tt-category $\cat{C}$ is said to be $\textit{local}$ is for all objects $X,Y\in\cat{C}$, $X\otimes Y\simeq 0$ implies that $X\simeq 0$ or $Y\simeq 0$.
\end{definition}

Another important player in this paper is the notion of $\otimes$-nilpotent maps.

\begin{definition}  A map $f\colon x\to y$ in a tt-category is said to be \textit{$\otimes$-nilpotent} if there exists some $n\geq 0$ with $f^{\otimes n}\simeq 0$.
\end{definition}

\begin{lemma}\label{lemma:omnibusnil}
Let $\cat{C}$ be a tt-category. 
The class of $\otimes$-nilpotent morphisms in $\cat{C}$ is closed under:
\begin{itemize}
\item Tensoring with any object; 
\item Retracts; 
\item Dualization.
\end{itemize}
Furthermore, if $F\colon \cat{C}\to \cat{D}$ is a tt-functor, it sends $\otimes$-nilpotent morphisms to $\otimes$-nilpotent morphisms. 
\end{lemma}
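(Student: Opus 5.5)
Each closure property comes straight from the symmetric monoidal structure, and the last clause from the fact that a tt-functor is strong monoidal. Throughout, suppose $f\colon x\to y$ satisfies $f^{\otimes n}\simeq 0$.

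\emph{Tensoring.} Let $z$ be any object. Using the symmetry isomorphisms, we identify
\[
(f\otimes z)^{\otimes n}\cong f^{\otimes n}\otimes z^{\otimes n},
\]
up to composition with isomorphisms on both sides. Here $z^{\otimes n}$ abbreviates $\id_{z^{\otimes n}}$. The right-hand side is zero because $-\otimes w$ is additive and so sends zero maps to zero maps. Hence $f\otimes z$ is $\otimes$-nilpotent.

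\emph{Retracts.} Suppose $g\colon x'\to y'$ is a retract of $f$ in the arrow category. This means there are maps $i\colon x'\to x$, $r\colon x\to x'$, $i'\colon y'\to y$ and $r'\colon y\to y'$ with $ri=\id$, $r'i'=\id$ and $g=r' f i$. By functoriality of $\otimes$,
\[
g^{\otimes n}=r'^{\otimes n}\circ f^{\otimes n}\circ i^{\otimes n}=0.
\]
The same argument shows more generally that any map factoring through a $\otimes$-nilpotent map, on either side, is $\otimes$-nilpotent. In other words, $\otimes$-nilpotent maps form a two-sided ideal of morphisms.

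\emph{Dualization.} Recall our standing convention that tt-categories are rigid. Dualization $(-)^\vee$ is then a strong symmetric monoidal, contravariant, additive functor. It gives a natural isomorphism $(f^\vee)^{\otimes n}\cong (f^{\otimes n})^\vee$, compatible with the canonical isomorphisms $(x^{\otimes n})^\vee\cong (x^\vee)^{\otimes n}$. Since $(f^{\otimes n})^\vee=0^\vee=0$, the map $f^\vee$ is $\otimes$-nilpotent.

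\emph{Functoriality.} Let $F$ be a tt-functor. It is additive and strong monoidal, so
\[
F(f)^{\otimes n}\cong F(f^{\otimes n})=F(0)=0,
\]
up to conjugation by the monoidal structure isomorphisms.

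No step is a genuine obstacle. The only point needing care is that the identifications above are isomorphisms in the arrow category rather than equalities. Zero-ness of a morphism is invariant under pre- and post-composition with isomorphisms, so this causes no problem.
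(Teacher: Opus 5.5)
Your proof is correct and follows the paper's argument item by item: $(f\otimes z)^{\otimes n}\simeq f^{\otimes n}\otimes z^{\otimes n}$, tensor powers respecting retracts, duality commuting with $\otimes$, and $F(f)^{\otimes n}\simeq F(f^{\otimes n})$. Your explicit factorization $g=r'\circ f\circ i$ is a concrete form of the paper's observation that retracts of the zero map are zero. It also yields the slightly stronger fact that $\otimes$-nilpotent maps form a two-sided ideal.
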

\begin{proof}
The first item follows from the fact that $(f\otimes z)^{\otimes n}\simeq f^{\otimes n}\otimes z^{\otimes n}$. The second item follows from the fact that if $f_0$ is a retract of $f_1$, then $f_0^{\otimes n}$ is a retract of $f_1^{\otimes n}$, and retracts of the $0$ morphism are $0$. The third item follows from the fact that dualization commutes with tensor products and the dual of the $0$ morphism is the $0$ morphism. The last fact is also clear from the equivalence $F(f)^{\otimes n}\simeq F(f^{\otimes n})$. 
\end{proof}

\begin{definition}\label{def:tensorfaithful}
An object $x\in\cat{C}$ in a tt-category $\cat{C}$ is said to be \textit{$\otimes$-faithful} if $x\otimes-$ is conservative on morphisms, that is: if $f\colon y\to z$ is a morphism in $\cat{C}$ with $x\otimes f\simeq 0$, then $f\simeq 0$.
\end{definition}

We will make use of the following elementary observation:

\begin{lemma}\label{lem:equivtensorfaithful}
An object $x$ in a tt-category $\cat{C}$ is $\otimes$-faithful if and only if the co-evaluation morphism \[\mo{coev}_x\colon\bb{1}\to x\otimes x^{\vee}\] is split injective.  In particular, if every non-zero object of $\cat{C}$ is $\otimes$-faithful, then the Balmer spectrum of $\cat{C}$ is a single point.
\end{lemma}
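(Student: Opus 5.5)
The plan is to prove the equivalence first and then derive the consequence for the spectrum from it. Both rely on rigidity, that is, on the adjunction $\Hom(x\otimes y,z)\cong\Hom(y,x^\vee\otimes z)$ together with the triangle identities.

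\emph{Split injectivity implies $\otimes$-faithfulness.} Suppose $r\colon x\otimes x^\vee\to\bb{1}$ satisfies $r\circ\mo{coev}_x=\id_{\bb{1}}$, and let $f\colon y\to z$ satisfy $x\otimes f\simeq 0$. Then also $x\otimes x^\vee\otimes f\simeq 0$, after permuting factors by the symmetry. By naturality of $\mo{coev}_x\otimes-$, the morphism $f$ factors as
\[
y\xrightarrow{\mo{coev}_x\otimes y} x\otimes x^\vee\otimes y\xrightarrow{x\otimes x^\vee\otimes f} x\otimes x^\vee\otimes z\xrightarrow{r\otimes z} z .
\]
The middle map is zero, so $f\simeq 0$.

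\emph{$\otimes$-faithfulness implies split injectivity.} Complete $\mo{coev}_x$ to an exact triangle $\bb{1}\xrightarrow{\mo{coev}_x}x\otimes x^\vee\to c\xrightarrow{\delta}\Sigma\bb{1}$. It is enough to show $\delta\simeq 0$, because then the triangle splits and $\mo{coev}_x$ is split injective. By faithfulness, $\delta\simeq0$ follows once $x\otimes\delta\simeq 0$. This in turn follows once $x\otimes\mo{coev}_x$ is split injective, since tensoring with $x$ preserves exact triangles. Now $x\otimes\mo{coev}_x\colon x\to x\otimes x\otimes x^\vee$ is split by $\mo{ev}_x\otimes x$, after a symmetry isomorphism. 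This is exactly the triangle identity $(\mo{ev}\otimes\id)\circ(\id\otimes\mo{coev})=\id_x$. The only care needed in this step is to get the symmetries and the order of factors in the triangle identity right, so that the zigzag composite really is the identity of $x$. I expect this bookkeeping to be the main (mild) obstacle.

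\emph{The consequence for the spectrum.} Assume every nonzero object is $\otimes$-faithful. First, $\cat C$ is local. Indeed, if $x\otimes y\simeq 0$ with $x\neq 0$, then $x\otimes\id_y=0$, so faithfulness gives $\id_y\simeq 0$ and hence $y\simeq 0$. Thus $(0)$ is prime, and it is contained in every prime.

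Next, every prime $\cP$ equals $(0)$. Let $x\in\cP$ be nonzero. By the equivalence, $\bb{1}$ is a retract of $x\otimes x^\vee$, which lies in $\cP$ because $\cP$ is a $\otimes$-ideal. Since $\cP$ is thick, $\bb{1}\in\cP$, which contradicts $\cP$ being proper. Hence $\Spc(\cat C)=\{(0)\}$ is a single point. This assumes $\cat C\neq 0$; if $\cat C=0$ the spectrum is empty, and we take the statement to implicitly exclude this degenerate case.
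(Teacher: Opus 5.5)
Your proposal is correct and follows essentially the same route as the paper. The forward direction uses that $\bb{1}$ is a retract of $x\otimes x^{\vee}$. The converse kills the boundary map of the triangle on $\mo{coev}_x$ (the paper uses the fiber map $\mo{fib}(\mo{coev}_x)\to\bb{1}$ instead) because $x\otimes\mo{coev}_x$ is split, and then applies faithfulness. The spectrum claim follows because every nonzero object generates the unit ideal; your explicit check that $(0)$ is prime and your caveat about the degenerate case $\cat{C}=0$ make the paper's terser argument more complete.
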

\begin{proof}
If $\bb{1}$ splits off of $x\otimes x^{\vee}$, then $x\otimes x^{\vee}\otimes -$ is conservative on morphisms, and the same must then be true of the functor $x\otimes -$.  Conversely, if $x\otimes -$ is $\otimes$-faithful, then since $x\otimes \mo{coev}_x$ is split injective, the map $\mo{fib}(\mo{coev}_x)\to \bb{1}$ tensors with $x$ to zero, and is thus itself zero, which implies that $\mo{coev}_x$ is split injective.

The final claim follows from the observation that every $\otimes$-faithful object generates the unit ideal, which then must be true of every nonzero object.
\end{proof}

\subsection{The homological spectrum}  The first Balmer spectrum computation, as with many more to follow, was carried out by first proving an abstract nilpotence theorem, then using this to produce a classification of the thick $\otimes$-ideals.  Following the idea that there should be a general interplay between tensor triangular geometry and abstract nilpotence theorems and building on earlier work with Krause and Stevenson \cite{BKS2019}, Balmer in \cite[Remark~3.4]{balmernilpotence} found a new way to attach a topological space to a tt-category.  We recall the construction now.

\begin{definition}
Let $\cat{C}$ be a tt-category.  The category $\mo{mod}(\cat{C})$ is the full subcategory of additive presheaves on $\cat{C}$ valued in abelian groups 
    \[
        \mo{Fun}^{\oplus}(\cat{C}^{op},\mo{Ab})
    \]
generated by cokernels of maps $\yo(x)\to \yo(y)$ for $x,y\in\cat{C}$.
\end{definition}

The category $\mo{mod}(\cat{C})$ is an abelian category, which inherits a symmetric monoidal structure through Day convolution in such a way that the Yoneda embedding 
    \[
        \yo\colon\cat{C}\to \mo{mod}(\cat{C})
    \]
is symmetric monoidal.  In the absence of a general theory of ``residue fields'' for tt-categories, we turn our attention to nice ``abelian residue fields'' instead. Explicitly, Balmer defines a homological residue field of a tt-category $\cat{C}$ as a quotient of $\mo{mod}(\cat{C})$ by a maximal Serre $\otimes$-ideal.  This leads us to:

\begin{definition}[{\cite[Remark 3.4]{balmernilpotence}}]  The \textit{homological spectrum} of a tt-category $\cat{C}$ is defined as a set by  
    \[
        \Spc^{h}(\cat{C})\coloneqq\{\text{maximal Serre }\otimes\text{-ideals in }\mo{mod}(\cat{C})\}.
    \]
This becomes a topological space by defining closed subsets to be generated by those of the form 
    \[
        \supp^h(x)\coloneqq\{\mf{m}\in\Spc^h(\cat{C})\colon\yo(x)\notin\mf{m}\}.
    \]
At any given point $\mf{m}$ in the homological spectrum, the \emph{homological residue field} at $\mf{m}$ is defined to be the composite functor $\cat{C}\to \mo{mod}(\cat{C})/\mf{m}$ to the quotient by the maximal Serre $\otimes$-ideal $\mf{m}$.
\end{definition}
\subsection{Weak rings}
\begin{definition}
Let $\cat{C}$ be a tt-category, and consider its big category $\mo{Ind}(\cat{C})$.  A \textit{weak ring} in $\mo{Ind}(\cat{C})$ is a pointed object $f\colon\bb{1}\to R$ which becomes split after tensoring with $R$, that it, such that
\[R\otimes f\colon R\to R\otimes R\] is split injective.
\end{definition}

\begin{remark}\label{rem:weakringprimes}  Each point of the homological spectrum $\mf{m}$ gives rise to a unique weak ring object $E_{\mf{m}}$ in the big category $\mo{Ind}(\cat{C})$ corresponding to it, such that the kernel of tensoring with $E_{\mf{m}}$ is exactly the kernel of the composite $\mo{Ind}(\cat{C})\to \mo{Ind}\left(\mo{mod}(\cat{C})\right)\to \mo{Ind}\left(\mo{mod}(\cat{C})/\mf{m}\right)$ \cite[Proposition 3.9]{BKS2019}.  In \cite{BalmerCameron2021}, Balmer--Cameron demonstrate techniques to compute a great number of these weak rings in practice.
\end{remark}

The following property of the weak rings attached to homological primes will be useful later.

\begin{proposition}[{\cite[Proposition 5.3]{balmernilpotence}}]\label{prop:weakRingtenszero}
Let $\cat{C}$ be a tt-category, and consider  homological primes $\mf{m}_1,\mf{m_2}\in \Spc^{h}(\cat{C})$ with associated weak rings $E_{\mf{m}_1},E_{\mf{m}_2}$.  Then $E_{\mf{m}_1}\otimes E_{\mf{m}_2}\simeq 0$ if and only if $\mf{m}_1 \neq \mf{m}_2$.
\end{proposition}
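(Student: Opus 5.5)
The plan is to prove the two implications separately, translating vanishing of $E_{\mf{m}_1}\otimes E_{\mf{m}_2}$ into a statement about Serre $\otimes$-ideals of $\mo{mod}(\cat{C})$, where maximality can be used.

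\emph{The case $\mf{m}_1=\mf{m}_2=\mf{m}$.} Here I use the weak ring structure $f\colon\bb{1}\to E_{\mf{m}}$. Since $E_{\mf{m}}\otimes f\colon E_{\mf{m}}\to E_{\mf{m}}\otimes E_{\mf{m}}$ is split injective, $E_{\mf{m}}$ is a retract of $E_{\mf{m}}\otimes E_{\mf{m}}$. So it suffices to show $E_{\mf{m}}\neq 0$. By \Cref{rem:weakringprimes}, the kernel of $E_{\mf{m}}\otimes-$ is the kernel of the quotient functor to $\mo{Ind}(\mo{mod}(\cat{C})/\mf{m})$. The unit $\bb{1}$ is not in that kernel, because $\mf{m}$ is a proper Serre ideal and hence $\yo(\bb{1})\notin\mf{m}$. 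Therefore $E_{\mf{m}}\simeq E_{\mf{m}}\otimes\bb{1}\neq 0$.

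\emph{The case $\mf{m}_1\neq\mf{m}_2$.} I argue by contraposition: assume $E\coloneqq E_{\mf{m}_1}\otimes E_{\mf{m}_2}\neq 0$ and deduce $\mf{m}_1=\mf{m}_2$.
\begin{enumerate}
\item For an object $F\in\mo{Ind}(\cat{C})$, consider the restricted Yoneda functor $\yo(F)$ in the big module category $\mo{Mod}(\cat{C})$. It is a filtered colimit of representables, hence flat for Day convolution. Consequently
\[
\cat{S}_F\coloneqq\SET{M\in\mo{mod}(\cat{C})}{\yo(F)\otimes M=0}
\]
is a Serre $\otimes$-ideal: exactness of $\yo(F)\otimes-$ gives closure under subobjects, quotients and extensions, and the ideal property is clear.
\item By \Cref{rem:weakringprimes}, transported to $\mo{mod}(\cat{C})$ (this is how $E_{\mf{m}}$ is constructed in \cite{BKS2019}), we have $\cat{S}_{E_{\mf{m}_i}}=\mf{m}_i$. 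I expect this identification of the module-level kernel with $\mf{m}$ to be the main point to check carefully, rather than only the kernel on objects of $\mo{Ind}(\cat{C})$.
\item Since $\yo(E)\simeq\yo(E_{\mf{m}_1})\otimes\yo(E_{\mf{m}_2})$, we get $\cat{S}_{E}\supseteq\mf{m}_1\cup\mf{m}_2$.
\item $\cat{S}_E$ is proper: $\yo(\bb{1})\otimes\yo(E)\simeq\yo(E)$, which is nonzero because $E\neq 0$.
\end{enumerate}
By maximality of each $\mf{m}_i$, it follows that $\mf{m}_1=\cat{S}_E=\mf{m}_2$.
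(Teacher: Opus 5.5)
Your proof is correct and is essentially the standard argument behind the cited result; the paper itself gives no proof and only quotes \cite[Proposition 5.3]{balmernilpotence}. The case $\mf m_1=\mf m_2$ is the retract argument for a nonzero weak ring, and otherwise the Serre $\otimes$-ideal of $\mo{mod}(\cat C)$ killed by the flat module of $E_{\mf m_1}\otimes E_{\mf m_2}$ contains both maximal primes, so it contains the unit and the product vanishes.

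For the point you flag in (b), only the inclusion $\mf m\subseteq\cat S_{E_{\mf m}}$ is used, and it holds. Embed $M\in\mf m$ in the module represented by some $y\in\cat C$. Then $M$ tensored with the module of $E_{\mf m}$ is a filtered colimit of objects of $\mf m$, and it sits inside the module of $E_{\mf m}\otimes y$. That module receives no nonzero maps from $\vec{\mf m}$, because $y$ is rigid and the module of $E_{\mf m}$ is, by construction, the right-adjoint image of an injective object of $\mo{Mod}(\cat C)/\vec{\mf m}$.
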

There are two natural notions of homological support attached to big objects in a given tt-category.
\begin{definition}
Let $\cat C$ be a tt-category, which has associated big category ``$\mo{Ind}(\cat C)$'' (say, arising as the homotopy category the Ind-category of a rigid $\bb{E}_n$-2-ring with homotopy category $C$), and let $t\in \mo{Ind}(\cat C)$  a big object.  Then we define
\begin{itemize}
	\item  The \textit{naive homological support} of $t$ is 
	\[\supp^{n}(t)\coloneqq \{\mf{m}\in\Spc^h(\cat C)\colon t\otimes E_{\mf{m}}\neq 0\}.\]
	\item  The \textit{genuine homological support} of $t$ is 
	\[\supp^{h}(t)\coloneqq \{\mf{m}\in\Spc^h(\cat C)\colon \mo{hom}(t,E_{\mf{m}})\neq 0\}.\]
\end{itemize}
\end{definition}
We record the following properties of these supports, which will be useful later.
\begin{proposition}[{\cite[Theorem 4.5]{balmerhomological}}]\label{prop:tensorproductprop}
The genuine homological support satisfies the so-called \textit{tensor product property}.  Namely, 
\[\supph(t_1\otimes t_2)=\supph(t_1)\cap \supph(t_2).\]
\end{proposition}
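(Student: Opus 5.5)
The plan is to reduce the statement to a tensor-product property inside the homological residue field at each $\mf{m}$, where it holds because maximality of $\mf{m}$ makes the residue category behave like a field. Fix $\mf{m}\in\Spc^h(\cat{C})$ and write $\cat{A}_{\mf{m}}\coloneqq\mo{Ind}\left(\mo{mod}(\cat{C})/\mf{m}\right)$. Let $\hat h_{\mf{m}}\colon\mo{Ind}(\cat{C})\to\cat{A}_{\mf{m}}$ be the coproduct-preserving extension of the homological residue field. It is a homological tensor functor, since Day convolution makes $\yo$ monoidal and the localization $\mo{mod}(\cat C)\to\mo{mod}(\cat C)/\mf{m}$ is monoidal because $\mf{m}$ is a $\otimes$-ideal. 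The proof then has three steps.

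\emph{Step 1 (the main obstacle): detection.} I aim to show, for every big object $t$,
\[
\mo{hom}(t,E_{\mf{m}})\neq 0 \iff \hat h_{\mf{m}}(t)\neq 0 .
\]
I would use the description of $E_{\mf{m}}$ behind \Cref{rem:weakringprimes}: $\hat h_{\mf{m}}(E_{\mf{m}})$ is (a summand of) an injective hull $\bar E$ of the unit in the locally coherent category $\cat{A}_{\mf{m}}$. Together with restricted Yoneda, this gives a natural isomorphism $\mo{Hom}_{\mo{Ind}(\cat C)}(t,E_{\mf{m}})\cong\mo{Hom}_{\cat{A}_{\mf{m}}}(\hat h_{\mf{m}}(t),\bar E)$. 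It then suffices to show that $\bar E$ is a cogenerator of $\cat{A}_{\mf{m}}$. For this I would test against the generators $\hat h(x)$, $x\in\cat C$. Rigidity gives
\[
\mo{Hom}(\hat h(x)\otimes M,\bar E)\cong\mo{Hom}(M,\hat h(x^\vee)\otimes\bar E).
\]
One then needs that a nonzero object of $\cat A_{\mf m}$ admits a nonzero map to $\bar E$ after twisting by some $\hat h(x)$, which should follow from Step 3. Getting the injectivity and cogeneration statements exactly right is the delicate part, and here I would follow Balmer's arguments on pure-injectives.

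\emph{Step 2: monoidality.} Since $\hat h_{\mf{m}}$ is monoidal, $\hat h_{\mf{m}}(t_1\otimes t_2)\cong\hat h_{\mf{m}}(t_1)\otimes\hat h_{\mf{m}}(t_2)$. By Step 1, the tensor product property therefore reduces to the following claim: in $\cat{A}_{\mf{m}}$, $M\otimes N\neq 0$ whenever $M,N\neq0$. The inclusion $\supph(t_1\otimes t_2)\subseteq\supph(t_1)\cap\supph(t_2)$ is already immediate from this reformulation.

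\emph{Step 3: the residue category is a field.} For $x\in\cat C$ with $\yo(x)\notin\mf{m}$, I use maximality of $\mf{m}$: the Serre $\otimes$-ideal generated by $\mf{m}$ and $\yo(x)$ is all of $\mo{mod}(\cat C)$, so it contains the unit. Combined with rigidity, this should force $\yo(x)\otimes-$ to be faithful modulo $\mf m$, in the spirit of \Cref{lem:equivtensorfaithful}. Since $\cat A_{\mf m}$ is locally coherent, every nonzero object receives a nonzero map from some image of a nonzero $\hat h(x)$. Passing to colimits with $\otimes$ exact in each variable on flat generators then gives $M\otimes N\neq0$ for nonzero $M$ and $N$. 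Combining Steps 1–3 yields $\supph(t_1\otimes t_2)=\supph(t_1)\cap\supph(t_2)$.
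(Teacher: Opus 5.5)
\textbf{The gap is Step 1.} The equivalence $\mo{hom}(t,E_{\mf m})\neq 0\iff\hat h_{\mf m}(t)\neq 0$ is not something you can establish along the lines you sketch. The problem depends on how $\mo{hom}$ is read, and it fails either way.

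\emph{Hom groups.} Your isomorphism $\mo{Hom}_{\mo{Ind}(\cat C)}(t,E_{\mf m})\cong\mo{Hom}_{\cat A_{\mf m}}(\hat h_{\mf m}(t),\bar E)$ reads it this way. Then the claim is false and $\bar E$ is not a cogenerator. Take $\cat C=\A_\eta$, which is semisimple with simple unit by \Cref{thm:A1etasemisimple}. Here $\mo{mod}(\cat C)\simeq\ho(\cat C)$ has no nonzero proper Serre $\otimes$-ideals, because the unit is a summand of $x\otimes x^\vee$ for every $x\neq 0$ (\Cref{lem:ssimpleimpllocal}). So $\Spc^h(\cat C)$ is a point with $E_{\mf m}=\bb{1}$. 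Yet $\mo{Hom}(X,\Sigma^k\bb{1})=0$ for all $k$ by \Cref{proposition:omnibusCob}(b), while $\hat h(X)\neq 0$. With Hom groups even the displayed formula fails: for $t_1=X$ and $t_2=X^\vee$, the map $\mathrm{ev}\colon X\otimes X^\vee\to\bb{1}$ is nonzero. So $\mo{hom}$ must be the internal hom, as in Balmer's $[t,E_{\mathcal B}]$.

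\emph{Internal hom.} Granting your identification of maps into $E_{\mf m}$ with maps into $\bar E$, your rigidity twist is exactly the right computation: $\underline{\mo{hom}}(t,E_{\mf m})\neq 0$ if and only if $\hat h_{\mf m}(t)$ has a nonzero map to $\hat h(x)\otimes\bar E$ for some compact $x$. Step 1 then says these twisted injectives detect every nonzero $\hat h_{\mf m}(t)$. By \Cref{rem:weakringprimes}, $\hat h_{\mf m}(t)\neq 0$ if and only if $t\otimes E_{\mf m}\neq 0$. So Step 1 is precisely the assertion $\supph(t)=\supp^n(t)$ for all big $t$. That is the open question from \cite{BHSZ2024pp}, which the paper only settles under an enough-tt-fields hypothesis (\Cref{cor:NaiveEqualsGenuineEnoughTT}, \Cref{cor:Qttresidues}). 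In general one only knows $\supph\subseteq\supp^n$, with equality for weak rings (\Cref{prop:naiveequalsgenuine}). Your remark that this ``should follow from Step 3'' has no mechanism: non-vanishing of tensor products in $\cat A_{\mf m}$ says nothing about nonzero maps from $\hat h_{\mf m}(t)$ into injectives.

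\textbf{Consequences.} Steps 2 and 3 can at best give the tensor formula for the naive support $\supp^n$. Even there, passing in Step 3 from images of the generators $\hat h(x)$ to arbitrary $M,N$ needs more than exactness of $\hat h(x)\otimes-$. The statement has to be proved for $\supph$ directly. The paper gives no proof of its own and cites \cite[Theorem 4.5]{balmerhomological}.

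The inclusion $\subseteq$ needs none of your machinery. The class $\{t:\underline{\mo{hom}}(t,E_{\mf m})=0\}$ is localizing, and it is stable under $x\otimes-$ for compact $x$ because $\underline{\mo{hom}}(x\otimes t,E_{\mf m})\simeq x^\vee\otimes\underline{\mo{hom}}(t,E_{\mf m})$. Hence it is a $\otimes$-ideal. The whole content is $\supseteq$: from nonzero maps $f_i\colon t_i\to x_i\otimes E_{\mf m}$ one must produce a nonzero map from $t_1\otimes t_2$ into some twist of $E_{\mf m}$. For instance, after untwisting one can try to show that $\mu\circ(f_1\otimes f_2)$ is nonzero, where $\mu$ is the weak-ring multiplication, using that $\bar E$ is an essential extension of the unit. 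This is a statement about maps into $E_{\mf m}$, not about non-vanishing of $\hat h_{\mf m}(t_1\otimes t_2)$.
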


\begin{proposition}[{\cite[Theorem 4.7]{balmerhomological}}]\label{prop:naiveequalsgenuine}
If $t$ is a weak ring, then there is an equality
\[\supp^n(t)=\supph(t).\]
\end{proposition}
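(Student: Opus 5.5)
The statement is that for a weak ring $t$ with unit $f\colon\bb{1}\to t$, a homological prime $\mf{m}$ has $t\otimes E_{\mf{m}}\neq 0$ if and only if $\mo{hom}(t,E_{\mf{m}})\neq 0$. The plan is to prove the two inclusions $\supp^n(t)\subseteq\supph(t)$ and $\supph(t)\subseteq\supp^n(t)$ separately. Throughout we use two facts: $E_{\mf{m}}$ is itself a weak ring, and tensoring with $E_{\mf{m}}$ kills precisely the kernel of the homological residue functor at $\mf{m}$ (\Cref{rem:weakringprimes}).

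\emph{Inclusion $\supp^n(t)\subseteq\supph(t)$.} Suppose $t\otimes E_{\mf{m}}\neq 0$. Since $t$ is a weak ring, $t\otimes f$ is split by some retraction $\mu\colon t\otimes t\to t$. Tensoring with $E_{\mf{m}}$ shows that $t\otimes E_{\mf{m}}$ is a retract of $t\otimes t\otimes E_{\mf{m}}$, and hence that $\mo{hom}(t,t\otimes E_{\mf{m}})$ is nonzero: it contains the map $f\otimes E_{\mf{m}}$ precomposed appropriately, or more directly the nonzero map $t\to t\otimes E_{\mf{m}}$ coming from $t\otimes$ (unit of $E_{\mf{m}}$). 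To pass from $\mo{hom}(t,t\otimes E_{\mf{m}})\neq 0$ to $\mo{hom}(t,E_{\mf{m}})\neq 0$, the cleanest route is to use that the homological residue field is a field-like object: in the quotient $\mo{mod}(\cat C)/\mf{m}$, every nonzero $E_{\mf{m}}$-module is a direct sum of shifts of $E_{\mf{m}}$ (Balmer's structure result for $E_{\mf{m}}$-modules). Then $t\otimes E_{\mf{m}}$ contains a summand $\Sigma^i E_{\mf{m}}$. Composing the map $t\to t\otimes E_{\mf{m}}$ with projection to that summand gives a map $t\to\Sigma^iE_{\mf{m}}$, and we must check it is nonzero. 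For this, compose further with $E_{\mf{m}}$-linear retractions using the weak-ring splittings.

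\emph{Inclusion $\supph(t)\subseteq\supp^n(t)$.} Suppose $g\colon t\to E_{\mf{m}}$ is nonzero but $t\otimes E_{\mf{m}}=0$. Then $g$ factors as
\[
t\xrightarrow{t\otimes\eta}t\otimes E_{\mf{m}}\xrightarrow{g\otimes E_{\mf{m}}}E_{\mf{m}}\otimes E_{\mf{m}}\xrightarrow{\mu_{\mf{m}}}E_{\mf{m}},
\]
where $\eta$ is the unit of $E_{\mf{m}}$ and $\mu_{\mf{m}}$ is a retraction of $E_{\mf{m}}\otimes\eta$. This factorization holds by naturality, since $\mu_{\mf{m}}\circ(E_{\mf{m}}\otimes\eta)=\id$. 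Because the middle object $t\otimes E_{\mf{m}}$ is $0$, we get $g=0$, a contradiction. This direction works for arbitrary $t$, not only weak rings.

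\emph{Main obstacle.} The main difficulty is the first inclusion: producing a nonzero map $t\to E_{\mf{m}}$ from the nonvanishing of $t\otimes E_{\mf{m}}$. This is the only place where the weak ring structure of $t$ is really used. The first approach to try is the following. The nonzero map $f\otimes E_{\mf{m}}\colon E_{\mf{m}}\to t\otimes E_{\mf{m}}$ is split by $\mu\otimes E_{\mf{m}}$ after identifying $t\otimes E_{\mf{m}}$ as a module over $t$. Using the residue field structure, dualize this splitting into a nonzero functional $t\otimes E_{\mf{m}}\to E_{\mf{m}}$ that restricts to the identity along $f\otimes E_{\mf{m}}$. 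Precomposing with $t\otimes\eta$ then gives the desired map $t\to E_{\mf{m}}$. It is nonzero because its further precomposition with $f$ equals $\eta\neq 0$.
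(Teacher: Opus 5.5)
Your inclusion $\supph(t)\subseteq\supp^n(t)$ is fine. The factorization of $g$ through $t\otimes E_{\mf m}$ is correct and, as you say, needs no hypothesis on $t$.

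The gap is the reverse inclusion, which is the entire content of the statement. None of your sketches actually produces a nonzero map $t\to E_{\mf m}$.
\begin{itemize}
\item The ``structure result'' you invoke, that nonzero $E_{\mf m}$-modules are sums of shifts of $E_{\mf m}$, is not available. $E_{\mf m}$ is only a weak ring, and such a result would make it a field object, which is precisely what is not known in general.
\item Even granting a summand $\Sigma^iE_{\mf m}$ of $t\otimes E_{\mf m}$, nothing forces the component of $t\otimes\eta$ on it to be nonzero.
\item In your final paragraph, $\mu\otimes E_{\mf m}$ is a retraction of $t\otimes f\otimes E_{\mf m}\colon t\otimes E_{\mf m}\to t\otimes t\otimes E_{\mf m}$. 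It is not a retraction of $f\otimes E_{\mf m}\colon E_{\mf m}\to t\otimes E_{\mf m}$.
\end{itemize}
A retraction $r$ of $f\otimes E_{\mf m}$ would finish the proof; your computation $r\circ(t\otimes\eta)\circ f=\eta\neq 0$ is right. But ``dualizing using the residue field structure'' does not construct one, and constructing it is the theorem.

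The paper does not reprove this statement; it cites Balmer. The missing idea is to work in the homological residue field $\bar{\mathcal A}_{\mf m}$. This is the Gabriel quotient of $\mo{Mod}(\cat C)$ by the localizing subcategory generated by $\mf m$, with functor $\bar h\colon\mo{Ind}(\cat C)\to\bar{\mathcal A}_{\mf m}$.
\begin{itemize}
\item In Balmer's construction, $E_{\mf m}$ is the pure-injective with $\bar h(E_{\mf m})=\bar E_{\mf m}$, the injective hull of $\bar h(\bb 1)$. Hence $\mo{hom}(t,E_{\mf m})\cong\Hom(\bar h(t),\bar E_{\mf m})$, while $t\otimes E_{\mf m}\neq 0$ if and only if $\bar h(t)\neq 0$ (\Cref{rem:weakringprimes}).
\item Now show that $\bar h(f)$ is a monomorphism when $\bar h(t)\neq 0$. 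Let $K\subseteq\bar h(\bb 1)$ be its kernel. Then $\bar h(t)\otimes K=0$, because $\bar h(t)$ is flat and $\bar h(t)\otimes\bar h(f)=\bar h(t\otimes f)$ is split mono.
\item The finitely presented objects annihilated by the exact functor $\bar h(t)\otimes-$ form a Serre $\otimes$-ideal of $\mo{mod}(\cat C)/\mf m$. By maximality of $\mf m$ this ideal is zero; otherwise it contains the unit and $\bar h(t)=0$.
\item So every finitely presented subobject of $K$ vanishes. Since $\bar{\mathcal A}_{\mf m}$ is locally coherent and $\bar h(\bb 1)$ is finitely presented, $K$ is the union of its finitely presented subobjects, hence $K=0$.
\item Injectivity of $\bar E_{\mf m}$ then extends $\bar h(\bb 1)\hookrightarrow\bar E_{\mf m}$ along $\bar h(f)$ to a nonzero map $\bar h(t)\to\bar E_{\mf m}$, i.e.\ a nonzero map $t\to E_{\mf m}$.
\end{itemize}
The same reasoning also shows that $f\otimes E_{\mf m}$ does admit a retraction: $\bar h(f)\otimes\bar E_{\mf m}$ is a monomorphism out of an injective, and maps into $E_{\mf m}$ are computed by $\bar h$. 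So the claim in your last paragraph is true, but only via this argument.
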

We will frequently identify $\supph(t)$ with $\supp^{n}(t)$ implicitly when working with weak rings later on.  Finally, we record also the following detection property of the homological support for weak rings.
\begin{proposition}[{\cite[Theorem 4.7]{balmerhomological}}]\label{prop:suppdetection}
If $w\in \mo{Ind}(\cat C)$ is a weak ring with homological support $\supph(w)=\emptyset$, then $w=0$. 
\end{proposition}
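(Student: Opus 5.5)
The plan is to reduce the statement to the tensor product property together with the fact that distinct homological primes have orthogonal weak rings. Let $w$ be a weak ring with $\supph(w)=\emptyset$. By \Cref{prop:naiveequalsgenuine}, this says $w\otimes E_{\mf{m}}=0$ for every $\mf{m}\in\Spc^h(\cat C)$. We want to conclude that $w=0$. Since $w$ is a weak ring, $w\otimes(\bb{1}\to w)$ is split injective. Hence $w$ is a retract of $w\otimes w$, and it suffices to show $w\otimes w=0$, or more directly that the unit map $\eta\colon\bb{1}\to w$ is zero. Indeed, if $\eta=0$ then $w\otimes\eta=0$ is split injective, which forces $w=0$.

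The key step is to argue by contradiction using homological residue fields. Suppose $w\neq 0$. Consider the Serre $\otimes$-ideal $\cat{B}_w\subseteq\mo{mod}(\cat C)$ consisting of those $M$ with $\hat{w}\otimes M=0$, where $\hat{w}$ is the (flat) image of $w$ in $\mo{Ind}(\mo{mod}(\cat C))$. It is a Serre subcategory because $\hat w\otimes-$ is exact, the Yoneda image of a big object being flat. It is a $\otimes$-ideal by construction. It is proper, since $\yo(\bb{1})\notin\cat B_w$ as $w\neq 0$. Since $\mo{mod}(\cat C)$ is essentially small and the ideal is generated by a nonzero unit condition, Zorn's lemma gives a maximal Serre $\otimes$-ideal $\mf{m}\supseteq \cat{B}_w$. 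The remaining task is to show $w\otimes E_{\mf{m}}\neq 0$, contradicting the hypothesis.

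For this I would use the weak ring property of $w$ to see that $\hat w$ is a weak ring in $\mo{Ind}(\mo{mod}(\cat C))$, whose annihilator is exactly the localizing Serre ideal generated by $\cat{B}_w$. This is the analogue of \cite[Proposition 3.9]{BKS2019} for arbitrary weak rings: the kernel of $\hat w\otimes-$ is a localizing Serre $\otimes$-ideal, and the splitting $w\to w\otimes w$ shows it coincides with the ideal generated by the finitely presented objects it contains. From the inclusion of kernels $\Ker(\hat w\otimes-)\subseteq \Ker(\hat E_{\mf m}\otimes-)$ one expects that $\hat w$ maps nontrivially to the residue field quotient at $\mf{m}$. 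In particular, $\hat w\otimes\hat E_{\mf{m}}$ is nonzero, since it equals the image of $\hat w$ in $\mo{Ind}(\mo{mod}(\cat C)/\mf m)$ tensored with a faithful object there. By faithfulness of $\cat C\to\mo{mod}(\cat C)$ on objects, this gives $w\otimes E_{\mf m}\neq0$.

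I expect the main obstacle to be the identification of $\Ker(\hat w\otimes -)$ on big objects with the localizing ideal generated by its finitely presented part. This is where the weak ring splitting must be used carefully, via Balmer's argument that such kernels are determined on $\mo{mod}(\cat C)$.
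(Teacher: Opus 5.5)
Your strategy is sound, but it is not the route behind the citation. The paper proves nothing here and quotes \cite{balmerhomological}, where detection for weak rings is obtained from Balmer's nilpotence theorem for maps with compact source and big target \cite{balmernilpotence}. That argument runs as follows: if $w\otimes E_{\mf m}=0$ for all $\mf m$, then by \Cref{rem:weakringprimes} the unit $\eta_w\colon\bb 1\to w$ vanishes in every homological residue field, hence is $\otimes$-nilpotent, and a weak ring with $\otimes$-nilpotent unit is zero.

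You instead take a maximal $\mf m$ containing the annihilator $\cat B_w$ of $\hat w$ and show directly that $w\otimes E_{\mf m}\neq 0$. In effect you reprove the special case of the nilpotence theorem that is needed. You exploit the fact that for a weak ring the annihilator of $\hat w$ is already a proper Serre $\otimes$-ideal, so a single application of Zorn's lemma suffices. The cost is that the decisive step has to be done by hand, and in your write-up it is not done. ``One expects'' is not an argument, and ``tensored with a faithful object there'' does not refer to anything you have established.

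The step closes quickly, and you do not need the full identification $\Ker(\hat w\otimes -)=\varinjlim\cat B_w$ that you flag as the main obstacle. Suppose $\cat B_w\subseteq\mf m$ but $w\otimes E=0$, where $E=E_{\mf m}=\varinjlim_\alpha c_\alpha$ with $c_\alpha\in\cat C$.
\begin{enumerate}
\item By compactness of $\bb 1$, the unit $\eta_E$ factors through some $g\colon\bb 1\to c_\alpha$. Since $[\bb 1,w\otimes E]=\varinjlim_\beta[\bb 1,w\otimes c_\beta]=0$, after enlarging $\alpha$ you may assume $\eta_w\otimes g=0$.
\item Let $\mu$ be a retraction of $w\otimes\eta_w$. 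Then $w\otimes g=(\mu\otimes c_\alpha)\circ(w\otimes(\eta_w\otimes g))=0$.
\item By flatness, $\hat w\otimes\mo{im}(\hat g)=\mo{im}(\hat w\otimes\hat g)=0$, so $\mo{im}(\hat g)\in\cat B_w\subseteq\mf m$.
\item Thus $\hat g$, and with it $\hat\eta_E$, dies in $\Ind(\mo{mod}(\cat C)/\mf m)$. Since $E\otimes\eta_E$ is split injective, $E$ itself dies there.
\item Hence $E\otimes E=0$ by \Cref{rem:weakringprimes}, so $E=0$. This is absurd, since $\hat{\bb 1}\notin\mf m$.
\end{enumerate}
In your own formulation, the point is this: once $\Ker(\hat w\otimes-)\subseteq\Ker(\hat E\otimes-)$ holds on big objects, apply it to $\hat E$ itself. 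The same compactness-plus-splitting argument, run on each finitely presented $N_i\to N$, also proves your general finite-type claim, but that is more than the statement requires.
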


\subsection{Nilpotence detection}
Balmer investigated the connection of this homological spectrum to nilpotence theorems, proving that in some sense, proving abstract nilpotence theorems is equivalent to computing the homological spectrum of a category.

\begin{proposition}[{\cite[Corollary 4.7, Theorem 5.4]{balmernilpotence}}] Given a map $f\colon x\to Y$ for $x\in\cat{C}$ and $Y\in\mo{Ind}(\cat{C})$, if $f$ vanishes in every homological residue field of $\cat{C}$, then $f$ is $\otimes$-nilpotent.  Furthermore, given a collection $I=\{\mf{m}\}\subseteq \Spc^h(\cat{C})$ of points in the homological spectrum with the property that $f$ vanishing in the residue field at $\mf{m}$ for all $\mf{m}\in I$ implies $f$ is $\otimes$-nilpotent, then $I=\Spc^h(\cat{C})$ is the entire space.
\end{proposition}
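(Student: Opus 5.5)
The plan follows Balmer's argument from \cite{balmernilpotence}, organized around the weak rings $E_{\mf m}$ of \Cref{rem:weakringprimes}. The key reformulation is this. For each $\mf m$, the homological residue field kills exactly the objects killed by $E_{\mf m}\otimes-$. Hence a morphism $f\colon x\to Y$ vanishes in the residue field at $\mf m$ if and only if $E_{\mf m}\otimes f=0$ in $\mo{Ind}(\cat C)$. For the forward direction, I use rigidity to replace $f$ by its adjoint $\tilde f\colon \bb 1\to x^\vee\otimes Y$. By \Cref{lemma:omnibusnil} (tensoring and retracts), $f$ is $\otimes$-nilpotent iff $\tilde f$ is. So I may assume $x=\bb 1$.

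\textbf{First assertion.} Let $f\colon\bb 1\to Y$. I form the cofiber sequence $Z\xrightarrow{g}\bb 1\xrightarrow{f} Y$. Then $f^{\otimes n}=0$ if and only if $g^{\otimes n}\colon Z^{\otimes n}\to\bb 1$ is split-surjective up to a standard rearrangement. Equivalently, $f$ is $\otimes$-nilpotent iff the telescope/colimit $W=\colim_n Y^{\otimes n}$ along the maps $f\otimes Y^{\otimes n}$ vanishes.

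The object $W$ is a weak ring: its unit is $\bb 1\to W$, and $W\otimes W\simeq W$ via the diagonal cofinality argument. Now suppose $E_{\mf m}\otimes f=0$ for all $\mf m$. Then for each $\mf m$ the transition maps of the colimit become zero after tensoring with $E_{\mf m}$, so $E_{\mf m}\otimes W=0$. Thus $\supp^n(W)=\emptyset$. Since $W$ is a weak ring, \Cref{prop:naiveequalsgenuine} gives $\supph(W)=\emptyset$, and \Cref{prop:suppdetection} gives $W=0$. Because $\bb 1$ is compact, the unit map $\bb 1\to W=\colim Y^{\otimes n}$ being zero factors through a finite stage. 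Hence $f^{\otimes n}=0$ for some $n$.

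\textbf{Second assertion.} Suppose $I\subsetneq\Spc^h(\cat C)$, and pick $\mf m_0\notin I$. I want a morphism that vanishes at every $\mf m\in I$ but is not $\otimes$-nilpotent. I take $f\colon\bb 1\to E_{\mf m_0}$, the unit of the weak ring. For $\mf m\in I$, \Cref{prop:weakRingtenszero} gives $E_{\mf m}\otimes E_{\mf m_0}=0$, so $E_{\mf m}\otimes f=0$. On the other hand, $f^{\otimes n}\colon\bb1\to E_{\mf m_0}^{\otimes n}$ is never zero. Indeed, using the splitting $E_{\mf m_0}\to E_{\mf m_0}^{\otimes 2}$ repeatedly, $E_{\mf m_0}\otimes f^{\otimes n}$ is a split monomorphism out of $E_{\mf m_0}\neq 0$. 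This requires $E_{\mf m_0}\ne 0$, which follows since its support contains $\mf m_0$ (by \Cref{prop:weakRingtenszero} with $\mf m_1=\mf m_2$).

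\textbf{Main obstacle.} The delicate step is the first assertion: checking that $W$ is genuinely a weak ring and that the object-wise vanishing $E_{\mf m}\otimes W=0$ follows from the morphism-wise vanishing. The latter uses that filtered colimits commute with $E_{\mf m}\otimes-$ in $\mo{Ind}(\cat C)$. One further point: $f$ here has target $Y$ a big object, which is not dualizable. The telescope argument still works because only $x$ needs to be dualized.
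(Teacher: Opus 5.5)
\textbf{Part two.} Your argument for the second assertion is correct. It is essentially Balmer's own argument for \cite[Theorem 5.4]{balmernilpotence}, which the paper only cites.
\begin{itemize}
\item The unit $\bb 1\to E_{\mf m_0}$ dies at every $\mf m\neq\mf m_0$. This uses \Cref{prop:weakRingtenszero} and only the object-level statement of \Cref{rem:weakringprimes}, since the target itself is killed.
\item It is not $\otimes$-nilpotent, because tensoring with $E_{\mf m_0}\neq0$ turns its tensor powers into split monomorphisms.
\end{itemize}

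\textbf{Part one: the weak-ring claim.} You never establish that $W=\colim_n Y^{\otimes n}$ is a weak ring.
\begin{itemize}
\item Write $W=\colim_S Y^{\otimes S}$ over finite $S\subset\bb N$. A bijection $\bb N\sqcup\bb N\cong\bb N$ does give an abstract equivalence $W\otimes W\simeq W$.
\item Under that equivalence, however, $W\otimes\eta_W$ becomes the self-map of $W$ induced by a non-surjective injection $\bb N\hookrightarrow\bb N$. This is not an equivalence in general. For $Y=\bb1\oplus\Sigma\bb1$ with $f$ the summand inclusion, $W\simeq\bigoplus_A\Sigma^{|A|}\bb1$ over finite $A\subset\bb N$, and $W\otimes\eta_W$ is the inclusion of a proper summand.
\item What you actually need is a retraction of $W\otimes\eta_W$. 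The natural candidates are the concatenations $Y^{\otimes m}\otimes Y^{\otimes n}\to Y^{\otimes m+n}$. These respect the transition maps in one variable, but in the other they insert $f$ in the middle rather than at the front.
\item Permutations of tensor factors do not act trivially on the colimit. So nothing you wrote produces the splitting.
\end{itemize}

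\textbf{Part one: circularity.} More fundamentally, the black box \Cref{prop:suppdetection} is, in Balmer's development, a consequence of the very statement you are proving.
\begin{itemize}
\item One applies the nilpotence theorem to the unit $\eta\colon\bb1\to w$ of a weak ring with empty support.
\item One then concludes $w=0$, because $w\otimes\eta^{\otimes n}$ is split mono.
\end{itemize}
So even with the weak-ring claim repaired, your argument only derives nilpotence detection from weak-ring detection, which is an equivalent reformulation of it.

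The missing idea is the actual existence step. For a non-$\otimes$-nilpotent $f\colon\bb1\to Y$, you must produce a homological prime, i.e.\ a maximal Serre $\otimes$-ideal of $\mo{mod}(\cat C)$, at which $f$ survives. Balmer does this directly in the module category, choosing such an ideal via Zorn's lemma.

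\textbf{A smaller point.} Your inference that $f$ dies in the residue field at $\mf m$ if and only if $E_{\mf m}\otimes f=0$ does not follow from the object-level kernel statement. You can bypass it: the residue functor is monoidal and commutes with filtered colimits, so it kills $W$, and then the object-level statement gives $E_{\mf m}\otimes W=0$.
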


An important related notion is that of nil-conservativity; we recall the definition:

\begin{definition}
Let $\{f_i\colon\cat{C}\to \cat{D}_i\}_{i\in I}$ be a family of functors from a fixed tt-category $\cat{C}$ to various tt-categories $\cat{D}_i$.  We say that the family is \textit{jointly nil-conservative} if the induced functors on big categories (abusively also denoted by $f_i$) detect weak rings, that is: if $R$ is a weak ring in $\mo{Ind}(\cat{C})$ such that $f_i(R)\simeq 0$ for all $i\in I$, then $R\simeq 0$.
\end{definition}
The connection between jointly nil-conservative families and the homological spectrum is summarized by the following theorem.
\begin{theorem}[{\cite[Theorem 1.9]{BCHS2024}}]\label{thm:surjspech} Let $\{f_i\colon\cat{C}\to \cat{D}_i\}_{i\in I}$ be a family of tt-functors.  Then the family is jointly nil-conservative if and only if the induced map \[\coprod_{i\in I}\Spc^{h}(\cat{D}_i)\to\Spc^h(\cat{C})\] is surjective.
\end{theorem}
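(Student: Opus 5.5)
The plan is to prove both implications through a single base-change formula for the homological support of weak rings. Write $\varphi_i\colon\Spc^h(\cat{D}_i)\to\Spc^h(\cat{C})$ for the map induced by $f_i$. It sends $\mf{n}$ to $\hat f_i^{-1}(\mf{n})$, where $\hat f_i\colon\mo{mod}(\cat{C})\to\mo{mod}(\cat{D}_i)$ is the exact tensor functor extending $f_i$. For a weak ring $R\in\mo{Ind}(\cat{C})$, the image $f_i(R)$ is again a weak ring. The key step is
\[
\supph\big(f_i(R)\big)=\varphi_i^{-1}\big(\supph(R)\big),
\]
where both supports are read as naive supports, as permitted by \Cref{prop:naiveequalsgenuine}.

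To prove the formula, I would use \Cref{rem:weakringprimes}: for $\mf{m}\in\Spc^h(\cat{C})$, $R\otimes E_{\mf{m}}\neq 0$ if and only if $R$ survives in $\mo{Ind}(\mo{mod}(\cat{C})/\mf{m})$. Fix $\mf{n}\in\Spc^h(\cat{D}_i)$ and set $\mf{m}=\varphi_i(\mf{n})$. The composite $\mo{mod}(\cat{C})\to\mo{mod}(\cat{D}_i)\to\mo{mod}(\cat{D}_i)/\mf{n}$ is exact with kernel exactly $\mf{m}$. Hence it factors through an exact functor $\mo{mod}(\cat{C})/\mf{m}\to\mo{mod}(\cat{D}_i)/\mf{n}$ with zero kernel, and likewise on Ind-completions. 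This factorization gives both inclusions: if $R$ dies at $\mf{m}$, then $f_i(R)$ dies at $\mf{n}$; and since an exact functor with zero kernel is conservative on objects, if $R$ survives at $\mf{m}$, then $f_i(R)$ survives at $\mf{n}$. The step I expect to need most care is passing these statements to the Ind-completions, where the weak rings actually live. Concretely, one has to check that the kernel stays trivial after applying $\mo{Ind}$, using that these quotients are locally coherent and $\mo{Ind}$ of an exact functor is exact.

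For the direction surjective $\Rightarrow$ nil-conservative, suppose $R$ is a weak ring with $f_i(R)=0$ for all $i$. Every $\mf{m}\in\Spc^h(\cat{C})$ is of the form $\varphi_i(\mf{n})$ for some $i$ and $\mf{n}$. The formula then gives $\mf{m}\notin\supph(R)$, since $\supph(f_i(R))=\emptyset$. So $\supph(R)=\emptyset$, and $R=0$ by \Cref{prop:suppdetection}.

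For the direction nil-conservative $\Rightarrow$ surjective, suppose $\mf{m}$ is not in the image of any $\varphi_i$. By \Cref{prop:weakRingtenszero} together with \Cref{prop:naiveequalsgenuine}, we have $\supph(E_{\mf{m}})=\{\mf{m}\}$. The formula then gives $\supph(f_i(E_{\mf{m}}))=\varphi_i^{-1}(\{\mf{m}\})=\emptyset$ for every $i$, so each $f_i(E_{\mf{m}})=0$ by \Cref{prop:suppdetection}. Joint nil-conservativity now forces $E_{\mf{m}}=0$, which contradicts $\mf{m}\in\supph(E_{\mf{m}})$.
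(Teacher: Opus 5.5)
The paper gives no proof of this statement; it quotes \cite[Theorem 1.9]{BCHS2024}, so there is no in-paper argument to compare against. Your proof is correct. It is a self-contained derivation from the facts the paper recalls, namely \Cref{rem:weakringprimes}, \Cref{prop:weakRingtenszero}, \Cref{prop:naiveequalsgenuine} and \Cref{prop:suppdetection}, together with Balmer's description of $\Spc^h(f_i)$ as $\mf n\mapsto\hat f_i^{-1}(\mf n)$.

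Your base-change identity never uses that $R$ is a weak ring. The factorization through the faithful exact functor $\mo{mod}(\cat C)/\mf m\to\mo{mod}(\cat D_i)/\mf n$ gives $\supp^{n}(f_i(t))=\varphi_i^{-1}(\supp^{n}(t))$ for every $t\in\mo{Ind}(\cat C)$. Weak rings enter only through the detection property and the identification of naive with genuine support. This makes clear why nil-conservativity has to be phrased in terms of weak rings, since detection is only available for them.

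Two steps should be written out.

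First, ``likewise on Ind-completions'' silently uses that the restricted-Yoneda functors $\mo{Ind}(\cat C)\to\mo{Ind}(\mo{mod}(\cat C))$ and $\mo{Ind}(\cat D_i)\to\mo{Ind}(\mo{mod}(\cat D_i))$ intertwine $f_i$ with $\mo{Ind}(\hat f_i)$. This holds because both composites preserve filtered colimits and agree on $\cat C$.

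Second, for the Ind step the relevant input is faithfulness, not local coherence. An exact functor $\bar F$ with zero kernel is faithful, since $\bar F(\operatorname{im} g)=\operatorname{im}\bar F(g)$. Now suppose $\colim_\alpha\bar F(X_\alpha)=0$ in the Ind-category. Then for each $\alpha$ the canonical map $\bar F(X_\alpha)\to\colim_\beta\bar F(X_\beta)$ is zero, so some transition map $\bar F(X_\alpha)\to\bar F(X_\beta)$ vanishes. By faithfulness $X_\alpha\to X_\beta$ vanishes as well, hence $\colim_\alpha X_\alpha=0$.
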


We will later require a basechange property for jointly nil-conservative families in the highly structured case, which in turn will require the following lemma.

\begin{lemma}\label{lem:FFBasechange}
Let $\cat{C}$ and $\cat{D}$ be rigid 2-rings, and let $f^*\colon\cat{C}\to \cat{D}$ be a 2-ring map which is fully faithful (so that the right adjoint $f_*$ on Ind-categories has $f_*f^*\simeq id_{\mo{Ind}(\cat{C})}$.  Then for any map of rigid 2-rings $g^*\colon\cat{C}\to \cat{E}$, the basechange $\cat{E}\to \cat{E}\otimes_{\cat{C}}\cat{D}$ is also fully faithful.
\end{lemma}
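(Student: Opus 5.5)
We will work on Ind-categories throughout. Write $A=\mo{Ind}(\cat{D})$, regarded as a commutative algebra in $\mo{Ind}(\cat{C})$ via $f_*$: since $\cat{C}$ and $\cat{D}$ are rigid, $f_*$ is lax monoidal, preserves colimits and satisfies the projection formula. Hence $\mo{Ind}(\cat{D})\simeq \mo{Mod}_{A}(\mo{Ind}(\cat{C}))$ with $A=f_*\bb{1}_{\cat D}$, at least after restricting to the relevant compact generators. Full faithfulness of $f^*$ says that the unit $\bb{1}_{\cat C}\to f_*f^*\bb{1}=A$ is an equivalence after applying $\mo{hom}(c,-)$ for compact $c$. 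Equivalently, using the projection formula $f_*f^*(x)\simeq x\otimes A$, the unit map $\eta\colon x\to x\otimes A$ is an equivalence for all $x\in\mo{Ind}(\cat{C})$. This is exactly the content of the parenthetical $f_*f^*\simeq\mathrm{id}$, and in particular $\eta\colon\bb{1}\to A$ is an equivalence of commutative algebras.

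Next, identify the basechange. For rigid 2-rings, $\mo{Ind}(\cat{E}\otimes_{\cat C}\cat D)\simeq \mo{Ind}(\cat E)\otimes_{\mo{Ind}(\cat C)}\mo{Ind}(\cat D)$, which is in turn equivalent to $\mo{Mod}_{g^*A}(\mo{Ind}(\cat E))$. Here we use that $\mo{Ind}(\cat D)\simeq\mo{Mod}_A(\mo{Ind}(\cat C))$ and that relative tensor products of presentable module categories satisfy $\cat M\otimes_{\cat V}\mo{Mod}_A(\cat V)\simeq \mo{Mod}_A(\cat M)$ (Lurie, HA 4.8.4.6). The basechanged functor is then the free-module functor $e\mapsto e\otimes g^*A$, with right adjoint the forgetful functor. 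Its unit is $e\to e\otimes g^*A$, induced by $g^*\eta$, and this is an equivalence because $g^*$ is a functor and $\eta$ is an equivalence. Thus the basechanged right adjoint composed with the left adjoint is the identity on $\mo{Ind}(\cat E)$, so the left adjoint is fully faithful. Restricting to compact objects gives full faithfulness of $\cat E\to\cat E\otimes_{\cat C}\cat D$.

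The main obstacle is the first identification $\mo{Ind}(\cat D)\simeq\mo{Mod}_A(\mo{Ind}(\cat C))$. This monadicity holds only when $f_*$ is conservative, which may fail in general. We avoid needing it as follows. Once $A\simeq\bb{1}$, the argument really only needs the monad $f_*f^*$, and the relative tensor product computed via bar constructions. Concretely, $\mo{Ind}(\cat E)\otimes_{\mo{Ind}(\cat C)}\mo{Ind}(\cat D)$ receives a functor from $\mo{Ind}(\cat E)$ whose right adjoint, composed with the left adjoint, is computed by basechanging the endofunctor $f_*f^*\simeq -\otimes f_*\bb{1}$. Here rigidity ensures that $f_*$ is $\mo{Ind}(\cat C)$-linear and colimit-preserving, so the adjunction itself basechanges along $-\otimes_{\mo{Ind}(\cat C)}\mo{Ind}(\cat E)$. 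Basechange of an $\mo{Ind}(\cat C)$-linear adjunction whose unit is an equivalence yields an adjunction whose unit is an equivalence, since the unit is a natural transformation of linear functors and this is preserved by the 2-functor $-\otimes_{\mo{Ind}(\cat C)}\mo{Ind}(\cat E)$. This 2-functoriality argument is the step I would write out carefully; everything else is formal.
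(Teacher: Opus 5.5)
Your final argument (third paragraph) is exactly the paper's proof: rigidity of $\cat{C}$ makes $f^*\dashv f_*$ an $\mo{Ind}(\cat{C})$-linear adjunction, and the 2-functor $\mo{Ind}(\cat{E})\otimes_{\mo{Ind}(\cat{C})}-$ carries it to an adjunction whose unit is still an equivalence. The monadic identification $\mo{Ind}(\cat{D})\simeq\mo{Mod}_A(\mo{Ind}(\cat{C}))$ in your first two paragraphs is false here, since $A\simeq\bb{1}$ would force $f^*$ to be an equivalence; you were right to abandon it, and the write-up should consist of the 2-functoriality argument alone.
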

\begin{proof}
By \cite[Proposition 4.18]{LocRig} (which applies since $\cat{C}$ is rigid), the adjunction $f^*\dashv f_*$ is $\mo{Ind}(\cat{C})$-linear and hence, by $2$-functoriality of $\cat{E}\otimes_{\mo{Ind}(\cat{C})} -$, it induces an adjunction upon tensoring with $\cat{E}$. Since fully faithfulness can be expressed in $2$-categorical terms (that the unit map is an equivalence), the result follows. 

\end{proof}
\begin{proposition}\label{prop:nilconsbasechange}
Let $\cat{C}$ be a rigid 2-ring.  Suppose we are given a family of functors $F_{i}\colon\cat{C}\to \cat{D}_i$, $i\in I$ of rigid 2-rings which is jointly nil-conservative.  Then, for any 2-ring map $G\colon\cat{C}\to \cat{E}$ to a rigid 2-ring $\cat{E}$, the family
\begin{equation*}
\{\cat{E}\to \cat{E}\otimes_{\cat{C}}\cat{D}_i\}_{i\in I}
\end{equation*} is jointly nil-conservative as well.
\end{proposition}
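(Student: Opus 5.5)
Let $R$ be a weak ring in $\mo{Ind}(\cat{E})$ that becomes zero in $\mo{Ind}(\cat{E}\otimes_{\cat{C}}\cat{D}_i)$ for every $i\in I$. I want to show $R\simeq 0$. The plan is to move back to $\cat{C}$ along the right adjoint $G_*\colon\mo{Ind}(\cat{E})\to\mo{Ind}(\cat{C})$ of $G$, apply joint nil-conservativity of the $F_i$ there, and then return to $\cat{E}$.

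First I check that $G_*R$ is a weak ring in $\mo{Ind}(\cat{C})$. Since $\cat{C}$ is rigid, $G_*$ is lax symmetric monoidal and satisfies the projection formula $G_*(M)\otimes N\simeq G_*(M\otimes G^*N)$ (\cite[Proposition 4.18]{LocRig}). Give $G_*R$ the unit $\bb{1}\to G_*G^*\bb{1}\to G_*R$. Then $G_*R\otimes G_*R\simeq G_*(R\otimes G^*G_*R)$. Compose with the counit $G^*G_*R\to R$ and then with the retraction $R\otimes R\to R$. This gives a retraction of $G_*R\otimes(\bb{1}\to G_*R)$. Verifying that this composite really splits the unit map, via the triangle identities, is a routine but fiddly check. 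Alternatively, I would use that $G_*R$ is a module over the weak ring $G_*R$ in the weak sense, so the needed splitting follows.

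Second, I compute $F_i^*G_*R$ by base change. By \cite[Proposition 4.18]{LocRig}, the adjunctions are $\mo{Ind}(\cat{C})$-linear. Hence the Beck--Chevalley map $F_i^*G_*\to G'_{i*}F_i'^*$ is an equivalence, where $F_i'\colon\cat{E}\to\cat{E}\otimes_{\cat{C}}\cat{D}_i$ and $G_i'$ are the base-changed functors. Concretely, $\mo{Ind}(\cat{E}\otimes_{\cat{C}}\cat{D}_i)\simeq \mo{Ind}(\cat{E})\otimes_{\mo{Ind}(\cat{C})}\mo{Ind}(\cat{D}_i)$, and $G'_{i*}=G_*\otimes \mathrm{id}$ is computed by the same 2-functoriality argument as in \Cref{lem:FFBasechange}. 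Hence $F_i^*G_*R\simeq G'_{i*}F_i'^*R\simeq 0$ for all $i$. Joint nil-conservativity of the $F_i$ then gives $G_*R\simeq 0$.

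Finally, I deduce $R\simeq 0$. The counit $G^*G_*R\to R$ need not be an equivalence, so I use the weak ring structure instead. By the projection formula, $G_*(R\otimes G^*x)\simeq G_*R\otimes x=0$ for all $x\in\cat{C}$. Taking $x=\bb{1}$ gives $\mo{Hom}_{\mo{Ind}(\cat{E})}(\bb{1},R)\simeq\mo{Hom}(\bb{1},G_*R)=0$. In particular, the unit $\bb{1}\to R$ is zero. Since $R\to R\otimes R$ is split injective and equals $R\otimes(\text{unit})=0$, we get $\mathrm{id}_R=0$, so $R\simeq 0$.

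I expect the main obstacle to be the base-change equivalence $F_i^*G_*\simeq G'_{i*}F_i'^*$ in the second step. It requires identifying the relative tensor product of rigid 2-rings with the tensor product of Ind-categories over $\mo{Ind}(\cat{C})$, and using rigidity so that the right adjoints are colimit-preserving and linear. The weak-ring check in the first step is more routine, and the last step shows it is even partly avoidable. Vanishing of $G_*R$ alone suffices once the unit factors through it, so I would first try to apply nil-conservativity to the genuine weak ring $G_*R$.
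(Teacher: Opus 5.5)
Your proposal is correct and follows essentially the same route as the paper: push the weak ring forward along $G_*$, use the $\mo{Ind}(\cat{C})$-linearity from \cite[Proposition 4.18]{LocRig} and 2-functoriality to get the Beck--Chevalley equivalence $F_iG_*\simeq G_*F_i$, apply joint nil-conservativity to $G_*R$, and conclude because the unit of $R$ factors through $G^*G_*R\simeq 0$. Your explicit check that $G_*R$ is again a weak ring fills in a point the paper leaves implicit; it follows directly from the lax monoidality of $G_*$.
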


\begin{proof}
Let $S\in \mo{Ind}(\cat{E})$ be a weak ring, and assume that $F_i(S)\in \mo{Ind}(\cat{E})\otimes_{\mo{Ind}(\cat{C})} \mo{Ind}(\cat{D}_i)$ is $0$ for each $i$. 

Again, by \cite[Proposition 4.18]{LocRig}, the adjunction $G\dashv G_*$ between $\Ind(\cat{C})$ and $\Ind(\cat{E})$ induces an adjunction $G\otimes \Ind(\cat{D}_i) \dashv G_* \otimes \Ind(\cat{D}_i)$ between  $\Ind(\cat{D}_i)$ and $\Ind(\cat{E})\otimes_{\Ind(\cat{C})}\Ind(\cat{D}_i)$, which we abusively still denote $G,G_*$. 

We therefore find that $G_*F_i(S) = 0$. We claim that the following square is vertically right adjointable, i.e., that the canonical map $F_iG_*\to G_*F_i$ is an equivalence: 

\[\begin{tikzcd}
	{\Ind(\cat{C})} & {\Ind(\cat{D}_i)} \\
	{\Ind(\cat{E})} & {\Ind(\cat{E})\otimes_{\Ind(\cat{C})} \Ind(\cat{D}_i)}
	\arrow["{F_i}", from=1-1, to=1-2]
	\arrow["G"', from=1-1, to=2-1]
	\arrow["G"', from=1-2, to=2-2]
	\arrow[shift right=3, dashed, from=2-1, to=1-1]
	\arrow["{F_i}"', from=2-1, to=2-2]
	\arrow[shift right=3, dashed, from=2-2, to=1-2]
\end{tikzcd}\]

Once this is proved, it will follow that $F_i G_*(S)=0$ for all $i$, and hence that $G_*(S)= 0$ by nil-conservativity. But there is a map of weak rings $GG_*(S)\to S$ by design, so this implies that $S=0$. 

To prove this adjointability, we again use $2$-functoriality of $-\otimes_{\Ind(\cat{C})}-$: the arrow $\Ind(\cat{C})\to \Ind(\cat{D}_i)$ sends the adjunction $G\dashv G_*$ to an adjunction in the arrow category, and those are precisely adjointable squares (this follows from the more general \cite[Theorem 4.6]{haugsengmonad}). 

\end{proof}

\subsection{The nerves of steel conjecture}  For any tt-category $\cat{C}$, there exists a natural continuouos surjection 
    \[
        \phi\colon\Spc^{h}(\cat{C})\twoheadrightarrow \Spc(\cat{C})
    \]
by \cite[Corollary 3.9]{balmernilpotence}, which connects us back to the setting of tensor triangular geometry.  The manner in which the very first computation of a Balmer spectrum was carried out in \cite{hopkins1998nilpotence} was essentially by computing the homological spectrum of $\Sp$, and then proving that the map $\Spc^h(\Sp)\to \Spc(\Sp)$ is an isomorphism.  In modern terms, Hopkins--Smith proved the first case of Balmer's nerves of steel conjecture, which we can now state.

\begin{definition}\label{def:steelcondition}  
    We say that the \textit{nerves of steel condition} holds for a tt-category $\cat{C}$ (written NoS($\cat{C})$) if the map $\phi\colon \Spc^h(\cat{C})\to \Spc(\cat{C})$ is a bijection. The \textit{nerves of steel conjecture} asserts that every tt-category satisfies the nerves of steel condition.
\end{definition}
The nerves of steel conjecture has been a center of attention for tt-geometers in recent years, and many equivalent formulations have been discovered.  For instance:
\begin{remark}\label{rem:eqsteelcondition}
    For a tt-category $\cat C$, the comparison map $\phi$ exhibits $\Spc(\cat C)$ as the Kolomogorov quotient of $\Spc^h(\cat C)$, i.e., the universal $T_0$-space under it; see \cite[Lemma 4.2]{BHS2023a}. This implies the equivalence of the following statements:
        \begin{enumerate}
            \item $\phi\colon \Spc^h(\cat{C})\to \Spc(\cat{C})$ is a bijective;
            \item $\phi\colon \Spc^h(\cat{C})\to \Spc(\cat{C})$ is a homeomorphism; 
            \item $\Spc^h(\cat{C})$ is $T_0$ (and hence spectral).
        \end{enumerate}
\end{remark}

\begin{remark}
There is an alternative description of the homological spectrum constructed in \cite{BirdWilliamson2025} via a space constructed from the Ziegler spectrum, relating homological primes to so-called ``definable'' $\otimes$-subcategories.
\end{remark}

\begin{remark} Work of \cite{BHSZ2024pp} shows that if the nerves of steel conjecture holds, then the triangular and homological notions of stratification coincide for all rigidly-compactly generated tt-categories. In particular, stratification would satisfy descent along weakly descendable families of geometric tt-functors.
\end{remark}

\subsection{The exact-nilpotence condition}

The nerves of steel conjecture was reformulated in \cite[Theorem A.1]{balmerhomological}, and again in \cite[Theorem 1.6]{logan} in terms of the so-called exact-nilpotence condition, which we now recall.
\begin{definition}[{\cite[Definition 1.5]{logan}}]\label{def:ENC}
Let $\cat{C}$ be a local tt-category.  We say that the \textit{exact-nilpotence condition (resp.~exact-nilpotence condition to order $n$)} (ENC($n$)) holds for $\cat{C}$ if for all fiber sequences of the form
    \[
        y\xrightarrow{g}\mathbb{1}\xrightarrow{f}x,
    \]
there exists a nonzero object $z\in\cat{C}$ with either $z\otimes g$ or $z\otimes f$ being $\otimes$-nilpotent (resp. either $z\otimes g^{\otimes n}\simeq 0$ or $z\otimes f^{\otimes n}\simeq 0$).
\end{definition}
As mentioned in \Cref{ssec:methodology}, the starting point of this paper is the following theorem.
\begin{theorem}[{\cite[Theorem A.1]{balmerhomological}}; {\cite[Theorem 1.6]{logan}}] The nerves of steel conjecture holds if and only if the exact-nilpotence condition holds for every local tt-category $\cat{C}$.
\end{theorem}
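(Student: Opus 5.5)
The plan is to prove both implications by comparing the two descriptions of support: the homological one (via the weak rings $E_{\mf{m}}$ of \Cref{rem:weakringprimes}) and the triangular one (via $\phi$). The input common to both directions is a dichotomy in homological residue fields. Let $y\xrightarrow{g}\bb{1}\xrightarrow{f}x$ be a fiber sequence in $\cat{C}$ and $\mf{m}\in\Spc^h(\cat{C})$. Write $\bar f,\bar g$ for the images of $f,g$ in $\mo{mod}(\cat{C})/\mf{m}$. I would first show that at least one of $\bar f,\bar g$ vanishes. The image of $\yo(g)$ is a $\otimes$-idempotent-like subobject of $\yo(\bb{1})$. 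By maximality of the Serre $\otimes$-ideal $\mf{m}$, the unit of the quotient admits no non-trivial subobjects of this type. So either $\bar g=0$, or $\bar g$ is epi, in which case $\bar f=0$ by exactness. Combining this with the nilpotence detection theorem of the previous subsection, for any $z$ we get: $z\otimes f$ is $\otimes$-nilpotent if and only if $\bar f=0$ at every $\mf{m}\in\supph(z)$, and similarly for $g$. Hence
\[
\Spc^h(\cat{C})=U_f\cup U_g, \qquad U_f=\{\mf{m}: \bar f=0\},\quad U_g=\{\mf{m}:\bar g=0\}.
\]

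\emph{NoS implies ENC.} Let $\cat{C}$ be local. Then $\Spc(\cat{C})$ has the unique closed point $(0)$, and under NoS, $\phi$ is a homeomorphism by \Cref{rem:eqsteelcondition}. So $\Spc^h(\cat{C})$ has a unique closed point $\mf{m}_0$, and it lies in every nonempty closed subset. The point $\mf{m}_0$ lies in $U_f$ or in $U_g$; say $U_f$. Next I would identify $U_f$ and $U_g$ with complements of homological supports of compact objects. Exactness shows, for instance, that $\bar g=0$ forces $\bar f$ to be mono, and conversely. This gives $U_g\supseteq \Spc^h\sminus\supph(y)$ and the analogous statement for $f$ up to the fiber. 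Having done this, I would choose a nonzero compact $z$ whose support is the single closed point; for example $z=x\otimes y$ or its dual, using that $U_f\cap U_g$ is controlled by $\supph(x)\cap\supph(y)$. Then $z\otimes f$ is $\otimes$-nilpotent by detection.

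\emph{ENC implies NoS.} Suppose $\mf{m}_1\neq\mf{m}_2$ with $\phi(\mf{m}_1)=\phi(\mf{m}_2)=\cat{P}$. Passing to the local category $\cat{C}/\cat{P}$, the two primes remain distinct and both lie over $(0)$. By \Cref{prop:weakRingtenszero}, $E_{\mf{m}_1}\otimes E_{\mf{m}_2}=0$. Writing these as filtered colimits of compacts and using rigidity, I would extract a compact fiber sequence $y\xrightarrow{g}\bb{1}\xrightarrow{f}x$ with $\bar f=0$ only at $\mf{m}_1$ and $\bar g=0$ only at $\mf{m}_2$, in the sense that $\bar f\neq0$ at $\mf{m}_2$ and $\bar g\neq 0$ at $\mf{m}_1$. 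ENC then gives a nonzero $z$ with, say, $z\otimes f$ nilpotent, so $\mf{m}_2\notin\supph(z)$. The remaining task is to show that every nonzero compact $z$ in a local category has $\supph(z)$ containing all homological primes over $(0)$, which contradicts $\mf{m}_2\notin\supph(z)$.

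I expect this last point, together with the compact approximation step, to be the main obstacle. Localness only gives $(0)\in\supp(z)=\phi(\supph(z))$, not that the entire fiber of $\phi$ over $(0)$ is contained in $\supph(z)$. To handle it, I would first try to replace $z$ by $z\otimes z^\vee$ and iterate ENC. Failing that, I would use that $\supph(z)$ is closed and that $\mf{m}_1,\mf{m}_2$ can be taken to be closed points of the fiber, exploiting that $E_{\mf{m}_i}\otimes z\neq0$ follows from the tensor product property of \Cref{prop:tensorproductprop}.
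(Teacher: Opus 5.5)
There is a genuine gap, and it sits in the lemma that both of your directions rest on.

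\textbf{The lemma is false.} Two claims fail: the dichotomy ``at every $\mf m$, $\bar f=0$ or $\bar g=0$'', and the ``only if'' half of your criterion ($z\otimes f$ $\otimes$-nilpotent $\Rightarrow$ $\bar f=0$ on $\supph(z)$). Homological residue fields detect nilpotence in one direction only. Vanishing in all of them forces $\otimes$-nilpotence, but a $\otimes$-nilpotent map need not vanish in any of them.

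Here is a counterexample. Take $\cat C=\mo{stab}(kC_3)$ with $\mathrm{char}(k)=3$.
\begin{itemize}
\item Its indecomposables are $k$ and $\Omega k$, so $\mo{mod}(\cat C)$ has finite length with two simple objects. These are swapped by $\yo(\Omega k)\otimes-$.
\item Hence the only Serre $\otimes$-ideals are $0$ and everything. So $\Spc^h(\cat C)$ is one point, and its residue field is $\yo$ itself.
\item Let $f\colon k\to\Omega k$ be nonzero, a class in $\hat H^{-1}(C_3;k)$. Its fiber is $g\colon \Omega k\to k$, a nonzero class in $\hat H^{1}(C_3;k)$.
\item Then $\yo(f)\neq 0\neq \yo(g)$. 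In particular $\Img\yo(g)$ is a nonzero proper subobject of the unit of the residue field, contrary to your maximality heuristic.
\item Yet $f^{\otimes 2}=0=g^{\otimes 2}$, since odd-degree Tate classes square to zero for $p$ odd.
\end{itemize}

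\textbf{Consequences for NoS$\Rightarrow$ENC.} Your first step already fails: in the example, the unique point lies in neither $U_f$ nor $U_g$, although ENC holds there with $z=\bb 1$. Even granting the dichotomy, your choice of $z$ does not work.
\begin{itemize}
\item You only get $U_g\subseteq\supph(x)$ and $U_f\subseteq\supph(y)$. Neither set is a complement of a support.
\item A nonzero object supported exactly at the closed point need not exist. In $\mo{SH}^{c}_{(p)}$, where NoS holds, every nonzero finite spectrum has infinite support.
\end{itemize}
This direction really needs an argument that produces, from a failure of ENC, two distinct homological primes over $(0)$. That requires a nilpotence-plus-compactness argument, not pointwise bookkeeping in $\mo{mod}(\cat C)/\mf m$.

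\textbf{Consequences for ENC$\Rightarrow$NoS.} The step you flagged as the main obstacle is immediate. Since $\phi(\mf m)=\{x:\yo(x)\in\mf m\}$, you have $\supph(z)=\phi^{-1}(\supp(z))$. So in a local category, $\phi^{-1}((0))\subseteq\supph(z)$ for every $z\neq 0$.

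The real problem is the inference ``$z\otimes f$ is nilpotent and $\bar f\neq 0$ at $\mf m_2$, hence $\mf m_2\notin\supph(z)$''. The example above shows this is invalid. Repair it by replacing nonvanishing with a property stable under $\otimes$-powers:
\begin{itemize}
\item Since $E_{\mf m_1}\otimes E_{\mf m_2}=0$, compactness of $\bb 1$ gives compact factorizations $\eta_i\colon\bb 1\to e_i$ of the units $\bb 1\to E_{\mf m_i}$ with $\eta_1\otimes\eta_2=0$.
\item Put $f=\eta_1$. Then $E_{\mf m_1}\otimes f^{\otimes n}$ is split mono for all $n$.
\item Since $(\eta_1\otimes e_2)\circ\eta_2=0$, the map $\eta_2$ factors through $g\otimes e_2$. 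Also $E_{\mf m_2}\otimes\eta_2^{\otimes n}$ is split mono.
\item Combined with $E_{\mf m_i}\otimes z^{\otimes n}\neq 0$, this shows that neither $z\otimes f$ nor $z\otimes g$ is $\otimes$-nilpotent for any $z\neq 0$. So ENC fails for this sequence, as required.
\end{itemize}
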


\subsection{Tensor triangular fields}\label{ssec:ttfields}

We recall the definition of tt-fields proposed by Balmer, Krause, and Stevenson in \cite{BKS2019} and discuss some of its basic properties. In particular, we show that it differs from the earlier one given by Balmer in \cite{BalmerICM2010}.

\begin{definition}[{\cite[Definition 1.1]{BKS2019}}]\label{def:ttfield}
    A rigidly-compactly generated tt-category $\cat{F}$ is a \emph{tt-field} if it satisfies the following two conditions:
        \begin{enumerate}
            \item every nonzero compact object $x \in \cat{F}^c$ is $\otimes$-faithful, i.e., $x \otimes f =0$ implies $f=0$ for morphisms $f$ in $\cat{F}$;
            \item every object in $\cat{F}$ is a coproduct of compacts.
        \end{enumerate}
    Occasionally, we will also refer to the essentially small tt-category $\cat F^c$ as a tt-field.
\end{definition}

Simple examples of tt-fields are given by:

\begin{lemma}\label{lem:ssimpleimpllocal}    
    A semi-simple tt-category $\cat{K}$ with simple unit is a tt-field, in that for any nonzero object $z\in\cat{K}$, $\mo{coev}\colon\bb{1}\to z\otimes z^{\vee}$ is split injective.
\end{lemma}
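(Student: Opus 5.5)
By \Cref{lem:equivtensorfaithful}, it suffices to show that for every nonzero $z\in\cat{K}$ the coevaluation $\mo{coev}_z\colon\bb{1}\to z\otimes z^{\vee}$ is split injective. This also yields condition (a) of \Cref{def:ttfield} for the rigidly-compactly generated category $\mo{Ind}(\cat{K})$. Condition (b) holds because, in a semisimple abelian-like setting, every ind-object is a filtered colimit of finite direct sums of simples, and that colimit splits into a coproduct. The substantive part is therefore the splitting of $\mo{coev}_z$, and my plan is to deduce it from simplicity of the unit combined with semisimplicity.

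Fix $z\neq 0$. First I will check that $\mo{coev}_z$ is nonzero. The triangle identity says the composite $z\xrightarrow{\mo{coev}_z\otimes z} z\otimes z^{\vee}\otimes z\xrightarrow{z\otimes \mo{ev}_z} z$ is the identity, and this is nonzero since $z\neq 0$. If $\mo{coev}_z$ were $0$, then $\mo{coev}_z\otimes z$ would be $0$, a contradiction. So $\mo{coev}_z$ is a nonzero morphism out of the simple object $\bb{1}$.

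Next I use semisimplicity. Every morphism in a semisimple category decomposes into components between simple summands, and by Schur's lemma a nonzero map between simples is an isomorphism. Write $z\otimes z^{\vee}\simeq\bigoplus_i s_i$ with each $s_i$ simple. Some component $\bb{1}\to s_i$ of $\mo{coev}_z$ is nonzero, hence an isomorphism. Composing the projection onto $s_i$ with the inverse of that isomorphism gives a retraction $z\otimes z^{\vee}\to\bb{1}$, so $\mo{coev}_z$ is split injective.

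The main obstacle is interpreting "semi-simple" for a tt-category, which is triangulated rather than abelian. I would take it to mean that the homotopy category is semisimple as an additive category: every object is a finite direct sum of simple objects, meaning objects with no nontrivial idempotents whose endomorphism ring is a division ring, and Hom between non-isomorphic simples vanishes. Under this reading, Schur's lemma is built in and the argument above goes through verbatim. This is the case for Deligne-type examples such as $\A_{\eta}$, which is a graded semisimple category with the shift acting on the simples. If instead "semisimple" is meant in the sense that every morphism splits (image and kernel are summands), the argument still works: a nonzero map out of a simple object must be a split monomorphism, since its kernel is a proper summand of $\bb{1}$ and therefore zero.
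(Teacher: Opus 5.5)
Your proof is correct and follows the paper, whose entire argument is that in a semisimple category any nonzero map out of an object with simple endomorphism ring is split injective; your triangle-identity check that $\mo{coev}_z\neq 0$ just makes explicit a step the paper leaves implicit. The only soft spot is your aside on condition (b) for $\mo{Ind}(\cat{K})$. The statement does not require it, and you assert it rather than prove it: a filtered colimit taken in the homotopy category does not split into a coproduct by itself. One would instead argue via the restricted Yoneda functor, using that every module over a semisimple $\cat{K}$ is a sum of representables, and then lift to a map from a coproduct of compacts.
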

\begin{proof}
    In a semi-simple category, any nonzero map from an object with simple endomorphism ring is split-injective. 
\end{proof}

\begin{remark}\label{rem:KrullSchmidt}
    An important consequence of the definition of tt-field $\cat{F}$ is that $\cat{F}^c$ is a Krull--Schmidt category, see for example \cite[Theorem 5.7]{BKS2019}.
\end{remark}

In \cite[Section 4.3]{BalmerICM2010}, Balmer proposed a variant, only requiring condition (a) in \Cref{def:ttfield}. The next proposition provides an example demonstrating that the two definitions do not coincide, thereby resolving a question which was raised in \cite[Remark 1.2]{BKS2019}.

\begin{proposition}\label{prop:ultrattfields}
    Let $(\kappa_n)_{n \in \bb{N}}$ be an $\bb{N}$-indexed collection of fields and let $\mc{U}$ be a non-principal ultrafilter on $\bb{N}$. Consider the ultraproduct $\prod_{\mc{U}}\mo{Perf}(\kappa_n)$ taken in the category of 2-rings.
        \begin{enumerate}
            \item Every nonzero compact object of $\prod_{\mc{U}}\mo{Perf}(\kappa_n)$ is $\otimes$-faithful, i.e., the ultraproduct satisfies condition (a) of \Cref{def:ttfield}. 
            \item There does not exist any tt-functor $\prod_{\mc{U}}\mo{Perf}(\kappa_n) \to \cat{F}$ to a tt-field $\cat{F}$. In particular, $\prod_{\mc{U}}\mo{Perf}(\kappa_n)$ does not satisfy condition (b) of \Cref{def:ttfield}
        \end{enumerate}
\end{proposition}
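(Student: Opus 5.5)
For part (a) I will work directly with the explicit model of the ultraproduct. Write $\cat{U}\coloneqq \prod_{\mc{U}}\mo{Perf}(\kappa_n)$. I expect it to be the filtered colimit over $A\in\mc{U}$ of the products $\prod_{n\in A}\mo{Perf}(\kappa_n)$ in 2-rings, possibly followed by idempotent completion. Its objects are then represented by sequences $x=(x_n)$ with $x_n\in\mo{Perf}(\kappa_n)$, and its mapping spectra are
\[
\mo{colim}_{A\in\mc{U}}\prod_{n\in A}\mo{Map}(x_n,y_n).
\]
The tensor product, duals, and evaluation/coevaluation maps are all computed levelwise. An object $x$ is nonzero precisely when $\SET{n}{x_n\neq 0}\in\mc{U}$; for a retract of such a sequence the same description holds after passing to idempotent completion. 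For every $n$ in this set, $\mo{Perf}(\kappa_n)$ is semisimple with simple unit, so by \Cref{lem:ssimpleimpllocal} the map $\mo{coev}_{x_n}$ is split injective. I will choose splittings $r_n$ levelwise and set $r_n=0$ elsewhere. The family $(r_n)$ then defines a retraction of $\mo{coev}_x$ in $\cat{U}$. By \Cref{lem:equivtensorfaithful}, $x$ is $\otimes$-faithful. This also holds for morphisms in $\mo{Ind}(\cat{U})$, since $\bb{1}$ being a retract of $x\otimes x^\vee$ is a purely formal statement that persists in the big category.

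For part (b) the idea is to produce a nonzero object of $\cat{U}$ that is ``infinitely divisible'' with respect to direct sums. This is incompatible with the Krull--Schmidt property of $\cat F^c$ from \Cref{rem:KrullSchmidt}. Concretely, I take $x=(\kappa_n^{\oplus n!})_n$, and for each $k\geq 1$ I set $y_k=(\kappa_n^{\oplus n!/k})_{n\geq k}$, with arbitrary entries for $n<k$. For $n\geq k$ we have $x_n\simeq y_{k,n}^{\oplus k}$. Since $\mc{U}$ is non-principal, the cofinite set $\{n\geq k\}$ lies in $\mc{U}$, and hence $x\simeq y_k^{\oplus k}$ in $\cat{U}$ for every $k$. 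Moreover $x\neq 0$.

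Now suppose $F\colon\cat{U}\to\cat F$ is a tt-functor to a tt-field. Then $F(x)\neq 0$: by (a) $\bb{1}$ is a retract of $x\otimes x^\vee$, so $F(x)=0$ would force $F(\bb{1})=\bb{1}_{\cat F}=0$, whereas a tt-field is nonzero. Hence $F(x)$ is a nonzero compact object of $\cat F^c$. By Krull--Schmidt it decomposes as a finite sum of indecomposables with a well-defined total number $N\geq 1$ of summands. Since $F$ is additive, $F(x)\simeq F(y_k)^{\oplus k}$. The summand count of $F(x)$ is therefore $k$ times that of $F(y_k)$, so $k\mid N$ for all $k$. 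This forces $N=0$, a contradiction.

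The final clause follows because $\cat{U}$ itself satisfies (a), so if it also satisfied (b) it would be a tt-field. The identity functor $\cat{U}\to\cat{U}$ would then be a tt-functor to a tt-field, contradicting what was just shown. More precisely, the argument applies to the Ind-completion together with the compact objects, which contain $\cat{U}$.

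The step I expect to need the most care is the first one: pinning down the model of the ultraproduct in 2-rings. In particular I must check that idempotent completion does not spoil the levelwise description of objects and maps. Both the levelwise splittings in (a) and the isomorphism $x\simeq y_k^{\oplus k}$ in (b) use this description only for the explicit sequences above, which are already levelwise objects. So I expect this check to be routine.
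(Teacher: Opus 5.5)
Your proposal is correct and follows the paper's proof. Part (a) splits $\mo{coev}$ coordinatewise, and part (b) exhibits an object of the ultraproduct whose image in $\cat F^c$ contradicts the Krull--Schmidt property of \Cref{rem:KrullSchmidt}. The only difference is the test object: the paper uses $(\kappa_n^{\oplus n})_n$, from which $\bb{1}^{\oplus m}$ splits off for every $m$, so its image has unboundedly many indecomposable summands. Your $(\kappa_n^{\oplus n!})_n\simeq y_k^{\oplus k}$ instead makes the summand count divisible by every $k$, and it needs the extra check $F(x)\neq 0$, which you supply correctly.
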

\begin{proof}
    Every non-zero compact object in $\prod_{\mc{U}}\mo{Perf}(\kappa_n)$ is $\otimes$-faithful since the co-evaluation map of every non-zero object splits; this can be checked coordinate-wise.

    In order to show that $\prod_{\mc{U}}\mo{Perf}(\kappa_n)$ does not admit any maps into a tt-field, consider the image $X$ of the object $(\bigoplus_{i=1}^{n} \kappa_n)_{n\in\bb{N}}$ in the ultraproduct. Since $\mc{U}$ is non-principal, for any $m \in \bb{N}$ there exists some $V_m \in \mc{U}$ which does not meet the interval $[0,m-1]$. In $\prod_{n \in V_m}\mo{Perf}(\kappa_n)$, the object $\bigoplus_{i=1}^{m}\bb{1}$ splits off $(\bigoplus_{i=1}^{n} \kappa_n)_{n\in V_m}$, hence the same is true for $X$ in the ultraproduct. Its image under any tt-functor cannot satisfy the Krull--Schmidt property, so we conclude by \Cref{rem:KrullSchmidt}. 
\end{proof}

As an immediate consequence, we obtain:

\begin{corollary}\label{cor:ultrattfields}
    The collection of tt-fields is not closed under ultraproducts. 
\end{corollary}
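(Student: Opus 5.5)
The corollary follows directly from \Cref{prop:ultrattfields}. The plan is to exhibit a family of tt-fields whose ultraproduct is not a tt-field. Take any $\bb{N}$-indexed family of fields $(\kappa_n)_{n\in\bb{N}}$, for instance $\kappa_n=\bb{Q}$ for all $n$, and a non-principal ultrafilter $\mc{U}$ on $\bb{N}$.

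The first step is to check that each $\mo{Perf}(\kappa_n)$ is a tt-field in the sense of \Cref{def:ttfield}. More precisely, $\cat{D}(\kappa_n)$ is a tt-field with compact part $\mo{Perf}(\kappa_n)$. This is standard and we argue it as follows.

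\begin{itemize}
\item Every object of $\cat{D}(\kappa_n)$ is a sum of shifts of $\kappa_n$, hence a coproduct of compacts, so condition (b) holds.
\item $\mo{Perf}(\kappa_n)$ is semisimple with simple unit. By \Cref{lem:ssimpleimpllocal}, every nonzero compact object has split coevaluation. By \Cref{lem:equivtensorfaithful}, such an object is $\otimes$-faithful, and this remains true on the big category because $\bb{1}$ splits off $x\otimes x^\vee$. So condition (a) holds.
\end{itemize}

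The second step is to apply \Cref{prop:ultrattfields}(b). It shows that the ultraproduct $\prod_{\mc{U}}\mo{Perf}(\kappa_n)$, formed in 2-rings, admits no tt-functor to any tt-field. In particular, it cannot itself be (the compact part of) a tt-field, since the identity functor would be such a tt-functor. Hence the class of tt-fields is not closed under ultraproducts.

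There is one point to state carefully: what "ultraproduct of tt-fields" means. Following the convention at the end of \Cref{def:ttfield}, we regard the essentially small $\cat{F}^c$ as the tt-field, and the ultraproduct is the one formed in 2-rings, as in the proposition. Part (a) of the proposition shows that the failure is only in condition (b). This is why the example also separates the two definitions of tt-field, but the corollary itself needs only part (b). No real obstacle remains once \Cref{prop:ultrattfields} is taken as given.
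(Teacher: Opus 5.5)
Your proposal is correct and matches the paper, which states the corollary as an immediate consequence of \Cref{prop:ultrattfields}: each $\mo{Perf}(\kappa_n)$ is a tt-field, and part (b) rules out the (nonzero) ultraproduct being one, since otherwise the identity would be a tt-functor to a tt-field. Your explicit check that $\cat{D}(\kappa_n)$ satisfies both conditions of \Cref{def:ttfield} is exactly the verification the paper leaves implicit.
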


\begin{remark}\label{rem:nilextension}
    In light of the above, one might be tempted to try to define a tt-field as a rigidly-compactly generated tt-category where every non-zero compact object is $\otimes$-faithful, following \cite{BalmerICM2010}. This notion would be closed under ultraproducts but, as we will show in \Cref{rem:hcob} below, there are categories with this property which act much more akin to a ``nil-extension'' of a field.  

    Furthermore, recent work of Riedel (in particular \cite[Remark~4.24]{riedel2025reducedpointsmathbbeinftyringspositive}) can be used to show that, Desideratum $(\ddagger)$ from the introduction also fails for the definition proposed in \cite{BalmerICM2010} at least if one required the map to admit an enhancement as an $\bb{E}_1$-2-ring map.  We note that this does not completely rule out the possibility of $(\ddagger)$ holding for this definition at the purely tensor triangulated level, just of there being some enhanced version of it.
\end{remark}

This concludes our review of the required background material on tensor triangular geometry.

\newpage
\section{Free Constructions}\label{sec:freeconstructions}
The goal of this section is to define and analyze the free categories we will need.  Recall from \Cref{ssec:methodology} that we will need to analyze the free rational rigid 2-ring on a dualizable object and the free rational rigid 2-ring on a pointed dualizable object. A first step for this is to analyze the non-rational, non-stable versions of these categories. In a first sub-section, we discuss the free symmetric monoidal category on a dualizable object 
as well as its $\bb{Q}$-linearization, and in a second subsection we compare it to the free symmetric monoidal category on a pointed dualizable object, respectively to its $\bb{Q}$-linearization. 
\subsection{The free symmetric monoidal category on a dualizable object} 
The free symmetric monoidal category on a dualizable object has the universal dualizable object, $X$, its tensor powers $X^{\otimes i}$, and similarly for its dual, $X^\vee$, as well as their tensors. Besides the symmetric groups $\Sigma_i\times\Sigma_j$ acting on $X^{\otimes i}\otimes X^{\vee,\otimes j}$, there are extra morphisms coming from the evaluation pairing $X\otimes X^\vee\to \bb{1}$ and the evaluation co-pairing $\bb{1}\to X^\vee\otimes X$.
\[\begin{tikzpicture}
	\draw[black, very thick] (-4,0) rectangle (1,4);
	\draw[black, very thick] (3,0) rectangle (8,4);
	\path	(-3.25,2)	node	[black]	{$\emptyset$}
			(0,3.25)	node	[black]	{$+$}
			(0,0.75)	node	[black]	{$-$};
	\draw[thick, ->] (-.25,0.75) arc (270:90:1.25);
	\path	(7.25,2)	node	[black]	{$\emptyset$}
	(4,3.25)	node	[black]	{$+$}
	(4,0.75)	node	[black]	{$-$};
	\draw[thick, <-] (4.25,0.75) arc (90:270:-1.25);
	\path 	(-1.5,-0.5)	node 	[black]	{co-evaluation on $X=+$}
		(5.5,-0.5)	node	[black]	{evaluation pairing for $X=+$};
\end{tikzpicture}\]

The graphical calculus that arises from the triangle identities suggested early on a connection to manifolds, as formalized by Baez and Dolan's famous Cobordism Hypothesis \cite{BaezDolan1995}, which describes in general the free symmetric monoidal $(\infty,n)$-category on a dualizable object in terms of cobordism categories. The following is an informal description of the answer in dimension $1$. We will single out below what we need more precisely: 
\begin{definition}
The $1$-dimensional oriented cobordism category $\mo{Cob}^{1d,or}$ is the $(\infty,1)$-category whose objects are oriented closed $0$-dimensional manifolds, i.e., finite sets with a sign $\pm$ on each element, and whose morphisms from $M$ to $N$ are oriented $1$-dimensional compact manifolds with boundary $W$ with an oriented identification $\partial W\cong \overline{M}\coprod N$.
\end{definition}
Here, $\overline{M}$ is the manifold $M$ with the reverse orientation. 
\newpage %Fixed line spacing issue, thanks Anish ;)
\[\begin{tikzpicture}
	\draw[black, very thick] (-2,0) rectangle (2,2);
	\path (-1.5,1) node [black] {$+$}
		(1.5,1)	node	[black]	{$+$};
	\draw[thick, ->] (-1.25,1) -- (1.25,1);
	\path (0,-0.5)	node	[black]	{identity of $+=X$.};
\end{tikzpicture}
\]
A proof in general (for all $n$) has been sketched by Lurie in \cite{luriecob}, though there does not seem to be a consensus about its completeness. On the other hand, in dimension $1$, a complete proof was given by Harpaz, so we can state: 
\begin{theorem}[\cite{harpaz},\cite{luriecob}]\label{thm:cobhyp}
The free rigid symmetric monoidal $\infty$-category on an object is the category $\mo{Cob}^{1d,or}$ of 1-dimensional oriented cobordisms between oriented 0-manifolds.
\end{theorem}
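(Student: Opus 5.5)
This is the one-dimensional cobordism hypothesis, and we plan to deduce it from the framed/oriented classification of fully dualizable objects in dimension one. First we recast the statement as a universal property. For any symmetric monoidal $\infty$-category $\cat{D}$, evaluation at the positively oriented point $+$ should give an equivalence
\[
    \mo{Fun}^{\otimes}(\mo{Cob}^{1d,or},\cat{D}) \simeq (\cat{D}^{\mathrm{dual}})^{\simeq}.
\]
Here the right side is the core of the full subcategory of dualizable objects. Since every object of $\mo{Cob}^{1d,or}$ is a disjoint union of $\pm$ points, it is rigid: $-$ is dual to $+$, with the elbow cobordisms as evaluation and coevaluation, and the zigzag identities are witnessed by diffeotopies of intervals. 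The evaluation functor therefore lands in dualizable objects. Conversely, symmetric monoidal functors out of a rigid category automatically land in $\cat{D}^{\mathrm{dual}}$ and are groupoid-valued on natural transformations. So it suffices to treat the case where $\cat{D}$ is itself rigid, and to compare with the free rigid category on one object.

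Second, we follow Harpaz's approach. We build $\mo{Cob}^{1d,or}$ inductively from the groupoid of oriented $0$-manifolds by attaching the index-$0$ and index-$1$ handles, that is, the coevaluation and evaluation elbows. The key input is a presentation of the mapping spaces. A morphism $M\to N$ is a disjoint union of arcs and circles. The space of such cobordisms is equivalent to the groupoid of matchings of signed points, together with circle components weighted by $B\mathrm{Diff}^+(S^1)\simeq BS^1$, which should be recovered as loops of the trace, i.e. the $S^1$-action on $\dim$. We will show that freely adjoining an evaluation map $\mathrm{ev}\colon X\otimes X^\vee\to\unit$ and a coevaluation $\mathrm{coev}$, subject to the zigzag identities as a contractible space of data (the space of duality data being contractible once it exists), reproduces exactly these mapping spaces. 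Concretely, we compare the free symmetric monoidal category on an object with a \emph{non-degenerate} pairing, which is what Lurie's Claim 2.4.x reduces to, against the cobordism category. We then check via a Segal-space or quasi-unital argument that this non-degenerate pairing is equivalent to full dualizability.

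The main obstacle is this last identification. One must verify that adding the elbow handles creates no extra higher coherences beyond those visible in the cobordism category: the circle components must contribute exactly $BS^1$ rather than something discrete or larger. This is where Harpaz's argument with the ``unfolding'' of cobordisms into a quasi-unital category and the contractibility of the space of duality data is needed. We would try first to prove that the map from the free category on a non-degenerate pairing to $\mo{Cob}^{1d,or}$ is essentially surjective, which is clear. For fully faithfulness, we compute both mapping spaces as colimits over matchings and compare circle contributions via the Hopf-type fibration $\mathrm{Diff}^+(S^1)\simeq S^1$.
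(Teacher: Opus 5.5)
The paper does not prove this statement. It imports it from Harpaz, who makes Lurie's outline rigorous, and afterwards only uses the consequences recorded in \Cref{proposition:omnibusCob}. So deferring the hard step to Harpaz is consistent with the paper. Your first paragraph is also fine: freeness recast as $\mo{Fun}^{\otimes}(\mo{Cob},\cat{D})\simeq(\cat{D}^{\mathrm{dual}})^{\simeq}$, rigidity of $\mo{Cob}$, and mapping spaces as arcs plus circles weighted by $B\mathrm{Diff}^+(S^1)\simeq BS^1$.

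The part you propose to carry out yourself, however, has a genuine gap and misplaces the difficulty. The claim that a non-degenerate pairing is equivalent to full dualizability is a tautology in a symmetric monoidal $(\infty,1)$-category. Non-degeneracy of $X\otimes Y\to\unit$ means exactly that it is the evaluation of a duality, and such data form a contractible space. So no Segal-space or quasi-unital argument is needed there.

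The real content of Lurie's outline is an extension statement. Let $\mo{Cob}^{\mathrm{ev}}\subset\mo{Cob}$ be the wide subcategory of cobordisms with no closed components and no arcs having both endpoints outgoing, so evaluations are allowed but coevaluations are not. Symmetric monoidal functors $\mo{Cob}^{\mathrm{ev}}\to\cat{D}$ are just pairings $X\otimes Y\to\unit$. One must show that restriction $\mo{Fun}^{\otimes}(\mo{Cob},\cat{D})\to\mo{Fun}^{\otimes}(\mo{Cob}^{\mathrm{ev}},\cat{D})$ is fully faithful, with essential image the non-degenerate pairings. Producing coherent values on coevaluations and circles in this extension is where Harpaz uses quasi-unital $\infty$-categories, namely the fact that quasi-units determine an essentially unique unital structure.

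Your fully-faithfulness step, ``compute both mapping spaces as colimits over matchings'', fails because it is circular. There is no independent description of the mapping spaces of the free rigid category, equivalently the free category on a non-degenerate pairing. Non-degeneracy is a property, not a cell attachment. Freely attaching $\mathrm{ev}$, $\mathrm{coev}$ and both zigzag homotopies gives the wrong object. Its symmetric monoidal functors to $\cat{D}$ map to $(\cat{D}^{\mathrm{dual}})^{\simeq}$ with fibre $\Omega_{\mathrm{id}}\mo{Map}(X^\vee,X^\vee)$ over $X$, because the second homotopy is superfluous data.

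In particular, the fact that the loops of $\mo{End}(\unit)$ at $\dim X$ are exactly $S^1$ in the universal example is the theorem itself. It cannot be read off from a presentation. The Lurie--Harpaz argument never computes mapping spaces of the free object. It verifies the universal property of $\mo{Cob}$ directly through the extension statement above, and that is the idea missing from your plan.
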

Since no higher dimensional, or unoriented cobordisms will appear in this paper, we simplify notation:
\begin{notation}
We let $\mo{Cob}\coloneqq\mo{Cob}^{1d,or}$.

For $X\in \mo{Cob}$ the universal dualizable object (i.e., the singleton, as a positively oriented $0$-dimensional manifold), we let $\X{i}{j}$ denote $X^{\otimes i}\otimes X^{\vee,\otimes j}$. 
\end{notation}
The precise information we need about $\mo{Cob}$ can be gathered in the following:
\begin{proposition}\label{proposition:omnibusCob}
Let $i,j,r,s$ be natural numbers. 
\begin{enumerate}
\item The symmetric monoidal dimension $T$ of $X$ in $\mo{Cob}$ is given by the circle as a cobordism from the empty manifold to itself, and $\Omega(\Hom_{\mo{Cob}}(\emptyset,\emptyset), T) \simeq S^1$. Furthermore, the induced map $BS^1\to \Hom_{\mo{Cob}}(\emptyset,\emptyset)$ witnesses the target as free commutative monoid over the source; 
\item The mapping space $\Hom_{\mo{Cob}}(\X{i}{j},\X{r}{s})$ is empty unless $i-j=r-s$; 
\item The mapping space $\Hom_{\mo{Cob}}(\X{i}{j},\X{r}{s})$ is free on a discrete set as a module over $\Hom_{\mo{Cob}}(\bb{1},\bb{1})$.
\end{enumerate}
In particular, all mapping spaces in $\mo{Cob}$ are finite disjoint unions of spaces of the form $\coprod_n ((BS^1)^{\times n})_{h\Sigma_n}$.
\end{proposition}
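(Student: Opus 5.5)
We argue directly from the geometric model of $\mo{Cob}$ supplied by \Cref{thm:cobhyp}. A morphism $\X{i}{j}\to\X{r}{s}$ is a compact oriented $1$-manifold $W$ with $\partial W\cong\overline{\X{i}{j}}\amalg\X{r}{s}$. The mapping space is the classifying space of the groupoid of such $W$ and their boundary-fixing diffeomorphisms, i.e.\ the disjoint union over diffeomorphism types of $W$ of $B\mo{Diff}_\partial(W)$. Every compact oriented $1$-manifold splits uniquely as a disjoint union of arcs and circles. An oriented arc joins an incoming boundary point to an outgoing one of compatible orientation. After reversing $\overline{\X{i}{j}}$, the boundary has $j+r$ points of one orientation and $i+s$ points of the other, and arcs pair them bijectively.

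For (b), such a pairing exists only if $j+r=i+s$, i.e.\ $i-j=r-s$; otherwise the space is empty.

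For (a), take $i=j=r=s=0$, so $W$ is a finite disjoint union of $n$ oriented circles. Its orientation-preserving diffeomorphism group is $\mo{Diff}^+(S^1)\wr\Sigma_n$, and $\mo{Diff}^+(S^1)\simeq S^1$ by rotations. Hence
\[
\Hom_{\mo{Cob}}(\emptyset,\emptyset)\simeq\coprod_n \big((BS^1)^{\times n}\big)_{h\Sigma_n},
\]
which is the free $\bb{E}_\infty$-monoid on $BS^1$, with monoid structure given by disjoint union. The circle as a morphism $\emptyset\to\emptyset$ is $\mo{ev}\circ\mo{coev}$ after gluing, so it is the dimension $T$. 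Its component is $BS^1$, so $\Omega(\Hom(\emptyset,\emptyset),T)\simeq S^1$.

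For (c), write $W=A\amalg C$, with $A$ the union of arcs and $C$ the union of circles. Arcs have contractible boundary-fixed diffeomorphism groups, since $\mo{Diff}_\partial([0,1])$ is contractible. Each arc is pinned by its endpoints, so no diffeomorphism permutes arcs or mixes them with circles. Hence $\mo{Diff}_\partial(W)\simeq\mo{Diff}(C)$. Summing over the discrete set of admissible pairings $\sigma$ (matchings as above) and over the circle components gives
\[
\Hom(\X{i}{j},\X{r}{s})\simeq\coprod_\sigma \Hom(\emptyset,\emptyset).
\]
Here $\Hom(\emptyset,\emptyset)$ acts by disjoint union, which is tensoring with endomorphisms of $\bb{1}$. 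So the mapping space is free on the discrete set of pairings, which is finite. The final sentence of the statement follows by combining (a) and (c).

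The main obstacle is making the passage from \Cref{thm:cobhyp} to this concrete description of mapping spaces fully rigorous. This requires invoking a model of $\mo{Cob}$ whose mapping spaces are provably $\coprod_{[W]}B\mo{Diff}_\partial(W)$, such as Lurie's or Galatius--Madsen--Tillmann--Weiss's, and checking that the monoid action and the identification of $T$ are compatible with that model. We would cite this identification rather than reprove it. The remaining input is the classical contractibility results $\mo{Diff}^+(S^1)\simeq S^1$ and $\mo{Diff}_\partial(I)\simeq *$.
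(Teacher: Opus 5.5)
Your proposal is correct and follows essentially the same route as the paper: you split each cobordism uniquely into arcs and circles, use $\mathrm{Diff}^+(S^1)\simeq S^1$ for (a), use the signed boundary count for (b), and observe for (c) that the arc part is discrete and the whole mapping space is free over $\Hom(\emptyset,\emptyset)$. Two small remarks. First, the paper also gives a universal-property proof of (b): the wide subcategory of morphisms with $i-j=r-s$ contains $\mathrm{ev}$ and $\mathrm{coev}$, so the identity of $\mathrm{Cob}$ factors through it, and this avoids the model-dependence you flag. Second, your sentence that an arc always joins an incoming point to an outgoing one is inaccurate, since $\mathrm{ev}$ and $\mathrm{coev}$ join two incoming or two outgoing points; your subsequent count of positive versus negative points of $\partial W$ handles this correctly.
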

\begin{proof}
(a) This follows from the definition of $\mo{Cob}$, together with the homotopy equivalence of topological groups $S^1\to \mathrm{Diff}^+(S^1)$. 

(b) One can give a topological proof, or a proof based on the universal property of $\mo{Cob}$. 

\textit{Topological proof:} 
Fix an oriented compact $1$-dimensional manifold with boundary  $M$, viewed as a cobordism from $\partial_1 M$ to $\partial_2 M$ with associated decomposition of the boundary of $M$ as $\partial M=\overline{\partial_1 M}\coprod \partial_2 M$, and write $\partial_2 M=\coprod_{1}^{r}X\coprod \coprod_{1}^{s}X^\vee$, and $\partial_1 M=\coprod_{1}^{i} X\coprod \coprod_{1}^{j}X^\vee$.  

The oriented count of the size of the boundary of $M$ is zero by the classification of compact 1-manifolds.  For our given $M$, this oriented count is nothing but $r-s+j-i=(r-i)-(s-j)$, which is required to be zero.  But $(r-i)-(s-j)$ is zero if and only if $r-i=s-j$. 

\textit{Universal property proof:} Consider the wide subcategory $C$ of $\mo{Cob}$ spanned by morphisms $\X{i}{j}\to \X{r}{s}$ with $i-j=r-s$. It is clear that these morphisms are closed under composition, and under tensor products, and all symmetry/associator equivalences, so that $C\to \mo{Cob}$ is a symmetric monoidal functor. Furthermore, $C$ contains the evaluation and coevaluation, $X\otimes X^\vee \to \bb{1}$ and $\bb{1} \to X^\vee \otimes X$, so $X\in C$ is dualizable. It follows that the identity of $\mo{Cob}$ factors through $C$, which proves the result. 

(c) Every $1$-dimensional cobordism 
is (uniquely) a disjoint union of a simply-connected part and a disjoint union of circles. Consider then the full subspace $\Hom_{\mo{Cob}}(\X{i}{j},\X{r}{s})^{\mathrm{simple}}$ of $\Hom_{\mo{Cob}}(\X{i}{j},\X{r}{s})$ spanned by the simply-connected cobordisms. It is a space of intervals with fixed endpoints, and is therefore discrete. The above observation shows that, as a $\Hom_{\mo{Cob}}(\bb{1},\bb{1})$-module, $\Hom_{\mo{Cob}}(\X{i}{j},\X{r}{s})$ is free on $\Hom_{\mo{Cob}}(\X{i}{j},\X{r}{s})^{\mathrm{simple}}$.
\end{proof}

Next, we note that ``linearization'' is an easy process: 
\begin{proposition}\label{proposition:descfreeI}
The free rigid stably symmetric monoidal $\infty$-category over $\cat{D}^b(\bb{Q})$ on an object is given by the category 
    \[
        \mo{Fun}((\mo{Cob})^{op},\cat{D}(\bb{Q}))^{\omega},
    \]
with the Day convolution symmetric monoidal structure.

In particular, it is generated under finite colimits, desuspensions and retracts by images of the Yoneda embedding, whose mapping spectra are $\bb{Q}$-homology spectra of the mapping spaces in $\mo{Cob}$. 
\end{proposition}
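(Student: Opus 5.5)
The plan is to combine the universal property of \Cref{thm:cobhyp} with the standard universal properties of linearization, stabilization and idempotent completion. Each of these is a free construction left adjoint to a forgetful functor, and the forgetful functors compose in the right order.

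We first reduce to known presentable statements. The plan is to identify $\mo{Fun}(\mo{Cob}^{op},\cat{D}(\bb{Q}))$ with $\mo{Fun}(\mo{Cob}^{op},\mo{Sp})\otimes\cat{D}(\bb{Q})$, i.e.\ with $\mo{Mod}_{\bb{Q}}(\mo{Fun}(\mo{Cob}^{op},\mo{Sp}))$. Under Day convolution this is the free presentably symmetric monoidal $\cat{D}(\bb{Q})$-linear stable $\infty$-category on the symmetric monoidal $\infty$-category $\mo{Cob}$. Its compact objects are the idempotent-complete stable subcategory generated by $\yo(\mo{Cob})\otimes\bb{Q}$.

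Next comes the universal property. Let $\cat{E}$ be a rigid (or merely idempotent-complete, stably symmetric monoidal) $\cat{D}^b(\bb{Q})$-linear 2-ring. Then symmetric monoidal exact $\bb{Q}$-linear functors $\mo{Fun}(\mo{Cob}^{op},\cat{D}(\bb{Q}))^{\omega}\to\cat{E}$ correspond to those from $\mo{Ind}$ of the source that preserve colimits and compact objects. By the universal property of Day convolution on presheaves and the $\bb{Q}$-linearization adjunction, these correspond to symmetric monoidal functors $\mo{Cob}\to\cat{E}$. Since every object of $\mo{Cob}$ is dualizable, such a functor lands in $\cat{E}^{\mathrm{dual}}$, which equals $\cat{E}$ when $\cat{E}$ is rigid. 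By \Cref{thm:cobhyp} these in turn correspond to objects of $\cat{E}$ (functorially, the core). We also need to check that the source is rigid. It is generated under finite colimits, shifts and retracts by $\yo(\X{i}{j})$, which are dualizable since $\yo$ is symmetric monoidal. Dualizable objects are closed under those operations, so the source is rigid, and hence it is the free rigid 2-ring over $\cat{D}^b(\bb{Q})$ on $\yo(X)$. One small point is that rationally $\cat{D}^b(\bb{Q})=\cat{D}(\bb{Q})^{\omega}$, so the base is as stated.

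The main subtlety, and the step I expect to be the obstacle, is the precise compatibility of Day convolution with compact objects and with the enriched (module) structure. Concretely, we must show that the Ind-extension of an exact symmetric monoidal functor $\mo{Fun}(\mo{Cob}^{op},\cat{D}(\bb{Q}))^{\omega}\to\cat{E}$ is determined by its restriction along $\yo$. Here I would cite \cite{HA} on Day convolution being the free presentably symmetric monoidal category, and use that compact objects of $\mo{Fun}(\mo{Cob}^{op},\cat{D}(\bb{Q}))$ are exactly the thick closure of the representables $\yo(c)\otimes\bb{Q}$.

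For the ``in particular'', the generation statement follows from this thick-closure description. The mapping spectra follow from Yoneda: $\hom(\yo(c)\otimes\bb{Q},\yo(d)\otimes\bb{Q})\simeq\bb{Q}[\Hom_{\mo{Cob}}(c,d)]$, which is the $\bb{Q}$-chains, i.e.\ the rational homology spectrum, of the mapping space. These spaces are described by \Cref{proposition:omnibusCob}.
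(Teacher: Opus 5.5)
Your proposal is correct and takes the same route as the paper. The paper gives no separate proof; it obtains the statement by combining the cobordism hypothesis (\Cref{thm:cobhyp}) with the Day-convolution description of free rational stably symmetric monoidal categories (\Cref{proposition:descFree}), exactly as you do. Your explicit check that the result is rigid (it is thick-generated by the dualizable objects $y(\X{i}{j})$) and your Yoneda identification of the mapping spectra with $\bb{Q}$-chains on the mapping spaces of $\mo{Cob}$ fill in details the paper leaves implicit.
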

More generally, following \cite{HA}, the next result is explicitly spelled out (in the case of spectra, but works equally well over an arbitrary stable base such as $\cat{D}(\bb{Q})$) in \cite[Construction 2.7]{aoki2025higherzariskigeometry}:
\begin{proposition}\label{proposition:descFree}
Let $C$ be a small symmetric monoidal category. The free rational stably symmetric monoidal category with a symmetric monoidal functor from $C$ is $\mo{Fun}(C^{op},\cat{D}(\bb{Q}))^\omega$ equipped with the Day convolution structure. 

It is generated under finite colimits, desuspensions and retracts by images of the Yoneda embedding, whose mapping spectra are $\bb{Q}$-homology spectra of the mapping spaces in $C$. 
\end{proposition}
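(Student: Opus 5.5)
The claim is that the free rational stably symmetric monoidal $\infty$-category on a symmetric monoidal functor out of $C$ is $\mo{Fun}(C^{op},\cat{D}(\bb{Q}))^\omega$ with Day convolution. I will obtain it by factoring the free construction into three left adjoints and composing their universal properties, so the argument is mostly bookkeeping of adjunctions in $\mo{CAlg}(\mo{Pr}^L)$.

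\textbf{Step 1 (free presentable, cocontinuous).} By the universal property of presheaves with Day convolution \cite[Section 4.8.1]{HA}, $\mo{Fun}(C^{op},\mc{S})$ with Day convolution is the free presentably symmetric monoidal $\infty$-category receiving a symmetric monoidal functor from $C$, via the Yoneda embedding.

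\textbf{Step 2 (tensor with $\cat{D}(\bb{Q})$).} Tensoring in $\mo{Pr}^L$ with the idempotent-free commutative algebra $\cat{D}(\bb{Q})$ is symmetric monoidal and left adjoint to the forgetful functor $\mo{CAlg}_{\cat{D}(\bb{Q})}(\mo{Pr}^L)\to\mo{CAlg}(\mo{Pr}^L)$. Since $\mo{Fun}(C^{op},\mc{S})\otimes\cat{D}(\bb{Q})\simeq\mo{Fun}(C^{op},\cat{D}(\bb{Q}))$, with Day convolution and with the Yoneda embedding replaced by $\bb{Q}[\yo(-)]$, this category is the free rational presentably symmetric monoidal category on a symmetric monoidal functor from $C$.

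\textbf{Step 3 (pass to compact objects).} Small idempotent-complete rational stable symmetric monoidal categories embed via $\mo{Ind}$ into $\mo{CAlg}_{\cat{D}(\bb{Q})}(\mo{Pr}^L)$ as the compactly generated ones with compact objects closed under tensor, and exact functors correspond to compact-preserving cocontinuous ones. Let $\cat{E}$ be small rational stable symmetric monoidal and $F\colon C\to\cat{E}$ symmetric monoidal. Then $F$ extends uniquely to a cocontinuous symmetric monoidal functor $\mo{Fun}(C^{op},\cat{D}(\bb{Q}))\to\mo{Ind}(\cat{E})$. This extension sends the representables $\bb{Q}[\yo(c)]$ into $\cat{E}$. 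The compact objects $\mo{Fun}(C^{op},\cat{D}(\bb{Q}))^\omega$ are the thick closure of these representables, so the extension restricts to an exact symmetric monoidal functor on compacts landing in $\cat{E}$ (idempotent complete). Conversely, any exact functor out of the compacts Ind-extends. Together these give the equivalence of mapping spaces. This also proves the generation claim.

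\textbf{Step 4 (mapping spectra).} The mapping spectrum between representables is, by Yoneda, $\bb{Q}[\Hom_C(c,d)]=\bb{Q}\otimes\Sigma^\infty_+\Hom_C(c,d)$, the $\bb{Q}$-homology of the mapping space.

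\textbf{Main obstacle.} The delicate point is Step 3. One must check that Day convolution preserves compact objects and that the restriction to compacts is an equivalence of the relevant mapping spaces of symmetric monoidal functors, not just of underlying functors. I would handle this by using that $\mo{Ind}$ is symmetric monoidal and fully faithful from small stable idempotent-complete categories with exact functors into $\mo{Pr}^L_{\mathrm{st}}$ with compact-preserving functors, and then applying $\mo{CAlg}$ to that embedding.

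Proposition~\ref{proposition:descfreeI} then follows from Theorem~\ref{thm:cobhyp} by taking $C=\mo{Cob}$: rigidity in the target is automatic, because representables are dualizable and duals are closed under thick closure.
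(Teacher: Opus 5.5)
Your argument is correct, and it is essentially the argument the paper relies on. The paper gives no independent proof: it defers to Lurie's universal property of Day convolution on presheaves (HA, \S4.8.1), combined with passage to compact objects via the symmetric monoidal equivalence between small idempotent-complete stable categories and compactly generated ones with compact-preserving functors, as spelled out in Aoki's Construction~2.7. That is exactly your Steps~1--3, with Step~4 being the usual Yoneda computation.

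One small slip: $\cat{D}(\bb{Q})$ is in fact an idempotent algebra in $\mo{Pr}^L$, not ``idempotent-free''. This does not matter for your argument, since Step~2 only needs $-\otimes\cat{D}(\bb{Q})$ to be left adjoint to the forgetful functor, which holds for any commutative algebra.
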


\begin{notation}
We define the \emph{(rational) affine line} as
    \[
        \A\coloneqq\mo{Fun}((\mo{Cob})^{op},\cat{D}(\bb{Q}))^{\omega},
    \]
and let $y\colon\mo{Cob}\to \A$ denote the $\bb{Q}$-linearized Yoneda embedding. 
\end{notation}
\begin{remark}
To be careful, one should add a $\bb{Q}$-subscript to this notation, but no non-rational 2-ring will appear in Part I of this paper, and we will not use this notation in Part II, save for when we are talking only about rational categories, so there is no risk for confusion.
\end{remark}

\begin{lemma}\label{lemma:endunit}
The graded endomorphism ring of the unit in $\A$ is given by $\pi_*(\bb{1})=\bb{Q}[t][t_1,t_2,t_3,\ldots]$ with $|t|=0$ and $|t_i|=2i$ for $i>0$.
\end{lemma}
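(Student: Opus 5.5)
The plan is to compute $\pi_*\Hom_{\A}(\bb{1},\bb{1})$ directly from the description of mapping spectra in \Cref{proposition:descfreeI}. The unit of $\A$ is $y(\emptyset)$, the image of the monoidal unit of $\mo{Cob}$. By Yoneda, the endomorphism spectrum of the unit is the rational chains $C_*(\Hom_{\mo{Cob}}(\emptyset,\emptyset);\bb{Q})$. Its homotopy is therefore $H_*(\Hom_{\mo{Cob}}(\emptyset,\emptyset);\bb{Q})$, and the composition product on endomorphisms is induced by composition of cobordisms. Here composition coincides with disjoint union, so the ring structure is the Pontryagin product coming from the commutative monoid structure. (In fact composition and $\otimes$ agree on $\End(\bb{1})$ by Eckmann--Hilton.)

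Next I invoke \Cref{proposition:omnibusCob}(a): $\Hom_{\mo{Cob}}(\emptyset,\emptyset)$ is the free $\bb{E}_\infty$-monoid on $BS^1$, namely
\[
\coprod_{n\ge0}\big((BS^1)^{\times n}\big)_{h\Sigma_n}.
\]
Rational homology of a free $\bb{E}_\infty$-space is the free graded-commutative $\bb{Q}$-algebra on the rational homology of the generating space. The reason is that $H_*(Y^{\times n}_{h\Sigma_n};\bb{Q})\cong (H_*(Y;\bb{Q})^{\otimes n})_{\Sigma_n}$, because homotopy orbits by a finite group are rationally computed by coinvariants. The Pontryagin product then assembles these pieces into $\mathrm{Sym}(H_*(Y;\bb{Q}))$. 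Concretely, I would apply the functor $C_*(-;\bb{Q})$, which is symmetric monoidal from spaces to $\cat{D}(\bb{Q})$ and so sends free $\bb{E}_\infty$-monoids to free $\bb{E}_\infty$-$\bb{Q}$-algebras. I then use that, rationally, the free $\bb{E}_\infty$-algebra on a chain complex $V$ has homotopy $\mathrm{Sym}(H_*V)$ with Koszul signs.

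Finally, I plug in $H_*(BS^1;\bb{Q})=\bb{Q}\{\beta_0,\beta_1,\beta_2,\ldots\}$ with $|\beta_i|=2i$. All generators sit in even degrees, so the graded-symmetric algebra is the polynomial algebra $\bb{Q}[\beta_0,\beta_1,\ldots]$. Setting $t=\beta_0$ recovers the class of the circle itself, i.e.\ the dimension $T$ of $X$, in degree $0$, and setting $t_i=\beta_i$ for $i\ge1$ gives the claimed ring $\bb{Q}[t][t_1,t_2,\ldots]$.

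I expect the main care to be needed in two identifications. The first is that the multiplication on $\End(\bb{1})$ in the linearized category really is the Pontryagin product for disjoint union. The second is the rational computation of homology of the free $\bb{E}_\infty$-space, in particular that the $\Sigma_n$-homotopy orbits contribute exactly symmetric coinvariants with no higher terms. The second is standard since $\bb{Q}[\Sigma_n]$ is semisimple. The first follows from the Day convolution structure making $y$ symmetric monoidal, together with the Eckmann--Hilton argument.
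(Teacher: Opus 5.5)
Your proposal is correct and follows essentially the same route as the paper. Both identify $\End_{\A}(\bb{1})$ with rational chains on $\Hom_{\mo{Cob}}(\emptyset,\emptyset)$ via \Cref{proposition:descfreeI}, recognize this space as the free $\bb{E}_\infty$-monoid on $BS^1$ by \Cref{proposition:omnibusCob}(a), and conclude that rationally one obtains the free commutative algebra on $H_*(BS^1;\bb{Q})$, i.e.\ polynomial generators in degrees $0,2,4,\ldots$. Your two extra remarks make explicit steps the paper leaves implicit: that the composition product agrees with the Pontryagin product, and that rational homotopy orbits are computed by coinvariants.
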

\begin{proof}
By \Cref{proposition:descfreeI} and the ($\bb{Q}$-linear) Yoneda embedding, the graded endomorphism ring of the unit is given by 
    \[
        \pi_*(\bb{1})=\bb{Q}(\mo{End}_{\mo{Cob}}(\bb{1})).
    \]
By \Cref{proposition:omnibusCob}, this mapping space is given by    
    \[
        \mo{Free}_{\bb{E}_{\infty}}(BS^1).
    \]
The $\bb{Q}$-homology of this space is in turn given by the free $\bb{E}_{\infty}$-$\bb{Q}$-algebra on $\bb{Q}[BS^1]\simeq\bb{Q}[x]$ with $|x|=2$, so we get that 
    \[
        \mo{End}_{\A}(\bb{1})\simeq \bb{Q}[\bb{Q}[x]],
    \]
which is a free algebra on generators in degree $2n$ for each $n\geq 0$.
\end{proof}

In the next section we will describe more generally the graded mapping spectra between various generators, see \Cref{lemma:end=br}.

\begin{corollary}\label{cor:BalmerSpcBig}
The Balmer spectrum $\mo{Spc}(\A)$ surjects onto the spectrum of homogeneous prime ideals in the graded polynomial ring $\bb{Q}[t_0,t_1,t_2,\ldots].$
\end{corollary}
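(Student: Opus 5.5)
We apply \Cref{prop:comparisonUnit} to $\A$. It gives a natural continuous map
\[
\rho\colon \Spc(\A)\to \Spech(\pi_*(\mo{End}_{\A}(\bb{1}))).
\]
By \Cref{lemma:endunit}, the target is the homogeneous spectrum of $\bb{Q}[t][t_1,t_2,\ldots]$, which is the graded polynomial ring $\bb{Q}[t_0,t_1,t_2,\ldots]$ with $t_0=t$ in degree $0$ and $t_i$ in degree $2i$. This graded ring is honestly commutative, since all generators sit in even degrees. So it only remains to show that $\rho$ is surjective.

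For surjectivity we invoke Balmer's criterion from \cite{BalmerSpSpSp}. If the graded endomorphism ring $R^\bullet$ of the unit is coherent (for instance noetherian), then $\rho$ is surjective. Our ring is not noetherian, but it is a polynomial ring in countably many variables over a field. Such a ring is coherent, since it is a filtered colimit of noetherian polynomial rings along flat transition maps. Thus Balmer's theorem (Theorem 7.13 of \loccit, which requires only coherence of $R^\bullet$) applies directly.

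If one wants to avoid the coherence hypothesis, there is a more hands-on approach. Given a homogeneous prime $\gp$, we consider the localization of $\A$ at the multiplicative set $S$ of homogeneous elements outside $\gp$. We then take a prime lying over using the standard argument: the objects $\cone(s)$ for $s\in S$, tensored with $\gp$-Koszul-type objects, generate a proper ideal. The key input there is the comparison between $\rho^{-1}(\gp)$ and the nonemptiness of the spectrum of $\A_S/\langle \cone(f) : f\in\gp\rangle$. Nonemptiness reduces to the unit being nonzero in this Verdier quotient, which can be checked on the graded homotopy of the unit, since $R^\bullet_{\gp}/\gp\neq 0$.

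The main obstacle is exactly this last point: showing that killing $\cone(f)$ for infinitely many $f\in\gp$ does not kill the unit. The unit dies precisely when a finite tensor product of such cones contains $\bb{1}$, that is, when some product of Koszul-type maps is invertible. This is excluded by the usual Koszul argument on each finitely generated subring, which is where coherence (or the filtered-colimit reduction to finitely many variables) enters. I expect citing Balmer's coherent surjectivity theorem to settle this cleanly.
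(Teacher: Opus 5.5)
Your argument is the same as the paper's: you identify the graded endomorphism ring of the unit via \Cref{lemma:endunit} and then invoke Balmer's surjectivity of $\rho$ for coherent graded rings, which the paper cites as \cite[Theorem 7.3]{BalmerSpSpSp}. Your justification that a polynomial ring in countably many variables over $\bb{Q}$ is coherent fills in a point the paper leaves implicit; the hands-on alternative you sketch is not needed.
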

\begin{proof}
Combine \Cref{lemma:endunit} with the fact that the Balmer spectrum of a tt-category surjects onto the graded spectrum of the unit, at least when the latter is coherent, see \cite[Theorem 7.3]{BalmerSpSpSp}. 
\end{proof}

The fiber of the comparison map over the generic point of $\Spec(\pi_*(\bb{1}))$ appears as the Balmer spectrum of the following 2-ring:
    \begin{notation}
        We let $\A_\eta$ be the quotient of $\A$ by the thick tensor ideal generated by $\mo{cofib}(a\colon\Sigma^{i}\bb{1}\to\bb{1})$ for all nonzero homogeneous elements $a\in \pi_i(\bb{1})$. 
    \end{notation}
    
\begin{remark}
    Equivalently, $\A_\eta$ can be defined as as the basechange of $\A$ along the map of commutative ring spectra $\End_{\A}(\bb{1})\to \End_{\A}(\bb{1})[a^{-1}, a\in \pi_k(\bb{1})\setminus\{0\}, k\in \bb{N}]$. Here, the subscript $\eta$ on $\A_\eta$ is supposed to make one think of a generic point. 
\end{remark}

\subsection{The free symmetric monoidal category on a pointed dualizable object}
The second free construction we will need to analyze is the free symmetric monoidal category on a pointed dualizable object, i.e., on a dualizable object $X$ equipped with a map $\bb{1}\to X$. We will denote this category by $\mo{Cob}^+$.

This category clearly receives a morphism $i\colon\mo{Cob}\to \mo{Cob}^+$ picking out the object $X$, and it is not hard to prove that this morphism is essentially surjective, so one can view the objects of $\mo{Cob}^+$ as oriented $0$-dimensional manifolds as well. 

Heuristically, we may thus think of $\mo{Cob}^+$ as a cobordism category, where two new types of cobordisms are allowed: the half open intervals $(-\infty,0]$ and $[0,\infty)$ as cobordisms from $\emptyset$ to $+$ and $-$ to $\emptyset$ respectively, though this is harder to prove rigorously. A precise proof will appear in forthcoming work of Barkan and Steinebrunner \cite{barkansteinebrunner2} as a special case of their generalized $1$D cobordism hypothesis. We will, however, not need the full strength of this precise identification and only certain properties of $\mo{Cob}^+$ that can actually establish more easily using their previous work \cite{barkansteinebrunnerSeg}.

The precise facts we will need about this category are the following: 
\begin{proposition}\label{proposition:omnibuscob+}
The free symmetric monoidal category $\mo{Cob}^+$ on a pointed dualizable object $X$ and the induced functor $i\colon\mo{Cob}\to \mo{Cob}^+$ have the following properties: 
\begin{enumerate}
\item $i$ is essentially surjective; 
\item $i$ induces equivalences of mapping spaces 
    \[
        \Hom_{\mo{Cob}}(\X{i}{j},\X{r}{s})\to\Hom_{\mo{Cob}^+}(\X{i}{j},\X{r}{s})
    \]
whenever $r-s\leq i-j$. In particular, they are both empty when $r-s < i-j$. 
\end{enumerate}
In particular, $i$ is an equivalence on endomorphisms of the unit. 
\end{proposition}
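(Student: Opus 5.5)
The plan is to control $\mo{Cob}^+$ through its universal property, using the $\bb{Z}$-valued degree $\X{i}{j}\mapsto i-j$ already used in the proof of \Cref{proposition:omnibusCob}(b). We will construct symmetric monoidal $\infty$-categories receiving a pointed dualizable object, and compare them with $\mo{Cob}^+$.

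\emph{Step 1: essential surjectivity.} Write $\mo{Cob}^+$ as the pushout, in symmetric monoidal $\infty$-categories, of $\mo{Cob}\leftarrow \mo{Free}(\Delta^0)\to\mo{Free}(\Delta^1)$. The left map picks out $X$, and the right map includes $\Delta^0$ as the target of an arrow whose source is sent to $\bb{1}$. More precisely, $\mo{Cob}^+$ is the pushout along the functor from $\mo{Free}(\partial\Delta^1)$ that sends the source to the unit. The map $\mo{Free}(\partial\Delta^1)\to\mo{Free}(\Delta^1)$ is essentially surjective, because it only adds a morphism. Essentially surjective maps are the left class of a factorization system on symmetric monoidal $\infty$-categories, so they are stable under pushout. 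This gives (a).

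\emph{Step 2: vanishing below the diagonal and a retraction.} Let $\cat{D}$ be the symmetric monoidal $\infty$-category with the same objects as $\mo{Cob}$ and mapping spaces
\[
\Hom_{\cat D}(a,b)=\begin{cases}\Hom_{\mo{Cob}}(a,b) & \deg a=\deg b,\\ \ast & \deg a<\deg b,\\ \emptyset & \deg a>\deg b.\end{cases}
\]
Composites that strictly raise degree are absorbed into the point, and the tensor product is inherited from $\mo{Cob}$, which is compatible because degree is additive. Formally, $\cat D$ is the pullback-type construction over the symmetric monoidal poset $(\bb{Z},\le,+)$ associated to $\mo{Cob}\to\bb{Z}$. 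In $\cat D$ the object $X$ is dualizable, since $\mo{Cob}\subseteq\cat D$, and it carries a unique point $\bb{1}\to X$. The universal property therefore gives a symmetric monoidal functor $q\colon\mo{Cob}^+\to\cat D$ with $q\circ i$ equal to the inclusion. Composing further with the symmetric monoidal functor $\cat D\to(\bb{Z},\le)$ shows that $\Hom_{\mo{Cob}^+}(\X{i}{j},\X{r}{s})=\emptyset$ when $r-s<i-j$. For degree-preserving mapping spaces, $q$ gives a retraction of $i\colon\Hom_{\mo{Cob}}\to\Hom_{\mo{Cob}^+}$.

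\emph{Step 3: surjectivity.} Consider the wide subcategory $C\subseteq\mo{Cob}^+$ consisting of all degree-increasing morphisms, together with those degree-preserving morphisms whose components lie in the image of $i$. It is closed under composition and tensor products: by monotonicity of degree, a degree-preserving composite has only degree-preserving factors, and both factors then come from $\mo{Cob}$. $C$ also contains the duality data and the point. By the universal property, the identity of $\mo{Cob}^+$ factors through $C$, so $i$ is surjective on $\pi_0$ of degree-preserving mapping spaces.

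The main obstacle is upgrading ``retract plus $\pi_0$-surjective'' to an equivalence of mapping spaces. My first attempt would be to show that $q$ is an equivalence on degree-preserving mapping spaces. For this I would check that $\cat D$ itself satisfies the universal property of a pointed dualizable object on its degree $\le$ part. Concretely, any symmetric monoidal functor out of $\mo{Cob}^+$ restricted to degree-preserving morphisms should factor through $q$. I would try to deduce this from a universal property of $\cat D$ obtained via the factorization over $(\bb{Z},\le)$, i.e., $\mo{Cob}^+\to\cat D$ being the left Kan extension that freely inverts nothing in degree $0$. If that fails, the fallback is to import the Segal-space description of $\mo{Cob}^+$ from \cite{barkansteinebrunnerSeg}. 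That description presents the mapping spaces of free pointed-dualizable constructions as spaces of graphs with half-open legs. Every half-open leg strictly increases degree, so the degree-preserving part consists only of ordinary cobordisms, which identifies it with $\Hom_{\mo{Cob}}$. The last assertion of the statement then follows from (b) with $i=j=r=s=0$.
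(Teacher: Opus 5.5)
\textbf{There is a genuine gap in the equal-degree case of (b).}

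Steps 1--3 are sound as far as they go. Step 1 is a valid alternative to the paper's argument for (a). The degree functor correctly gives emptiness for $r-s<i-j$, and you do not even need $\cat D$ for this: $\mo{Cob}^+$ maps directly to $(\bb{Z},\le,+)$, where $1$ is dualizable and $0\le 1$ serves as the pointing.

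For $r-s=i-j$, however, Steps 2 and 3 only show two things: $i\colon\Hom_{\mo{Cob}}(\X{i}{j},\X{r}{s})\to\Hom_{\mo{Cob}^+}(\X{i}{j},\X{r}{s})$ admits a retraction, and it is surjective on $\pi_0$. So it is injective on all homotopy groups and bijective on $\pi_0$, and nothing more. These spaces are not discrete: $\Hom_{\mo{Cob}}(\bb{1},\bb{1})\simeq\coprod_n((BS^1)^{\times n})_{h\Sigma_n}$. Their higher homotopy is exactly what is used later, in the classes $t_i$ of \Cref{lemma:endunit} and in \Cref{lemma:differentnumbersII}. So the real content of (b) is missing.

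Neither of your proposed repairs works as stated. The universal property you hope for from $\cat D$ is false: $\cat D$ is not free on a pointed dualizable object. For example, $\Hom_{\cat D}(\bb{1},X)\simeq *$, while $f$ and $f\cdot T$ ($T$ the circle) lie in different components of $\Hom_{\mo{Cob}^+}(\bb{1},X)$. One sees this by mapping to $\mathrm{Vect}_{\bb{Q}}$ with $\dim V=2$ and $v\neq 0$, where they go to $v$ and $2v$. Your fallback also misattributes its source: \cite{barkansteinebrunnerSeg} contains no ``graphs with half-open legs'' model of $\mo{Cob}^+$. That geometric description is the forthcoming \cite{barkansteinebrunner2}, which the paper does not rely on.

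\textbf{How the paper closes the gap.} What \cite{barkansteinebrunnerSeg} does provide, and what the paper uses, is the necklace formula for Segalification. The paper proceeds as follows.
\begin{itemize}
\item It writes $\mo{Cob}^+$ as the pushout of $\mo{Cob}\leftarrow\Fin^{\simeq}\to\Fin^{\inj}$.
\item It computes this pushout $P^+$ levelwise in commutative monoids in simplicial spaces as $(P^+)_n\simeq((\Fin^{\inj})^{[1,n]})^{\simeq}\times(\mo{Cob}^{[n]})^{\simeq}$ (\Cref{lemma:injfree}). Hence $N(\mo{Cob})\subseteq P^+$ is levelwise an inclusion of components.
\item A degree count shows that every necklace in $P^+$ from $\X{i}{j}$ to $\X{r}{s}$ with $r-s\le i-j$ has empty $\Fin^{\inj}$-coordinate in each bead. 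Such a necklace therefore lies in $N(\mo{Cob})$.
\item The colimit over necklaces then yields the equivalence of mapping spaces, higher homotopy included, together with the emptiness.
\end{itemize}

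\textbf{How to complete your own route.} Your $\cat D$ is $j_*\mo{Cob}$ for $j\colon\bb{Z}^{\mathrm{disc}}\hookrightarrow(\bb{Z},\le)$. What is missing is that $j^*$, i.e. passing to degree-preserving morphisms, commutes with the pushout. This holds for three reasons.
\begin{itemize}
\item $j$ satisfies the Conduch\'e condition: if $g\circ f$ is an identity in a poset, so are $f$ and $g$. Hence $j^*$ preserves colimits.
\item Degree changes are nonnegative, so $j^*$ of a product with summed degree is the product of the $j^*$'s.
\item Write $F=\mo{Free}(\partial\Delta^1)$ and $G=\mo{Free}(\Delta^1)$, so that $\mo{Cob}^+$ is the realization of the bar construction $\mo{Cob}\times F^{\times\bullet}\times G$. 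Applying $j^*$ termwise and using $j^*G=G^{\simeq}\simeq F$ gives $j^*\mo{Cob}^+\simeq\mo{Cob}$ via $i$.
\end{itemize}
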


 We delay the proof to the end of the section, and instead indicate how we will use $\mo{Cob}^+$.

From \Cref{proposition:descFree}, we also have the following special case: 
\begin{proposition}\label{proposition:descfreeII}
The free rigid stably symmetric monoidal $\infty$-category over $\cat{D}^b(\bb{Q})$ on a pointed object is given by the category 
    \[
        \mo{Fun}((\mo{Cob}^+)^{op},\cat{D}(\bb{Q}))^{\omega},
    \]
with the Day convolution symmetric monoidal structure.

In particular, it is generated under finite colimits, desuspensions and retracts by images of the Yoneda embedding, whose mapping spectra are $\bb{Q}$-homology spectra of the mapping spaces in $\mo{Cob}^+$. 
\end{proposition}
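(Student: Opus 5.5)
The plan is to deduce this from \Cref{proposition:descFree} applied to $C=\mo{Cob}^+$, once we know that $\mo{Cob}^+$ is the free (rigid) symmetric monoidal $\infty$-category on a pointed dualizable object. The only real content is comparing universal properties. For a rational rigid stably symmetric monoidal $\infty$-category $\cat{E}$, the space of symmetric monoidal functors out of $\mo{Fun}((\mo{Cob}^+)^{op},\cat{D}(\bb{Q}))^{\omega}$ (exact and $\cat{D}^b(\bb{Q})$-linear on the stable side) is, by \Cref{proposition:descFree}, equivalent to the space of symmetric monoidal functors $\mo{Cob}^+\to\cat{E}$. By the defining universal property of $\mo{Cob}^+$, the latter is the space of pointed dualizable objects $\bb{1}\to Y$ in $\cat{E}$. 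Since $\cat{E}$ is rigid, every object is dualizable, so this is just the space of pointed objects of $\cat{E}$. This identifies the category in question as the free rational stably symmetric monoidal category on a pointed object among rigid targets.

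Next I will check that the category itself is rigid, so that it is the free object in the rigid world and not merely free with a map into rigid targets. The compact objects of the presheaf category are generated under finite colimits, shifts and retracts by the representables $y(c)$, $c\in\mo{Cob}^+$. The linearized Yoneda embedding is symmetric monoidal for Day convolution, so each $y(c)$ is dualizable, because $c$ is dualizable in $\mo{Cob}^+$: every object there is a tensor product of copies of $X$ and $X^\vee$, by \Cref{proposition:omnibuscob+}(a) or directly by the universal property. Dualizable objects are closed under finite colimits, shifts and retracts in a stable symmetric monoidal category, and the unit $y(\bb{1})$ is compact. Hence the compact objects are exactly the dualizable ones, and the category is rigid.

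The description of mapping spectra in the final clause is then immediate from the $\bb{Q}$-linear Yoneda lemma: $\mo{map}(y(c),y(d))\simeq\bb{Q}[\Hom_{\mo{Cob}^+}(c,d)]$. The generation statement follows because compact objects of a presheaf category are built from representables under finite colimits, shifts and retracts.

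The main (mild) obstacle is the bookkeeping of universal properties. Freeness of $\mo{Cob}^+$ is stated among all symmetric monoidal $\infty$-categories, whereas \Cref{proposition:descFree} is stated for small symmetric monoidal categories. We need that restricting to rigid stable targets loses nothing, and that $\cat{D}^b(\bb{Q})$-linearity corresponds to the rational presheaf category. I expect this to be handled exactly as in the unpointed case, \Cref{proposition:descfreeI}, with \Cref{thm:cobhyp} replaced by the defining property of $\mo{Cob}^+$.
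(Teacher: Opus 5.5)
Your proposal is correct and follows the paper's route. The paper obtains the statement directly as the special case $C=\mo{Cob}^+$ of \Cref{proposition:descFree}, using that $\mo{Cob}^+$ is by definition free on a pointed dualizable object; your check that the compact presheaves are all dualizable (so the category is itself rigid) spells out what the paper leaves implicit, and the bookkeeping concern you raise at the end is harmless since $\mo{Cob}^+$ is small.
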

\begin{notation}
We let  
    \[
        \Apoi\coloneqq\mo{Fun}((\mo{Cob}^+)^{op},\cat{D}(\bb{Q}))^{\omega}
    \]
and let $y\colon\mo{Cob}^+\to \Apoi$ denote the $\bb{Q}$-linearized Yoneda embedding. 

We also abuse notation and let $i\colon\A\to \Apoi$ denote the $\bb{Q}$-linearization of the functor $i\colon\mo{Cob}\to \mo{Cob}^+$. 
\end{notation}

\begin{construction}\label{construction:iota}
Since $\Apoi$ is free on a pointed object, there is a 2-ring map   
    \[
        \iota\colon\Apoi\to\Apoi
    \]
which takes our free pointed object $f\colon\bb{1}\to X$ to the dual of its fiber $g^{\vee}\colon\bb{1}\to y^{\vee}$ for some choice of fiber $g$ of $f$ (the choice of which is unique up to a contractible space of choices).
\end{construction}
We have:
\begin{proposition}\label{proposition:involution}
The functor $\iota$ is an auto-equivalence.  Moreover, $\iota$ is an involution in the sense that $\iota\circ\iota\simeq id_{\Apoi}$.
\end{proposition}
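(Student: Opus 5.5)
By the universal property of $\Apoi$ as the free rigid 2-ring on a pointed object, a 2-ring endomorphism of $\Apoi$ is determined up to a contractible space of choices by the image of the universal pointed object $f\colon\bb{1}\to X$. The plan is therefore to show that $\iota\circ\iota$ sends $f$ to a pointed object equivalent to $f$ itself. It then follows that $\iota\circ\iota\simeq\id_{\Apoi}$, and hence that $\iota$ is an equivalence with inverse $\iota$.

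The key step is a pointwise computation, valid in any rigid 2-ring $\cat{C}$ with a pointed object $f\colon\bb{1}\to x$. Let $g\colon y\to\bb{1}$ be the fiber of $f$, so that $y\xrightarrow{g}\bb{1}\xrightarrow{f}x$ is a fiber sequence. Dualizing gives a cofiber sequence
\[
x^\vee\xrightarrow{f^\vee}\bb{1}\xrightarrow{g^\vee}y^\vee,
\]
because duality on a rigid stable category is an exact anti-equivalence. This exhibits $f^\vee$ as the fiber of the new pointed object $g^\vee$. Applying the construction a second time therefore produces the dual of $f^\vee$, namely $f^{\vee\vee}\colon\bb{1}\to x^{\vee\vee}$, which the canonical double-duality equivalence $x\simeq x^{\vee\vee}$ identifies with $f$.

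To turn this into a statement about $\iota$, I would note that $\iota$ is a 2-ring map, so it preserves fibers and duals. Write $\iota(f)=g^\vee$. Then $(\iota\circ\iota)(f)=\iota(g^\vee)$ is the dual of the fiber of $\iota(f)=g^\vee$, computed inside $\Apoi$. By the previous paragraph this is $f^{\vee\vee}\simeq f$.

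The only delicate point is coherence: the identification $(\iota\circ\iota)(f)\simeq f$ must be an equivalence of pointed objects, i.e.\ an equivalence in $\cat{C}_{\bb{1}/}$, and not merely an equivalence of underlying objects. This holds because the fiber and dual constructions are functorial on arrow categories. Concretely, the identification $\cofib(f^\vee)\simeq y^\vee$ is compatible with the map out of $\bb{1}$, and the double-duality equivalence is natural, hence compatible with $f$. Since the space of choices of fiber is contractible, the resulting natural equivalence $\iota\circ\iota\simeq\id$ is well defined.
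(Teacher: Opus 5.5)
Your proposal is correct and follows essentially the same route as the paper: use that $\iota$ is exact and symmetric monoidal to get $\iota(g)\simeq\mo{fib}(g^\vee)\simeq f^\vee$, hence $\iota(\iota(f))\simeq\iota(g)^\vee\simeq f^{\vee\vee}\simeq f$, and then invoke the freeness of $\Apoi$ on the pointed object to promote this to $\iota\circ\iota\simeq\id_{\Apoi}$. Your remark that the identification must hold in $\cat{C}_{\bb{1}/}$, and not just on underlying objects, makes explicit a coherence point the paper leaves implicit.
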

\begin{proof}
Since $\iota$ is exact and symmetric monoidal, we have that 
    \[
        \iota(g)\simeq \mo{fib}(\iota(f))=\mo{fib}(g^{\vee})\simeq f^{\vee},
    \]
so that 
    \[
        \iota(\iota(f))\simeq \iota(g^{\vee})\simeq \iota(g)^{\vee}\simeq f.
    \]
Since $\Apoi$ is free on the pointed dualizable object $f\colon\bb{1}\to X$, the equivalence $\iota(\iota(f))\simeq f$ extends to an equivalence $\iota\circ \iota\simeq id_{\Apoi}$.
\end{proof}
This functor will be used crucially in the sequel to show that a local category obtained from $\Apoi$ (specifically, $\Apoi_\eta \coloneqq\Apoi\otimes_{\A} \A_\eta$) does not satisfy the exact-nilpotence condition from \Cref{def:ENC}.
\begin{remark}
Taking into account the forthcoming work of Barkan--Steinebrunner \cite{barkansteinebrunner2}, $\mo{Cob}^+$ has a geometric description. 
It seems rather nontrivial to define the functor $\iota$ while referring only to this geometric description, while its description in terms of universal properties is evident. 
\end{remark}

We conclude this section with the proof of \Cref{proposition:omnibuscob+}. 

Recall that for us, $\mo{Cob}^+$ is defined as the free symmetric monoidal category on a pointed object, and not as a geometric object. 

The first thing we will need is the description of the free symmetric monoidal category on a pointed object, with no dualizability condition:
\begin{lemma}[{\cite[Lemma 2.1]{logan}}]\label{lemma:freepoinodbl}
The free symmetric monoidal category on an object $X$ is $\Fin^\simeq$, and the free symmetric monoidal category on an object $X$ equipped with a map $\bb{1}\to X$ is $\Fin^\inj$, and the induced map sending $X$ to $X$ is the canonical inclusion $\Fin^\simeq\to\Fin^\inj$. \end{lemma}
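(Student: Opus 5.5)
We have to prove that the free symmetric monoidal $\infty$-category on an object is $\Fin^\simeq$, and that the free one on a pointed object is $\Fin^\inj$, with the natural inclusion. The plan is to verify the universal property directly, in the form of an equivalence of mapping spaces. For a symmetric monoidal $\infty$-category $\cat{D}$, evaluation at the singleton $\{*\}$ should give an equivalence $\mo{Fun}^{\otimes}(\Fin^\simeq,\cat{D})\simeq \cat{D}^\simeq$. Similarly, evaluation at $\emptyset\to\{*\}$ should give $\mo{Fun}^{\otimes}(\Fin^\inj,\cat{D})\simeq \mo{Fun}(\Delta^1,\cat{D})^\simeq\times_{\cat{D}^\simeq}\{\bb{1}\}$, the space of pointed objects.

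\textbf{Unpointed case.} This is standard. By \cite{HA}, the free symmetric monoidal $\infty$-category on an object is the free $\bb{E}_\infty$-algebra in $\mo{Cat}_\infty$ on a point, namely
\[
\coprod_n (\ast)_{h\Sigma_n}=\coprod_n B\Sigma_n\simeq \Fin^\simeq,
\]
with disjoint union as the tensor product. Equivalently, $\Fin^\simeq$ is the free $\bb{E}_\infty$-space on a point, and $\bb{E}_\infty$-algebras in spaces embed fully faithfully into those in $\mo{Cat}_\infty$ after passing to cores.

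\textbf{Pointed case.} I would use that $\Fin^\inj$ with disjoint union is the symmetric monoidal envelope, in the sense of \cite[2.2.4]{HA}, of the $\infty$-operad $\bb{E}_0$. The envelope $\mo{Env}(\cat{O})$ has objects lists of colors, and its morphisms are active maps in $\cat{O}^\otimes$. For $\bb{E}_0=\mo{Fin}_{*}^{\inj}$ with only the zero-ary and unary operations, active maps $\langle m\rangle\to\langle n\rangle$ are exactly injections $m\to n$, so $\mo{Env}(\bb{E}_0)\simeq\Fin^\inj$. By the universal property of the envelope,
\[
\mo{Fun}^{\otimes}(\mo{Env}(\bb{E}_0),\cat{D})\simeq \mo{Alg}_{\bb{E}_0}(\cat{D}),
\]
and $\mo{Alg}_{\bb{E}_0}(\cat{D})$ is the $\infty$-category of pointed objects $\bb{1}\to X$ \cite[2.1.3.10]{HA}. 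Taking cores gives the desired equivalence.

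The same argument with the trivial operad, which has only unary identities, produces $\Fin^\simeq$ as its envelope. The map of operads from the trivial operad to $\bb{E}_0$ induces the inclusion $\Fin^\simeq\to\Fin^\inj$, and this inclusion sends $X$ to $X$. This recovers the unpointed case uniformly and identifies the comparison functor.

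The main obstacle is the identification of the envelope of $\bb{E}_0$ with $\Fin^\inj$. Here one has to check that active morphisms in $\bb{E}_0^\otimes$ form a discrete category, with mapping spaces given by sets of injections, and not something with higher homotopy. This holds because $\bb{E}_0^\otimes$ is a 1-category, being a subcategory of $\mo{Fin}_*$, so the envelope is a 1-category. It then remains to compare its morphisms with injections explicitly.
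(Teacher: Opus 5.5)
Your proposal is correct. The paper gives no argument of its own for this lemma; it only imports the statement from \cite[Lemma 2.1]{logan}. Your route is the standard proof: you compute the symmetric monoidal envelopes of the trivial operad and of $\bb{E}_0^\otimes$ as $\Fin^\simeq$ and $\Fin^\inj$ under disjoint union. You then combine the universal property of the envelope \cite[Proposition 2.2.4.9]{HA} with $\mo{Alg}_{\bb{E}_0}(\cat{D})\simeq\cat{D}_{\bb{1}/}$. This even yields $\mo{Fun}^{\otimes}(\Fin^\inj,\cat{D})\simeq\cat{D}_{\bb{1}/}$ as $\infty$-categories, not just on cores. Functoriality of $\mo{Env}$ along $\mathrm{Triv}^\otimes\subseteq\bb{E}_0^\otimes$ also correctly identifies the comparison functor with the inclusion $\Fin^\simeq\to\Fin^\inj$.

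One point of notation should be fixed. $\bb{E}_0^\otimes$ is not the category of injective maps of pointed finite sets. That category contains no inert maps such as $\langle 2\rangle\to\langle 1\rangle$, so it is not an operad. Rather, $\bb{E}_0^\otimes$ is the subcategory of $\mo{Fin}_*$ of maps whose fibres over non-base points have at most one element, i.e.\ partial injections. Only its \emph{active} part consists of honest injections $\langle m\rangle^\circ\hookrightarrow\langle n\rangle^\circ$, and that active part is all your envelope computation uses.
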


\begin{corollary}
The canonical maps $\Fin^\simeq\to \mo{Cob}$ and $\Fin^\inj\to \mo{Cob}^+$ coming from \Cref{lemma:freepoinodbl} fit in a pushout square of symmetric monoidal categories: 
\[\begin{tikzcd}
	{\Fin^\simeq} & \mo{Cob} \\
	{\Fin^{\inj}} & {\mo{Cob}^+}
	\arrow[from=1-1, to=1-2]
	\arrow[from=1-1, to=2-1]
	\arrow[from=1-2, to=2-2]
	\arrow[from=2-1, to=2-2]
\end{tikzcd}\]
\end{corollary}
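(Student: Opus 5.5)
This is a formal universal-property argument carried out in the $\infty$-category $\CAlg(\mathrm{Cat})$ of symmetric monoidal $\infty$-categories. The idea is to identify each of the four corners of the square with a corepresentable functor on that category and check that the square becomes a pullback after applying maps out.

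Write $\mathrm{Map}(-,D)$ for mapping spaces in $\CAlg(\mathrm{Cat})$. The four corners corepresent the following functors of a symmetric monoidal $\infty$-category $D$:
\begin{itemize}
\item $\Fin^\simeq$: the core $D^\simeq$, by \Cref{lemma:freepoinodbl};
\item $\Fin^\inj$: the space of pairs $(x,\bb{1}\to x)$, by \Cref{lemma:freepoinodbl};
\item $\mo{Cob}$: the subspace $D^{\mathrm{dual}}\subseteq D^\simeq$ of dualizable objects, by \Cref{thm:cobhyp};
\item $\mo{Cob}^+$: the space of pairs $(x,\bb{1}\to x)$ with $x$ dualizable, by definition.
\end{itemize}
The maps in the square are the ones making these identifications natural. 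Both $\Fin^\simeq\to\mo{Cob}$ and $\Fin^\inj\to\mo{Cob}^+$ send the generator to the generator, so they induce the inclusions that forget dualizability. The map $\Fin^\simeq\to\Fin^\inj$ induces the map forgetting the point.

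Applying $\mathrm{Map}(-,D)$ turns the square into
\[
\mathrm{Map}(\mo{Cob}^+,D)\to \mathrm{Map}(\Fin^\inj,D)\times_{D^\simeq} D^{\mathrm{dual}}.
\]
The right-hand side is the space of pointed objects $\bb{1}\to x$ whose underlying object is dualizable, which is exactly $\mathrm{Map}(\mo{Cob}^+,D)$. This identification is a genuine pullback of spaces because $D^{\mathrm{dual}}\to D^\simeq$ is the inclusion of a union of path components. Dualizability is a property, not extra structure: the space of duality data on a given object is either empty or contractible. So the fiber product simply restricts to those components.

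The comparison map is natural in $D$, and the Yoneda lemma turns this pointwise pullback into a pushout in $\CAlg(\mathrm{Cat})$. This argument does not need the geometric description of $\mo{Cob}^+$.

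The main point to check carefully is the claim that "dualizable" is a property, i.e.\ that $\mathrm{Map}(\mo{Cob},D)\to D^\simeq$ is a monomorphism of spaces onto the components of dualizable objects. This is the precise form of the cobordism hypothesis in \Cref{thm:cobhyp}, which we will invoke. The remaining bookkeeping is to verify that the maps in the square are compatible with the identifications above, which holds because every map in the square sends the generator to the generator.
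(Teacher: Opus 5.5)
Your proposal is correct and is the same argument as the paper's, whose proof is the single line ``immediate by comparing universal properties''. You have simply written that comparison out via Yoneda in symmetric monoidal $\infty$-categories, correctly isolating the one substantive input: $\mathrm{Map}(\mo{Cob},D)\to D^\simeq$ is the inclusion of the components of dualizable objects, so the fiber product is exactly the space of pointed dualizable objects corepresented by $\mo{Cob}^+$.
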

\begin{proof}
This is immediate by comparing universal properties. 
\end{proof}

The idea of the proof of \Cref{proposition:omnibuscob+} is then to start by computing the pushout above in commutative monoids in simplicial spaces, and analyze how far the result is from being a Segal space. In turn, the latter analysis will use Barkan and Steinebrunner's formula for Segalification, cf. \cite{barkansteinebrunnerSeg}.

To analyze the pushout in simplicial spaces, we first need the following elementary bit of combinatorics:
\begin{lemma}\label{lemma:injfree}
For every $[n]\in\Delta$, the map 
    \[
        (\Fin^\simeq)\times ((\Fin^\inj)^{[1,n]})^\simeq\simeq (\Fin^\simeq)^{[n]}\times ((\Fin^\inj)^{[1,n]})^\simeq \to ((\Fin^\inj)^{[n]})^\simeq
    \]
defined by $(X,X_1\hookrightarrow X_2\hookrightarrow ... \hookrightarrow X_n)\to (X\hookrightarrow X\coprod X_1 \hookrightarrow X\coprod X_2 \hookrightarrow ... \hookrightarrow X\coprod X_n)$ is an equivalence of $\Fin^\simeq \simeq (\Fin^\simeq)^{[n]}$-modules. 
\end{lemma}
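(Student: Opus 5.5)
We prove that the map is an equivalence of spaces by comparing the groupoids on both sides. The target $((\Fin^\inj)^{[n]})^\simeq$ is the groupoid of chains of injections $Y_0\hookrightarrow Y_1\hookrightarrow\cdots\hookrightarrow Y_n$ of finite sets, with isomorphisms given by compatible bijections $Y_k\cong Y'_k$. The source is the groupoid of pairs consisting of a finite set $X$ and a chain $X_1\hookrightarrow\cdots\hookrightarrow X_n$. First we check that the formula defines a functor of groupoids: bijections of $X$ and of the $X_k$ induce bijections of the coproducts $X\coprod X_k$ compatible with the inclusions. Next we check compatibility with the $\Fin^\simeq$-module structures. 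On the source, $\Fin^\simeq$ acts by $Z\cdot(X,X_\bullet)=(Z\coprod X, X_\bullet)$. On the target, $(\Fin^\simeq)^{[n]}\simeq\Fin^\simeq$ acts diagonally via constant chains, $Z\cdot Y_\bullet=(Z\coprod Y_k)_k$. The map visibly intertwines the two actions up to the canonical associativity isomorphisms of $\coprod$, and the coherence is inherited from the symmetric monoidal structure of $\coprod$. In practice one can define the whole map as the composite of $\coprod$ on $(\Fin^\inj)^{[n]}$ with the inclusion of $(\Fin^\simeq)\times ((\Fin^\inj)^{[1,n]})^\simeq$, where the chain is extended by $\emptyset\hookrightarrow X_1$ in degree $0$ and $X$ is placed as a constant chain. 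This makes linearity automatic.

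It then remains to show that the underlying functor of groupoids is an equivalence.

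\emph{Essential surjectivity.} Given a chain $Y_0\hookrightarrow\cdots\hookrightarrow Y_n$, set $X=Y_0$ and $X_k=Y_k\setminus \operatorname{im}(Y_0)$, with the restricted injections $X_k\hookrightarrow X_{k+1}$. These are well defined because the image of $Y_0$ in $Y_{k+1}$ is the image of its image in $Y_k$, so complements map into complements. Then $Y_k\cong X\coprod X_k$ compatibly.

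\emph{Full faithfulness.} An isomorphism $(\sigma_k\colon X\coprod X_k\cong X'\coprod X'_k)_k$ of chains must satisfy $\sigma_k|_X=\sigma_0$, since $X$ sits inside $X\coprod X_k$ through the chain. Because $\sigma_k$ is a bijection carrying $X$ onto $X'$, it also carries the complement $X_k$ onto $X'_k$. So $\sigma_\bullet$ decomposes uniquely as $\sigma_0\coprod(\tau_k)$, with $\tau_\bullet$ an isomorphism of chains $X_\bullet\cong X'_\bullet$. This gives a bijection on hom-sets.

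The only mild obstacle is bookkeeping: we must ensure the module-compatibility data is coherent, not merely objectwise. Defining the map as a composite of symmetric monoidal functors, as above, sidesteps this. Since everything is a $1$-groupoid, the equivalence check reduces to the elementary combinatorics described.
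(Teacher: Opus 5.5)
Your proof is correct. The paper states this lemma without proof, calling it an elementary bit of combinatorics, and your argument is exactly the intended verification. Essential surjectivity comes from splitting each $Y_k$ into the image of $Y_0$ and its complement, which works because the transition maps are injective. Full faithfulness comes from observing that an isomorphism of chains restricts to $\sigma_0$ on $X$ and therefore preserves this splitting. Linearity is automatic because the source is the free module on $((\mathrm{Fin}^{\mathrm{inj}})^{[1,n]})^\simeq$, and the map is the one induced by $X_\bullet\mapsto(\emptyset\hookrightarrow X_1\hookrightarrow\cdots\hookrightarrow X_n)$.
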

\begin{notation}
Let $N$ denote the Rezk nerve embedding categories into simplicial spaces via $NC\colon [n]\mapsto \Hom([n],C)$, and let $L_{\Seg}$ denote the Segalification functor from simplicial spaces to Segal spaces\footnote{This is almost left adjoint to $N$, up to the distinction between Segal spaces and complete Segal spaces. Since completion does not change the mapping spaces \cite[Corollary 3.15]{hebestreit2025short}, this distinction will be irrelevant for us.}.

We abuse notation by using the same symbol for the induced functor on commutative monoids. 
\end{notation}
\begin{definition}
Let 
    \[
        P^+ \coloneqq N(\mo{Cob})\coprod_{N(\Fin^\simeq)}N(\Fin^\inj)
    \]
denote the pushout in commutative monoids in simplicial spaces. 
\end{definition} 

\begin{remark}
The canonical map $L_\Seg(P^+)\to \mo{Cob}^+$ is a completion, and in particular mapping spaces in $\mo{Cob}^+$ are the same as in $L_\Seg(P^+)$.
\end{remark}
\begin{corollary}\label{corollary:levelwiseff}
The map $N(\mo{Cob})\to P^+$ is levelwise a monomorphism of spaces. 
\end{corollary}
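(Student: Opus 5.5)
The plan is to compute the pushout $P^+$ levelwise, using \Cref{lemma:injfree}. Pushouts of commutative monoids in simplicial spaces are computed levelwise, so $P^+_n$ is the pushout of commutative monoids in spaces
\[
    N(\mo{Cob})_n\coprod_{N(\Fin^\simeq)_n} N(\Fin^\inj)_n .
\]
\Cref{lemma:injfree} says that $N(\Fin^\inj)_n = ((\Fin^\inj)^{[n]})^\simeq$ is, as a module over $N(\Fin^\simeq)_n=(\Fin^\simeq)^{[n]}\simeq \Fin^\simeq$, free on the space $M_n\coloneqq((\Fin^\inj)^{[1,n]})^\simeq$. Here the module structure is restriction along the diagonal $\Fin^\simeq\to(\Fin^\simeq)^{[n]}$, which is an equivalence because all maps in $\Fin^\simeq$ are invertible. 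Moreover, $M_n$ is itself a commutative monoid, and the equivalence of the lemma is compatible with the monoid structures. That is, $N(\Fin^\inj)_n\simeq N(\Fin^\simeq)_n\times M_n$ as commutative monoids, with the map from $N(\Fin^\simeq)_n$ being the inclusion of the first factor.

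With this splitting, the pushout of commutative monoids becomes
\[
    N(\mo{Cob})_n\coprod_{N(\Fin^\simeq)_n}\bigl(N(\Fin^\simeq)_n\times M_n\bigr)\simeq N(\mo{Cob})_n\times M_n .
\]
This holds because in commutative monoids the product with $M_n$ is the coproduct with $M_n$, and pushing out $A\to A\sqcup M$ along $A\to B$ gives $B\sqcup M$. Under this identification the map $N(\mo{Cob})_n\to P^+_n$ is the inclusion $b\mapsto(b,0)$ at the unit of $M_n$. This map is a section of the projection $N(\mo{Cob})_n\times M_n\to N(\mo{Cob})_n$, and it is the product of an identity with the inclusion of a point $\{0\}\to M_n$. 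Since $M_n$ is a groupoid of chains of finite sets, the point $0$ (the chain of empty sets) has contractible automorphisms, so its component is contractible and $\{0\}\to M_n$ is a monomorphism. A product of monomorphisms is a monomorphism, which gives the claim levelwise.

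The main obstacle is checking that the equivalence in \Cref{lemma:injfree} is one of commutative monoids, not merely of modules, so that the pushout really splits as a product. I would handle this directly: the formula $(X,X_\bullet)\mapsto X\sqcup X_\bullet$ is visibly symmetric monoidal for disjoint union in both variables. If that is awkward to make precise, a fallback is to use only the module statement: the relative tensor product $N(\mo{Cob})_n\otimes_{N(\Fin^\simeq)_n}(\text{free module on } M_n)$ is the free $N(\mo{Cob})_n$-module on $M_n$. The underlying space of the pushout can then be identified with this relative tensor product, since the pushout of commutative monoids $B\sqcup_A C$ has underlying space $B\otimes_A C$. The summand indexed by the empty chain is then a copy of $N(\mo{Cob})_n$, included as a component-wise summand and hence a monomorphism.
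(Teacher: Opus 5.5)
Your proposal is correct and is essentially the paper's proof: compute $P^+_n$ levelwise as the relative tensor product $N(\Fin^{\inj})_n\otimes_{N(\Fin^{\simeq})_n}N(\mo{Cob})_n$, use \Cref{lemma:injfree} to identify it with $((\Fin^{\inj})^{[1,n]})^{\simeq}\times N(\mo{Cob})_n$, and observe that the map is the inclusion at the contractible component of the empty chain. Your main line, which splits $N(\Fin^{\inj})_n\simeq N(\Fin^{\simeq})_n\times M_n$ as commutative monoids, is valid: commutative monoids in spaces form a semiadditive category, and the map in \Cref{lemma:injfree} is the sum of two monoid maps. It is a repackaging of the paper's module-theoretic argument, which is exactly your fallback.
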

\begin{proof}
The canonical map $\mo{Cob}\to P^+$ is levelwise given by   
    \[
        (\mo{Cob}^{[n]})^\simeq \to (\Fin^\inj)^{[n]}\otimes_{(\Fin^\simeq)^{[n]}}(\mo{Cob}^{[n]})^\simeq
    \]
Hence by \Cref{lemma:injfree}, it is equivalent to the inclusion        
    \[
        (\mo{Cob}^{[n]})^\simeq \to (\Fin^\inj)^{[1,n]} \times (\mo{Cob}^{[n]})^\simeq
    \]
at the component of $\emptyset\to ...\to \emptyset$. 

Since the component of $\emptyset\to ... \to \emptyset$ in $(\Fin^\inj)^{[1,n]}$ is contractible, we find that the canonical map $N(\mo{Cob})\to P^+$ is levelwise an inclusion of components, as claimed.
\end{proof}
We now need to discuss this monomorphism in more detail. For this, we need to recall the notion of necklace from \cite{barkansteinebrunnerSeg}: 
\begin{definition}[{\cite[Definition 1.10]{barkansteinebrunnerSeg}}]\label{definition:necklace}
A \emph{necklace} is a bipointed simplicial space equivalent to a wedge of the form $\Delta^{n_1}\vee ... \vee \Delta^{n_k}$, where all the wedges $\Delta^n\vee \Delta^m$ are taken at the point $n\in\Delta^n, 0\in\Delta^m$,  bipointed at the image of $0\in\Delta^{n_1}, n_k\in \Delta^{n_k}$. 
\end{definition}
The relevance of this notion is that if $x,y\in X_0$ are points in the $0$ simplices of a simplicial space $X$, we can compute the mapping space in $L_\Seg X$ from $x$ to $y$ as a colimit along necklaces $N$ of mapping spaces of bipointed simplicial spaces $N\to (X,x,y)$, cf. \cite[Theorem A]{barkansteinebrunnerSeg}. 

We now need to make the following observation: 
\begin{lemma}
Let $i,j$ be integers and consider $\X{i}{j}\in \mo{Cob}$ as a point in $N(\mo{Cob})_0$, and hence in $(P^+)_0$. Let $N$ be a necklace in the sense of \Cref{definition:necklace}. 

A map $f\colon N\to P^+$ of simplicial spaces factors through\footnote{Necessarily uniquely, by \Cref{corollary:levelwiseff}.} $N(\mo{Cob})$ if and only if it sends the basepoints of $N$ to $\X{i}{j}, \X{r}{s}$ for some integers $i,j,r,s$ with $r-s\leq i-j$. 
\end{lemma}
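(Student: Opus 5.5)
The idea is to track a numerical invariant along the necklace. Assign to each object $\X{a}{b}$ its charge $a-b$. Every simplex of $P^+$ either lies in $N(\mo{Cob})$, where charge is preserved by \Cref{proposition:omnibusCob}(b), or involves an injection from $\Fin^\inj$, which can only raise charge. The lemma then says: a simplex fails to lie in $\mo{Cob}$ exactly when charge strictly increases somewhere along it, and charge cannot come back down.

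First I would describe the simplices of $P^+$ concretely. Using \Cref{lemma:injfree} as in the proof of \Cref{corollary:levelwiseff}, an $n$-simplex of $P^+$ is a pair consisting of a chain $X_1\hookrightarrow\cdots\hookrightarrow X_n$ in $(\Fin^\inj)^{[1,n]}$ and an $n$-simplex of $N(\mo{Cob})$. Its vertices are the objects $c_k\otimes X^{\otimes |X_k|}$, with $|X_0|=0$ and $c_0\to\cdots\to c_n$ the $\mo{Cob}$-chain. The simplex lies in $N(\mo{Cob})$ exactly when all $X_k$ are empty. The charge of the $k$-th vertex is $\mathrm{ch}(c_k)+|X_k|$, and $\mathrm{ch}(c_k)$ is constant in $k$ by \Cref{proposition:omnibusCob}(b). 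So charge is nondecreasing along every simplex, and it is constant exactly when all $X_k=\emptyset$, since $|X_k|$ is nondecreasing starting from $0$.

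Next, note that the vertices of each bead of $N$ are linearly ordered, and consecutive beads share their endpoints. It follows that charge is nondecreasing along the whole necklace, from the first basepoint to the last.

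Now take $f$ sending the basepoints to $\X{i}{j}$ and $\X{r}{s}$. Then the total charge change is $(r-s)-(i-j)\ge 0$. If $r-s\le i-j$, this forces every bead to have constant charge. Hence each bead lands in $N(\mo{Cob})$, and $f$ factors through $N(\mo{Cob})$. For the bead-by-bead factorisation to glue, use that $N(\mo{Cob})\to P^+$ is a levelwise monomorphism, i.e.\ an inclusion of components (\Cref{corollary:levelwiseff}). Then a map from a necklace, which is a colimit of simplices, factors through $N(\mo{Cob})$ if and only if each bead does. Conversely, if $f$ factors through $N(\mo{Cob})$, charge is preserved along the necklace, so $r-s=i-j$ and in particular $r-s\le i-j$.

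The main obstacle is the first step: checking that the pushout of commutative monoids is computed levelwise by the relative tensor product, and that vertices are described as claimed, compatibly with faces. I would handle this by redoing the computation in \Cref{corollary:levelwiseff} at level $0$ and then applying the face maps explicitly.
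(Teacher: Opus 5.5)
Your proposal is correct and is essentially the paper's argument. Both use the identification $(P^+)_n\simeq ((\Fin^\inj)^{[1,n]})^\simeq\times(\mo{Cob}^{[n]})^\simeq$ to see that the charge $i-j$ can only increase along each bead, with equality forcing the chain of injections to be empty, so the bead lies in the image of $N(\mo{Cob})$. You additionally spell out three steps the paper leaves implicit: the summation of charge increments over the beads, the gluing step via the levelwise monomorphism, and the easy converse direction.
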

\begin{proof}
A necklace $N$ is by definition isomorphic to the iterated wedge $\Delta^{n_1}\vee ... \vee \Delta^{n_k}$ for a sequence of natural numbers $n_1,...,n_k$, and a map $N\to P^+$ is thus the data of a point in $(P^+)_{n_1} \times_{(P^+)_0} ... \times_{(P^+)_0}(P^+)_{n_k}$. 

Unwinding the equivalence $(P^+)_n\simeq ((\Fin^\inj)^{[1,n]})^\simeq \times (\mo{Cob}^{[n]})^\simeq$ we find the evaluation at $0\in\Delta^n$ amounts to projection on $\mo{Cob}^{[n]}$ followed by evaluation at $0$, while evaluation at $n\in\Delta^n$ amounts to evaluating both terms at $n$, and using the obvious sum operation $\Fin^\simeq\times \mo{Cob}\to \mo{Cob}$.

So let $(a_1,...,a_k)$ be a point in $(P^+)_{n_1}\times_{(P^+)_0} ... \times_{(P^+)_0}(P^+)_{n_k}$ where $a_q$ has endpoints $(\X{i_q}{j_q},\X{r_q}{s_q})\in \mo{Cob}^\simeq \times \mo{Cob}^\simeq$. We learn from the pullback condition that $r_q=i_{q+1}, s_q=j_{q+1}$ and that $\X{r_q}{s_q} = \X{n}{m}\otimes \X{t}{0}$ for some $t\geq 0$ and some $n,m$ so that there exists a map $\X{i_q}{j_q}\to \X{n}{m}$ in $\mo{Cob}$. In particular, $m=s_q, n\leq r_q$. From the latter, we learn that $i_q-j_q = n-m \leq r_q-s_q$. For them to be equal, $t$ has to be $0$, which means that the last term of the sequence of injections in $(\Fin^\inj)^{[1,n]}$ is empty, so all the terms need to be empty. It follows that all the terms are in the image of $\mo{Cob}$ (see \Cref{corollary:levelwiseff}), as was claimed.
\end{proof}

We obtain the following from \cite[Theorem A]{barkansteinebrunnerSeg}, which is exactly item (b) in \Cref{proposition:omnibuscob+}:
\begin{corollary}
The map $\mo{Cob}= L_\Seg(N(\mo{Cob}))\to L_\Seg(P^+) \to \mo{Cob}^+$ is fully faithful on $\hom(\X{i}{j},\X{r}{s})$ whenever $r-s\leq i-j$. 
\end{corollary}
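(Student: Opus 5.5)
The plan is to read off the mapping spaces of $L_\Seg(P^+)$ from Barkan--Steinebrunner's necklace formula \cite[Theorem A]{barkansteinebrunnerSeg} and compare it with the same formula for the Segal space $N(\mo{Cob})$. Recall that for $x,y\in X_0$ that theorem gives
\[
\Hom_{L_\Seg X}(x,y)\simeq \colim_{N\in \mathrm{Nec}\op}\Map_{*,*}(N,(X,x,y)),
\]
a colimit indexed over the category of necklaces. We apply this to both $X=N(\mo{Cob})$ and $X=P^+$, with $x=\X{i}{j}$ and $y=\X{r}{s}$, where $r-s\le i-j$. The map $N(\mo{Cob})\to P^+$ then induces a natural transformation between the two necklace-indexed diagrams.

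The first step is to show this transformation is an equivalence for each individual necklace $N$. By \Cref{corollary:levelwiseff}, the map $N(\mo{Cob})\to P^+$ is levelwise a monomorphism. The space $\Map(N,X)$ is an iterated fiber product of the levels $X_{n_q}$ over $X_0$, and monomorphisms are stable under such limits, so $\Map_{*,*}(N,N(\mo{Cob}))\to\Map_{*,*}(N,P^+)$ is again a monomorphism. In other words, it is the inclusion of a union of path components. The preceding lemma says that every bipointed map $N\to (P^+,\X{i}{j},\X{r}{s})$ with $r-s\le i-j$ factors through $N(\mo{Cob})$. Hence this inclusion of components is also surjective on $\pi_0$, and so it is an equivalence.

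Taking colimits over necklaces, the induced map $\Hom_{L_\Seg N(\mo{Cob})}(\X{i}{j},\X{r}{s})\to \Hom_{L_\Seg P^+}(\X{i}{j},\X{r}{s})$ is an equivalence. The source is $\Hom_{\mo{Cob}}(\X{i}{j},\X{r}{s})$, since $N(\mo{Cob})$ is already Segal and $L_\Seg$ is the identity on it. The target agrees with $\Hom_{\mo{Cob}^+}(\X{i}{j},\X{r}{s})$ by the Remark that $L_\Seg(P^+)\to\mo{Cob}^+$ is a completion; completion preserves mapping spaces \cite[Corollary 3.15]{hebestreit2025short}. It remains to check that the composite of these identifications is the map induced by $i$. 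This holds because everything is natural in the map of simplicial spaces $N(\mo{Cob})\to P^+$, and the composite $\mo{Cob}\to L_\Seg(P^+)\to\mo{Cob}^+$ is $i$ by construction of the pushout.

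The main obstacle is the naturality step: we need the necklace formula to be functorial in maps of simplicial spaces, so that the equivalence of diagrams really induces the comparison map on Segalifications. This is part of the content of \cite[Theorem A]{barkansteinebrunnerSeg}. A second point to watch is that the lemma covers all necklaces, including degenerate ones with $k=0$ or $n_q=0$. Those cases are fine, since the monomorphism argument is uniform in $N$.
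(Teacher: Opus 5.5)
Your proposal is correct and follows the paper's argument: compute the mapping spaces of $L_{\Seg}(P^+)$ with the Barkan--Steinebrunner necklace formula, use the preceding lemma to identify the necklace diagram for $P^+$ with the one for $N(\mo{Cob})$, and pass to $\mo{Cob}^+$ via the completion remark. You also make explicit a step the paper leaves implicit: the levelwise monomorphism of \Cref{corollary:levelwiseff}, combined with the lemma's factorization, makes each per-necklace comparison map a $\pi_0$-surjective monomorphism and hence an equivalence.
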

\begin{proof}
By \cite[Theorem A]{barkansteinebrunnerSeg}, the mapping space in $L_\Seg(P^+)$ (and hence in $\mo{Cob}^+$) from $\X{i}{j}$ to $\X{r}{s}$ can be computed as a colimit over all necklaces $N$ of maps $N\to P^+$ sending the basepoints to $\X{i}{j}, \X{r}{s}$ respectively. By the previous lemma, the conditions on $(i,j,r,s)$ guarantee that this is the same as the colimit of maps $N\to N(\mo{Cob})$ sending the basepoints to the objects with the same name, which is exactly $\Hom_{\mo{Cob}}(\X{i}{j},\X{r}{s})$, as was to be shown. 
\end{proof}
\begin{proof}[Proof of \Cref{proposition:omnibuscob+}]
The previous corollary is exactly item (b), so we are left with item (a). Consider the full subcategory of $\mo{Cob}^+$ spanned by the image of $\mo{Cob}$. This is a symmetric monoidal subcategory, and it contains the pointed object $X$. 

Hence, by the universal property of $\mo{Cob}^+$, the identity of $\mo{Cob}^+$ factors through this subcategory, which proves that they are equal.
\end{proof}

\subsection{A brief digression, and an example}  Using the free symmetric monoidal rigid rational additive 1-category on a pointed object, which we can describe from the above construction, we can present the promised example of a tt-category with all non-zero compact objects being $\otimes$-faithful, but which acts more akin to a nil-extension of a field than an actual tt-field itself.
\begin{remark}\label{rem:hcob}
Consider the free idempotent complete rigid symmetric monoidal $\bb{Q}$-linear additive 1-category on a pointed object, which we will denote by $\bb{Q}[\mo{hCob}^{+}]$ to connect it to \Cref{proposition:omnibuscob+}, and localize this category at the generic point of the unit (a polynomial ring in a variable $t$), to get the category we will call $\bb{Q}[\mo{hCob}^{+}]_{\eta}$.  The indecomposable objects in this category are all summands of objects of the form $\X{i}{j}$ (and are in bijection with the simple objects of $\mathrm{Rep}(GL_t;\bb{Q}(t))$).  If we say that a summand of such an object has grading $i-j$, this gives a well-defined grading on the indecomposable objects in this category, in such a way that the non-zero morphisms are grading non-decreasing.  Let $\mc{I}_2\subseteq \bb{Q}[\mo{hCob}^{+}]_{\eta}$ denote the additive $\otimes$-ideal of morphisms which increase grading by at least 2, and consider the additive quotient 
\[\cat{C}\coloneqq \bb{Q}[\mo{hCob}^{+}]_{\eta}/\mc{I}_2.\]
We claim that every non-zero compact object in $K^b(\cat{C})$ is $\otimes$-faithful.  Since we can further quotient out the ideal $\mc{I}_1$ of morphisms of grading at least 1 (in which case we get back to $\mathrm{Rep}(GL_t;A)$, which is semi-simple by \Cref{thm:Deligne}), this category acts as a sort of ``nil-extension of a field.''

We claim that every non-zero compact object of $K^b(\cat{C})$ is $\otimes$-faithful.  Since this category is Krull-Schmidt, it suffices to show that the dimension of any indecomposable object is invertible.  Since morphisms in $\cat{C}$ of grading zero are split (which follows from \Cref{thm:Deligne}), and morphisms of grading $\geq 2$ vanish, we can kill off contractible chain complexes to see that every indecomposable object $Z_{\bdot}\in K^b(\cat{C})$ is represented by a chain complex of the form
\[\ldots \to Z_i\to Z_{i-1}\to\ldots,\]
with each $Z_i$ either zero or a sum of indecomposable objects all with grading $k-i$ for some fixed $k$ depending only on $Z_{\bdot}$.  The formula given at the end of \cite{deligne1996serie} (see also \Cref{cor:DeligneParityArgument}) shows that the dimension of each $Z_i$ is a polynomial, say $f_{i}(t)$, with positive leading coefficient, such that the parity of the degree of $f_i(t)$ equals the parity of $k-i$.  In particular, we have 
\[\mo{dim}(Z_{\bdot})=\sum_{i\in\bb{Z}}(-1)^{i}f_i(t)\neq 0,\]
where the sum is non-zero since we can look at the $f_i(t)$s with highest degree, and we see that the indices where such $f_i(t)$s appear all have the same parity.  Since the dimension of this object is non-zero, and the endomorphism ring of the unit in this category is a field, the dimension of every non-zero indecomposable object is invertible, as claimed.
\end{remark}

\newpage
\section{The Affine Line $\A$}\label{section:affineline}

\subsection{Semi-Simplicity of $\A_{\eta}$} 
The goal of this subsection is to prove that the category $\A_{\eta}$ defined in \Cref{sec:freeconstructions} is semi-simple, and is even a tt-field in the sense of \cite[Definition 1.1]{BKS2019}. Towards this end, we first have to study the mapping spaces of generators, beginning with the case of the affine line.

In the classical language of tensor categories, the graded mapping spectra between various generators in $\A$ correspond to the endomorphism rings of the generators in Deligne's category $\mathrm{Rep}(GL_t)$ \cite{deligne}, recalled in \Cref{sec:appendix}: 

\begin{lemma}\label{lemma:end=br}
Let $i,j$ be natural numbers. There is an isomorphism of graded rings 
    \[
        \pi_*\End_{\A}(\X{i}{j}) \cong \pi_*(\bb{1})\otimes_{\pi_0(\bb{1})} \End_{\mathrm{Rep}(GL_t)}(\X{i}{j})_{\mathbb Q}.
    \]
In particular, the graded endomorphism ring is concentrated in even degrees.

More generally, for any family of indices $i_k,j_k$, 
\[
    \pi_*\End_{\A}(\bigoplus_k \X{i_k}{j_k}) \cong \pi_*(\bb{1})\otimes_{\pi_0(\bb{1})} \End_{\mathrm{Rep}(GL_t)}(\bigoplus_k \X{i_k}{j_k})
\]
and both sides are concentrated in even degrees.  
\end{lemma}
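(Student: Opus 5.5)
By \Cref{proposition:descfreeI}, the objects $\X{i}{j}$ are images of the $\bb{Q}$-linearized Yoneda embedding. Hence
\[
\pi_*\End_{\A}(\X{i}{j})\cong H_*(\Hom_{\mo{Cob}}(\X{i}{j},\X{i}{j});\bb{Q}).
\]
The plan is to compute this homology using \Cref{proposition:omnibusCob}(c). That result says $\Hom_{\mo{Cob}}(\X{i}{j},\X{i}{j})$ is a free $\Hom_{\mo{Cob}}(\emptyset,\emptyset)$-module on the discrete set $S$ of simply-connected cobordisms, i.e.\ on disjoint unions of intervals. As spaces we therefore get
\[
\Hom_{\mo{Cob}}(\X{i}{j},\X{i}{j})\simeq S\times \Hom_{\mo{Cob}}(\emptyset,\emptyset).
\]
By the Künneth formula, rational homology is then the free $\pi_*(\bb{1})$-module $\bb{Q}[S]\otimes_{\bb{Q}}\pi_*(\bb{1})$. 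Here $\pi_*(\bb{1})=\bb{Q}[t][t_1,t_2,\ldots]$ by \Cref{lemma:endunit}, and it is concentrated in even degrees. This immediately gives the evenness claim.

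It remains to identify the multiplicative structure. Taking $\pi_0$, we obtain $\pi_0\End_{\A}(\X{i}{j})=\bb{Q}[S]\otimes\bb{Q}[t]$. This should be exactly the walled Brauer algebra, i.e.\ $\End_{\mathrm{Rep}(GL_t)}(\X{i}{j})$ over $\bb{Q}[t]$. To see this, note that $\mathrm{Rep}(GL_t)$ is the free rigid $\bb{Q}[t]$-linear symmetric monoidal $1$-category on an object of dimension $t$. By the universal property of $\mo{Cob}$ (\Cref{thm:cobhyp}), the truncation $\pi_0$ of the $\bb{Q}$-linearization of $\mo{Cob}$ is therefore identified with $\mathrm{Rep}(GL_t)$. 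Concretely, composing two interval diagrams produces a diagram together with some closed circles, and each closed circle is sent to $t=[S^1]\in\pi_0(\bb{1})$. Thus $\pi_0$ composition agrees with walled Brauer composition.

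For the full graded ring, I would construct a comparison map
\[
\pi_*(\bb{1})\otimes_{\pi_0(\bb{1})}\pi_0\End_{\A}(\X{i}{j})\to\pi_*\End_{\A}(\X{i}{j}),
\]
using the action of the commutative ring $\pi_*\End(\bb{1})$, which is central through the tensoring $\bb{1}\otimes-$. The free-module description above then shows this map is an isomorphism of groups. It is a ring map because the $\pi_*(\bb{1})$-action is central and bilinear with respect to composition. The main subtle point is this: when composing two simple cobordisms, the circles that appear are closed off in a way compatible with the module structure. That is, composition in $\Hom_{\mo{Cob}}$ is $\Hom(\emptyset,\emptyset)$-bilinear, and the newly created circles contribute exactly their class in $\pi_0$, namely a power of $t$. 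In higher degrees they contribute only the degree-zero class, since we are multiplying point classes in $S$. I expect verifying this coherence on homology, which amounts to showing that the product of two basis elements lands in $\bb{Q}[S]\otimes\bb{Q}[t]$, to be the main check. It follows because composition of discrete points produces a single point of the target, i.e.\ a simple cobordism disjoint-union $k$ copies of $S^1$ at a basepoint, whose homology class is $t^k$ times the basis element.

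For sums $\bigoplus_k\X{i_k}{j_k}$, the endomorphism ring is the matrix ring of the $\Hom(\X{i_k}{j_k},\X{i_l}{j_l})$. The same argument applies verbatim to each off-diagonal mapping space: each is free on a discrete set by \Cref{proposition:omnibusCob}(c) (and empty when the degrees differ, by (b)), with the matching $\pi_0$ being the Hom in $\mathrm{Rep}(GL_t)$. This gives the general isomorphism and the concentration in even degrees.
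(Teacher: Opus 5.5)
Your proposal is correct and follows essentially the same route as the paper. In both, freeness of $\Hom_{\mo{Cob}}(\X{i}{j},\X{r}{s})$ over $\Hom_{\mo{Cob}}(\emptyset,\emptyset)$ on the discrete set of simply-connected cobordisms (\Cref{proposition:omnibusCob}(c)) shows that $\pi_*$ is base-changed from $\pi_0$, and $\pi_0$ is then identified with Deligne's $\mathrm{Rep}(GL_t)$ via the universal property coming from \Cref{thm:cobhyp}. You are more explicit than the paper about why the comparison map is multiplicative: the $\pi_*(\bb{1})$-action is central and composition is bilinear. That observation already makes your ``main check'' about circles in higher degrees automatic, since products of degree-zero classes lie in $\pi_0$.
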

\begin{proof}
It follows from \Cref{proposition:omnibusCob} and \Cref{proposition:descfreeI} that for any $i_0,j_0,i_1,j_1$, $\Hom_{\A}(\X{i_0}{j_0},X{i_1}{j_1})$ is free on a discrete set as a module over $\End_{\A}(\bb{1})$, and thus this is preserved by passing to $\pi_*$. Specifically, it is free on the set of (isomorphism classes of) simply-connected cobordisms from $\X{i_0}{j_0}$ to $\X{i_1}{j_1}$. Thus its $\pi_*$ is indeed basechanged from its $\pi_0$, and we simply have to identify $\pi_0$. 

The result then follows from the definition of Deligne's $\mathrm{Rep}(GL_t)$ as the free additively symmetric monoidal $1$-category on a dualizable object, cf. \Cref{defn:repglt}, or directly from Deligne's original definition \cite[Definition 10.2]{deligne}. 
\end{proof}

\begin{remark}
    In representation theoretic language, these graded endomorphism rings are isomorphic to the ``walled Brauer algebras $B_{i,j}(t)$ with parameter $t$ on the ring $\pi_*(\bb{1})\cong \bb{Q}[t,t_1,t_2,...]$, see \cite[Section 2]{Brauersemisimplicity}. Indeed, it is clear that the set of these cobordisms is in bijection with the generators of $B_{i,j}(t)$ from \cite{Brauersemisimplicity}, and each connected component arising in the multiplication of two such generators corresponds to a dimension of $X$, i.e., to a multiplication by $t$, so the multiplication of these generators also corresponds to the one on $B_{i,j}(t)$. 
\end{remark}

\begin{lemma}\label{lemma:end=brgeneric} With notation as in \Cref{sec:freeconstructions}, for any $i,j$, the graded endomorphism ring $\pi_*\End_{\A_{\eta}}(\X{i}{j})$ is isomorphic to is isomorphic to the endomorphism ring of $\X{i}{j}$ in Deligne's $\mathrm{Rep}(GL_t, \pi_*(\bb{1}_{\A_\eta}))$, and similarly for finite direct sums of generators. 

Equivalently, it is the walled Brauer algebra $B_{i,j}(t)$ with parameter $t$ in the graded field $\pi_*(\bb{1})\simeq \bb{Q}(t,t_1,t_2\ldots)$.  In particular, the ring $\pi_0(\End_{\A_{\eta}}(\X{i}{j}))$ is isomorphic to the walled Brauer algebra $B_{i,j}(t)$ with parameter $t$ in the field $\pi_0(\bb{1})$.
\end{lemma}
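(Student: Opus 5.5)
The plan is to deduce the statement from \Cref{lemma:end=br} by base change along the localization of the unit. By the remark following the definition of $\A_\eta$, the category $\A_\eta$ is the basechange of $\A$ along the map of commutative ring spectra $R\coloneqq\End_{\A}(\bb{1})\to R_\eta\coloneqq R[a^{-1}]$, where $a$ ranges over the nonzero homogeneous elements of $\pi_*R=\bb{Q}[t][t_1,t_2,\ldots]$. The first step is the standard computation of mapping spectra in such a basechange: for $x,y\in\A$ we have
\[
\Hom_{\A_\eta}(x,y)\simeq R_\eta\otimes_R\Hom_{\A}(x,y).
\]
This follows because $\A\otimes_{\mo{Perf}(R)}\mo{Perf}(R_\eta)$ is the idempotent completion of the category with the same objects and mapping spectra basechanged. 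Equivalently, it is the Verdier quotient by the ideal generated by the cones of the elements $a$, and for compact $x$ the localization of $\Hom(x,y)$ is the filtered colimit over multiplication by these $a$.

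The second step is to pass to homotopy groups. By the proof of \Cref{lemma:end=br} and \Cref{proposition:omnibusCob}(c), $\Hom_{\A}(\X{i}{j},\X{r}{s})$ is a free $R$-module on the discrete set of simply-connected cobordisms. Hence $\pi_*$ of the basechange is $\pi_*R_\eta\otimes_{\pi_*R}\pi_*\Hom_{\A}$, with no Tor terms. Since $\pi_*R$ is a polynomial ring concentrated in even degrees, inverting all nonzero homogeneous elements gives the graded fraction field. I would identify this as $\bb{Q}(t,t_1,t_2,\ldots)$, meaning the graded field whose degree-zero part is the field of degree-zero ratios and which has units in every even degree. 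Substituting into \Cref{lemma:end=br}, $\pi_*\End_{\A_\eta}(\X{i}{j})\cong\pi_*(\bb{1}_{\A_\eta})\otimes_{\bb{Q}[t]}\End_{\mathrm{Rep}(GL_t)}(\X{i}{j})$. This is the endomorphism ring in Deligne's $\mathrm{Rep}(GL_t,\pi_*(\bb{1}_{\A_\eta}))$, because Deligne's category over a base ring $A$ is obtained by extending scalars on the (free) Hom-modules of the integral or $\bb{Q}[t]$-form. The same argument applies verbatim to finite direct sums, since both sides are additive in each variable.

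The walled Brauer description follows from the remark identifying $\End_{\mathrm{Rep}(GL_t)}(\X{i}{j})$ with $B_{i,j}(t)$ on the free basis of simple cobordisms, extended by scalars. The step that needs the most care is taking degree $0$. Because the Hom-modules are free on generators in degree $0$, we get $\pi_0\End_{\A_\eta}(\X{i}{j})=\pi_0(\bb{1}_{\A_\eta})\otimes_{\bb{Q}[t]}B_{i,j}(t)$, and this is the walled Brauer algebra over the field $\pi_0(\bb{1})$ with parameter $t$. This uses that $\pi_0$ of a graded field is a field, here the degree-zero rational functions, which contains $\bb{Q}(t)$. The other point requiring care is justifying the mapping-spectrum formula for the basechange in the first step; I would cite the Verdier-quotient description to make it precise.
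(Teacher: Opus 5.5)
Your proposal is correct and follows the paper's approach. The paper's proof is the one-line observation that $\A_\eta$ is obtained from $\A$ by localizing at the prime $(0)$ of $\pi_*(\bb{1})$, so the statement follows from \Cref{lemma:end=br}. You have spelled out the details that sentence leaves implicit: mapping spectra in the localization are the base-changed ones, and freeness of the Hom-modules lets $\pi_*$ commute with base change and makes taking $\pi_0$ harmless.
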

\begin{proof}
This follows from \Cref{lemma:end=br} since $\A_{\eta}$ is obtained from $\A$ by localizing at the prime ideal $(0)$ in the unit of $\A$.
\end{proof}

\begin{proposition}\label{prop:Brsemisimp}
The algebra $\pi_0(\mo{End}_{\A_{\eta}}(\X{i}{j}))$ is semi-simple for all $i,j$. In fact, for any finite family of indices $(i_s,j_s), s\in S$, the algebra $\pi_0(\mo{End}_{\A_{\eta}}(\bigoplus_{s\in S}\X{i_s}{j_s}))$ is semi-simple. 
\end{proposition}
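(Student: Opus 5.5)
By \Cref{lemma:end=brgeneric}, $\pi_0(\mo{End}_{\A_{\eta}}(\X{i}{j}))$ is the walled Brauer algebra $B_{i,j}(t)$ over the field $K\coloneqq\pi_0(\bb{1}_{\A_\eta})$. More generally, $\pi_0\mo{End}(\bigoplus_s \X{i_s}{j_s})$ is the endomorphism algebra of $\bigoplus_s\X{i_s}{j_s}$ in Deligne's category $\mathrm{Rep}(GL_t;K)$, where $t$ is the dimension of $X$. The first thing to pin down is that $t$ is \emph{transcendental} over $\bb{Q}$ inside $K$: by \Cref{lemma:endunit}, $\pi_0(\bb{1}_{\A})=\bb{Q}[t]$, and after inverting all nonzero homogeneous elements this becomes $K\supseteq\bb{Q}(t)$. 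The plan is therefore to reduce everything to Deligne's semisimplicity theorem (\Cref{thm:Deligne}, recalled in the appendix): $\mathrm{Rep}(GL_t;F)$ is semisimple whenever $F$ is a field of characteristic zero and $t\in F$ is not an integer. Since $t$ is transcendental over $\bb{Q}$, it is certainly not an integer.

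The key steps are as follows.

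(1) Identify the $\bb{Q}$-linear additive 1-category spanned by the $\X{i}{j}$ in $\mathrm{ho}(\A_\eta)$, with $\pi_0$-morphisms, as the free $K$-linear rigid symmetric monoidal category on an object of dimension $t$. This is exactly what \Cref{lemma:end=br} and \Cref{lemma:end=brgeneric} give on Hom-sets, compatibly with composition, because composition of cobordisms matches composition in $\mathrm{Rep}(GL_t)$, with closed circles becoming $t$.

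(2) Invoke Deligne's theorem to conclude that the idempotent completion of $\mathrm{Rep}(GL_t;K)$ is semisimple abelian. Consequently every object, and in particular $Y\coloneqq\bigoplus_{s\in S}\X{i_s}{j_s}$, is a finite direct sum of simple objects with division-algebra endomorphism rings.

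(3) Deduce that $\mo{End}(Y)$ is a finite product of matrix algebras over division rings, hence semisimple. The single-object case $i,j$ is the special case $|S|=1$.

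If one prefers to avoid the full strength of Deligne's theorem, an alternative for step (2) is to cite known semisimplicity criteria for walled Brauer algebras $B_{r,s}(\delta)$, namely semisimplicity for non-integral $\delta$ as in \cite{Brauersemisimplicity}. For a direct sum of generators one can then reduce to the single-object case by a Morita argument: $\mo{End}(\bigoplus_s Y_s)$ is semisimple precisely when each $\mo{End}(Y_s)$ is semisimple and the Hom-bimodules between them are compatible with a semisimple additive envelope. The cleanest route to this is again through the additive category being semisimple.

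The main obstacle is step (1), namely checking that the identification in \Cref{lemma:end=brgeneric} is functorial, i.e.\ an equivalence of $K$-linear symmetric monoidal categories and not merely an isomorphism of individual rings. This is needed because Deligne's theorem is a statement about the whole category. I would handle it by the universal property: $\mathrm{Rep}(GL_t;K)$ is the free $K$-linear additive rigid category on an object of dimension $t$ (\Cref{defn:repglt}), which yields a functor to $\pi_0\mathrm{ho}(\A_\eta)$. Full faithfulness on the generators then follows from the basis computation in \Cref{proposition:omnibusCob}(c).
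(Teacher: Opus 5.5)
Your proposal is correct and follows the paper's proof, which is exactly the combination of \Cref{lemma:end=brgeneric} (including its statement for finite direct sums of generators) with Deligne's semisimplicity theorem \cite[Théorème 10.5]{deligne}. One small correction: the version proved in \Cref{thm:Deligne} assumes $t$ is not an \emph{algebraic} integer, whereas Deligne's original only needs $t\notin\bb{Z}$; your observation that $t$ is transcendental over $\bb{Q}$ in $K=\pi_0(\bb{1}_{\A_\eta})$ satisfies both hypotheses, so the argument goes through either way.
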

\begin{proof}
Using \Cref{lemma:end=brgeneric}, this follows from \cite[Théorème 10.5]{deligne}, a proof of which is explained in \Cref{thm:Deligne}.

\end{proof}

\begin{lemma}\label{lemma:sillysummand}
Let $C$ be an additive category.
\begin{itemize}
\item Let $x\in C$ such that $\End_C(x)$ is semi-simple. For any summand $y$ of $x$, $\End_C(y)$ is also semi-simple.

\item If $x,z$ are orthogonal, and $\End_C(x),\End_C(z)$ are both semi-simple, then so is $\End_C(x\oplus z)$.

\end{itemize} 

\end{lemma}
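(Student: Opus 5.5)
The plan is to deal with both items directly through idempotents and a block decomposition of endomorphism rings. Throughout, a ring is semi-simple if it is Artinian semi-simple, equivalently a finite product of matrix rings over division rings. We take ``$x,z$ orthogonal'' to mean $\Hom_C(x,z)=0=\Hom_C(z,x)$.

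For the first item, let $y$ be a summand of $x$. Then there are maps $\iota\colon y\to x$ and $p\colon x\to y$ with $p\iota=\id_y$, and $e\coloneqq \iota p$ is an idempotent in $R\coloneqq\End_C(x)$. The assignment $\varphi\mapsto \iota\varphi p$ gives a ring isomorphism $\End_C(y)\cong eRe$, with inverse $\psi\mapsto p\psi\iota$. We then use the standard fact that a corner ring $eRe$ of a semi-simple ring is semi-simple. One way to see this: $eRe\cong \End_R(Re)^{\mathrm{op}}$, and $Re$ is a finitely generated module over the semi-simple ring $R$, hence a finite direct sum of simple modules. By Schur's lemma its endomorphism ring is a finite product of matrix rings over division rings, so it is semi-simple, and the opposite of a semi-simple ring is again semi-simple. (If $y\simeq 0$ the ring is zero, which is semi-simple as an empty product.)

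For the second item, write $\End_C(x\oplus z)$ as the $2\times 2$ matrix ring
\[
\begin{pmatrix}\End_C(x) & \Hom_C(z,x)\\ \Hom_C(x,z) & \End_C(z)\end{pmatrix}.
\]
Orthogonality kills the off-diagonal entries, so this ring is just the product $\End_C(x)\times\End_C(z)$. A finite product of semi-simple rings is semi-simple, since its modules split as products of modules over the two factors, each of which is semi-simple.

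Neither step is a real obstacle. The only point needing care is the corner-ring fact in the first item, and the idempotent description $\End_C(y)\cong eRe\cong\End_R(Re)^{\mathrm{op}}$ handles it cleanly.
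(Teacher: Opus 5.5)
Your proposal is correct and follows essentially the same route as the paper: you identify $\End_C(y)$ with the corner ring $e\End_C(x)e$ for the idempotent $e$ cutting out $y$, and you use orthogonality to split $\End_C(x\oplus z)$ as $\End_C(x)\times\End_C(z)$. The only difference is how you show the corner ring is semi-simple. The paper uses Artin--Wedderburn to reduce to $M_n(D)$, with $e$ a projection onto a subspace $V\subset D^n$, so that $eM_n(D)e\cong\End_D(V)$. Your identification $eRe\cong\End_R(Re)^{\mathrm{op}}$ together with Schur's lemma handles a general finite product of matrix rings directly, without the paper's implicit reduction to a single simple factor.
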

\begin{proof}
For the first item, we note that if $e\in \End_C(x)$ is an idempotent corresponding to $y$, we have $\End_C(y)\cong e\End_C(x)e$, so it suffices to focus on the latter. Now, by Artin--Wedderburn, we may assume that $\End_C(x)\cong M_n(D)$ for some division algebra $D$, so that $e$ is the projection onto some sub-vector space $V\subset D^n$. Then, $eM_n(D)e \cong \End_D(V)\cong M_k(D)$ for some $k$ is semi-simple. 

For the second item, if $x,z$ are orthogonal, then $\End_C(x\oplus z)\cong \End_C(x)\times\End_C(z)$ and products of semi-simple rings are semi-simple. 
\end{proof}

\begin{theorem}\label{thm:A1etasemisimple}
The category $\A_{\eta}$ is semi-simple with simple unit.  In particular, this category is a tt-field, and there is a unique point of $\Spc(\A)$ over the generic point of $\mo{Spec}(\pi_*(\bb{1}_{\A}))$.
\end{theorem}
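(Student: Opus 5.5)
We will show that $\A_\eta$, viewed as a triangulated category, is semisimple: every object is a finite direct sum of shifts of objects with semisimple endomorphism rings, and the unit is simple. The guiding observation is that $\pi_*(\bb{1}_{\A_\eta})\cong\bb{Q}(t,t_1,t_2,\ldots)$ is a graded field, since every nonzero homogeneous element has been inverted. By \Cref{lemma:end=brgeneric}, the graded endomorphism ring of any finite sum $G=\bigoplus_s\X{i_s}{j_s}$ of generators is the walled Brauer algebra over this graded field. That algebra is concentrated in even degrees, and by \Cref{prop:Brsemisimp} its degree-zero part is semisimple. Inverting the degree-$2$ element $t_1$ makes $\pi_*\End(G)\cong\pi_0\End(G)[t_1^{\pm1}]$, so $\pi_*\End(G)$ is graded semisimple. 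In particular $\End(\bb{1})$ is a graded field, which is what simplicity of the unit means here.

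\textbf{Step 1: split the generators.} The additive subcategory $\cat{A}$ of $\A_\eta$ spanned by shifts of summands of finite sums $G$ has the following property. By \Cref{lemma:sillysummand} (both parts), every object of $\cat{A}$ has semisimple $\pi_0$-endomorphism ring. Moreover $\pi_0\End(G)$ is semisimple and $\A_\eta$ is idempotent complete. Hence every morphism between objects of $\cat{A}$ is a direct sum of an isomorphism and zero maps: kernel, image and cokernel split off as retracts. The reduction to $\pi_0$ uses the following. A map $\Sigma^k\X{i}{j}\to\X{r}{s}$ is nonzero only in even degree $k$, and multiplying by the unit $t_1^{-k/2}$ turns it into a degree-zero map.

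\textbf{Step 2: show $\cat{A}$ is triangulated.} Take any morphism $f\colon a\to b$ in $\cat{A}$. By Step 1 we may write $f=\mathrm{iso}\oplus 0$, so $\mathrm{cofib}(f)\simeq\coker\oplus\Sigma\ker$, which again lies in $\cat{A}$. Thus $\cat{A}$ is closed under cofibres and shifts. It is closed under retracts by construction, so $\cat{A}$ is a thick subcategory containing the Yoneda images. By \Cref{proposition:descfreeI}, $\A$ is generated by these images under finite colimits, desuspensions and retracts, and this generation persists under the Verdier quotient to $\A_\eta$. Therefore $\cat{A}=\A_\eta$, and $\A_\eta$ is semisimple.

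\textbf{Step 3: conclusions.} The unit has endomorphisms forming a graded field, so it is simple. \Cref{lem:ssimpleimpllocal} then gives that every nonzero object is $\otimes$-faithful. Hence $\Spc(\A_\eta)=\ast$ by \Cref{lem:equivtensorfaithful}, and $\A_\eta$ is a tt-field. Since $\Spc(\A_\eta)$ is the fibre of $\rho$ over the generic point of $\Spech(\pi_*\bb{1}_\A)$, that fibre is a single point. For condition (b) of \Cref{def:ttfield} on $\Ind(\A_\eta)$, a filtered colimit argument suffices. Every object has the form $\colim$ of objects of $\cat{A}$, and maps between sums of simple objects split. Alternatively, one notes that $\Ind$ of a semisimple abelian-like triangulated category has every object a sum of compacts.

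The main obstacle is the generation statement in Step 2, and the care needed to pass from $\pi_0$-semisimplicity to genuine splitting of maps in the stable category. The key point is the even-degree concentration together with the degree-$2$ unit $t_1$. These are needed so that no odd-degree maps obstruct splitting of cofibres, and so that the graded endomorphism ring is controlled by its degree-zero part. A subsidiary check is that the Verdier quotient $\A\to\A_\eta$ is the base change described in the remark, so that mapping spectra in $\A_\eta$ are indeed the localized ones used in \Cref{lemma:end=brgeneric}.
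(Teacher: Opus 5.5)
Your proposal is correct and is essentially the paper's own argument. In both, the subcategory of objects $y\oplus\Sigma z$ (with $y,z$ summands of finite sums of the $\X{i}{j}$) has semisimple endomorphism rings by \Cref{prop:Brsemisimp}, \Cref{lemma:sillysummand} and even-degree orthogonality; it is closed under shifts by $2$-periodicity and under cofibres because every map splits as $\mathrm{iso}\oplus 0$, so it exhausts $\A_\eta$. Your additional points (simplicity of the unit from the graded field, and identifying $\Spc(\A_\eta)$ with the generic fibre of $\rho$) are fine. The only soft spot is condition (b), which the paper also leaves implicit: instead of the vague filtered-colimit sketch, map a sum of shifted simples $S$ onto $M\in\Ind(\A_\eta)$ using bases of the graded division-ring modules $\pi_*\Hom(S,M)$, and check that this map is an equivalence after applying $\pi_*\Hom(S',-)$ for every simple $S'$.
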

\begin{proof}
By the Yoneda lemma and our explicit description of $\A$ from \Cref{sec:freeconstructions}, $\A$, and hence $\A_{\eta}$, is generated under finite colimits and retracts by the image of the Yoneda embedding, that is, the objects $\X{i}{j}$. 

Consider the subcategory $C$ of $\ho(\A_\eta)$ spanned by sums of the form $y\oplus \Sigma z$, where $y,z$ are summands of objects of the form $\bigoplus_{s\in S}\X{i_s}{j_s}$ for some finite family $(i_s,j_s), s\in S$ of indices. Note that since all the generators have even periodic homotopy rings, $y$ and $\Sigma z$ are orthogonal in $\ho(\A_\eta)$ for any such $y,z$. 

By \Cref{prop:Brsemisimp} and \Cref{lemma:sillysummand}, every object in $C$ has a semi-simple endomorphism ring. By $2$-periodicity, $C$ is closed under shifts as well, and it is clearly closed under retracts. 

This implies directly that its preimage in $\A_{\eta}$ is closed under co/fibers, since any map between objects of a semi-simple category is isomorphic to one of the form $\id\oplus 0\colon x\oplus y\to x\oplus z$.  Thus this preimage is equal to $\A_{\eta}$, which proves the claim in light of \cref{lem:ssimpleimpllocal}.
\end{proof}

\subsection{Points of $\Spc(\A)$}\label{subsection:points}
We note that all the arguments in this section directly generalize to $\A_K:= \A\otimes_{\End(\bb{1}_{\A})} K$ for any map of commutative ring spectra $\End(\bb{1}_{\A})\to K$, where $K$ is a graded field and on $\pi_0, \bb{Q}[t]\to K$ does not kill $(t-n)$ for any integer $n\in\bb{Z}$.

We also observe the following :
\begin{construction}
Since $\End(\bb{1}_{\A})$ is free as a rational commutative ring spectrum on classes in even degrees, it is also \emph{formal}, i.e., equivalent to the commutative ring presented by the cdga $\bb{Q}[t, t_i,\in \bb{N}_{\geq 1}], |t_i|=2i$ with $0$ differentials.  It follows that for any homogeneous prime $\mathfrak p\subset \pi_*\End(\bb{1}_{\A})$, one can realize the map $\pi_*\End(\bb{1}_{\A})\to \pi_*\End(\bb{1}_{\A})/\mathfrak p $ as $\pi_*$ of a map of (formal) commutative ring spectra, and a fortiori also $\pi_*\End(\bb{1}_{\A})\to \mathrm{Frac}(\pi_*\End(\bb{1}_{\A})/\mathfrak p)$ can be realized as a map of formal commutative ring spectra (here, $\mathrm{Frac}$ is the graded fraction field, obtained by inverting all nonzero homogeneous elements). 

We denote the outcome of this construction by $\End(\bb{1}_{\A})\to K(\mathfrak p)$. 
\end{construction}
\begin{remark}
In characteristic $0$, all fields are formally smooth and so by deformation theory any graded field $K$ as above is equivalent, as a commutative ring spectrum, to $L$ or $L[u^{\pm 1}]$ for some ordinary field $L$ and $|u| =2k, k\geq 1$; either way, it is formal. Thus, possibly up to reparametrization of $u$, any map $\End(\bb{1}_{\A})\to K$ to a graded field factors weakly uniquely over $K(\mathfrak p)$ for some homogeneous prime ideal $\mathfrak p$. 

Another construction of $K(\mathfrak p)$ is given by Mathew in \cite[Theorem 1.2]{mathewresidue}, up to the (in this case, minor) issue that $\bb{Q}[t,t_i,i\in \bb{N}_{\geq 1}]$ is not noetherian. He also discusses uniqueness in his situation. 
\end{remark}
\begin{remark}
For the above construction, one can also apply \Cref{thm:consequalshom}, proven later on.  Interpreted for rational $\bb{E}_{\infty}$-rings $R$, this says that the points in the homological spectrum of $\mo{Perf}(R)$ are in bijection with equivalence classes of maps $R\to L$, where $L$ is an even 2-periodic rational $\bb{E}_{\infty}$-ring with $\pi_0(L)$ an algebraically closed field, under the equivalence relation $L_1\sim L_2$ iff $L_1\otimes_{R}L_2\neq 0$.  Since we are working on a free algebra on even degree classes, one can check that these equivalence classes of homological residue fields are determined by homogeneous prime ideals in the graded endomorphism ring of the unit, and a residue field exists for each such prime.
\end{remark}
\begin{corollary}\label{cor:A1pointsdescribe}
Let $S$ be the set of primes in $\bb{Q}[t]$ of the form $(t-n)$.  On the complement of the inverse image of $S$ under the map 
\[
    \Spech(\pi_*\bb{1}_{\A})\to \Spec(\bb{Q}[t])
\]
induced by $\bb{Q}[t]\to \bb{Q}[t,t_i\colon i\in \bb{N}_{\geq 1}]$, the map 
\[
    \Spc(\A) \to \Spech(\pi_*\bb{1}_{\A})
\]
is a bijection. 
\end{corollary}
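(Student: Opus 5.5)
We prove surjectivity and injectivity over the stated locus $U \subseteq \Spech(\pi_*\bb{1}_{\A})$ separately. Surjectivity is already \Cref{cor:BalmerSpcBig}, since $\pi_*\bb{1}_{\A}$ is a polynomial ring and hence coherent. So the substance is injectivity: for $\mathfrak p \in U$ we must show the fiber of $\rho$ over $\mathfrak p$ is a single point.

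The plan is to compute this fiber by base change to the residue field at $\mathfrak p$. Let $K(\mathfrak p)$ be the formal graded field from the preceding Construction, and form $\A_{K(\mathfrak p)} = \A\otimes_{\End(\bb{1}_{\A})}K(\mathfrak p)$. We then use the standard fiber identification from \cite{BalmerSpSpSp}: localizing at $\mathfrak p$ and then passing to the closed-point fiber identifies $\rho^{-1}(\mathfrak p)$ with the image of $\Spc(\A_{K(\mathfrak p)})\to\Spc(\A)$. Concretely, $\rho^{-1}(\mathfrak p)$ sits inside the image of $\Spc$ of the central localization $\A_{\mathfrak p}$, and every prime over $\mathfrak p$ contains no $\mathrm{cofib}(a)$ for $a\notin\mathfrak p$, while it contains all $\mathrm{cofib}(a)$ for $a\in\mathfrak p$ of the relevant type. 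It therefore suffices to show that $\Spc(\A_{K(\mathfrak p)})$ is a single point.

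For that we rerun \Cref{section:affineline} with $\pi_*\bb{1}_{\A_\eta}$ replaced by $K=\pi_*K(\mathfrak p)$, as the paper notes is possible. The steps are as follows.
\begin{enumerate}
\item \Cref{lemma:end=br} gives that $\pi_*\End_{\A_K}(\bigoplus_k\X{i_k}{j_k})$ is $K\otimes_{\pi_0}$ of Deligne's walled Brauer algebra. This uses that mapping spectra in $\A$ are free $\End(\bb{1})$-modules, so base change commutes with taking $\pi_*$; it also uses that these rings are concentrated in even degrees.
\item The image of $t$ in $\pi_0K$ is not an integer, because $\mathfrak p\cap\bb{Q}[t]\notin S$. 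Hence Deligne's semisimplicity theorem (\Cref{thm:Deligne}, \cite[Théorème 10.5]{deligne}) applies to $\mathrm{Rep}(GL_t;\pi_0K)$. These are finite-dimensional algebras over a field, so the extension to $K$ (and, if needed, from $\pi_0 K$ to a periodic $K$) stays semisimple.
\item The argument of \Cref{thm:A1etasemisimple} then goes through verbatim: \Cref{lemma:sillysummand}, orthogonality of even and odd shifts, and closure under cofibers together show $\A_K$ is semisimple with simple unit. \Cref{lem:ssimpleimpllocal} then gives $\Spc(\A_K)=\ast$.
\end{enumerate}

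The main obstacle is the fiber identification: justifying rigorously that every Balmer prime over $\mathfrak p$ comes from $\A_{K(\mathfrak p)}$. Since $\mathfrak p$ is not finitely generated, we cannot directly quotient by a Koszul object. The first approach is to write the passage as a filtered colimit over finitely generated subideals and finitely many inverted elements, using that $\Spc$ converts filtered colimits of 2-rings into limits. At each finite stage we would apply Balmer's description of fibers of $\rho$ via central localization and Koszul quotients \cite{BalmerSpSpSp}. A fallback, in case the colimit argument is awkward, is to note that the image of $\Spc(\A_K)$ is a point $P$ and argue directly that any prime $Q$ with $\rho(Q)=\mathfrak p$ satisfies $Q\subseteq P$ and $P\subseteq Q$. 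Each inclusion would be checked on generators $\mathrm{cofib}(a)$ and on objects killed after base change, using semisimplicity to see that an object vanishes in $\A_K$ precisely when it is built from Koszul objects on $\mathfrak p$.
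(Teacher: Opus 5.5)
Your architecture matches the paper's: base change to $\A_{K(\mathfrak p)}$, then rerun \Cref{section:affineline} with Deligne's theorem for non-integral $t$ to get a semisimple tt-field with $\Spc=\ast$. That part is fine, as is surjectivity via \Cref{cor:BalmerSpcBig}. The gap is the step you flag yourself, and it carries all the content: showing that every prime $\mathcal Q$ with $\rho(\mathcal Q)=\mathfrak p$ lies in the image of $\Spc(\A_{K(\mathfrak p)})\to\Spc(\A)$.

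There is no such fiber identification in \cite{BalmerSpSpSp}. Central localization only gives $\Spc(\A_{(\mathfrak p)})\cong\{\mathcal Q:\rho(\mathcal Q)\subseteq\mathfrak p\}$. The fiber is the closed subset $\bigcap_{a\in\mathfrak p}\supp(\cofib(a))$ of that. Reaching it needs base change along Koszul algebras together with an ``image $=$ support'' theorem. Because $\mathfrak p$ may be infinitely generated, it also needs a compactness argument that the image of a cofiltered limit of spectral spaces is the intersection of the images. None of this is supplied.

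Your concrete description is also reversed. Since $\rho(\mathcal Q)=\{a:\cofib(a)\notin\mathcal Q\}$, a prime over $\mathfrak p$ contains $\cofib(a)$ exactly for homogeneous $a\notin\mathfrak p$ (compare the definition of $\A_\eta$). So the fallback's criterion is false: Koszul objects on elements of $\mathfrak p$ do \emph{not} vanish in $\A_K$, because such $a$ map to $0$ in $K$.

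A repaired fallback needs two things, writing $R\coloneqq\End(\bb{1}_{\A})$. First, $\ker(\A\to\A_K)=\langle\cofib(a):a\notin\mathfrak p\rangle$, by Nakayama on the perfect mapping spectra; this gives $P\subseteq\mathcal Q$. Second, for $\mathcal Q\subseteq P$: if $x\otimes K\neq0$, descend the splitting of $\mathrm{coev}_x$ to a finite stage $R_{(\mathfrak p)}/\!/(a_1,\ldots,a_r)$ with $a_i\in\mathfrak p$. This puts a Koszul object on $\mathfrak p$ into $\langle x\rangle$. It requires presenting $K(\mathfrak p)$ as a filtered colimit of finite Koszul algebras, which is real work.

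The idea that avoids all of this, and which the paper uses, is nil-conservativity.
\begin{itemize}
\item The residue fields $\{\mo{Perf}(R)\to\mo{Perf}(K(\mathfrak q))\}_{\mathfrak q}$ of the rational $\bb{E}_\infty$-ring $R$ are jointly nil-conservative (cf.\ \cite{mathewresidue}, or \Cref{thm:consequalshom} and the remark preceding the corollary).
\item By \Cref{prop:nilconsbasechange}, so is $\{\A\to\A_{K(\mathfrak q)}\}_{\mathfrak q}$.
\item By \Cref{thm:surjspech} and surjectivity and naturality of $\phi$, the map $\coprod_{\mathfrak q}\Spc(\A_{K(\mathfrak q)})\to\Spc(\A)$ is surjective.
\item Naturality of $\rho$ shows the $\mathfrak q$-summand lands in $\rho^{-1}(\mathfrak q)$, since the graded field $\pi_*K(\mathfrak q)$ has a single homogeneous prime, pulling back to $\mathfrak q$.
\end{itemize}
Hence $\rho^{-1}(\mathfrak p)$ is exactly the image of the one-point space $\Spc(\A_{K(\mathfrak p)})$. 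This needs no finiteness of $\mathfrak p$, and surjectivity comes for free.
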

\begin{proof}
By \Cref{prop:nilconsbasechange} and the discussion preceding the construction, the fiber over $\mathfrak p\in \Spech(\pi_*\bb{1}_\A)$ is the Balmer spectrum of a tt-field, and hence is reduced to a point. 
\end{proof}

\newpage
\section{The Comparison Functor}\label{sec:ComparisonFunctor}
There is a functor $\Apoi\to\A$ taking the free pointed object to the map $\bb{1}\xrightarrow{0}x$, that is, to the zero map to the free object.  This induces an equivalence on endomorphism rings of the unit, and so we may invert all endomorphisms of the unit and get an induced functor of 2-rings $p\colon\Apoi_\eta\to \A_\eta$.  Our goal in this section is to leverage the functor $p$ to get information about $\Apoi_\eta$ from information about $\A_\eta$. 

The key result is:
\begin{proposition}\label{proposition:conservative}
The functor $p\colon\Apoi_\eta\to \A_\eta$ is conservative.
\end{proposition}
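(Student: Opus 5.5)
The plan is to exploit the grading $i-j$ on the generators $\X{i}{j}$. By \Cref{proposition:omnibuscob+}, morphisms in $\mo{Cob}^+$ can only raise this grading, and those preserving it come from $\mo{Cob}$. The comparison functor $p$ should then behave like an ``associated graded'' functor for a finite filtration on every object. Conservativity would follow from the fact that $p$ is fully faithful on each graded piece.

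\emph{Step 1: semi-orthogonality in $\Apoi_\eta$.} For $k\in\bb{Z}$, let $\cat{G}_k\subseteq \Apoi_\eta$ be the thick subcategory generated by the $\X{i}{j}$ with $i-j=k$. Let $\cat{G}_{\geq k}$ and $\cat{G}_{\leq k}$ be the thick subcategories generated by those with $i-j\geq k$, resp.~$i-j\le k$. By \Cref{proposition:omnibuscob+}(b), $\Hom_{\mo{Cob}^+}(\X{i}{j},\X{r}{s})=\emptyset$ whenever $r-s<i-j$. Hence the $\bb{Q}$-linearized mapping spectra vanish, and this persists after the base change defining $\Apoi_\eta=\Apoi\otimes_{\A}\A_\eta$, since that base change only localizes the mapping spectra over $\End(\bb{1})$, which is the same for both categories. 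Thus $\Hom(\cat{G}_{\geq k+1},\cat{G}_{\leq k})=0$. Every object of $\Apoi_\eta$ lies in the thick subcategory generated by finitely many $\X{i}{j}$, so its generators involve only finitely many gradings. The orthogonality should then give, for each such object $Z$, a finite filtration with graded pieces $\mathrm{gr}_k Z\in\cat{G}_k$. Concretely, I would use the right adjoint to the inclusion $\cat{G}_{\geq k}\subseteq \langle \cat{G}_{\geq k},\cat{G}_{\le k-1}\rangle$, which exists by the standard semi-orthogonal decomposition argument. The pieces are functorial, since the decomposition is semi-orthogonal, and retracts cause no trouble because the projection functors are exact and idempotent-complete subcategories are preserved.

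\emph{Step 2: $p$ respects the filtration and splits it.} In $\A_\eta$, \Cref{proposition:omnibusCob}(b) says that mapping spectra between $\X{i}{j}$ and $\X{r}{s}$ vanish unless $i-j=r-s$, in both directions. So the analogous subcategories $\cat{G}'_k\subseteq\A_\eta$ are mutually orthogonal, and every object of $\A_\eta$ splits as $\bigoplus_k$ of its components. Since $p$ sends $\X{i}{j}$ to $\X{i}{j}$, it maps $\cat{G}_k$ into $\cat{G}'_k$. Applying the exact functor $p$ to the filtration of $Z$ gives a filtration of $p(Z)$ with pieces in distinct orthogonal $\cat{G}'_k$; all extension classes vanish, so $p(Z)\simeq\bigoplus_k p(\mathrm{gr}_k Z)$.

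\emph{Step 3: $p$ is fully faithful on each $\cat{G}_k$.} The composite $\mo{Cob}\xrightarrow{i}\mo{Cob}^+\to\mo{Cob}$, where the second functor sends the point to $0$, sends $X$ to $X$, so it is the identity by the universal property. By \Cref{proposition:omnibuscob+}(b), $i$ is an equivalence on mapping spaces between generators of equal grading, so $p$ is an inverse equivalence there. Linearizing and localizing at $\eta$ (again, the unit endomorphisms agree) shows that $p$ is an equivalence on mapping spectra between generators of $\cat{G}_k$. Hence $p$ is fully faithful on $\cat{G}_k$. Now if $p(Z)=0$, then each $p(\mathrm{gr}_kZ)=0$, so $\mathrm{gr}_kZ=0$ by full faithfulness, and therefore $Z=0$. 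Since $p$ is exact, this detection of zero objects gives conservativity, applied to cofibers.

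The main obstacle I expect is Step 1: making the filtration rigorous, including under retracts and after the $\eta$-localization. The point to check carefully is that Hom-vanishing on generators propagates to the thick closures, and that the ``truncation'' functors exist in the small idempotent-complete setting. If needed, I would pass to $\mo{Ind}$, construct the truncations there as colocalizations, and observe that they preserve compact objects.
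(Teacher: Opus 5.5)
Your argument is correct, but it is organized differently from the paper's proof.

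\textbf{The paper's route.} The paper never builds a filtration. It restricts to the thick subcategory generated by $\bigoplus_{i+j\le n}\X{i}{j}$. It then uses Schwede--Shipley to turn $p$ into base change along the $\bb{E}_1$-map $S=\End_{\Apoi_\eta}(\bigoplus\X{i}{j})\to R=\End_{\A_\eta}(\bigoplus\X{i}{j})$, and grades both rings by $i-j$. By \Cref{lemma:differentnumbersII}, the fiber $I$ sits in gradings $\geq 1$ while $S$ is bounded. Hence $I^{\otimes_S k}\to S$ is null as a bimodule map (\Cref{corollary:gradingnil}), which forces $R\otimes_S-$ to be conservative (\Cref{lemma:nilNakayama}).

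\textbf{How the two compare.} Your semi-orthogonal decomposition is the categorical counterpart of the same upper-triangularity. The paper's ``fiber in positive grading'' corresponds to your ``equivalence on the diagonal pieces $\cat{G}_k$ plus vanishing of maps that lower the grading.'' The paper's route avoids constructing truncation functors, and it gives conservativity on all (not only perfect) $S$-modules. Your route avoids Schwede--Shipley and bar resolutions, and it yields more structure. You get a functorial finite filtration on every object and the splitting $p(Z)\simeq\bigoplus_k p(\mathrm{gr}_k Z)$. Since $p$ is fully faithful on $\cat{G}_k$ and hits the generators, you even get equivalences $\cat{G}_k\simeq\cat{G}'_k$, so $p$ is literally an associated-graded functor.

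\textbf{A slip in Step 3.} There is no functor $\mo{Cob}^+\to\mo{Cob}$ sending the point to $0$, because $\Hom_{\mo{Cob}}(\emptyset,+)$ is empty. The retraction only exists after linearization, as $\A\xrightarrow{i}\Apoi\xrightarrow{\mathrm{can}}\A$, whose composite is the identity by the universal property of $\A$. Two-out-of-three with \Cref{proposition:omnibuscob+}(b) then shows $\mathrm{can}$ is an equivalence on mapping spectra between generators of equal grading. This survives the $\eta$-localization, and it is exactly how the paper proves the second half of \Cref{lemma:differentnumbersII}.
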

As an immediate corollary, we have:
\begin{corollary}
The 2-ring $\Apoi_\eta$ is local. 
\end{corollary}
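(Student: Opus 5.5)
The plan is to deduce locality of $\Apoi_\eta$ from \Cref{proposition:conservative} together with \Cref{thm:A1etasemisimple}, which says that $\A_\eta$ is a semisimple tt-field with simple unit, and hence local.

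First, note that $\Apoi_\eta$ is nonzero. The functor $p$ is induced from a functor $\Apoi\to\A$ that is an equivalence on endomorphisms of the unit, by inverting the same nonzero homogeneous elements on both sides. So $p$ induces an isomorphism on $\pi_*\End(\bb{1})$. In particular $\bb{1}_{\Apoi_\eta}\neq 0$, since $\pi_*\End(\bb{1}_{\A_\eta})=\bb{Q}(t,t_1,t_2,\ldots)$ is nonzero.

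Now let $x,y\in\Apoi_\eta$ with $x\otimes y\simeq 0$. Since $p$ is a tt-functor, $p(x)\otimes p(y)\simeq p(x\otimes y)\simeq 0$ in $\A_\eta$. Because $\A_\eta$ is local, one of $p(x)$ or $p(y)$ vanishes. Locality of $\A_\eta$ follows from \Cref{lem:ssimpleimpllocal}: every nonzero object $z$ is $\otimes$-faithful, so $z\otimes w\simeq 0$ forces $\id_w\simeq 0$, that is, $w\simeq 0$.

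It remains to see that $p(x)\simeq 0$ implies $x\simeq 0$. This is conservativity applied to the zero map $0\to x$: its image is an equivalence, so $x\simeq 0$. Hence either $x\simeq 0$ or $y\simeq 0$, and $\Apoi_\eta$ is local.

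None of these steps is hard; all the real work sits in \Cref{proposition:conservative}, which is assumed here.
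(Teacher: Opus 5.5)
Your proposal is correct and is the same argument as the paper's: locality of $\A_\eta$ comes from \Cref{thm:A1etasemisimple}, and it is pulled back along the conservative tt-functor $p$ of \Cref{proposition:conservative}. Your explicit check that $\Apoi_\eta\neq 0$ fills in a detail the paper leaves implicit; it also follows directly from $p(\bb{1})\simeq\bb{1}\neq 0$ in $\A_\eta$.
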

\begin{proof}
It follows directly from \Cref{thm:A1etasemisimple} that $\A_\eta$ is local, and it is clear that if a tt-category has a conservative symmetric monoidal functor to a local tt-category, it is itself local. 
\end{proof}
Thus, $\Apoi_\eta$ is a potential counterexample, and is in fact one of the universal examples (the universal examples to test the exact nilpotence condition are exactly the $\Apoi/\mathcal{P}, \mathcal{P}\in \Spec(\Apoi)$). 

Before the proof, we need a few lemmas. 

\begin{lemma}
Let $S$ be a non-negatively graded ring spectrum, and $M,N$ two graded $S$-modules. Suppose $M$ is in gradings $\geq n+1$, and $N$ in gradings $\leq n$. Then any map $M\to N$ of graded $S$-modules is null. 

The same is true for bimodules. 
\end{lemma}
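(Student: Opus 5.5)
The plan is to reduce the statement to a computation of graded mapping spectra, carried out one grading at a time. I interpret a graded $S$-module as a functor from the discrete set $\bb{Z}$ (or $\bb{N}$) to spectra, equipped with an action of $S$ for the Day convolution structure. Equivalently, it is a module over $S$ viewed as an algebra in $\mo{Fun}(\bb{Z},\mo{Sp})$. Since $S$ is non-negatively graded, the action maps $S_a\otimes M_b\to M_{a+b}$ only raise degree.

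The key step is to exhibit $M$ as built from free modules on generators in degrees $\geq n+1$. Concretely, the free-forgetful adjunction gives the bar resolution $\colim_{\Delta\op}(S^{\otimes \bullet+1}\otimes M)\simeq M$. Here $S^{\otimes k}\otimes M$ denotes the free $S$-module on the underlying graded spectrum $S^{\otimes k}\otimes M$, which is again concentrated in gradings $\geq n+1$ because $S$ is non-negatively graded. Mapping into $N$ gives
\[
\Hom_S(M,N)\simeq \lim_{\Delta}\Hom_{\mo{Fun}(\bb{Z},\mo{Sp})}(S^{\otimes\bullet}\otimes M,N).
\]
Each term vanishes, since a map of graded spectra is a product over gradings $d$ of maps $A_d\to N_d$, and in every grading either $A_d=0$ (when $d\leq n$) or $N_d=0$ (when $d\geq n+1$). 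A limit of zero spectra is zero, so the mapping spectrum is contractible, and in particular every map is null.

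For bimodules, I would run the same argument with $S\otimes S^{\op}$, or with $S\otimes T^{\op}$ for two non-negatively graded ring spectra. This is again a non-negatively graded ring spectrum, since Day convolution adds degrees, and $S$-$T$-bimodules are left modules over it. The statement then follows from the module case.

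The only point requiring care is to confirm that this truncation-style argument is legitimate, namely that the bar resolution converges in graded modules. This holds because the free-forgetful adjunction is monadic, with the forgetful functor conservative and preserving geometric realizations. If one prefers a more hands-on argument, one can instead note that the full subcategory of $S$-modules concentrated in gradings $\leq n$ is closed under limits and is the image of the right adjoint to the inclusion of modules in gradings $\geq n+1$. Either way, the argument has no real obstacle; it is essentially formal.
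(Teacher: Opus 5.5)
Your proof is correct and follows essentially the same route as the paper. Resolve $M$ by the bar construction, and observe that each $S^{\otimes k}\otimes M$ stays in gradings $\geq n+1$ because $S$ is non-negatively graded. Then use the free--forgetful adjunction to reduce to maps of graded spectra, which vanish grading by grading, and treat bimodules as modules over $S\otimes S^{\mathrm{op}}$.

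The optional ``hands-on'' alternative in your last paragraph is misphrased and not needed. Modules in gradings $\leq n$ are the \emph{kernel}, not the image, of the right adjoint $(-)_{\geq n+1}$, and checking that this truncation is right adjoint to the inclusion amounts to the lemma itself.
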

\begin{proof}
The bimodule statement has either the same proof, or follows from the module case by using $S\otimes S^{\mathrm{op}}$, so we only treat the module case. 

First, we note that without the $S$-module structure, this is obvious: any map of graded spectra $M\to N$ is $0$. 

Second, we note that since $S$ is non-negatively graded, all tensors $S^{\otimes k}\otimes M$ are in gradings $\geq n+1$. Using the bar resolution for $M$ and the case of graded spectra with no $S$-module structure, we obtain the result. 
\end{proof}

\begin{corollary}\label{corollary:gradingnil}
Let $f\colon S\to R$ be a map of graded ring spectra, where $R,S$ are nonnegatively graded, and suppose furthermore that $S$ is in gradings $\leq n$ for some $n$. If the fiber $I$ is in gradings $\geq 1$, then for $k\geq n+1$, the map $I^{\otimes_S k}\to S$ is null as a map of graded $S$-bimodules, and hence also when forgetting the grading. 
\end{corollary}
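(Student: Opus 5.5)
The plan is to show that the $k$-fold tensor power $I^{\otimes_S k}$ lives in gradings $\geq k$. Once $k\geq n+1$, the preceding lemma then kills the map $I^{\otimes_S k}\to S$, because $S$ sits in gradings $\leq n$.

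\emph{Setting up the map.} Regard $I=\mo{fib}(f)$ as a graded $S$-bimodule via $f$; the map $I\to S$ is then a map of graded $S$-bimodules. Tensoring these maps over $S$ gives
\[
I^{\otimes_S k}\to S^{\otimes_S k}\simeq S,
\]
which is again a map of graded $S$-bimodules. This is the map referred to in the statement.

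\emph{Grading of the source.} I would prove by induction on $k$ that $I^{\otimes_S k}$ is concentrated in gradings $\geq k$. The key observation is that a relative tensor product $M\otimes_S N$ is the geometric realization of the bar construction $M\otimes S^{\otimes \bullet}\otimes N$, where $\otimes$ is the Day convolution of graded spectra. Since $S$ is non-negatively graded, Day convolution adds gradings, and geometric realizations are computed gradingwise, we get: if $M$ is in gradings $\geq a$ and $N$ in gradings $\geq b$, then $M\otimes_S N$ is in gradings $\geq a+b$. Applying this with $I$ in gradings $\geq 1$ (the hypothesis) gives $I^{\otimes_S k}$ in gradings $\geq k$.

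\emph{Conclusion.} For $k\geq n+1$ the source is in gradings $\geq n+1$ and the target $S$ is in gradings $\leq n$. The bimodule version of the preceding lemma then says the map is null as a map of graded $S$-bimodules. Forgetting the grading (taking the underlying colimit over gradings) is a functor, so it preserves nullity, which gives the final clause.

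The only real point to check is the grading bound for relative tensor products. Non-negativity of $S$ is exactly what makes the bar construction preserve lower grading bounds, and the hypotheses that $R$ is non-negatively graded and $I$ is in gradings $\geq 1$ are used only to know that $I$ is bounded below by $1$.
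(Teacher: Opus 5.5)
Your proposal is correct and follows the paper's proof: the paper likewise notes that non-negativity of $S$ forces $I^{\otimes_S k}$ into gradings $\geq k\geq n+1$, and then applies the preceding lemma in its bimodule form. Your bar-construction argument for the grading bound on relative tensor products spells out the one step the paper leaves implicit.
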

\begin{proof}
Since $S$ is nonnegatively graded, $I^{\otimes_S k}$ is in gradings $\geq k\geq n+1$ and so the previous lemma applies directly. 
\end{proof}

For a map of ring spectra $S\to R$ with fiber $I$, the map $I^{\otimes_S k}\to S$ being null is a very strong nilpotence condition, and buys us the following: 
\begin{lemma}\label{lemma:nilNakayama}
Let $f\colon S\to R$ be a map of ring specta with fiber $I$. If for $n$ large enough, the map $I^{\otimes_S n}\to S$ is null as a map of $S$-bimodules, then tensoring with $R$ is conservative on $S$-modules. 
\end{lemma}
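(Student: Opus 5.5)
We argue directly with the fiber sequence $I\to S\to R$ of $S$-bimodules. Let $M$ be a (left) $S$-module with $R\otimes_S M\simeq 0$; we must show $M\simeq 0$. Tensoring the fiber sequence with $M$ over $S$ gives a fiber sequence $I\otimes_S M\to M\to R\otimes_S M$. Since the last term vanishes, the map $I\otimes_S M\to M$ is an equivalence.

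Iterating, we get that for every $k\geq 1$ the map $I^{\otimes_S k}\otimes_S M\to M$ is an equivalence. Concretely, $I^{\otimes_S k}\to S$ factors as $I\otimes_S\cdots\otimes_S I\to I^{\otimes_S (k-1)}\to\cdots\to S$, and after applying $-\otimes_S M$ each step is $I^{\otimes_S j}\otimes_S(I\otimes_S M\to M)$, hence an equivalence.

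Now choose $n$ as in the hypothesis, so that $I^{\otimes_S n}\to S$ is null as a map of $S$-bimodules. The functor $-\otimes_S M$ from $S$-bimodules to left $S$-modules sends null maps to null maps, so $I^{\otimes_S n}\otimes_S M\to S\otimes_S M\simeq M$ is null. We have just seen that this map is also an equivalence, so $M$ is equivalent to $0$ through a null map, and hence $M\simeq 0$.

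Since the categories are stable, conservativity follows. Suppose $g$ is a map of $S$-modules with $R\otimes_S g$ an equivalence. Then $R\otimes_S\mo{fib}(g)\simeq 0$, so $\mo{fib}(g)\simeq 0$ and $g$ is an equivalence.

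The only subtle point is identifying the composite in the iteration with $(I^{\otimes_S n}\to S)\otimes_S M$. This requires checking that the map $I^{\otimes_S k}\to S$ appearing in the hypothesis is the iterated multiplication-type map built from $I\to S$, so that the bimodule structures are compatible with the factorization used above. I expect this to be a formal bookkeeping issue rather than a genuine obstacle. Right modules are handled symmetrically.
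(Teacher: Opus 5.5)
Your argument is correct and is essentially the paper's proof. When $R\otimes_S M\simeq 0$, the fiber sequence shows that $I\otimes_S M\to M$ is an equivalence, so by induction $I^{\otimes_S n}\otimes_S M\to M$ is one too; the hypothesis makes this map null, which forces $M\simeq 0$. The bookkeeping point you flag is harmless, since the map in question is the $n$-fold tensor power over $S$ of the bimodule map $I\to S$, and this factors as you describe; your final step from detecting zero objects to conservativity via fibers just spells out something the paper leaves implicit.
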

\begin{proof}
Let $M$ be an $S$-module. By assumption, for $n$ large enough, the map $I^{\otimes_S n}\otimes_S M\to M$ is null. Thus, it can only be an equivalence if $M\simeq 0$. But if $R\otimes_S M\simeq 0$, then $I\otimes_S M\simeq M$ and so $I^{\otimes_S n}\otimes_SM\simeq M$ by induction, via the canonical map. 
\end{proof}
The final lemma we will need concerns the specific structure of $\Apoi_\eta$, and suggests how we will use the previous three lemmas:

\begin{lemma}\label{lemma:differentnumbersII}
The spectrum $\Hom_{\Apoi_\eta}(\X{i}{j},\X{r}{s})$ is nonzero only when $r-s\geq i-j$.

Furthermore, if $r-s=i-j$, the map 
    \[
        \Hom_{\Apoi_\eta}(\X{i}{j},\X{r}{s})\to \Hom_{\A_\eta}(\X{i}{j},\X{r}{s})
    \]
induced by $p$ is an equivalence. 
\end{lemma}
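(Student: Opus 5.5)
The plan is to deduce both statements from \Cref{proposition:omnibuscob+} and from how the mapping spectra of $\Apoi$ and $\A$ are computed. By \Cref{proposition:descfreeII} and \Cref{proposition:descfreeI}, $\Hom_{\Apoi}(y\X{i}{j},y\X{r}{s})\simeq \bb{Q}[\Hom_{\mo{Cob}^+}(\X{i}{j},\X{r}{s})]$, and similarly for $\A$ with $\mo{Cob}$. These mapping spectra are modules over $\End(\bb{1})$. By the last clause of \Cref{proposition:omnibuscob+}, $i$ induces an equivalence $\End_{\Apoi}(\bb{1})\simeq\End_{\A}(\bb{1})$. Hence $\Apoi_\eta=\Apoi\otimes_{\End(\bb{1})}\End(\bb{1})_\eta$, where $\End(\bb{1})_\eta$ denotes the localization inverting all nonzero homogeneous elements. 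The mapping spectra between the images of the generators in $\Apoi_\eta$ are then obtained by base change: $\Hom_{\Apoi_\eta}(\X{i}{j},\X{r}{s})\simeq \Hom_{\Apoi}(\X{i}{j},\X{r}{s})\otimes_{\End(\bb{1})}\End(\bb{1})_\eta$. The same formula holds for $\A_\eta$. I will justify this with the usual fact that localizing at a multiplicative set of the unit computes the Verdier quotient, with mapping spectra localized.

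\textbf{Vanishing.} Suppose $r-s<i-j$. Then item (b) of \Cref{proposition:omnibuscob+} says $\Hom_{\mo{Cob}^+}(\X{i}{j},\X{r}{s})$ is empty. Its $\bb{Q}$-linearization is $0$, so the localization is $0$ as well. This gives the first claim: the spectrum is nonzero only if $r-s\geq i-j$.

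\textbf{Comparison.} Now take $r-s=i-j$. The functor $p$ is induced by $\mo{Cob}^+\to\mo{Cob}$, which sends the point to $\bb{1}\xrightarrow{0}X$; at this point this is a functor on $\bb{Q}$-linearizations, and I will need to say how it acts on the generators. Since $\mo{Cob}^+\to\mo{Cob}$ classifies a pointed dualizable object, composing with $i\colon\mo{Cob}\to\mo{Cob}^+$ gives a symmetric monoidal endofunctor of $\mo{Cob}$ that sends $X$ to $X$. By the universal property in \Cref{thm:cobhyp}, this composite is equivalent to the identity. On mapping spaces, $\Hom_{\mo{Cob}}\to\Hom_{\mo{Cob}^+}\to\Hom_{\mo{Cob}}$ is therefore the identity. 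The first map is an equivalence by \Cref{proposition:omnibuscob+}(b), so the second map is an equivalence too, in the range $r-s=i-j$. Linearizing gives an equivalence of $\End(\bb{1})$-modules, and these modules are compatible with the identification of units. Localizing then gives the claimed equivalence. For finite sums of generators the statement follows by additivity.

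The main obstacle is making the base-change description of mapping spectra in $\Apoi_\eta$ precise. This requires checking that the $\End(\bb{1})$-module structure on $\Hom_{\Apoi}$ is compatible with the identification $\End_{\Apoi}(\bb{1})\simeq\End_{\A}(\bb{1})$, so that $p$ is linear over a common ring. I will get this from the fact that $p$ is symmetric monoidal and restricts to an equivalence on units, again because $p\circ i\simeq\id$.
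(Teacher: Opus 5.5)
Your proposal follows the paper's proof. Mapping spectra in $\Apoi_\eta$ and $\A_\eta$ are base changes of $\bb{Q}$-chains on cobordism mapping spaces along the common localization of the unit. Emptiness from \Cref{proposition:omnibuscob+}(b) gives the vanishing, and the comparison comes from $p$ having the section $i$.

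However, one step of your comparison paragraph is wrong as written. There is no symmetric monoidal functor $\mo{Cob}^+\to\mo{Cob}$ sending the universal pointing to $\bb{1}\xrightarrow{0}X$. The category $\mo{Cob}$ has no zero morphisms. In fact $\Hom_{\mo{Cob}}(\emptyset,+)=\emptyset$ by \Cref{proposition:omnibusCob}(b), so $X$ admits no pointing at all in $\mo{Cob}$, and no functor $\mo{Cob}^+\to\mo{Cob}$ can restrict along $i$ to the identity. Consequently the composite $\Hom_{\mo{Cob}}\to\Hom_{\mo{Cob}^+}\to\Hom_{\mo{Cob}}$ you invoke is not defined. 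The zero map only exists after $\bb{Q}$-linearization, so $p$ is only a functor $\Apoi\to\A$; it is induced by a symmetric monoidal functor $\mo{Cob}^+\to\A$, not $\mo{Cob}^+\to\mo{Cob}$.

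The fix is to run your retraction argument one level up, which is what the paper does. The composite $p\circ i\colon\A\to\A$ is a 2-ring map sending $X$ to $X$, so it is equivalent to the identity by the universal property in \Cref{proposition:descfreeI}. Hence the composite
\[\Hom_{\A}(\X{i}{j},\X{r}{s})\xrightarrow{\ i\ }\Hom_{\Apoi}(\X{i}{j},\X{r}{s})\xrightarrow{\ p\ }\Hom_{\A}(\X{i}{j},\X{r}{s})\]
is the identity. The first map is the $\bb{Q}$-linearization of the map of cobordism mapping spaces. That map is an equivalence for $r-s\le i-j$ by \Cref{proposition:omnibuscob+}(b); this is where you genuinely use that the section, unlike $p$, exists on cobordism categories. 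So $p$ is its inverse and hence an equivalence. This persists after localizing, since both sides are base changed along the same map of units. You already state $p\circ i\simeq\id$ correctly at the 2-ring level in your last paragraph; you only need to use it there rather than at the level of $\mo{Cob}$.
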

\begin{proof}
For the first half, we note that this spectrum is a basechange along a certain ring map of the $\bb{Q}$-homology of mapping spaces in a cobordism category. It thus suffices to show that if $r-s<i-j$, this mapping space is empty, which we have in \Cref{proposition:omnibuscob+}.

For the second half of the claim, note that $p$ has a section defined as the unique 2-ring map sending $X\mapsto X$ and so it suffices to prove the analogous claim for the section. However, that section exists already at the level of cobordism categories, and since both mapping spectra are basechanges along the same ring map of $\bb{Q}$-homologies of the mapping spaces in those cobordism categories, it suffices to prove the claim at the level of cobordism categories, which is also covered in \Cref{proposition:omnibuscob+}. 
\end{proof}

We now have all that we need to prove \Cref{proposition:conservative} which, we recall, states that $p$ is conservative:
\begin{proof}[Proof of \Cref{proposition:conservative}]
 The increasing union of the thick subcategories generated by $\X{i}{j}$ for $i+j\leq n$ is equal to $\Apoi_\eta$, hence it suffices to prove conservativity when restricted to any of these. 

By the Schwede-Shipley theorem, this amounts to proving that basechange along the $\bb{E}_1$-algebra map 
    \[
        \End_{\Apoi_\eta}(\bigoplus_{i+j\leq n}\X{i}{j})\to \End_{\A_\eta}(\bigoplus_{i+j\leq n}\X{i}{j})
    \]
is conservative (on perfect modules, though our proof shows it in general). 

 Observe that we can grade naturally the objects $\bigoplus_{i+j\leq n}\X{i}{j}$ in $\Apoi_\eta$ and $\A_\eta$ respectively by putting $\X{i}{j}$ in degree $i-j$, which canonically upgrades their endomorphism rings to graded ring spectra, and the map between them to a map of graded ring spectra. 

 By \Cref{lemma:differentnumbersII}, both these graded rings are concentrated in nonnegative gradings, and furthermore that map is an equivalence on the grading $0$ piece, so that its fiber is concentrated in gradings $\geq 1$. 

 We conclude using \Cref{corollary:gradingnil} and \Cref{lemma:nilNakayama}. 
\end{proof}

\begin{theorem}\label{thm:ENCfails}
The category $\Apoi_\eta$ is local, and the exact-nilpotence condition fails for $\Apoi_\eta$, for the (image of the) universal fiber sequence $Y\to \bb{1}\to X$.

 In particular, the homological spectrum of $\Apoi_{\eta}$ is strictly larger than its Balmer spectrum.
\end{theorem}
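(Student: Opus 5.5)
Locality of $\Apoi_\eta$ is the corollary to \Cref{proposition:conservative} already proved, so the work is to show that ENC fails for the image of the universal fiber sequence $Y\xrightarrow{g}\bb{1}\xrightarrow{f}X$. I argue by contradiction. Suppose $z\in\Apoi_\eta$ is nonzero and either $z\otimes f$ or $z\otimes g$ is $\otimes$-nilpotent.

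\emph{Case $z\otimes g$ nilpotent.} Use the involution $\iota$ of \Cref{construction:iota} and \Cref{proposition:involution}, which swaps $f$ with $g^\vee$. First I must check that $\iota$ descends to $\Apoi_\eta$. It is an auto-equivalence, so it induces an automorphism of $\pi_*\bb{1}$, and therefore it preserves the set of nonzero homogeneous elements $a$ together with the ideal generated by their cofibers. Then $z\otimes g$ nilpotent implies, by \Cref{lemma:omnibusnil} (duality), that $z^\vee\otimes g^\vee$ is nilpotent. Applying $\iota$ shows that $\iota(z^\vee)\otimes f$ is nilpotent, and $\iota(z^\vee)\neq0$. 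This reduces everything to the first case.

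\emph{Case $z\otimes f$ nilpotent.} Apply the tt-functor $p$, which sends $f$ to the zero map $\bb{1}\xrightarrow{0}X$. I plan to use $p$ on the cofiber side instead. Since $z\otimes f^{\otimes n}=0$ with $f^{\otimes n}\colon\bb{1}\to X^{\otimes n}$, the map $z\to z\otimes X^{\otimes n}$ is zero. Hence $z$ is a retract of $z\otimes\cofib(f^{\otimes n})$, and so $z\otimes\cofib(f^{\otimes n})\neq 0$.

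The real input is the following dichotomy: I claim $z\otimes f$ cannot be nilpotent in $\Apoi_\eta$, because $p(z)\otimes p(f)$ is zero while $f$ has to be ``nonzero in positive grading''. The cleanest approach is to work with the grading from the proof of \Cref{proposition:conservative}. Write $E=\End_{\Apoi_\eta}(G)$ for a generator $G=\bigoplus_{i+j\le m}\X{i}{j}$, with $z$ a perfect $E$-module. Multiplication by $f$ raises the grading by exactly one and is a nonzero degree-one element. The key claim is that $f^{\otimes n}\otimes z$ is nonzero for every $n$ whenever $z\neq0$. To prove it, filter $z$ by grading so that its lowest nonzero graded piece $z_{\min}$ is detected by $p$ (using \Cref{lemma:differentnumbersII} and the semisimplicity of $\A_\eta$ from \Cref{thm:A1etasemisimple}). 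Then $f^{\otimes n}\otimes z$ restricted to that piece is $z_{\min}\to z_{\min}\otimes X^{\otimes n}$. This map lands in the grading-$n$ part, and there it should be injective: on the associated graded it is $z_{\min}\otimes\mathrm{coev}$-type data and split, because every object in $\A_\eta$ is $\otimes$-faithful (\Cref{lem:ssimpleimpllocal}). Consequently it cannot be killed by the higher filtration pieces.

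This last step is the main obstacle: making the top/bottom grading argument rigorous for $z$ arbitrary, rather than merely graded.

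Finally, once ENC fails for the local category $\Apoi_\eta$, the cited equivalence of Balmer and Hyslop (ENC for locals $\Leftrightarrow$ NoS) gives that $\phi$ is not injective for $\Apoi_\eta$. Since $\Spc(\Apoi_\eta)$ is local with $(0)$ prime and more than one homological prime lies over it, $\Spc^h$ is strictly larger.
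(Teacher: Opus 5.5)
There is a genuine gap, and it comes from running the involution in the wrong direction. You reduce everything to the case where $z\otimes f$ is $\otimes$-nilpotent. That is exactly the case in which $p$ carries no information, since $p(f)=0$, and your grading argument for it does not work.

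A perfect module over $E=\End_{\Apoi_\eta}(G)$ carries no grading, so there is no ``lowest graded piece'' $z_{\min}$ to filter by. Even for graded modules, $\otimes$-faithfulness in $\A_\eta$ cannot control $f^{\otimes n}\otimes z_{\min}$. The category $\A_\eta$ has no nonzero maps that raise the grading ($\Hom(\bb{1},X)=0$ there), so the ``$\mathrm{coev}$-type'' splitting you invoke has no source. You flag this step as the main obstacle, and it is in fact the entire content of the theorem. The claim that $f^{\otimes n}\otimes z\neq 0$ for every nonzero $z$ is true, but it comes from the symmetry argument below, not from grading.

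The missing idea is to use $\iota$ to land in the case that $p$ does detect. Suppose $z\otimes f$ is $\otimes$-nilpotent. Applying $\iota$ gives that $\iota(z)\otimes g^\vee$ is nilpotent, and dualizing (\Cref{lemma:omnibusnil}) gives that $\iota(z)^\vee\otimes g$ is nilpotent, with $\iota(z)^\vee\neq 0$. So you may assume $z\otimes g$ is nilpotent.

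Now $p(g)$ is the fiber of $0\colon\bb{1}\to X$, i.e.\ the projection $\bb{1}\oplus\Omega X\to\bb{1}$, which is split surjective. Hence $(p(z)\otimes p(g))^{\otimes n}$ is a split surjection onto $p(z)^{\otimes n}$. This target is nonzero: $p(z)\neq 0$ by \Cref{proposition:conservative}, and $\A_\eta$ is a tt-field by \Cref{thm:A1etasemisimple}. Therefore $p(z)\otimes p(g)$ cannot be nilpotent, a contradiction.

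Equivalently, in your $f$-case you can use the conservative composite $p\circ\iota$, which sends $f$ to the split injection $p(g)^\vee\colon\bb{1}\to\bb{1}\oplus\Sigma X^\vee$. This is the paper's argument. Your remaining steps are fine: descending $\iota$ to $\Apoi_\eta$, locality, and deducing the statement about $\Spc^h$ from the Balmer--Hyslop equivalence.
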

\begin{proof}
Since $p\colon\Apoi_\eta\to\A_\eta$ is conservative, with the target a tt-field, we find that $\Apoi_\eta$ is local. 

Now, let $Y\xrightarrow{g}\bb{1}\xrightarrow{f}X$ be the fiber sequence associated to the universal arrow $f\colon\bb{1}\to X$ and suppose there exists some $z\in \Apoi_\eta$ with either $z\otimes f$ or $z\otimes g$ being $\otimes$-nilpotent.  

Recall from \Cref{construction:iota} and \Cref{proposition:involution} the involution $\iota\colon \Apoi\to\Apoi$, which sends $f$ to $g^\vee$. Since it is a 2-ring map and $\Apoi_\eta$ is defined by inverting nonzero endomorphisms of the unit, it also induces an involution on $\Apoi_\eta$, which we abusively still denote $\iota$. 

By replacing $z$ with $\iota(z)$ if necessary, we may assume that $z\otimes g$ is $\otimes$-nilpotent.  This implies then that $p(z)\otimes p(g)\simeq p(z\otimes g)$ is $\otimes$-nilpotent as well. 

 Since $z$ was assumed to be nonzero, \Cref{proposition:conservative} implies that $p(z)$ is also nonzero.  However, by definition of $p$, $p(g)$ is the fiber of $\bb{1}\xrightarrow{0}X$ in $\A_\eta$, and hence it is the projection map $\Omega X\oplus\bb{1}\to \bb{1}$, which is split surjective.  In particular, $p(z)\otimes p(g)$ is split surjective as well, with target $p(z)$, and the only way a split surjective map can be $\otimes$-nilpotent is if the target is zero, a contradiction.
\end{proof}
\begin{remark}\label{rmk:NoSfailsgeneral}
As in \Cref{section:affineline}, and the discussion in \Cref{subsection:points}, we note that a slight variant of the arguments given here also shows that for any map of commutative ring spectra $\End(\bb{1}_{\A})[\frac{1}{t-n}, n\in\bb{Z}] \to K$, where $K$ is a graded field, the exact-nilpotence condition also fails for the basechange $\Apoi_K := \Apoi\otimes_{\End(\bb{1}_{\A})}K$. 

The only point where the argument needs to be changed is when considering the involution $\iota$: instead, it induces an equivalence $\Apoi_K\simeq \Apoi_{K'}$, where $K'$ is the same graded field as $K$, but equipped with a different map $\End(\bb{1}_\A)\to K$ 
\end{remark}

\newpage

\part{Constructible Spectra in Higher Zariski Geometry}
\section{The $\bb{E}_{\infty}$-Constructible Spectrum}\label{sec:SpectraSpectra}
In \cite[Appendix A]{2022arXiv220709929B}, a constructible spectrum is constructed for objects in any suitably nice ``category of algebras,'' by generalizing the notion of an ``algebraically closed field''.  In the context of higher Zariski geometry, there is a map from the constructible spectrum to the Balmer spectrum whose image tracks exactly those primes which can be detected by maps to pointlike rigid 2-rings.  For the convenience of the reader, we recall the definitions/constructions of Nullstellensatzian objects and the constructible spectrum.
\begin{definition}[{\cite[Definition A.1]{2022arXiv220709929B}}]  Let $\mc{A}$ be a presentable category.  Then $\mc{A}$ is said to be \textit{weakly spectral} if it is compactly generated, the terminal object is compact, and any map from the terminal object is an equivalence.
\end{definition}
\begin{definition}[{\cite[Definition 1.1]{2022arXiv220709929B}}]  Let $\mc{A}$ be presentable and take any object $R\in\mc{A}$.  Then we say that $R$ is \textit{Nullstellensatzian} if $R$ is nonzero and every compact object in $\mc{A}_{R/}$ is either zero or admits a map to the initial object.
\end{definition}
These are the key ingredients going into creating a constructible spectrum in some general category.  This is summarized by the following theorem.
\begin{theorem}[{\cite[Theorem A.3/Lemma A.35]{2022arXiv220709929B}}]  Let $\mc{A}$ be a weakly spectral category.  Then there exists a unique functor 
\[\Speccons_{\mc{A}}\colon\mc{A}^{op}\to \mo{Top}^{\mo{cpt},T_1,\mo{cl}}\]
to compact $T_1$-topological spaces with closed maps between them satisfying a number of properties.  In particular, points of $\Speccons_{\mc{A}}(R)$ correspond to equivalence classes of maps $R\to S$ for $S$ Nullstellensatzian, where $S$, $S^{\prime}$ are said to be equivalent if $S\coprod_{R} S^{\prime}\neq 0$.  The topology is determined by the stipulation that the image of $\Speccons_{\mc{A}}(S)\to \Speccons_{\mc{A}}(R)$ is closed for all maps $R\to S$ in $\mc{A}$.
\end{theorem}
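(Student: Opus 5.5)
We plan to construct $\Speccons_{\mc{A}}$ directly, following \cite[Appendix A]{2022arXiv220709929B}. Everything rests on two consequences of weak spectrality. First, since every map out of the terminal object $0$ is an equivalence, any object admitting a map to $0$ is $0$; equivalently, the source of a map into a nonzero object is nonzero. Second, since $0$ is compact, a filtered colimit of nonzero objects is nonzero: a map from the compact object $0$ into the colimit would factor through some stage.

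\textbf{Step 1 (existence of Nullstellensatzian points).} We show that every nonzero $T\in\mc{A}_{R/}$ admits a map to a Nullstellensatzian object. We use a transfinite, Zorn-type construction. Repeatedly push out along compact $T$-algebras that are nonzero and admit no map to the initial object. Filtered colimits stay nonzero by the second observation above, so the process never collapses to $0$. A standard small-object argument then produces a nonzero object all of whose compact nonzero algebras admit retractions, and this object is Nullstellensatzian.

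\textbf{Step 2 (key lemma and the equivalence relation).} Let $S$ be Nullstellensatzian and let $T,T'$ be nonzero $S$-algebras. We claim $T\amalg_S T'\neq 0$. To see this, write $T=\colim T_i$ as a filtered colimit of compact $S$-algebras. Each $T_i$ is nonzero because it maps to $T$, so each admits a map $T_i\to S$. This gives a map $T_i\amalg_S T'\to T'\neq0$, so each $T_i\amalg_S T'$ is nonzero, and hence so is the colimit $T\amalg_S T'$.

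Reflexivity of the relation follows by taking $T=T'=S$. For transitivity, suppose $S\amalg_R S'\neq 0$ and $S'\amalg_R S''\neq0$. The lemma over $S'$ gives
\[
(S\amalg_R S')\amalg_{S'}(S'\amalg_R S'')\neq 0 .
\]
This object receives a map from $S\amalg_R S''$, so the latter is nonzero by the first observation. Points of $\Speccons_{\mc{A}}(R)$ are then the equivalence classes of maps $R\to S$ with $S$ Nullstellensatzian. For a map $R\to R'$, composition induces a well-defined map $\Speccons_{\mc{A}}(R')\to\Speccons_{\mc{A}}(R)$: if $S\amalg_{R'}S'\neq 0$, then $S\amalg_R S'\neq0$ because it maps to $S\amalg_{R'}S'$.

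\textbf{Step 3 (topology).} We declare the closed sets to be arbitrary intersections of the sets $Z(C)=\operatorname{im}\Speccons(C)$, where $C$ ranges over compact $R$-algebras. We then show that $\operatorname{im}\Speccons(S)=\bigcap_i Z(S_i)$ for any $S=\colim S_i$ written as a filtered colimit of compacts. For a point lying in every $Z(S_i)$, the pushouts $S_i\amalg_R N$ are nonzero, hence so is $S\amalg_R N$. Step 1 then provides a Nullstellensatzian $S$-algebra equivalent to $N$.

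This identity gives the remaining properties. Images of arbitrary maps are closed, and functoriality and closedness of induced maps follow from it. Compactness comes from the finite intersection property: if $\bigcap Z(C_\alpha)=\emptyset$, the colimit of the finite pushouts of the $C_\alpha$ has no Nullstellensatzian algebra. By Step 1 it is therefore $0$, and by compactness of $0$ some finite pushout is already $0$. The $T_1$ property holds because a point $[S]$ equals $\operatorname{im}\Speccons(S)$, using the key lemma. Uniqueness is forced by the requirement that points are as described and that images are closed, since the topology must then be generated by the sets $Z(C)$.

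We expect the main obstacle to be Step 1 together with the image identity in Step 3. Both need care with filtered colimits and with compactness of $0$, which the whole construction hinges on.
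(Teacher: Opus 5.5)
The paper gives no proof of this statement; it quotes it from \cite{2022arXiv220709929B}. The ingredients it later borrows from there are Proposition A.31 (every nonzero object maps to a Nullstellensatzian one, and Nullstellensatzian objects are pointlike) and Lemma A.35 (equivalent points have a common Nullstellensatzian refinement). These are exactly your Steps 1 and 2, which are sound, apart from a few things to fix:
\begin{itemize}
\item It is objects receiving a map \emph{from} $0$ that vanish; every object maps \emph{to} $0$.
\item Reflexivity comes from the fold map $S\amalg_R S\to S$, not from the key lemma with $T=T'=S$.
\item Step 1 needs explicit bookkeeping. Well-order the compact algebras over the current stage and push out along each only if the result stays nonzero; if it does not, its base change to every later stage is $0$. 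Then iterate this pass $\omega$ times, so that every compact algebra over the colimit is, up to retract, base changed from a finite stage.
\item You should note that the equivalence classes form a set. For instance, the class of $R\to S$ is determined by which compact $C$ satisfy $C\amalg_R S\neq 0$.
\end{itemize}

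The real gaps are in Step 3.

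\textbf{The closed sets.} By your image identity and the key lemma, arbitrary intersections of the $Z(C)$ are exactly the images $\operatorname{im}\Speccons(S)$ of arbitrary maps $R\to S$. Nothing you prove, and nothing in weak spectrality, makes $Z(C_1)\cup Z(C_2)$ such an image. The obvious candidate $C_1\times C_2$ needs op-disjunctivity, which fails for instance for $\bb{E}_1$-algebras, as $M_2(k)$ under $k\times k$ shows. So you must take the topology \emph{generated} by the $Z(C)$ as a closed subbasis.

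\textbf{Compactness.} With the generated topology, your finite-intersection argument gives compactness only via Alexander's subbase theorem. That argument also silently uses $Z(C_1)\cap\dots\cap Z(C_n)=Z(C_1\amalg_R\dots\amalg_R C_n)$. This should be derived from the criterion $[N]\in Z(C)\iff C\amalg_R N\neq 0$, the key lemma over $N$, and $0\amalg_R N\simeq 0$.

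\textbf{Closedness of induced maps.} This is no longer formal. Continuity follows from $f^{-1}Z_R(C)=Z_{R'}(C\amalg_R R')$. But to see that the image of an intersection of finite unions of $Z$'s is closed, you need $f(\bigcap_\alpha F_\alpha)=\bigcap_\alpha f(F_\alpha)$ for downward-directed families. That follows from compactness of $\Speccons(R')$ together with closedness of the fibres, which comes from $T_1$ and continuity.

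\textbf{Uniqueness.} Your argument here is a non sequitur. Requiring all images to be closed only bounds the topology from below, and compactness plus $T_1$ do not force minimality. For example, on $\bb{N}\cup\{\infty\}$ the one-point-compactification topology is compact, $T_1$, and strictly finer than the cofinite one. Uniqueness holds only because the characterizing property is that the topology is the one generated by the images, which is how ``determined by'' in the statement must be read. With that reading it is immediate, but it cannot be deduced from closedness of images alone.
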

With this in hand, we can now discuss the relation to the homological spectrum in the rational case.
\begin{proposition}\label{prop:weaklyspectral}  The category $\ttCat$ of rigid 2-rings is weakly spectral, and so too is the category $\ttCat_{\bb{Q}}$ of rational rigid 2-rings.
\end{proposition}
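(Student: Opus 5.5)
We have to verify three things for $\ttCat$: it is presentable and compactly generated, its terminal object is compact, and every map out of the terminal object is an equivalence. The same three checks then apply verbatim to $\ttCat_{\bb{Q}}$, which is the slice (under-category) of $\ttCat$ under $\mo{Perf}(\bb{Q})$.

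\textbf{Presentability and compact generation.} We would use that $\ttCat$ is the full subcategory of $\CAlg(\mo{Cat}^{\mathrm{perf}})$ on rigid objects. By \cite[Construction 2.7]{aoki2025higherzariskigeometry} and \Cref{proposition:descFree}, free rigid 2-rings exist, for instance $\A$ and more general finitely presented ones. Rigid 2-rings are closed under filtered colimits and under colimits computed in $\CAlg(\mo{Cat}^{\mathrm{perf}})$, since dualizability of all objects is preserved there. Hence $\ttCat$ is presentable. A generating set of compact objects is given by the free rigid 2-rings on finitely many objects modulo finitely many relations, i.e.\ pushouts of $\A$ along maps classifying morphisms and homotopies. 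These are compact because $\Hom(\A,-)$ is the core of the underlying category, which commutes with filtered colimits of 2-rings. Every rigid 2-ring is a filtered colimit of such finitely presented ones. For the rational case, $\ttCat_{\bb{Q}}=(\ttCat)_{\mo{Perf}(\bb{Q})/}$ is an under-category of a compactly generated category, hence compactly generated, with compact objects the pushouts of compacts along $\mo{Perf}(\bb{Q})$.

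\textbf{Terminal object.} The terminal object is the zero 2-ring $0$. It is compact because it is the pushout of $\mo{Sp}^{\omega}\leftarrow \A\to \mo{Sp}^{\omega}$ along the maps $X\mapsto \bb{1}$ and $X\mapsto 0$ (these force $\bb{1}\simeq 0$). More simply, it is the quotient of the initial object by the single relation $\bb{1}\simeq 0$, namely by $\mathrm{id}_{\bb{1}}=0$. This is a finite colimit of compact objects. Equivalently, $\Hom(0,\cat{C})$ is contractible if $\cat{C}=0$ and empty otherwise, and a filtered colimit of 2-rings is zero iff some stage is zero because $\bb{1}=0$ is a condition witnessed at a finite stage; so $\Hom(0,-)$ commutes with filtered colimits.

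\textbf{Maps out of the terminal object.} Any 2-ring map $0\to \cat{C}$ sends $\bb{1}\simeq 0$ to $\bb{1}_{\cat{C}}\simeq 0$, forcing $\cat{C}\simeq 0$ (every object $x\simeq x\otimes\bb{1}\simeq 0$). So such a map is an equivalence.

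The only step requiring care is compact generation. Specifically, we must make sure that colimits of rigid 2-rings computed in $\CAlg(\mo{Cat}^{\mathrm{perf}})$ remain rigid, so that $\ttCat$ is closed under colimits. For this we would cite the treatment in \cite{aoki2025higherzariskigeometry}, or argue that objects generated under finite colimits and retracts by images of dualizable objects are dualizable.
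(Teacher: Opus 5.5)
Your proof is correct and follows essentially the same route as the paper. Your finitely presented generators, built from the free rigid 2-ring on an object, are the images of compact generators under the left adjoint to the conservative, filtered-colimit-preserving forgetful functor $\ttCat\to\mo{Cat}^{\mathrm{perf}}$ that the paper uses; that conservativity is also what justifies your otherwise unproved claim that these generators generate. Your argument for maps out of $0$ is identical to the paper's, and your treatment of the rational case is the under-category lemma the paper cites.

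You additionally check that the terminal object is compact, either via the pushout presentation of $0$ or via the observation that a filtered colimit of 2-rings vanishes iff some stage does. The paper's proof leaves this condition implicit.
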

\begin{proof}
The compactly generated assumption follows e.g., from the fact that $\mo{Cat}^{\mo{perf}}$ is compactly generated, and that the forgetful functor $\ttCat\to \mo{Cat}^{\mo{perf}}$ is conservative, commutes with filtered colimits, and has a left adjoint (which therefore sends compact objects to compact objects, which generate $\ttCat$).  The existence of a map from the terminal object $0$ to a rigid 2-ring $\cat{C}$ implies $\mo{End}_{\cat{C}}(\bb{1})\simeq 0$, which in turn implies that $\cat{C}\simeq 0$.

The final claim follows by \cite[Lemma A.15]{2022arXiv220709929B} and the fact that $\ttCat_{\bb{Q}}\simeq \ttCat_{\cat{D}^b(\bb{Q})/}$.
\end{proof}

\begin{definition}\label{def:consspec} 
For any rigid 2-ring $\cat{C}$, define its \textit{constructible spectrum} to be 
    \[
        \Speccons(\cat{C})\coloneqq\Speccons_{\ttCat}(\cat{C}).
    \]
\end{definition}
Before we proceed, we record the following helper lemma.
\begin{lemma}\label{lem:adjunctionalg}  Let $\cat{C}$ be a rigid 2-ring.  Then there is an induced adjunction
\begin{center}
\begin{tikzcd}
         \mo{Perf}_{\cat{C}}(-)\colon\mo{CAlg}\left(\mo{Ind}\left(\mathcal{C}\right)\right)\arrow[r, shift left=.75ex] & \left(\ttCat\right)_{\mathcal{C}/}\arrow[l, shift left=.75ex]\colon\mo{End}_{-}(\bb{1}),
\end{tikzcd}
\end{center}
with fully faithful left adjoint and where the right adjoint commutes with filtered colimits and finite coproducts.
\end{lemma}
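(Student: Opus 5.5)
The plan is to build the adjunction from the standard module-category construction and then check the three claimed properties separately: full faithfulness of the left adjoint, and preservation of filtered colimits and finite coproducts by the right adjoint.

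\emph{The left adjoint.} For $A\in\mo{CAlg}(\mo{Ind}(\cat{C}))$, set $\mo{Perf}_{\cat{C}}(A)$ to be the thick subcategory of $\mo{Mod}_A(\mo{Ind}(\cat{C}))$ generated by the free modules $A\otimes c$, $c\in\cat{C}$. Since $\cat{C}$ is rigid, these generators are dualizable: $A\otimes c^\vee$ is a dual of $A\otimes c$. Hence $\mo{Perf}_{\cat{C}}(A)$ is a rigid 2-ring, receiving the symmetric monoidal functor $A\otimes-$ from $\cat{C}$. Moreover $\mo{Ind}(\mo{Perf}_{\cat{C}}(A))\simeq\mo{Mod}_A(\mo{Ind}(\cat{C}))$.

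\emph{The right adjoint.} Given $F\colon\cat{C}\to\cat{D}$, the right adjoint $F_*$ of $\mo{Ind}(F)$ is lax monoidal. So $\mo{End}_{\cat{D}}(\bb{1})$, defined as $F_*\bb{1}_{\cat{D}}$, is a commutative algebra in $\mo{Ind}(\cat{C})$.

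\emph{The adjunction equivalence.} The key input is the identification
\[
\mo{Map}_{\cat{C}/}(\mo{Perf}_{\cat{C}}(A),\cat{D})\simeq\mo{Map}_{\mo{CAlg}(\mo{Ind}(\cat{D}))}(F(A),\bb{1}_{\cat{D}})\simeq\mo{Map}_{\mo{CAlg}(\mo{Ind}(\cat{C}))}(A,F_*\bb{1}).
\]
For the first equivalence:
\begin{itemize}
\item Pass to big categories. A map $\mo{Perf}_{\cat{C}}(A)\to\cat{D}$ under $\cat{C}$ is the same as a colimit-preserving symmetric monoidal functor $\mo{Mod}_A(\mo{Ind}\cat{C})\to\mo{Ind}(\cat{D})$ under $\mo{Ind}(\cat{C})$ which preserves compacts.
\item By the universal property of module categories (Lurie, HA 4.8.5), such functors, without the compactness condition, correspond to $\mo{Ind}(\cat{C})$-algebra maps $A\to\bb{1}_{\cat{D}}$.
\item The compactness condition is automatic. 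The generators $A\otimes c$ are sent to $F(c)$, which lies in $\cat{D}$.
\end{itemize}
The second equivalence is the monoidal adjunction $F\dashv F_*$ on commutative algebras.

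\emph{Full faithfulness.} It amounts to the unit $A\to\mo{End}_{\mo{Perf}_{\cat{C}}(A)}(\bb{1})$ being an equivalence. Here the pushforward along $\mo{Ind}(\cat{C})\to\mo{Mod}_A$ is the forgetful functor, so the unit object $A$ pushes forward to the underlying object $A$.

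\emph{Filtered colimits.} I expect this to be the main obstacle: it requires knowing how colimits in $(\ttCat)_{\cat{C}/}$ interact with pushforwards of the unit. Filtered colimits of rigid 2-rings are computed in $\mo{Cat}^{\mo{perf}}$, so $\mo{Ind}(\colim\cat{D}_i)$ is the colimit in $\mo{Pr}^L$ along the left adjoints. Since all transition functors preserve compacts, the right adjoints preserve filtered colimits. Then, by Lurie's colimit/limit interchange for presentable categories, for $c\in\cat{C}$ one computes
\[
\hom(c,(\colim\cat{D}_i)_*\bb{1})\simeq\hom_{\colim\cat{D}_i}(F c,\bb{1})\simeq\colim_i\hom_{\cat{D}_i}(F_i c,\bb{1}),
\]
using that mapping spectra in a filtered colimit of small stable categories are the colimits of mapping spectra. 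This holds for every compact $c$, and the functor $\mo{CAlg}\to\mo{Ind}(\cat{C})$ detects filtered colimits, so the right adjoint preserves them.

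\emph{Finite coproducts.} The coproduct in $(\ttCat)_{\cat{C}/}$ is the relative tensor product $\cat{D}\otimes_{\cat{C}}\cat{E}$. For the empty coproduct: the initial object $\cat{C}$ has endomorphisms of the unit equal to $\bb{1}_{\cat{C}}$, which is initial in $\mo{CAlg}$. For binary coproducts, rigidity of $\cat{C}$ makes the square of right adjoints adjointable (as in \Cref{prop:nilconsbasechange}, via \cite[Proposition 4.18]{LocRig}). This gives
\[
(\cat{D}\otimes_{\cat{C}}\cat{E})_*\bb{1}\simeq F_*\bb{1}\otimes G_*\bb{1},
\]
which is the coproduct of commutative algebras.
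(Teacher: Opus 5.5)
Your proposal is correct and follows essentially the paper's route: the adjunction and full faithfulness come from Lurie's universal property of module categories (HA 4.8.5.21) restricted to compact objects, filtered colimits are handled because endomorphisms of the unit commute with filtered colimits, and binary coproducts are handled by right adjointability of the pushout square. The one step you compress is the last: with $H\colon\cat{D}\to\cat{D}\otimes_{\cat{C}}\cat{E}$ and $K\colon\cat{E}\to\cat{D}\otimes_{\cat{C}}\cat{E}$, adjointability only yields $F_*H_*\bb{1}\simeq F_*F\,G_*\bb{1}$, and it is the projection formula $F_*F(x)\simeq F_*\bb{1}\otimes x$ (valid since $F_*$ is $\mo{Ind}(\cat{C})$-linear when $\cat{C}$ is rigid) that gives $F_*\bb{1}\otimes G_*\bb{1}$, exactly as the paper does.
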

\begin{proof}
Since $\cat{C}$ is rigid, so too are the categories $\mo{Perf}_{\cat{C}}(R):=\Mod_R(\Ind(\cat{C}))^\omega$ for commutative algebras $R$ in $\Ind(\cat{C})$, and the claim about the induced adjunction follows from \cite[Corollary 4.8.5.21]{HA}.  Since filtered colimits commute with taking endomorphisms of the unit, the right adjoint commutes with filtered colimits.

We reduce to checking that for rigid 2-rings $\cat{D}$ and $\cat{E}$ under $\cat{C}$, \[\mo{End}_{\cat{D}\otimes_{\cat{C}}\cat{E}}(\bb{1})\simeq \mo{End}_{\cat{D}}(\bb{1})\otimes \mo{End}_{\cat{E}}(\bb{1}),\]
considered as algebras in $\mo{Ind}(\cat{C})$.  Writing out the pushout diagram and giving names to the relevant functors,
\begin{equation}\label{eq:square}
\begin{tikzcd}
\mo{Ind}(\cat{C})\rar{f^*}\dar{g^*} & \mo{Ind}(\cat{D})\dar{h^*}\\
\mo{Ind}(\cat{E})\rar{k^*} & \mo{Ind}(\cat{D})\otimes_{\mo{Ind}(\cat{C})}\mo{Ind}(\cat{E}),
\end{tikzcd}
\end{equation}
we note that we can rewrite the algebra $\mo{End}_{\cat{D}\otimes_{\cat{C}}\cat{E}}(\bb{1})$ in $\mo{Ind}(\cat{C})$ using the right adjoints, \[\mo{End}_{\cat{D}\otimes_{\cat{C}}\cat{E}}(\bb{1})\simeq f_*h_*(\bb{1}_{\cat{D}\otimes_{\cat{C}}\cat{E}})\simeq g_*k_*(\bb{1}_{\cat{D}\otimes_{\cat{C}}\cat{E}}).\]
As in \Cref{prop:nilconsbasechange}, we can use \cite[Theorem 4.6]{haugsengmonad} to show that the square \ref{eq:square} is vertically right adjointable, which tells us that there is an equivalence $f^*g_*\simeq h_*k^*$. Applying this to the unit, we learn that
\[
f_*h_*(\bb{1}_{\cat{D}\otimes_{\cat{C}}\cat{E}})\simeq f_*h_*k^*(\bb{1}_{\cat{E}})\simeq f_*f^*g_*(\bb{1}_{\cat{E}}).
\]
By the projection formula, we learn that
\[f_*f^*g_*(\bb{1}_{\cat{E}})\simeq (f_*f^*(\bb{1}_{\cat{C}}))\otimes g_*(\bb{1}_{\cat{E}})\simeq f_*(\bb{1}_{\cat{D}})\otimes g_*(\bb{1}_{\cat{E}}).\]
Finally, using again the identification of $\mo{End}_{\cat{D}}(\bb{1})$ with the image $f_*(\bb{1}_{\cat{D}})$ of the unit under the right adjoint to $f^*$, and similarly for $\cat{E}$, this yields the claim.
\end{proof}

With this in hand, we can make the following observation, relating the constructible spectrum of a category $\cat C$ back to the constructible spectrum of its unit in the category of commutative $\cat{C}$-algebras.

\begin{proposition}\label{prop:consequalscons}  Let $\cat{C}\in\ttCat$ be a rigid 2-ring.  Then there is an equivalence of spaces \[\Speccons(\cat{C})\simeq \Speccons_{\mo{CAlg}\left(\mo{Ind}\left(\cat{C}\right)\right)}(\bb{1}_{\cat{C}}).\]
\end{proposition}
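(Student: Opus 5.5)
First I will identify $\Speccons(\cat C)$, which is defined as $\Speccons_{\ttCat}(\cat C)$, with $\Speccons_{(\ttCat)_{\cat C/}}(\cat C)$. Here $\cat C$ is the initial object of $(\ttCat)_{\cat C/}$. The undercategory $(\ttCat)_{\cat C/}$ is weakly spectral by \cite[Lemma A.15]{2022arXiv220709929B}, and the constructible spectrum of an object $R$ only depends on the undercategory $\mc{A}_{R/}$. On both sides, the points are Nullstellensatzian objects under the base, modulo the relation ``pushout nonzero'', and the topology is generated by the images of maps. So the statement reduces to comparing the weakly spectral categories $\mc{A}\coloneqq(\ttCat)_{\cat C/}$ and $\mc{B}\coloneqq\mo{CAlg}(\mo{Ind}(\cat C))$ at their initial objects $\cat C$ and $\bb{1}_{\cat C}$. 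The tool is the adjunction $\mo{Perf}_{\cat C}\dashv \mo{End}_{-}(\bb{1})$ of \Cref{lem:adjunctionalg}.

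The key steps, in order, are as follows.

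\textbf{Step 1: the right adjoint preserves the relevant structure.} Let $G=\mo{End}_{-}(\bb{1})\colon \mc{A}\to\mc{B}$. By \Cref{lem:adjunctionalg}, $G$ preserves filtered colimits and finite coproducts, i.e.\ pushouts under $\cat C$. It also preserves the initial object, since $G(\cat C)=\bb{1}$. Finally, $G$ detects zero: $G(\cat D)=0$ forces $\cat D=0$ by rigidity, as in \Cref{prop:weaklyspectral}.

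\textbf{Step 2: $G$ sends Nullstellensatzian objects to Nullstellensatzian objects.} Let $\cat D$ be Nullstellensatzian in $\mc{A}$ and let $A$ be compact in $\mc{B}_{G(\cat D)/}$. Since the left adjoint $F=\mo{Perf}_{\cat C}$ is fully faithful, $\mc{B}$ embeds into $\mc{A}$, and $F(G\cat D)\to \cat D$ is the counit. Pushing $A$ forward gives $\cat D\otimes_{F G\cat D}F(A)$. This object is compact in $\mc{A}_{\cat D/}$, because $F$ preserves compacts, its right adjoint $G$ preserving filtered colimits. Since $\cat D$ is Nullstellensatzian, this object is either zero or admits a retraction to $\cat D$. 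Applying $G$ and using Step 1, we get $G\cat D\otimes_{G\cat D} A$ — that is, $A$ itself — is zero or retracts to $G\cat D$. Here one needs $G(\cat D\otimes_{FG\cat D}F A)\simeq A$, which is the pushout computation in \Cref{lem:adjunctionalg} again: endomorphism algebras turn relative tensor products into tensor products.

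\textbf{Step 3: every Nullstellensatzian $\mc{B}$-object comes from $\mc{A}$.} For $R$ Nullstellensatzian in $\mc{B}$, write $R$ as a filtered colimit and use the general existence of Nullstellensatzian extensions \cite[Appendix A]{2022arXiv220709929B} to pick a Nullstellensatzian $\cat D$ under $F(R)$. Then $G\cat D$ is Nullstellensatzian under $GF(R)=R$, so $G\cat D$ is equivalent to $R$ in the constructible spectrum.

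\textbf{Step 4: the map is a bijection and a homeomorphism.} Define $\Speccons(\cat C)\to \Speccons_{\mc{B}}(\bb{1})$ by $[\cat D]\mapsto[G\cat D]$. It is well defined and injective because, by Step 1, $G(\cat D\otimes_{\cat C}\cat D')=G\cat D\otimes G\cat D'$, and this vanishes if and only if $\cat D\otimes_{\cat C}\cat D'$ does. Surjectivity is Step 3. For the topology, both sides are compact $T_1$ and the closed sets are images of spectra of maps. Since $F$ is fully faithful, the images of $\cat C\to F(S)$ correspond to the images of $\bb{1}\to S$. For a general map $\cat C\to\cat E$, the image of its spectrum should equal the image for $\bb{1}\to G\cat E$, by applying Step 2 to factorizations. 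A continuous bijection between compact Hausdorff spaces would then suffice, but these spaces are only $T_1$, so I would instead match closed sets directly in both directions.

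The main obstacle is Step 2, specifically verifying that $G$ commutes with the relative pushout $\cat D\otimes_{FG\cat D}F(A)$, not only with pushouts over $\cat C$. I would handle it by rerunning the adjointability-plus-projection-formula argument of \Cref{lem:adjunctionalg} with base $\mo{Perf}_{\cat C}(G\cat D)$ instead of $\cat C$.
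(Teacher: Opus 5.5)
Your proposal is correct and follows the paper's proof. Both use the adjunction $\mo{Perf}_{\cat C}\dashv\mo{End}_{-}(\bb{1})$ and the same push-forward argument showing that $\mo{End}_{-}(\bb{1})$ preserves Nullstellensatzian objects; surjectivity comes from choosing a Nullstellensatzian rigid 2-ring under $\mo{Perf}_{\cat C}(S)$, and injectivity from preservation of coproducts. The differences are minor: the paper gets nonvanishing of the pushed-forward algebra from full faithfulness of $\mo{Perf}_{\cat C}(\mo{End}_{\cat D}(\bb{1}))\to\cat D$ via \Cref{lem:FFBasechange}, whereas you use the identification of its unit endomorphisms with $A$ (which the splitting step needs anyway), and the paper's proof only establishes the bijection, so your matching of closed sets goes beyond it.
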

\begin{proof}
Using the induced adjunction from \Cref{lem:adjunctionalg},
\begin{center}
\begin{tikzcd}
         \mo{Perf}_{\cat{C}}(-)\colon\mo{CAlg}\left(\mo{Ind}\left(\mathcal{C}\right)\right)\arrow[r, shift left=.75ex] & \left(\ttCat\right)_{\mathcal{C}/}\arrow[l, shift left=.75ex]\colon\mo{End}_{-}(\bb{1}),  
\end{tikzcd}
\end{center}
and the fact that the right adjoint commutes with filtered colimits, the left adjoint preserves compact objects.

We claim that the right adjoint takes Nullstellensatzian rigid 2-$\cat{C}$-algebras to Nullstellensatzian commutative algebras in $\mo{CAlg}\left(\mo{Ind}\left(\mathcal{C}\right)\right)$.  Indeed, given any Nullstellensatzian rigid 2-ring $\cat{D}$ with a map $\cat{C}\to \cat{D}$, consider a nonzero algebra $R$ compact over $\mo{End}_{\cat{D}}(\bb{1})$.  Using the induced pushout square, 
\begin{center}
\begin{tikzcd}
\mo{Perf}_{\cat{C}}(\mo{End}_{\cat{D}}(\bb{1}))\rar\dar &\cat{D}\dar\\
\mo{Perf}_{\cat{C}}(R)\rar & \mo{Perf}_{\cat{D}}(R),
\end{tikzcd}
\end{center}
we find that $\mo{Perf}_{\cat{D}}(R)$ is compact over $\cat{D}$.  Since the top horizontal map is fully faithful, \Cref{lem:FFBasechange} implies the bottom map is too, and in particular $\mo{Perf}_{\cat{D}}(R)$ is nonzero.  Since $\cat{D}$ is Nullstellensatzian, $\cat{D}\to \mo{Perf}_{\cat{D}}(R)$ must split, and applying $\mo{End}_{-}(\bb{1})$ yields a splitting of the map $\mo{End}_{\cat{D}}(\bb{1})\to R$, as desired.

Given any two maps from $\cat{C}$ to Nullstellensatzian rigid 2-rings $\cat{D}$ and $\cat{E}$, these represent the same point in $\Speccons(\cat{C})$ if and only if there's a common refinement to a map to a Nullstellensatzian rigid 2-ring $\cat{F}$.  Applying $\mo{End}_{-}(\bb{1})$ to such a refinement shows that the Nullstellensatzian $\cat{C}$-algebras obtained from these categories determine the same point in $\Speccons_{\mo{CAlg}\left(\mo{Ind}\left(\cat{C}\right)\right)}(\bb{1})$, giving a well-defined map 
\[\Speccons(\cat{C})\to \Speccons_{\mo{CAlg}\left(\mo{Ind}\left(\cat{C}\right)\right)}(\bb{1}),\]
which we claim to be an isomorphism.

First, consider a point in $\Speccons_{\mo{CAlg}\left(\mo{Ind}\left(\mathcal{C}\right)\right)}(\bb{1}_{\cat{C}})$ represented by a map $\bb{1}_{\cat{C}}\to S$ for some Nullstellensatzian $\cat{C}$-algebra $S$.  The category $\mo{Perf}(S)$ is nonzero, so admits a map $\mo{Perf}(S)\to \cat{S}^{\prime}$ for some Nullstellensatzian rigid 2-ring $\cat{S}^{\prime}$ by \cite[Proposition A.31]{2022arXiv220709929B}.  This gives surjectivity of the map.

Next, suppose that we have Nullstellensatzian rigid 2-rings $\cat{S}$ and $\cat{S}^{\prime}$ under $\cat{C}$ such that $\cat{S}\otimes_{\cat{C}}\cat{S}^{\prime}\simeq 0$.  By \Cref{lem:adjunctionalg}, $\mo{End}_{-}(\bb{1})$ commutes with finite coproducts, so that \[\End_{\cat{S}}(\bb{1})\otimes \End_{\cat{S}^{\prime}}(\bb{1})\simeq \mo{End}_{\cat{S}\otimes_{\cat{C}}\cat{S}^{\prime}}(\bb{1})\simeq 0\] in $\mo{CAlg}\left(\mo{Ind}\left(\mathcal{C}\right)\right)$, giving injectivity.
\end{proof}
To relate the constructible spectrum back to higher Zariski geometry, we make the following simple observation.
\begin{lemma}\label{lem:NSatzOnePtBalmer}
If $\cat L$ is a Nullstellensatzian rigid 2-ring, then it has no non-zero proper tt-ideals, and in particular the Balmer spectrum $\Spc(\cat L)\simeq *$ is a single point.
\end{lemma}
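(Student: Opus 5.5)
The plan is to show that $\cat L$ has no thick tensor ideals other than $0$ and $\cat L$. Take a nonzero object $x\in\cat L$ and the thick tensor ideal $\langle x\rangle$ it generates. The quotient $\cat L\to\cat L/\langle x\rangle$ is a Verdier quotient and is again a rigid 2-ring, since quotients of rigid categories by thick tensor ideals stay rigid. I want to use the Nullstellensatzian hypothesis on this quotient, which requires it to be compact as an object of $(\ttCat)_{\cat L/}$.

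\textbf{Compactness.} I expect to exhibit $\cat L/\langle x\rangle$ as a finite colimit of compact objects. The quotient corepresents maps of 2-rings out of $\cat L$ that kill $x$. For rigid $\cat L$, killing $x$ is the same as killing the single object $x\otimes x^\vee$, or equivalently making $\mo{coev}_x$ null. So I write $\cat L/\langle x\rangle$ as a pushout of $\cat L$ along the map from the free rigid 2-ring on one object $\A$ to the zero-killing quotient $\mo{Fun}(\cdots)$. Concretely this is $\cat L\otimes_{\A}(\A/\langle X\rangle)$. The free rigid 2-ring on an object is compact, and its quotient by the ideal generated by the universal object is compact, being the pushout $\A\otimes_{\mathrm{Free}(\text{pt})}0$ in the appropriate sense. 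This step is the main obstacle. The key is that "$x$ becomes zero" is a finitary condition (a null-homotopy of $\mathrm{id}_x$), so the functor $\Hom(\cat L/\langle x\rangle,-)$ is a fiber of $\Hom(\cat L,-)$ over a finitely presented condition and therefore commutes with filtered colimits. I would verify this directly: in a filtered colimit of 2-rings, mapping spaces are filtered colimits, so an object vanishing in the colimit already vanishes at some stage.

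\textbf{Conclusion.} Since $\cat L$ is Nullstellensatzian, the compact $\cat L$-algebra $\cat L/\langle x\rangle$ is either zero or admits a retraction $\cat L/\langle x\rangle\to\cat L$ over $\cat L$.
\begin{itemize}
\item In the retraction case, the composite $\cat L\to\cat L/\langle x\rangle\to\cat L$ is the identity and sends $x$ to $0$. Hence $x\simeq 0$, a contradiction.
\item So $\cat L/\langle x\rangle\simeq 0$, which means $\bb{1}\in\langle x\rangle$ and $\langle x\rangle=\cat L$.
\end{itemize}
Thus every nonzero thick tensor ideal is the whole of $\cat L$. Since $\cat L$ is nonzero, $(0)$ is a proper ideal, and it is prime: if $a\otimes b\simeq 0$ with $a\neq 0$, then $b\in\langle a\rangle\otimes b$, and $\langle a\rangle=\cat L$ forces $b\simeq 0$. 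The only prime is therefore $(0)$, and $\Spc(\cat L)\simeq *$.
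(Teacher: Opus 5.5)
Your proof is correct and follows essentially the same route as the paper: pass to $\cat L/\langle x\rangle$, use that it is compact under $\cat L$, and observe that a Nullstellensatzian splitting would force $x\simeq 0$. The differences are minor. The paper starts from a proper ideal, so the quotient is nonzero, and simply asserts compactness. You instead use the zero-or-split dichotomy, justify compactness by the direct filtered-colimit check (the one clean part of your compactness paragraph; the pushout description before it is garbled but not needed), and spell out why $(0)$ is then the unique prime.
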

\begin{proof}
If $\cat L$ had a proper non-zero tt-ideal $\mc{I}$, we could take some non-zero $x\in \mc{I}$, and note that $\cat L/\langle x\rangle$ is a compact non-zero algebra under $\cat L$, so must split.  Any splitting of $\cat L\to \cat L/\langle x\rangle$ shows that $x\simeq 0$ in $\cat L$, contradicting the choice of $x$.
\end{proof}

\begin{corollary}\label{cor:conscomparisonmap}
For any rigid 2-ring $\cat C$, there is a canonical map 
\[\Speccons(\cat C)\to \Spc(\cat C)\]
natural in $\cat C$.
\end{corollary}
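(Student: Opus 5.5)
We will define the map by sending a point represented by a 2-ring map $F\colon \cat C\to \cat L$, with $\cat L$ Nullstellensatzian, to the prime $F^{-1}(0)=\ker(F)\subseteq \cat C$. More intrinsically, by \Cref{lem:NSatzOnePtBalmer} we have $\Spc(\cat L)=\ast$, and we take the image of this unique point under the continuous map $\Spc(F)\colon \Spc(\cat L)\to\Spc(\cat C)$. Concretely, the unique prime of $\cat L$ is $(0)$: since $\cat L$ has no non-zero proper tt-ideals and $\cat L\neq 0$, the zero ideal is the only proper ideal, and it is prime because some prime exists. Its preimage $\ker F$ is therefore a prime of $\cat C$.

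\textbf{Well-definedness.} Suppose $F\colon\cat C\to\cat L$ and $F'\colon\cat C\to\cat L'$ represent the same point, i.e.\ $\cat L\otimes_{\cat C}\cat L'\neq 0$. We must show $\ker F=\ker F'$. Put $\cat M=\cat L\otimes_{\cat C}\cat L'$. It is a nonzero rigid 2-ring, so it has some prime $\mc Q$, and the two composites $\cat C\to\cat L\to\cat M$ and $\cat C\to\cat L'\to\cat M$ agree. Pulling $\mc Q$ back to $\cat L$ gives a prime of $\cat L$, which must be $(0)$ since $\Spc(\cat L)$ is a point. Hence pulling $\mc Q$ back to $\cat C$ via $\cat L$ gives $\ker F$. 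Symmetrically, pulling it back via $\cat L'$ gives $\ker F'$. By commutativity of the pushout square these two pullbacks coincide, so $\ker F=\ker F'$.

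(Alternatively, the equivalence relation in the cited theorem is generated by common refinements $\cat L\to\cat L''\leftarrow\cat L'$ with $\cat L''$ Nullstellensatzian. In that formulation the same argument applies: $\cat L\to\cat L''$ is injective on objects-up-to-zero, because its kernel is a proper tt-ideal of $\cat L$ and hence is $0$.)

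\textbf{Naturality.} Given $G\colon\cat C\to\cat C'$, the induced map $\Speccons(\cat C')\to\Speccons(\cat C)$ sends $[F']$ to $[F'\circ G]$. Since $\ker(F'G)=G^{-1}(\ker F')=\Spc(G)(\ker F')$, the square commutes.

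The main subtle point is the well-definedness step, namely making sure that the equivalence relation, as phrased via nonvanishing of the pushout, is compatible with kernels. This works because every nonzero rigid 2-ring has at least one prime (Balmer), combined with \Cref{lem:NSatzOnePtBalmer}. Continuity is not claimed here, so we will not address it.
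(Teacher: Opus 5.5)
Your proposal is correct and follows the same route as the paper: a point represented by $F\colon\cat C\to\cat L$ is sent to the image of the unique point of $\Spc(\cat L)$, which exists by \Cref{lem:NSatzOnePtBalmer}, i.e.\ to $\ker F$. The paper leaves well-definedness and naturality implicit, and your checks of both are sound: you pull back a prime of the nonzero pushout $\cat L\otimes_{\cat C}\cat L'$, and you use $\ker(F'G)=G^{-1}(\ker F')$.
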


\begin{proof}
This follows from \Cref{lem:NSatzOnePtBalmer}, where the map is defined explicitly by sending a point in the constructible spectrum, represented by a map $\cat C\to \cat L$ for some Nullstellensatzian rigid 2-ring $\cat L$, to the image of the map $\Spc(\cat L)\to \Spc(\cat C)$.
\end{proof}
We can give a precise meaning to the image of the comparison map:
\begin{proposition}\label{prop:ConsDetectFieldPts}  Let $\cat C$ be a rigid 2-ring, and consider any point $\mc{P}\in\Spc(\cat C)$.  Then, the following are equivalent.
\begin{enumerate}
\item  There exists a rigid 2-ring $\cat D$ with $\Spc(\cat D)\simeq *$, together with a map $\cat C\to \cat D$ of rigid 2-rings, such that the image of \[\Spc(\cat D)\to \Spc(\cat C)\] is exactly $\mc{P}$.
\item  The point $\mc{P}$ is in the image of the map \[\Speccons(\cat C)\to \Spc(\cat C).\]
\end{enumerate}
\end{proposition}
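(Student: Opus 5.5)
The direction (b)$\Rightarrow$(a) is immediate from what precedes. If $\mc{P}$ lies in the image of $\Speccons(\cat C)\to\Spc(\cat C)$, it is represented by a map $\cat C\to\cat L$ with $\cat L$ Nullstellensatzian. By \Cref{lem:NSatzOnePtBalmer}, $\Spc(\cat L)\simeq *$. By the construction in \Cref{cor:conscomparisonmap}, the image of $\Spc(\cat L)\to\Spc(\cat C)$ is exactly $\{\mc{P}\}$. So $\cat D=\cat L$ witnesses (a).

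For (a)$\Rightarrow$(b), let $\cat C\to\cat D$ be as in (a) with $\Spc(\cat D)=*$. The plan is to enlarge $\cat D$ to a Nullstellensatzian 2-ring without changing the image in $\Spc(\cat C)$.

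First, $\cat D$ is nonzero, since its spectrum is nonempty. Hence $\Speccons(\cat D)\neq\emptyset$: by \cite[Proposition A.31]{2022arXiv220709929B}, as already used in the proof of \Cref{prop:consequalscons}, $\cat D$ admits a map $\cat D\to\cat L$ to a Nullstellensatzian rigid 2-ring.

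Next, the composite $\Spc(\cat L)\to\Spc(\cat D)\to\Spc(\cat C)$ has as source the single point of \Cref{lem:NSatzOnePtBalmer}. It lands in $\Spc(\cat D)=*$, and that point maps to $\mc{P}$, so its image is $\{\mc{P}\}$. Finally, by the definition of the comparison map, the point of $\Speccons(\cat C)$ represented by $\cat C\to\cat D\to\cat L$ is sent to $\mc{P}$. Naturality of \Cref{cor:conscomparisonmap} in $\cat C$ makes this bookkeeping compatible.

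The only substantive input is the existence of a map from a nonzero rigid 2-ring to a Nullstellensatzian one. This holds because $\ttCat$ is weakly spectral (\Cref{prop:weaklyspectral}), which is exactly the hypothesis of \cite[Proposition A.31]{2022arXiv220709929B}. Beyond checking that this hypothesis applies, I expect no obstacle; the rest is formal.
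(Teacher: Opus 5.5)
Your proposal is correct and follows the paper's proof essentially step for step. For (b)$\Rightarrow$(a) you take $\cat D=\cat L$ to be a Nullstellensatzian representative, whose spectrum is a point by \Cref{lem:NSatzOnePtBalmer}. For (a)$\Rightarrow$(b) you use that $\cat D$ is nonzero, map it further to a Nullstellensatzian $\cat L$ via \cite[Proposition A.31]{2022arXiv220709929B}, and take the point represented by the composite $\cat C\to\cat D\to\cat L$.
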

\begin{proof}
(a)$\implies$(b)  Consider a rigid 2-ring $\cat D$ under $\cat C$ as in the statement.  Since the Balmer spectrum of $\cat D$ is non-empty, $\cat D$ is in particular nonzero, and so there exists a map $\cat D \to \cat L$ to some Nullstellensatzian rigid 2-ring $\cat L$.  The point of $\Speccons(\cat C)$ represented by the rigid 2-ring map $\cat C\to \cat L$ maps to the point $\mc{P}$.

(b)$\implies$(a)  Consider any point in the constructible spectrum mapping to $\mc{P}$, and take a Nullstellensatzian representative $\cat C\to \cat L$ for said point.  Since $\Spc(\cat L)\simeq *$, and the point represented by $\cat L$ maps to $\mc{P}$, we can take $\cat D = \cat L$ in the statement.
\end{proof}
 The following is essentially \cite[Example A.72]{2022arXiv220709929B}:
\begin{corollary}\label{cor:nonsurj}
Let $R$ be a height $n<\infty$ $\bb{E}_{\infty}$-ring spectrum.  For any point $\mc{P}\in\Spc(\mo{Perf}(R))$ such that $\mc{P}$ has height $n>0$, there is no map of commutative $2$-rings $\mo{Perf}(R)\to \cat C$ to a $2$-ring $\cat C$ with $\Spc(\cat C)\simeq *$ which picks out the point $\mc{P}$.
\end{corollary}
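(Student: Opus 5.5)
By \Cref{prop:ConsDetectFieldPts}, it suffices to show that $\mc{P}$ is not in the image of $\Speccons(\mo{Perf}(R))\to\Spc(\mo{Perf}(R))$. Indeed, if a map $\mo{Perf}(R)\to\cat C$ with $\Spc(\cat C)\simeq *$ picked out $\mc{P}$, we could compose it with a map $\cat C\to\cat L$ to a Nullstellensatzian rigid 2-ring. So suppose, for contradiction, that $\mo{Perf}(R)\to\cat L$ is such a map with $\cat L$ Nullstellensatzian, and its image on Balmer spectra is $\{\mc{P}\}$. Set $S\coloneqq\End_{\cat L}(\bb{1})$, an $\bb{E}_\infty$-$R$-algebra.

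The first step transfers the Nullstellensatzian property to $S$. The proof of \Cref{prop:consequalscons} shows that $\End_{-}(\bb{1})$ sends Nullstellensatzian rigid 2-rings under $\mo{Perf}(R)$ to Nullstellensatzian objects of $\mo{CAlg}(\Mod_R)$. Hence $S$ is Nullstellensatzian there. Since $\mo{CAlg}(\Mod_R)\simeq\mo{CAlg}_{R/}$, and $R\to S$ is itself compact data only relative to $R$, every compact $S$-algebra in $\mo{CAlg}$ is also compact in $\mo{CAlg}_{R/}$ under $S$. So we may regard $S$ as a Nullstellensatzian $\bb{E}_\infty$-ring in the sense relevant to \cite[Example A.72]{2022arXiv220709929B}.

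The second step pins down the chromatic height of $S$ using $\Spc(\cat L)=*$. By \Cref{lem:NSatzOnePtBalmer}, $\cat L$ has no nonzero proper tt-ideals, so the unique prime of $\cat L$ is $0$. Its preimage is $\mc{P}=\SET{x}{x\otimes\bb{1}_{\cat L}=0}$. That $\mc{P}$ has height $n>0$ means that a type-$n$ finite complex $F(n)$ satisfies $F(n)\otimes\bb{1}_{\cat L}\neq 0$, while $F(n+1)\otimes\bb{1}_{\cat L}=0$ (automatic since $R$ has height $n$). Since every nonzero object of $\cat L$ generates the unit ideal, $\bb{1}_{\cat L}$ lies in the thick ideal generated by $F(n)\otimes\bb{1}_{\cat L}$. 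Hence $S$ is built from $F(n)\otimes S$, so $S$ is $T(n)$-local, hence $K(n)$-local, and in particular $S\otimes\bb{Q}=0$, with $S\neq0$ and $n>0$.

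The final step, which I expect to be the main obstacle, is to show that no nonzero Nullstellensatzian $\bb{E}_\infty$-ring is $T(n)$-local with $0<n<\infty$. This is the content of \cite[Example A.72]{2022arXiv220709929B}, and I would follow its argument. One exhibits a compact nonzero $\bb{E}_\infty$-$S$-algebra $A$, obtained from a free $\bb{E}_\infty$-algebra on finitely many cells by imposing a relation that power operations make incompatible with a section. Nonvanishing of $A$ is checked after base change to a Lubin--Tate theory $E\otimes S\neq0$. Any $S$-algebra map $A\to S$ would force an identity on $\pi_0$ contradicting $\theta$-algebra / Frobenius-congruence relations at height $n>0$; this uses that the Frobenius lift on $K(n)$-local $\bb{E}_\infty$-rings is nontrivial, which fails at height $0$ and $\infty$. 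The delicate points are verifying compactness of $A$ in $\mo{CAlg}_{S/}$ and its nonvanishing. I would try first to take $A$ to be the pushout killing $\psi(x)-x$ for a free class $x$ after adjoining a $p$-th root-type cell. The resulting contradiction shows $\cat L$ cannot exist, which proves the corollary.
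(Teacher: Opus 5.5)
Your reduction is sound and does most of the work. You pass to a Nullstellensatzian $\cat L$ via \Cref{prop:ConsDetectFieldPts}, set $S=\End_{\cat L}(\bb{1})$, and use that $\bb{1}_{\cat L}$ lies in the thick $\otimes$-ideal generated by $F(n)\otimes\bb{1}_{\cat L}$. This correctly shows that $S$ is a nonzero $L_n^f$-local (even $T(n)$-local) $\bb{E}_\infty$-ring with $S\otimes\bb{Q}=0$. Two corrections along the way. First, ``$T(n)$-local, hence $K(n)$-local'' is false in general: that implication is the telescope conjecture, which fails for $n\geq 2$. Second, $K(n)$-locality would not give rational vanishing anyway, since $E_n\otimes\bb{Q}\neq 0$. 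The vanishing $S\otimes\bb{Q}=0$ follows instead from $S$ being built out of objects of the form $F(n)\otimes M$.

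The genuine gap is the final step, which carries all of the chromatic content and which you do not supply. Your sketch kills $\psi(x)-x$ in a compact $S$-algebra, checks nonvanishing over Lubin--Tate theory, and derives a contradiction from Frobenius congruences. This is speculative. A Frobenius acting as a ring endomorphism of $\pi_0$ is only available at height $1$. You give no argument that the algebra is compact or nonzero. And this is not the mechanism behind \cite[Example A.72]{2022arXiv220709929B}.

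The missing idea is Hahn's theorem \cite{hahnsupport}: for an $\bb{E}_\infty$ (even $H_\infty$) ring $A$, $K(i)_*A=0$ implies $K(i+1)_*A=0$. A nonzero $L_n^f$-local ring has $T(i)_*A\neq 0$, hence $K(i)_*A\neq 0$, for some $i\leq n$. Downward closure then forces $A\otimes\bb{Q}\neq 0$.

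The paper applies this to the Nullstellensatzian unit endomorphism ring. Its rationalization is nonzero, so each $S[1/N]$ is a nonzero compact $S$-algebra. By the Nullstellensatz property each admits a retraction, so $S$ is rational and the point has height $0$. In your setup the conclusion is even quicker: Hahn's theorem directly contradicts ``$S\neq 0$, $S$ is $L_n^f$-local, $S\otimes\bb{Q}=0$'', with no further use of the Nullstellensatz property.
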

\begin{proof}
It follows by \cite{hahnsupport} (see also \cite[Theorem 1.5]{2022arXiv220709929B}) that if $R$ is an $L_n^f$-local Nullstellensatzian $\bb{E}_{\infty}$-ring, then $R$ is rational, so has height $n=0$.
\end{proof}
The height $n>0$ phenomenon is nothing new, as there already are no $\bb{E}_{\infty}$ maps out of spectra picking out the points of $\Spc(\mo{Perf}(\mathbb S))$ corresponding to the Morava $K$-theories $K(n), n\in [1,\infty)$. It turns out that the map $\Speccons(\cat C)\to \Spc(\cat C)$ can fail to be surjective in positive characteristic, too, as the following example shows.

\begin{example}\label{ex:nosurj}
Consider the free $\bb{E}_{\infty}$-$\bb{F}_2$-algebra $R=\bb{F}_2\{x\}_{\infty}$ on a class $x$ in degree $0$.  It is known (cf.~\cite[Example~5.10]{lawson2020enringspectradyerlashof}) that $\pi_*(R)$ is an infinite dimensional polynomial ring on sequences of Dyer--Lashof operations $Q_I(x)$ applied to $x$.  Consider the localization $S\coloneqq R_{(x)}$ to the prime ideal $(x)$ in $\pi_*(R)$.  One can check that $\mo{Perf}(R)$ has two points in its Balmer spectrum by noting that the weak rings $R/x^2$ and $R[1/x]$ are jointly nil-conservative, so the homological spectrum consists of at least two points\footnote{In fact, using \Cref{thm:nconsequalshom}, one can show that these are the only two points in the homological spectrum.}, which correspond respectively to the zero ideal and the ideal generated by $R/x$ (that $R/x^2$ is a weak ring follows from \cite[Lemma 5.4, Remark 5.5]{burklund2022multiplicativestructuresmoorespectra}).

We claim that there is no symmetric monoidal functor $\mo{Perf}(S)\to \cat D$ to a rigid $2$-ring $\cat D$ with Balmer spectrum a point which picks out the point $\mc{P}$ corresponding to $R/x$.  By \Cref{prop:ConsDetectFieldPts}, it suffices to check that there is no map $S\to L$ to a Nullstellensatzian $\bb{E}_{\infty}$-$\bb{F}_2$-algebra $L$ representing a point in $\Speccons(S)$ mapping to $\mc{P}$.  Nullstellensatzian $\bb{E}_{\infty}$-$\bb{F}_2$-algebras have been studied by Riedel \cite[Proposition 4.16]{riedel2025reducedpointsmathbbeinftyringspositive}, where it was shown that if $L$ is Nullstellensatzian, then it is 1-periodic with $\pi_0(L)$ a field.  In particular, if we had an $\bb{E}_{\infty}$-ring map $S\to L$ picking out the closed point of $\pi_*(S)$, then the image of $x$ must vanish in $\pi_*(L)$.  However, we are considering $\bb{E}_{\infty}$-ring maps and they are compatible with power operations. Thus we would have to have $Q_1(x)\mapsto 0$ as well -- by construction, $Q_1(x)$ is invertible in $S$, so this forces $L=0$, a contradiction.
\end{example}

\newpage
\section{Constructible Spectra in the Rational Case}\label{sec:rational}
\subsection{Constructible spectra and homological spectra}
One cannot expect in general that the comparison map 
\[
    \Speccons(\cat C)\to \Spc(\cat C)
\] to be surjective for a given rigid 2-ring $\cat C$, see \Cref{cor:nonsurj} and \Cref{ex:nosurj}.  Both of the examples there are not rational, and in fact the rational case is nice enough that the comparison map \textit{is} actually always surjective. In fact we can say more, namely we have the following, which is the main theorem of this section:
\begin{theorem}\label{thm:consequalshom}  Let $\cat{C}$ be a rational rigid 2-ring.  Then there is a natural isomorphism of sets $\Speccons(\cat{C})\simeq \Spc^{h}(\cat{C})$  between the constructible spectrum and the homological spectrum of $\cat{C}$.
\end{theorem}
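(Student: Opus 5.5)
By \Cref{prop:consequalscons} I would replace $\Speccons(\cat C)$ with $\Speccons_{\mo{CAlg}(\mo{Ind}(\cat C))}(\bb{1}_{\cat C})$. Its points are equivalence classes of Nullstellensatzian commutative algebras $L$ in $\mo{Ind}(\cat C)$, with $L\sim L'$ if and only if $L\otimes L'\neq 0$. I then define
\[
\psi^h(L)\coloneqq \supph(L)\subseteq \Spc^h(\cat C),
\]
using that $L$ is in particular a weak ring, so $\supph(L)=\supp^n(L)$ by \Cref{prop:naiveequalsgenuine}. The plan has three steps: show this support is a single point, check that $\psi^h$ is well defined and injective, and prove surjectivity.

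\emph{Step 1: $\supph(L)$ is a single point.} The support is nonempty by \Cref{prop:suppdetection}, since $L\neq 0$. Suppose $\mf m_1\neq \mf m_2$ both lie in $\supp^n(L)$, so that $L\otimes E_{\mf m_i}\neq 0$ for $i=1,2$. The key rational input is that over $\bb{Q}$ every weak ring maps to a genuine commutative algebra with the same support. Rationally one can take the free commutative algebra $\mo{Sym}(\cdot)$ and kill the relevant cell; I would apply this to $E_{\mf m}\otimes L$. More concretely, I would show that for a weak ring $E$ with $E\otimes L\neq 0$ there is a nonzero commutative $L$-algebra $A$ whose support lies in $\supp^n(E)$. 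For this, write $L$ as a filtered colimit of compact $L$-algebras, and use that $E\otimes L\neq 0$ is detected by some finite stage: a map from a compact object which is not $\otimes$-nilpotent rationally yields a nonzero compact algebra. The Nullstellensatz property then gives a retraction of $L\to A$. Applying this to $\mf m_1$ and $\mf m_2$, together with \Cref{prop:weakRingtenszero}, forces $L\otimes E_{\mf m_1}\otimes E_{\mf m_2}\neq 0$, a contradiction. Well-definedness follows similarly: if $L\otimes L'\neq 0$, then by the tensor product property (\Cref{prop:tensorproductprop}) $\supph(L)\cap\supph(L')\neq\emptyset$, so the two singletons coincide. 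Injectivity is the converse: if $\supph(L)=\supph(L')=\{\mf m\}$, then $\supph(L\otimes L')=\{\mf m\}\neq\emptyset$, so $L\otimes L'\neq 0$.

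\emph{Step 2: surjectivity.} Given $\mf m$, it suffices to produce a nonzero commutative algebra $A$ with $\supph(A)\subseteq\{\mf m\}$. Any nonzero algebra admits a map to a Nullstellensatzian one (\cite[Proposition A.31]{2022arXiv220709929B}), and that Nullstellensatzian algebra then has support $\{\mf m\}$. To build $A$, I would rationally upgrade $E_{\mf m}$. Since $E_{\mf m}$ is a filtered colimit of quotients of $\bb{1}$ by maps $\bb{1}\to x$ whose fiber maps are not $\otimes$-nilpotent, I would take $A=\colim$ of $\bb{1}\otimes_{\mo{Sym}(\Sigma^{-1}x^{\vee}\text{-cells})}\bb{1}$, that is, a commutative cofiber-coning construction. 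Rationally $\mo{Sym}$ of a map behaves well: it is a sum of $\Sigma_n$-orbits, which are retracts of tensor powers. Nonvanishing of $A$ should then reduce to non-nilpotence via $\mo{Sym}^n$ being a summand of the $n$-fold tensor power. The containment $\supph(A)\subseteq\supph(E_{\mf m})=\{\mf m\}$ would follow because the cells killed in $A$ are killed in $E_{\mf m}$. Naturality is then clear from the construction.

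\emph{Main obstacle.} The hardest step is the rational rigidification in Steps 1 and 2: showing that the homological residue data of a weak ring can be realized by an honest $\bb{E}_\infty$-algebra without losing nonvanishing. This is exactly where rationality enters, through the splitting $\mo{Sym}^n=(\cdot)^{\otimes n}_{h\Sigma_n}$ as a retract of the $n$-th tensor power. I would attack it first via the nilpotence-theorem characterization: a map $f\colon\bb{1}\to x$ has $\mo{Sym}(\bb{1}\to\cdot)$-quotient equal to zero if and only if $f$ is $\otimes$-nilpotent.
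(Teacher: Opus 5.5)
Your route works, and structurally it is the paper's $\bb{E}_n$ argument (\Cref{prop:NSianPointlikeHomSupport}, \Cref{thm:nconsequalshom}) moved to the rational $\bb{E}_\infty$ setting. The free algebra $\FrEinfz(\bb{1}\to E_{\mf m})$ plays the role that Burklund's $E^n_{\mf m}$ plays there.

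The paper organizes the proof of this theorem differently:
\begin{itemize}
\item It first proves the intrinsic statement \Cref{lem:NSianttField}: $\Spc^h(\mo{Perf}_{\cat C}(S))$ is a single point for every Nullstellensatzian $S$.
\item It defines the comparison map by sending $S$ to the image of that point in $\Spc^h(\cat C)$.
\item It proves surjectivity exactly as in your Step 2.
\item It proves injectivity by a detour. It picks representatives $S_{\mf m}$ and uses \Cref{thm:surjspech} to see that $\{\cat C\to\mo{Perf}_{\cat C}(S_{\mf m})\}$ is jointly nil-conservative. It then base-changes this family along $\cat C\to\mo{Perf}_{\cat C}(S)$ using \Cref{prop:nilconsbasechange}.
\end{itemize}
You instead send $L$ to $\supph(L)$. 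Since coproducts in $\mo{CAlg}$ are tensor products, well-definedness and injectivity then follow in one line from \Cref{prop:tensorproductprop} and \Cref{prop:suppdetection}. This is correct and avoids the nil-conservativity machinery entirely.

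The paper's organization buys two things. It yields the stronger statement about $\Spc^h(\mo{Perf}_{\cat C}(S))$. It also gives naturality in $\cat C$ for free, by functoriality of $\Spc^h$. With your definition, naturality is not ``clear from the construction''. You would need to identify $\supph(L)$ with the image of $\Spc^h(\mo{Perf}_{\cat C}(L))\to\Spc^h(\cat C)$, or otherwise check that supports are compatible with base change.

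Two points in your write-up need repair.

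\emph{Step 1 conflates two algebras.} The algebra $\FrEinfz(L\to E\otimes L)$ receives a pointed map from $E\otimes L$, so its support is controlled by \Cref{lem:MappingSupport}. But it is not compact, so the Nullstellensatz property gives no retraction onto $L$. The compact stages $\FrEinfz(L\to x_\alpha)$ do retract onto $L$, but you have no control over their support. Replace the retraction by the fact that $\Speccons(L)$ is a point. This means any two nonzero commutative $L$-algebras have nonzero tensor product over $L$, which is exactly what \Cref{lem:NSianttField} uses.

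A cleaner fix is to do Step 2 first. The algebra $A_{\mf m}=\FrEinfz(\bb{1}\to E_{\mf m})$ is nonzero with $\supph(A_{\mf m})=\{\mf m\}$. Suppose $\mf m_1\neq\mf m_2$ both lay in $\supph(L)$. Then each $A_{\mf m_i}\otimes L$ would be a nonzero $L$-algebra. But their tensor product over $L$ is $A_{\mf m_1}\otimes A_{\mf m_2}\otimes L$, which has empty support and hence vanishes. This contradicts $\Speccons(L)=\ast$.

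\emph{Step 2 has two misstatements.}
\begin{itemize}
\item The pointed map goes $E_{\mf m}\to A$. The reason is that $A$ kills every cell killed in forming $E_{\mf m}$; you wrote the reverse. \Cref{lem:MappingSupport} then gives the support containment.
\item Nonvanishing of $A$ needs more than $\mo{Sym}^n$ being a retract of the $n$-th tensor power. The free $\bb{E}_\infty$-algebra on a pointed object is not $\mo{Sym}(x)$. It is the colimit $\colim_n (X^{\otimes n})_{h\Sigma_n}$ along the pointing (\Cref{cor:ShaiLior}). That formula, together with compactness of the unit, is what gives \Cref{prop:CAlgDetection}, the input you correctly identify as the crux.
\end{itemize}
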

\begin{remark}
We hope that the above result motivates the study of Nullstellensatzian (rational) rigid 2-rings for tt-geometers. Forthcoming work of the third author partially initiates the study of their structure, and in particular of their $K$-theory.
\end{remark}
The following formula for free commutative algebras on pointed objects in the rational case will be crucial to proving \Cref{thm:consequalshom}.
\begin{lemma}[{\cite[Corollary B]{2025arXiv251113536K}}; {\cite[Proposition 4.6]{betts2025motivicweilheightmachine}}]\label{cor:ShaiLior}  Let $\cat{T}$ be a rational  stable presentably symmetric monoidal $\infty$-category, and consider an $\bb{E}_0$-algebra (that is, a pointed object) $\bb{1}\to X$ in $\cat{T}$.  Then the free $\bb{E}_{\infty}$-algebra on this $\bb{E}_0$-algebra is given by the filtered colimit
\[\FrEinfz(\bb{1}\to X)=\varinjlim_n\left((X^{\otimes n})_{h\Sigma_n}\right).\]
\end{lemma}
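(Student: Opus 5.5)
The plan is to build the free algebra as the colimit of the free algebras on the truncated pieces $\bb{1}\to X$ modulo the point, using the rational splitting of symmetric powers. Write $\mathrm{Sym}^n(X)=(X^{\otimes n})_{h\Sigma_n}$. The point $u\colon\bb{1}\to X$ induces maps $\mathrm{Sym}^n(X)\to\mathrm{Sym}^{n+1}(X)$, heuristically given by multiplication by $u$ in $\mathrm{Sym}(X)=\bigoplus_n\mathrm{Sym}^n(X)$. Concretely, the map is $X^{\otimes n}\simeq \bb{1}\otimes X^{\otimes n}\xrightarrow{u\otimes\id}X^{\otimes n+1}$ followed by passage to $\Sigma_{n+1}$-orbits, which factors through $\Sigma_n$-orbits. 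We then compare both sides via their universal properties.

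\textbf{Step 1.} Identify the free $\bb{E}_\infty$-algebra on the pointed object with a pushout of free algebras:
\[
\FrEinfz(\bb{1}\to X)\simeq \mathrm{Sym}(X)\otimes_{\mathrm{Sym}(\bb{1})}\bb{1}.
\]
Here $\mathrm{Sym}(\bb{1})\to\mathrm{Sym}(X)$ is induced by $u$, and $\mathrm{Sym}(\bb{1})\to\bb{1}$ sends the generator to $1$, i.e.\ it is the augmentation twisted by $t\mapsto 1$. This follows formally: an $\bb{E}_0$-map $(\bb{1}\to X)\to A$ is a map $X\to A$ whose restriction along $u$ is homotopic to the unit. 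Rationally $\mathrm{Sym}(\bb{1})=\bb{1}[t]$ is the polynomial algebra, and the pushout is $\mathrm{Sym}(X)/(t-1)$, i.e.\ the cofiber of $u\cdot-1\colon\mathrm{Sym}(X)\to\mathrm{Sym}(X)$ as a module. This uses that $\bb{1}[t]\to\bb{1}$ has the cofiber-sequence resolution $\bb{1}[t]\xrightarrow{t-1}\bb{1}[t]$, which is valid because $\bb{1}[t]$ is flat polynomial in the rational setting: $\pi_*$ of $\bb{Q}$-homology of $\Sigma_n$ orbits is trivial group homology, so $\mathrm{Sym}^n(\bb{1})=\bb{1}$.

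\textbf{Step 2.} Compute the cofiber of $u-1$ on $\bigoplus_n\mathrm{Sym}^n(X)$ as the telescope $\varinjlim_n\mathrm{Sym}^n(X)$. This is the standard algebraic fact that for a graded module $M=\bigoplus M_n$ with degree-one endomorphism $u$, the cofiber of $u-1$ is the colimit of $M_0\to M_1\to\cdots$. To see it, filter by degree $\le n$: the cofiber of $u-1\colon M_{\le n-1}\to M_{\le n}$ maps isomorphically to $M_n$ via the map summing the $u$-translates of each component up to degree $n$. These identifications are compatible in $n$, and the statement follows by passing to the colimit, which commutes with cofibers. This step is purely stable and needs no rationality.

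\textbf{Step 3.} Check that the resulting equivalence is compatible with the maps $\mathrm{Sym}^n\to\mathrm{Sym}^{n+1}$ described above. These are the transition maps in the telescope, so we must identify multiplication by $u$ in $\mathrm{Sym}(X)$ with the described map on orbits. This is clear from the multiplication $\mathrm{Sym}^1\otimes\mathrm{Sym}^n\to\mathrm{Sym}^{n+1}$ being the orbit-induced map.

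The main obstacle is Step 1, specifically the identification $\mathrm{Sym}(\bb{1})\simeq\bb{1}[t]$ as a polynomial algebra whose module-theoretic pushout along $t\mapsto1$ is computed by the cofiber of $t-1$. This is where rationality enters: over $\bb{S}$, $\mathrm{Sym}(\bb{1})=\bigoplus \Sigma^\infty_+B\Sigma_n$ is not polynomial. I would argue it via $\pi_*$ of $\mathrm{Sym}(\bb{1})$ being $\pi_*(\bb{1})[t]$, with $t$ a free generator of a flat extension, and then apply the Künneth/Tor spectral sequence.
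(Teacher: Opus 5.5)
The paper does not prove this lemma; it quotes it from \cite[Corollary~B]{2025arXiv251113536K} and \cite[Proposition~4.6]{betts2025motivicweilheightmachine}. There is therefore no internal argument to match, and your proof is a correct, self-contained substitute.

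Its structure is clean. The identification $\FrEinfz(u)\simeq \mathrm{Sym}(X)\otimes_{\mathrm{Sym}(\bb{1})}\bb{1}$ is formal from the universal properties and holds in any presentably symmetric monoidal stable category. The telescope formula $\colim_n M_n\simeq\cofib(1-\mathrm{sh})$ in your Step~2 is likewise valid without rationality. The only rational input is that $\bb{1}$, viewed as a $\mathrm{Sym}(\bb{1})$-module via $t\mapsto 1$, is $\cofib(t-1)$. This isolates exactly why the statement fails integrally: for $u=\mathrm{id}_{\bb{1}}$ in spectra the free algebra is $\bb{S}$, while the telescope is $\Sigma^\infty_+B\Sigma_\infty$. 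Your argument also identifies the unit maps, since the unit corresponds to $\bb{1}=\mathrm{Sym}^0(X)\to\colim_n\mathrm{Sym}^n(X)$. That pointed form is what \Cref{prop:CAlgDetection} actually uses.

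The one soft spot is your justification of $\bb{1}\simeq\cofib(t-1)$. A K\"unneth/Tor spectral sequence on homotopy groups is not available in a general $\cat{T}$, because objects there need not be detected by maps out of $\bb{1}$. There are two clean fixes.
\begin{itemize}
\item Base change along the symmetric monoidal left adjoint $\mo{Mod}_{\bb{Q}}\to\cat{T}$, which exists because $\cat{T}$ is rational. The classical statement for the discrete ring $\bb{Q}[t]=\mathrm{Sym}_{\bb{Q}}(\bb{Q})$, where $t-1$ is a nonzerodivisor, then transports to $\cat{T}$.
\item Alternatively, reuse your own Step~2 with $X=\bb{1}$ and $u=\mathrm{id}$. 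Since $\mathrm{Sym}(\bb{1})$ is free on $\bb{1}$ and $\epsilon(t-1)=0$ in $\pi_0\mo{End}(\bb{1})$, the augmentation $\epsilon$ extends to a module map $\cofib(t-1)\to\bb{1}$. On underlying objects this is $\colim_n\bb{1}_{h\Sigma_n}\to\bb{1}$. Rationally every $\bb{1}_{h\Sigma_n}\to\bb{1}$ is an equivalence, so the transition maps are too. The composite $\bb{1}=\mathrm{Sym}^0(\bb{1})\to\cofib(t-1)\to\bb{1}$ is the identity, so the map is an equivalence.
\end{itemize}
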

This in turn, allows us to make the following observation.
\begin{proposition}\label{prop:CAlgDetection}  Let $\cat{T}$ be a rational presentably symmetric monoidal stable $\infty$-category such that the unit $\bb{1}_{\cat{T}}$ is compact (e.g., if $\cat{T}=\mo{Ind}\left(\cat{C}\right)$ is the big category attached to a rational rigid 2-ring $\cat{C}$).  A map $f\colon\bb{1}\to X$ in $\cat{T}$ is $\otimes$-nilpotent if and only if the free $\bb{E}_{\infty}$-algebra on $f$ vanishes, which is to say, if and only if \[\FrEinfz(\bb{1}\to X)\simeq 0\colon\bb{1}\to 0.\]  In particular, any pointed object $f\colon\bb{1}\to X$ for which the pointing map is not $\otimes$-nilpotent admits a nontrivial $\bb{E}_0$-algebra map to the underlying $\bb{E}_0$-algebra of an $\bb{E}_{\infty}$-algebra.
\end{proposition}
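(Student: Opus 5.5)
Everything will follow from the colimit formula of \Cref{cor:ShaiLior},
\[\FrEinfz(\bb{1}\to X)\simeq\varinjlim_n (X^{\otimes n})_{h\Sigma_n},\]
where the transition maps are induced by $f\colon\bb{1}\to X$. In particular the unit map of the free algebra is the canonical map $\bb{1}=(X^{\otimes 0})_{h\Sigma_0}\to\varinjlim_n (X^{\otimes n})_{h\Sigma_n}$.

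An $\bb{E}_\infty$-algebra vanishes if and only if its unit map $\bb{1}\to A$ is null. Indeed, if the unit is null, then $\id_A$ factors as $A\simeq\bb{1}\otimes A\to A\otimes A\to A$ and so is null. So the claim reduces to showing that the canonical map $\bb{1}\to\varinjlim_n (X^{\otimes n})_{h\Sigma_n}$ is null if and only if $f$ is $\otimes$-nilpotent.

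\emph{Forward direction.} Since $\bb{1}$ is compact, the map $\bb{1}\to\varinjlim_n (X^{\otimes n})_{h\Sigma_n}$ is null if and only if some finite stage $\bb{1}\to (X^{\otimes n})_{h\Sigma_n}$ is null. This finite stage is the composite
\[\bb{1}\xrightarrow{f^{\otimes n}} X^{\otimes n}\to (X^{\otimes n})_{h\Sigma_n}.\]
Rationally, the quotient map $X^{\otimes n}\to (X^{\otimes n})_{h\Sigma_n}$ admits a section given by the norm divided by $n!$, namely the symmetrization idempotent. Since $f^{\otimes n}$ is $\Sigma_n$-invariant (the source $\bb{1}^{\otimes n}$ carries the trivial action), $f^{\otimes n}$ factors through the symmetrization: $f^{\otimes n}=e\circ f^{\otimes n}$. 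Here $e=\frac{1}{n!}\sum_\sigma\sigma$, and $e$ equals the composite of the quotient map with the averaging section. Hence $f^{\otimes n}$ is null if and only if its image in $(X^{\otimes n})_{h\Sigma_n}$ is null. This shows that vanishing of the free algebra forces $f^{\otimes n}\simeq 0$ for some $n$.

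\emph{Converse.} If $f^{\otimes n}\simeq 0$, then the $n$-th stage is null by the same factorization, so the colimit map is null and the free algebra vanishes.

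The only subtle point is the $\Sigma_n$-equivariance claim: one must check that the composite $\bb{1}\to X^{\otimes n}\to(X^{\otimes n})_{h\Sigma_n}$ agrees with the transition structure maps of the colimit in \Cref{cor:ShaiLior}. I would take this from the explicit form of that colimit, where the $n$-th stage receives the $(n-1)$-st via $f$ tensored in and then symmetrized. With this, the stage-$n$ map from $\bb{1}$ is the image of $f^{\otimes n}$. I expect this identification, together with the splitting of the orbit map in characteristic zero, to be the main step to nail down; everything else is formal.

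\emph{The final sentence.} If $f$ is not $\otimes$-nilpotent, then $A=\FrEinfz(\bb{1}\to X)$ is a nonzero $\bb{E}_\infty$-algebra. By adjunction, the identity of $A$ yields the unit map $(\bb{1}\to X)\to(\bb{1}\to A)$ of $\bb{E}_0$-algebras, which is the required nontrivial $\bb{E}_0$-map.
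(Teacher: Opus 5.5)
Your proof is correct and matches the paper's for the substantive direction. Compactness of $\bb{1}$ reduces vanishing of $\FrEinfz(f)$ to some stage $\bb{1}\to (X^{\otimes n})_{h\Sigma_n}$ being null, and the rational splitting of the orbit map by $\frac{1}{n!}\mathrm{Nm}$ then forces $f^{\otimes n}\simeq 0$; the paper phrases this step via the composite $(X^{\otimes n})^{h\Sigma_n}\to X^{\otimes n}\to (X^{\otimes n})_{h\Sigma_n}$ being an equivalence, and likewise takes for granted the identification of the stage-$n$ map that you flag. For the easy direction, the paper argues without the colimit formula: the unit of any $\bb{E}_\infty$-algebra receiving an $\bb{E}_0$-map from $(\bb{1}\to X)$ factors through $f^{\otimes n}$ via the multiplication, so it is null. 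Your argument through the $n$-th stage of the colimit is equally valid.
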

\begin{proof}
Clearly, if $f$ is $\otimes$-nilpotent, \[\FrEinfz(\bb{1}\to X)\simeq 0\colon\bb{1}\to 0,\] since the multiplication on any $\bb{E}_{\infty}$-algebra $R$ splits the map $\bb{1}\to X$ over $R$, and if this map were $\otimes$-nilpotent, then taking a high enough $\otimes$-power for it to be zero, we can factor $\bb{1}\to R$ as $\bb{1}\to 0\to R$, so $R\simeq 0$.

Conversely, suppose that \[\FrEinfz(f\colon\bb{1}\to X)\simeq 0\colon\bb{1}\to 0.\]  By \Cref{cor:ShaiLior}, we can identify $\FrEinfz(f\colon\bb{1}\to X)$ with the filtered colimit of $f^{\otimes n}_{h\Sigma_n}$ to rewrite this as
\[\varinjlim\left(f^{\otimes n}_{h\Sigma_n}\colon\bb{1}\to X^{\otimes n}_{h\Sigma_n}\right)\simeq 0\colon\bb{1}\to 0.\]  Since the unit object was assumed to be compact, this implies there exists some $n$ such that $f^{\otimes n}_{h\Sigma_n}\simeq 0$, and we claim also that $f^{\otimes n}\simeq 0$.  Note that there is a natural commutative diagram
\begin{center}
\begin{tikzcd}
\bb{1}^{h\Sigma_n}\rar{(f^{\otimes n})^{h\Sigma_n}}\dar & (X^{\otimes n})^{h\Sigma_n}\dar\\
\bb{1}\rar{f^{\otimes n}}\dar & X^{\otimes n}\dar\\
\bb{1}_{h\Sigma_n}\rar{f^{\otimes n}_{h\Sigma_n}} & (X^{\otimes n})_{h\Sigma_n}.
\end{tikzcd}
\end{center}
Since we are rational, the rightmost vertical composite from the homotopy fixed points to the homotopy orbits of the action of the finite group $\Sigma_n$ is an equivalence, with inverse given by a multiple of the norm map $\mo{Nm}$ (specifically $\frac{1}{n!}\mo{Nm}$).  Similarly, since the action of $\Sigma_n$ on $\bb{1}^{\otimes n}\simeq \bb{1}$ is trivial, both leftmost vertical maps are equivalences, where in fact the first leftmost vertical map can be identified with the identity map, and the second one with multiplication by $n!$.  In particular, we have that $f^{\otimes n}$ factors through $(f^{\otimes n})^{h\Sigma_n}$ which is equivalent to $(f^{\otimes n})_{h\Sigma_n}$, which in turn is nullhomotopic, so that $f^{\otimes n}$ is nullhomotopic. 
\end{proof}
The next step is to finally relate the homological spectrum to the constructible spectrum by means of computing the homological spectrum of Nullstellensatzian objects.
\begin{lemma}\label{lem:NSianttField}
Let $\cat{C}$ be a rational rigid 2-ring, and let $A\in\mo{CAlg}\left(\mo{Ind}\left(\mathcal{C}\right)\right)$ be a Nullstellensatzian commutative algebra object.  Then the homological spectrum of the category of perfect $A$-modules in $\cat{C}$, $\Spc^{h}(\mo{Perf}_{\cat{C}}(A))$, consists of a single point.
\end{lemma}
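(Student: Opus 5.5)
The plan is to show that $\Spc^h(\mo{Perf}_{\cat{C}}(A))$ is nonempty and that any two of its points coincide. For the second part I will use \Cref{prop:weakRingtenszero}: it suffices to show $E_{\mf{m}}\otimes_A E_{\mf{m}'}\neq 0$ for any two homological primes $\mf{m},\mf{m}'$.

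Nonemptiness is quick. Since $A\neq 0$, the category $\mo{Perf}_{\cat{C}}(A)$ is a nonzero rigid tt-category, so its Balmer spectrum is nonempty. The comparison map $\phi\colon\Spc^h\to\Spc$ is surjective, so the homological spectrum is nonempty.

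For the main step, fix $\mf{m}$ and write $E=E_{\mf{m}}\in\Mod_A(\Ind(\cat{C}))$ with pointing $e\colon A\to E$. Work in $\cat{T}=\Mod_A(\Ind(\cat{C}))$, which is rational with compact unit.
\begin{enumerate}
\item Write $e$ as a filtered colimit of pointed compact objects $e_\alpha\colon A\to x_\alpha$ with $x_\alpha\in\mo{Perf}_{\cat{C}}(A)$. This is possible because $A$ is compact, so $e$ factors through compact stages of any presentation of $E$.
\item Show each $e_\alpha$ is not $\otimes$-nilpotent. Since $E$ is a weak ring, $E\otimes e$ is split injective. If $e_\alpha^{\otimes n}=0$, then $e^{\otimes n}=0$, because it factors through $e_\alpha^{\otimes n}$. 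Tensoring with $E$ and using the splitting repeatedly would then force $E=0$, a contradiction.
\item By \Cref{prop:CAlgDetection}, the free algebra $B_\alpha=\FrEinfz(A\to x_\alpha)$ is then nonzero.
\item Check that $B_\alpha$ is compact in $\mo{CAlg}(\Mod_A)$. It is a pushout of free $\bb{E}_\infty$-algebras on compact $\bb{E}_0$-algebras (free on $A$, free on $x_\alpha$, glued along the pointing), so it is compact.
\item Since $A$ is Nullstellensatzian, there is an $A$-algebra map $B_\alpha\to A$. Restricting along the $\bb{E}_0$-map $x_\alpha\to B_\alpha$ gives a map $x_\alpha\to A$ retracting $e_\alpha$. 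So every $e_\alpha$ is split injective.
\end{enumerate}

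Now take a second prime $\mf{m}'$ with weak ring $E'=E_{\mf{m}'}$ and unit $u\colon A\to E'$, and suppose $E'\otimes_A E=0$.
\begin{enumerate}
\item The composite $A\xrightarrow{u}E'\to E'\otimes_A E$ is the filtered colimit of the composites $A\to E'\to E'\otimes x_\alpha$.
\item Since $A$ is compact and the colimit is zero, some stage $A\to E'\to E'\otimes x_\alpha$ is already null.
\item The map $E'\to E'\otimes x_\alpha$ is $E'\otimes e_\alpha$, which is split injective by the previous step. Hence $u$ itself is null.
\item Since $E'\otimes u$ is split injective, $u=0$ forces $E'=0$, a contradiction.
\end{enumerate}
Therefore $E_{\mf{m}'}\otimes E_{\mf{m}}\neq 0$, and \Cref{prop:weakRingtenszero} gives $\mf{m}=\mf{m}'$.

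The step I expect to need the most care is the compactness of $B_\alpha$ in $\mo{CAlg}(\Mod_A(\Ind(\cat{C})))$, together with the fact that the $\bb{E}_\infty$-retraction really restricts to a retraction of the pointing. Both should follow formally from the free–forgetful adjunctions $\mo{Mod}\to\mo{Alg}_{\bb{E}_0}\to\mo{CAlg}$, whose left adjoints preserve compact objects because the right adjoints preserve filtered colimits.
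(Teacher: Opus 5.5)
Your proof is correct, but it takes a different route from the paper's.

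\textbf{The paper's route.} The paper never breaks $E_{\mf m}$ into compact pieces. It applies \Cref{prop:CAlgDetection} directly to the big pointing $A\to E_{\mf{m}_i}$, which gives nonzero $\bb{E}_\infty$-$A$-algebras $L_i$ under $E_{\mf{m}_i}$. It then refines these to Nullstellensatzian $S_i$. Finally, it uses the Burklund--Schlank--Yuan input that a Nullstellensatzian object has a one-point constructible spectrum. That forces $S_1\otimes_A S_2\neq 0$, hence $L_1\otimes_A L_2\neq 0$, hence $E_{\mf{m}_1}\otimes_A E_{\mf{m}_2}\neq 0$.

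\textbf{Your route.} You use the Nullstellensatzian property of $A$ only on compact objects, through the free algebras $\FrEinfz(A\to x_\alpha)$. This shows that every non-$\otimes$-nilpotent pointing $A\to x$ with $x$ perfect is split injective. That is essentially the argument the paper gives only afterwards, in \Cref{Cor:NSianVeryFieldIsh}. Compactness of $A$ then lets you move this splitting to a finite stage and conclude that $u$ would be null.

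\textbf{Comparison.} Your route is more elementary. It needs neither the one-point constructible spectrum of $A$ nor the existence of Nullstellensatzian refinements of the $L_i$. It also establishes the field-like splitting property before the single-point statement rather than after it.

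The cost is some bookkeeping the paper avoids by working with the big weak rings directly. You must present $e$ as a filtered colimit of compact pointed objects, using $(\Mod_A)_{A/}\simeq\mo{Ind}$ of compact pointed objects, since $A$ is compact. You must also check that $\FrEinfz(A\to x_\alpha)$ is compact in $\mo{CAlg}(\Mod_A)$. Both checks go through as you indicate.

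You also handle nonemptiness explicitly, via surjectivity of $\Spc^h\to\Spc$ for the nonzero category $\mo{Perf}_{\cat C}(A)$. The paper leaves this implicit.
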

\begin{proof}

Consider two points 

$\mf{m}_1$ and $\mf{m}_2$ in $\Spc^h(\mo{Perf}_{\cat{C}}(A))$.  Taking the corresponding ``residue fields'' from \Cref{rem:weakringprimes} we find corresponding weak rings $E_{\mf{m}_i}$ over $A$.

Since the pointed object in $A\text{-}\mo{Mod}(\mo{Ind}(\cat{C}))$ given by $A\to E_{\mf{m}_i}$ is a weak ring, the pointing map cannot be $\otimes$-nilpotent.  Applying \Cref{prop:CAlgDetection}, we can find nonzero $\bb{E}_{\infty}$-$A$-algebras $L_i$ in $\mo{Ind}(\cat{C})$ together with maps of pointed objects \[A\to E_{\mf{m}_i}\to L_i,\]

Since $L_i$ are nonzero as $A$-algebras, they are also nonzero in $\mo{CAlg}\left(\mo{Ind}\left(\mathcal{C}\right)\right)$, so there exists Nullstellensatzian objects $S_i$ in $\mo{CAlg}\left(\mo{Ind}\left(\mathcal{C}\right)\right)$ with maps $L_i\to S_i$.

Since $A$ was chosen to be Nullstellensatzian, its constructible spectrum consists of a single point by \cite[Proposition A.31]{2022arXiv220709929B}, and so we find that $S_1\otimes_A S_2\neq 0$. This implies in turn that $L_1\otimes_A L_2\neq 0$, and thus that $E_{\mf{m}_1}\otimes_{A} E_{\mf{m}_2}\neq 0$.  Applying \Cref{prop:weakRingtenszero}, this in turn implies that $\mf{m}_1= \mf{m}_2$. Since they were arbitrary points, this proves the claim. 
\end{proof}
\begin{corollary}\label{cor:NSianFieldIsh}
Let $\cat{C}$ be a rational rigid 2-ring. For any Nullstellensatzian algebra $A\in \mo{CAlg}\left(\mo{Ind}\left(\cat{C}\right)\right)$, every non-zero object of the category $\mo{Perf}_{\cat{C}}(A)$ is $\otimes$-faithful; equivalently, its coevaluation map splits.
\end{corollary}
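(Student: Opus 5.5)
The plan is to combine \Cref{lem:NSianttField} with the nilpotence theorem for the homological spectrum, and then use \Cref{lem:equivtensorfaithful}. Write $\cat{K}\coloneqq\mo{Perf}_{\cat{C}}(A)$. This is a rigid 2-ring, and by \Cref{lem:NSianttField} its homological spectrum $\Spc^h(\cat{K})=\{\mf{m}\}$ is a single point. By \Cref{lem:equivtensorfaithful}, it suffices to show that for every nonzero $x\in\cat{K}$ the coevaluation $\mo{coev}_x\colon\bb{1}\to x\otimes x^{\vee}$ is split injective. Equivalently, the connecting map $\delta\colon x\otimes x^{\vee}\to \Sigma F$, where $F=\mo{fib}(\mo{coev}_x)$, should vanish; or, dually, the map $F\to\bb{1}$ should be zero.

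\textbf{Step 1 (single homological prime forces $\otimes$-nilpotence).} Let $E=E_{\mf{m}}$ be the weak ring attached to the unique point. Since $\supph(x)$ is a closed subset of the one-point space and $x\neq 0$, we get $\supph(x)=\Spc^h(\cat{K})$. Here I use that a nonzero compact object has nonempty homological support, which follows because $\phi$ is surjective onto the nonempty support of $x$ in $\Spc(\cat{K})$. Hence $E\otimes x\neq 0$.

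\textbf{Step 2 (the residue field is a ``field'').} In the homological residue field $\mo{mod}(\cat{K})/\mf{m}$, every object is $\otimes$-faithful; this is the standard fact from \cite{balmerhomological,BKS2019} that the quotient by a maximal Serre ideal is a ``field'' in the abelian sense. Consider $g\colon F\to\bb{1}$, the fiber of $\mo{coev}_x$. After tensoring with $x$, the map $\mo{coev}_x$ splits by the triangle identity, so $x\otimes g=0$. In the residue field the image of $x$ is nonzero by Step 1, hence $\otimes$-faithful, so the image of $g$ vanishes there. Since $\mf{m}$ is the only homological prime, the nilpotence theorem (\cite[Corollary 4.7, Theorem 5.4]{balmernilpotence}, recalled above) shows that $g$ is $\otimes$-nilpotent.

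\textbf{Step 3 (upgrading nilpotence to vanishing).} This is the main obstacle: $\otimes$-nilpotence of $g$ must be promoted to $g=0$. Dualizing, $g^\vee\colon\bb{1}\to F^\vee$ is a $\otimes$-nilpotent pointing map. Since $g$ is $\otimes$-nilpotent, some power satisfies $g^{\otimes n}=0$. Using the fiber sequence $F\to\bb{1}\to x\otimes x^\vee$, $\mo{coev}_x^{\otimes n}$ is then split injective, so $\bb{1}$ is a retract of $(x\otimes x^\vee)^{\otimes n}$ and $x$ generates the unit $\otimes$-ideal. I would prefer a direct use of the Nullstellensatzian property. The algebra $\FrEinfz(\bb{1}\to x\otimes x^\vee)$ over $A$ is nonzero by \Cref{prop:CAlgDetection}, since $\mo{coev}$ is not nilpotent for $x\neq0$. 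Better still, $\End(x)=x\otimes x^\vee$ is an $\bb{E}_1$-algebra, and one would like to show that $A\to \End(x)$ splits as modules. The route I would try is to take a compact commutative $A$-algebra $B$ killing $F\to\bb{1}$, for instance a free algebra on a pointed object of the kind appearing in \Cref{cor:ShaiLior}, which is nonzero precisely because $g^\vee$ is \emph{not} nilpotent. If it were nonzero, the Nullstellensatzian property would give a retraction $B\to A$, forcing $g^\vee=0$. So the real content is a trick that makes $g$ vanish, not merely become nilpotent, after a compact algebra extension. Since $g$ is nilpotent, I instead expect to apply \Cref{prop:CAlgDetection} to the complementary pointing $\mo{coev}_x$ itself, via $\FrEinfz(\bb{1}\to x\otimes x^\vee)$. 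This algebra is compact in $\mo{CAlg}_A$ when formed from the compact object $x\otimes x^\vee$, and it is nonzero by Step 2. Hence it admits an $A$-algebra retraction to $A$, and composing gives an $A$-module retraction of $\mo{coev}_x$, i.e.\ the desired splitting.
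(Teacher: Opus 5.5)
Your final argument, in the last three sentences of Step 3, is exactly the paper's proof, and it is correct. Since $\mo{coev}_x$ is not $\otimes$-nilpotent, \Cref{prop:CAlgDetection} makes $\FrEinfz(\mo{coev}_x)$ a nonzero commutative $A$-algebra, and it is compact because $x\otimes x^\vee$ is. The Nullstellensatz property then gives an $A$-algebra retraction to $A$, and composing $A\to x\otimes x^\vee\to\FrEinfz(\mo{coev}_x)\to A$ splits $\mo{coev}_x$.

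You cite the wrong reason for non-vanishing, though. It does not come from Step 2; it comes from the triangle identity. The map $x\otimes\mo{coev}_x$ is split injective, hence so is $x\otimes\mo{coev}_x^{\otimes n}$ for every $n$. So $\mo{coev}_x^{\otimes n}=0$ would force $x=0$. This is the only input needed besides \Cref{prop:CAlgDetection} and the Nullstellensatzian property.

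Everything before that should be cut. Steps 1--2 (one homological point, nilpotence theorem) are never used in the final argument. The first half of Step 3 also contains a false inference: $g^{\otimes n}=0$ does \emph{not} imply that $\mo{coev}_x^{\otimes n}$ is split injective. It only gives that $\bb{1}\to\cofib(g^{\otimes n})$ splits, i.e.\ that $\bb{1}$ lies in the thick $\otimes$-ideal generated by $x$.

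For a counterexample, take $\mo{Perf}(\bb{Q}[\varepsilon]/\varepsilon^2)$ and the Koszul complex $K=\cofib(\varepsilon)$. The spectrum is a point and $K\neq0$, so $K$ generates the unit ideal and $\mo{fib}(\mo{coev}_K)$ is $\otimes$-nilpotent. Yet $K\otimes\varepsilon\simeq0$ while $\varepsilon\neq0$, so no power of $\mo{coev}_K$ splits.

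The same example shows that the information in Steps 1--2 can never by itself produce the splitting; the Nullstellensatz property is the entire content. Finally, your intermediate suggestion that some algebra is nonzero ``because $g^\vee$ is not nilpotent'' contradicts your own Step 2 (by \Cref{lemma:omnibusnil}, $g^\vee$ is nilpotent if $g$ is), so that passage should be dropped too.
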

\begin{proof}
Note first that given any non-zero $X\in \mo{Perf}_{\cat{C}}(A)$, the morphism \[\mo{coev}_X\colon A \to X\otimes X^{\vee}\] is split after tensoring with $X$, so cannot be $\otimes$-nilpotent.  Using \Cref{prop:CAlgDetection}, the free $\bb{E}_{\infty}$-$A$-algebra on $\mo{coev}_X$ is a nonzero compact $A$-algebra, so admits a section back to $A$ since $A$ is Nullstellensatzian.  For a given choice of section, the composite \[A\to X\otimes X^{\vee}\to \mo{Free}_{\bb{E}_{\infty}/\bb{E}_0}(\mo{coev}_X)\to A\] splits coevaluation of $X$, and so $X$ must be $\otimes$-faithful.
\end{proof}
\begin{corollary}\label{Cor:NSianVeryFieldIsh}  
Let $\cat{C}$ be a rational rigid 2-ring. For any Nullstellensatzian algebra $A\in \mo{CAlg}\left(\mo{Ind}\left(\cat{C}\right)\right)$, every rigid 2-ring map \[F\colon\mo{Perf}_{\cat{C}}(A)\to \cat D\] with $\cat D\neq 0$ is conservative for maps between compact objects, meaning that any non-zero map $f\colon Y\to X$ between compact objects in $\mo{Perf}_{\cat{C}}(A)$ has $F(f)\neq 0$ in $\cat D$.
\end{corollary}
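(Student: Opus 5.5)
Let $f\colon Y\to X$ be a nonzero map in $\mo{Perf}_{\cat{C}}(A)$. The plan is to reduce to the case of a map out of the unit and then use \Cref{cor:NSianFieldIsh}, which says every nonzero object of $\mo{Perf}_{\cat{C}}(A)$ is $\otimes$-faithful.

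First reduce to $Y=A$. By rigidity, $f$ corresponds under adjunction to a map $\tilde f\colon A\to Y^{\vee}\otimes X$. Since $F$ is a symmetric monoidal exact functor, it preserves duals and the adjunction, so $F(\tilde f)$ is the adjoint of $F(f)$. Hence $F(f)\neq 0$ if and only if $F(\tilde f)\neq 0$. Shifts are handled the same way: a nonzero map $\Sigma^k A\to Z$ becomes a map $A\to\Sigma^{-k}Z$. So it suffices to treat a nonzero map $g\colon A\to Z$ with $Z$ compact.

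Next, show that such a $g$ is split injective after a suitable tensor, so that its image cannot vanish. Let $W=\mo{cofib}(g)$. There are two cases.

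\emph{Case $W\simeq 0$.} Then $g$ is an equivalence. Since $F(A)=\bb{1}_{\cat D}\neq 0$, the map $F(g)$ is an equivalence between nonzero objects and is therefore nonzero.

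\emph{Case $W\neq 0$.} Then $W$ is $\otimes$-faithful by \Cref{cor:NSianFieldIsh}, so $\mo{coev}_W$ is split injective. The key point I would try is this: $g$ is nonzero, so $W\otimes g\neq 0$ by faithfulness. I then want to upgrade "$g$ nonzero" to "$g$ is not $\otimes$-nilpotent", since tt-functors preserve $\otimes$-nilpotence but need not reflect mere vanishing.

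To get this, apply \Cref{prop:CAlgDetection} inside $\mo{Mod}_A(\mo{Ind}(\cat C))$. If $g$ is not $\otimes$-nilpotent, then $\FrEinfz(g)$ is a nonzero compact $\bb{E}_\infty$-$A$-algebra. Because $A$ is Nullstellensatzian, this algebra admits a retraction to $A$, and the composite
\[A\xrightarrow{g} Z\to \FrEinfz(g)\to A\]
is the identity. So $g$ is split injective, $F(g)$ is split injective with source $\bb{1}_{\cat D}\neq0$, and hence $F(g)\neq 0$.

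\textbf{Main obstacle: nonzero implies non-nilpotent.} It remains to show that a nonzero $g$ is not $\otimes$-nilpotent. Suppose $g^{\otimes n}=0$ with $n$ minimal and $n\geq 2$. Then $g^{\otimes(n-1)}\neq 0$ and $g\otimes g^{\otimes(n-1)}=0$. Write $g^{\otimes n}=(Z^{\otimes(n-1)}\otimes g)\circ g^{\otimes(n-1)}$ up to identification. Faithfulness of the nonzero object $Z^{\otimes(n-1)}$ (nonzero because $g^{\otimes(n-1)}\neq 0$) then gives $Z^{\otimes(n-1)}\otimes g\neq 0$, but this does not directly contradict the vanishing of the composite.

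The route I would take instead is to apply faithfulness of $Z'=Z^{\otimes(n-1)}$ to the morphism $g$ itself. Tensoring $g\colon A\to Z$ with $Z'$ and precomposing with $g^{\otimes(n-1)}\colon A\to Z'$ yields $g^{\otimes n}$. Instead, use the splitting $\mo{coev}_{Z'}$: $g$ is recovered from $Z'\otimes g$ via a retraction. If that fails, the fallback is to take the retraction $r\colon Z'\otimes Z'^{\vee}\to A$ and argue that $g^{\otimes(n-1)}$, viewed through the adjunction as $A\to Z'$, composes with its "dual" to give a nonzero scalar in $\pi_0 A$. That scalar is a trace; if it is invertible, nilpotence is ruled out directly. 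Making this trace argument precise is the step I expect to be delicate.
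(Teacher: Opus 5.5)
Your reduction to a map $g\colon A\to Z$ out of the unit is correct, and so is your treatment of the case where $g$ is not $\otimes$-nilpotent. But the proof then rests entirely on the step you flag as the main obstacle, that a nonzero $g$ cannot be $\otimes$-nilpotent, and this is never established. The tools you propose for it also cannot establish it. They only use that nonzero objects are $\otimes$-faithful (\Cref{cor:NSianFieldIsh}), rigidity, and traces or retractions of coevaluations, and all of this is available in the nil-extension of \Cref{rem:hcob}. There, every nonzero compact object is $\otimes$-faithful and $\End(\bb{1})$ is a field. Yet the image of the universal pointing $f\colon \bb{1}\to X$ is nonzero, since it raises the grading by exactly $1$, while $f\otimes f$ raises it by $2$ and so vanishes. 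Since $\Hom(X,\bb{1})=0$ there, no scalar obtained by pairing $f$ against a map $X\to\bb{1}$ can detect it, and that pairing is what a trace or a retraction of $\mo{coev}$ would produce. So the Nullstellensatz property must be used a second time. Trying to show that every nonzero map out of the unit is split injective aims at more than the statement needs.

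The missing idea is to apply the split-or-nilpotent dichotomy to the connecting map rather than to $g$. The dichotomy says that a map $r\colon\bb{1}\to Z$ is split injective unless $\FrEinfz(r)\simeq 0$, in which case $r$ is $\otimes$-nilpotent by \Cref{prop:CAlgDetection}. You did look at $W=\mo{cofib}(g)$, but only to invoke faithfulness. Instead, take the fiber $h\colon Y\to A$ of $g$.
\begin{itemize}
\item Since $g\neq 0$, $h$ is not split surjective, so $h^{\vee}\colon A\to Y^{\vee}$ is not split injective.
\item If $\FrEinfz(h^{\vee})$ were nonzero, then, being compact, it would admit an $A$-algebra map back to the Nullstellensatzian $A$ and hence split $h^{\vee}$. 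So $\FrEinfz(h^{\vee})\simeq 0$, and $h$ is $\otimes$-nilpotent.
\item Then $F(h)$ is $\otimes$-nilpotent with target $\bb{1}_{\cat D}\neq 0$, hence not split surjective, hence $F(g)\neq 0$.
\end{itemize}
This is exactly the paper's proof, phrased there dually. The paper reduces to $A=\bb{1}$ and takes mates so that $f$ has target $\bb{1}$. Then $f\neq 0$ forces $\mo{cofib}(f)\colon\bb{1}\to Z$ to be non-split, hence $\otimes$-nilpotent. $F$ preserves $\otimes$-nilpotence, and a $\otimes$-nilpotent map out of $\bb{1}_{\cat D}\neq 0$ cannot be split, so $F(f)\neq 0$. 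This argument never needs to decide whether $g$ itself is nilpotent.
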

\begin{proof}
Without loss of generality, $A=\bb{1}$ (and $\bb{1}$ is Nullstellensatzian).

Using rigidity, up to replacing $f$ by its mate $Y\otimes X^{\vee} \to \bb{1}$, we may assume without loss of generality that $f$ has target $\bb{1}$. If $f$ is not zero, then the map \[r\coloneqq\mo{cofib}(f)\colon \bb{1}\to Z\]
is not split injective.  Since $\FrEinfz(r)$ is a compact commutative algebra, and $\bb{1}$ is Nullstellensatzian, it follows that $\FrEinfz(r)\simeq 0$.  Applying \Cref{cor:ShaiLior}, this implies that $r$ is $\otimes$-nilpotent, and hence so is $F(r)$. Since $\cat D$ is nonzero and $F(r) $ has source $\bb{1}_\cat D$, it follows that $F(r)$ is not split injective, and hence $F(f)$ is non zero, as was to be shown.
\end{proof}
\begin{remark}
Although it seems reasonable to expect, forthcoming work of the third author will show that Nullstellensatzian rigid 2-rings are \textit{not} tt-fields in the sense of \cite{BKS2019}! In fact, they are not even pure-semisimple in the sense of \cite[Theorem 5.7]{BKS2019} and drastically fail condition (iii) in \textit{loc.~cit.}

\end{remark}

Without further ado, we can prove the main theorem of the section.
\begin{proof}[Proof of \Cref{thm:consequalshom}]  By \Cref{prop:consequalscons}, instead of working with the constructible spectrum of $\cat{C}$, $\Speccons(\cat{C})$, we can work equally well with the constructible spectrum of $\bb{1}_{\cat{C}}$ defined internally to commutative algebras in the big category $\mo{Ind}(\cat{C})$,\[\Speccons_{\mo{CAlg}\left(\mo{Ind}\left(\mathcal{C}\right)\right)}(\bb{1}_{\cat{C}}).\]

Any Nullstellensatzian algebra $S$ in $\mo{CAlg}\left(\mo{Ind}\left(\mathcal{C}\right)\right)$ determines a functor \[\cat{C}\xrightarrow{-\otimes S}\mo{Perf}_{\cat{C}}(S).\]  By \Cref{lem:NSianttField}, we have that $\Spc^{h}(\mo{Perf}_{\cat{C}}(S))$ is a singleton, so we can define a map 
\[
\Speccons_{\mo{CAlg}\left(\mo{Ind}\left(\mathcal{C}\right)\right)}(\bb{1}_{\cat{C}})\to \Spc^{h}(\cat{C}),
\] by taking a Nullstellensatzian commutative algebra $S$ representing a point in the constructible spectrum to the image of the induced map \[*\simeq \Spc^{h}(\mo{Perf}_{\cat{C}}(S))\to \Spc^{h}(\cat{C}).\]  If $S_1$ and $S_2$ are two Nullstellensatzian commutative algebras representing the same point in the constructible spectrum, then again by \cite[Lemma A.35]{2022arXiv220709929B}, there exists another Nullstellensatzian commutative algebra $S_3$ with maps $S_1\to S_3\gets S_2$.  Functoriality of $\Spc^{h}(-)$ shows that point of the homological spectrum attached to $S_1$ agrees with the one attached to $S_2$, and so our total comparison map is indeed well-defined.

To see that the map is surjective, pick any homological prime $\mf{m}\in\Spc^{h}(\cat{C})$, and consider once again the residue field $E_{\mf{m}}$ at $\mf{m}$.  Applying \Cref{prop:CAlgDetection}, there exists a nonzero $\bb{E}_{\infty}$-algebra $L$ with a map of pointed objects $E_{\mf{m}}\to L$.  Choose some Nullstellensatzian $\bb{E}_{\infty}$-algebra $S$ in $\mo{CAlg}\left(\mo{Ind}\left(\mathcal{C}\right)\right)$ with a map $L\to S$.  Then, for all homological primes $\mf{p}\neq \mf{m}\in \Spec^{h}(\cat{C})$, since we have that $E_{\mf{p}}\otimes E_{\mf{m}}\simeq 0$ by \cite[Proposition 5.3]{balmernilpotence}, it follows that $S\otimes E_{\mf{p}}\simeq 0$ for all primes $\mf{p}\neq \mf{m}$.  Since the kernel of $S\otimes -$ contains $E_{\mf{p}}$ for all $\mf{p}\neq \mf{m}$, the only possible prime that the map $\Spc^{h}(\mo{Perf}_{\cat{C}}(S))\to \Spc^{h}(\cat{C})$ can possibly hit is $\mf{m}$ itself.  Since $\mf{m}$ was arbitrary, the map is surjective.

Finally, we must show injectivity of the comparison map.  Choose an arbitrary set-theoretic section \[\Spc^h(\cat{C})\to \Speccons_{\mo{CAlg}\left(\mo{Ind}\left(\mathcal{C}\right)\right)}(\bb{1}_{\cat{C}}),\] and lift this arbitrarily to a collection $S_{\mf{m}}$ of Nullstellensatzian commutative algebras in $\mo{CAlg}\left(\mo{Ind}\left(\mathcal{C}\right)\right)$ with $S_{\mf{m}}$ representing the point above $\mf{m}$ in the chosen section.  If the map is not injective, then there exists some point not in the image of this section, which corresponds to a Nullstellensatzian commutative algebra $S$ such that $S\otimes S_{\mf{m}}\simeq 0$ for all $\mf{m}\in\Spc^h(\cat{C})$.  Now, \cite[Theorem 1.9]{BCHS2024} tells us that the family $\{\cat{C}\to \mo{Perf}_{\cat{C}}(S_{\mf{m}})\}_{\mf{m}\in\Spc^h(\cat{C})}$ is jointly nil-conservative.  By \Cref{prop:nilconsbasechange}, the basechange 
\[\{\mo{Perf}_{\cat{C}}(S)\to \mo{Perf}_{\cat{C}}(S)\otimes_{\cat{C}}\mo{Perf}_{\cat{C}}(S_{\mf{m}})\}_{\mf{m}}\]
is also jointly nil-conservative.  However, we have that
\[\mo{Perf}_{\cat{C}}(S)\otimes_{\cat{C}}\mo{Perf}_{\cat{C}}(S_{\mf{m}})\simeq \mo{Perf}_{\cat{C}}(S\otimes S_{\mf{m}})\simeq 0,\]
for all $\mf{m}\in\Spc^h(\cat{C})$.  Since the map from $\mo{Perf}_{\cat{C}}(S)\to 0$ is nil-conservative, we find that $S$ itself must be zero, contradicting the fact that $S$ is Nullstellensatzian.
\end{proof}

\subsection{Consequences for residue fields} Using \Cref{thm:consequalshom}, we can provide criteria for certain rational rigid 2-rings to admit enough tt-fields.
\begin{corollary}\label{cor:EnoughTTFields}
Let $\cat{C}$ be a rational rigid 2-ring such that for all Nullstellensatzian commutative $\cat{C}$-algebras $L\in \mo{CAlg}\left(\mo{Ind}\left(\cat{C}\right)\right)$, $\mo{Perf}_{\cat{C}}(L)$ is a tt-field in the sense of \cite{BKS2019}.  Then for any commutative algebra $R\in \mo{CAlg}\left(\mo{Ind}\left(\cat{C}\right)\right)$, the category $\mo{Perf}_{\cat{C}}(R)$ has enough tt-fields. 
\end{corollary}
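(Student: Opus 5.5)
The plan is to apply \Cref{thm:consequalshom} to the rational rigid 2-ring $\cat{D}\coloneqq\mo{Perf}_{\cat{C}}(R)$ instead of to $\cat{C}$. I take ``enough tt-fields'' in the sense of \cite{BKS2019}: every homological prime of $\cat D$ is realized by a tt-functor to a tt-field, or equivalently, by \Cref{thm:surjspech}, there is a jointly nil-conservative family of tt-functors $\cat{D}\to\cat{F}_i$ into tt-fields. Note that $\cat D$ is rigid and rational, and that $\mo{Ind}(\cat D)\simeq \mo{Mod}_R(\mo{Ind}(\cat C))$. Hence
\[
\mo{CAlg}(\mo{Ind}(\cat D))\simeq \mo{CAlg}(\mo{Ind}(\cat C))_{R/},
\]
and for $L$ under $R$ we have $\mo{Perf}_{\cat D}(L)\simeq \mo{Perf}_{\cat C}(L)$.

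\textbf{Step 1: transfer the hypothesis to $\cat D$.} Let $L$ be a Nullstellensatzian object of $\mo{CAlg}(\mo{Ind}(\cat C))_{R/}$. I claim $L$ is already Nullstellensatzian in $\mo{CAlg}(\mo{Ind}(\cat C))$.
\begin{itemize}
\item There is a canonical equivalence $\big(\mo{CAlg}(\mo{Ind}(\cat C))_{R/}\big)_{L/}\simeq \mo{CAlg}(\mo{Ind}(\cat C))_{L/}$.
\item This equivalence preserves compact objects, zero objects and initial objects.
\item The condition ``nonzero'' means the same thing in both settings.
\end{itemize}
So the hypothesis applies, and $\mo{Perf}_{\cat D}(L)=\mo{Perf}_{\cat C}(L)$ is a tt-field. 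This identification of Nullstellensatzian objects in a slice is the step I would check most carefully, although it looks formal.

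\textbf{Step 2: one tt-field per homological prime.} Take $\mf m\in\Spc^h(\cat D)$. By \Cref{thm:consequalshom} together with \Cref{prop:consequalscons}, and following the surjectivity part of its proof, there is a Nullstellensatzian $L_{\mf m}$ over $\bb{1}_{\cat D}=R$ with the following properties.
\begin{itemize}
\item By \Cref{lem:NSianttField}, $\Spc^h(\mo{Perf}_{\cat D}(L_{\mf m}))$ is a single point.
\item The image of this point in $\Spc^h(\cat D)$ is exactly $\mf m$. This holds because $L_{\mf m}\otimes E_{\mf p}=0$ for $\mf p\neq\mf m$.
\end{itemize}
By Step 1, $\cat D\to \mo{Perf}_{\cat C}(L_{\mf m})$ is a tt-functor to a tt-field whose image on homological spectra is $\{\mf m\}$.

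\textbf{Step 3: assemble the family.} Ranging over all $\mf m$, the map
\[
\coprod_{\mf m}\Spc^h(\mo{Perf}_{\cat C}(L_{\mf m}))\to\Spc^h(\cat D)
\]
is surjective. By \Cref{thm:surjspech}, the family $\{\cat D\to\mo{Perf}_{\cat C}(L_{\mf m})\}_{\mf m}$ is therefore jointly nil-conservative. This gives enough tt-fields in either formulation of the definition.

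The only potential obstacle beyond Step 1 is matching precisely the definition of ``enough tt-fields'' used in \cite{BKS2019}, for instance whether the functors are required to be geometric on big categories. Base change $-\otimes_R L_{\mf m}$ on module categories is coproduct-preserving and symmetric monoidal, so I expect this to hold automatically.
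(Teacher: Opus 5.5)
Your proof is correct and follows the paper's argument: apply \Cref{thm:consequalshom} to $\mo{Perf}_{\cat C}(R)$ to obtain, for each homological prime, a Nullstellensatzian $L$ under $R$ representing it, and then invoke the hypothesis that $\mo{Perf}_{\cat C}(L)$ is a tt-field. Beyond the paper, you spell out two points it leaves implicit: first, that Nullstellensatzian objects of the slice $\mo{CAlg}(\mo{Ind}(\cat C))_{R/}$ are Nullstellensatzian in $\mo{CAlg}(\mo{Ind}(\cat C))$, which is formal via $(\mc{A}_{R/})_{L/}\simeq\mc{A}_{L/}$; second, the optional assembly of these functors into a jointly nil-conservative family using \Cref{thm:surjspech}.
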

\begin{proof}
For any point in the homological spectrum of $\mo{Perf}_{\cat{C}}(R)$, choose a map $R\to L$ to a Nullstellensatzian commutative $\cat{C}$-algebra $L$ representing that point, which exists by \Cref{thm:consequalshom}.  The induced functor
\[\mo{Perf}_{\cat{C}}(R)\to \mo{Perf}_{\cat{C}}(L)\]
acts as a tt-field at the given point.
\end{proof}
\begin{corollary}\label{cor:NaiveEqualsGenuineEnoughTT}
Suppose that $\cat{C}$ is a rational rigid 2-ring such that all Nullstellensatzian commutative algebras in $\mo{Ind}(\cat{C})$ have module categories which are tt-fields.  Then for all commutative algebras $R\in \mo{CAlg}\left(\mo{Ind}\left(\cat{C}\right)\right)$, and for any object $X\in\mo{Ind}(\left(\mo{Perf}_{\cat{C}}(R)\right)$, the genuine and naive homological support of $X$ agree, 
\[\supph(X)=\Supp^{n}(X).\]
\end{corollary}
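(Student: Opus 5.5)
The inclusion $\supp^{n}(X)\subseteq\supph(X)$ holds in any tt-category, so the work is the reverse inclusion. Fix $\mf{m}\in\supph(X)$, i.e.\ $\mo{hom}(X,E_{\mf{m}})\neq 0$, where $E_{\mf{m}}$ is the weak ring of $\mf{m}$ in $\mo{Ind}(\mo{Perf}_{\cat{C}}(R))$. The aim is to show $X\otimes E_{\mf{m}}\neq 0$. I plan to replace $E_{\mf{m}}$ by the big category of a Nullstellensatzian tt-field and use that every object there is a sum of compacts.

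First, apply \Cref{thm:consequalshom} (together with \Cref{prop:consequalscons}) to the rational rigid 2-ring $\mo{Perf}_{\cat{C}}(R)$. This gives a Nullstellensatzian commutative algebra $L$ under $R$ whose point in the homological spectrum is $\mf{m}$. By hypothesis $\mo{Perf}_{\cat{C}}(L)$ is a tt-field, and as in \Cref{cor:EnoughTTFields} we get a tt-functor $F\colon\mo{Perf}_{\cat{C}}(R)\to\mo{Perf}_{\cat{C}}(L)$ with $\Spc^h(\mo{Perf}_{\cat{C}}(L))=\ast$ mapping to $\mf{m}$. By \Cref{lem:NSianttField}, the weak ring attached to this unique point is (up to the standard identification for tt-fields) the unit $L$. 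Pushing forward along $F_*$ therefore identifies $E_{\mf{m}}$ with a weak ring built from $L$. More precisely, the standard argument of \cite{BKS2019} for tt-fields gives $\ker(E_{\mf{m}}\otimes-)=\ker(L\otimes-)$. Hence $\supp^n(X)\ni\mf{m}$ if and only if $L\otimes_R X\neq0$.

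Second, I will show $L\otimes_R X\neq 0$ from $\mo{hom}(X,E_{\mf{m}})\neq0$. Since $E_{\mf{m}}$ is a retract-type summand of $F_*$ of objects in the tt-field (the residue field at the unique point of a tt-field is a summand of the unit up to the Krull--Schmidt decomposition), a nonzero map $X\to E_{\mf{m}}$ produces a nonzero map $X\to F_*(W)$ for some $W\in\mo{Ind}(\mo{Perf}_{\cat{C}}(L))$. By adjunction this gives a nonzero map $F(X)=L\otimes_R X\to W$, and so $L\otimes_R X\neq0$.

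The main obstacle is justifying that $E_{\mf{m}}$ is controlled by $L$. The weak ring is defined via the abelian residue field, while $L$ comes from Nullstellensatzian theory, so I need $\ker(E_{\mf{m}}\otimes -)=\ker(L\otimes-)$ on big objects, together with an injection of $E_{\mf{m}}$ into $F_*$ of something. My first attempt is to use that $L\otimes E_{\mf{m}}\neq0$ (the point maps to $\mf{m}$) is a nonzero weak ring over a tt-field. Any such weak ring is a nonzero coproduct of compacts with $L$ as a summand, because the pointing map is split after tensoring with itself and compacts are $\otimes$-faithful. Combined with \Cref{prop:weakRingtenszero} and \Cref{prop:naiveequalsgenuine}, this should identify both supports with the single-point test ``$L\otimes X\neq0$'', which closes the argument.
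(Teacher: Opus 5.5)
You have the two inclusions the wrong way round, so your argument proves only the automatic one. Let $\eta\colon\bb{1}\to E_{\mf{m}}$ be the weak ring, and let $\mu$ be a retraction of $\eta\otimes E_{\mf{m}}$ (it exists by the weak-ring axiom and symmetry). Take any compact $c$ and any map $f\colon c\otimes X\to E_{\mf{m}}$. Bifunctoriality gives $f=\mu\circ(E_{\mf{m}}\otimes f)\circ(\eta\otimes c\otimes X)$, so $f$ factors through $E_{\mf{m}}\otimes c\otimes X$. Hence $E_{\mf{m}}\otimes X=0$ forces $\mo{hom}(X,E_{\mf{m}})=0$. So the inclusion that holds in every tt-category is $\supph(X)\subseteq\Supp^{n}(X)$. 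The reverse inclusion $\Supp^{n}(X)\subseteq\supph(X)$, which you assert for free, is exactly the question from \cite{BHSZ2024pp} that this corollary answers in the present setting, and it is the whole content of the statement. Your second step goes from $\mo{hom}(X,E_{\mf{m}})\neq 0$ to $L\otimes_R X\neq 0$, which is again the trivial direction. So nothing beyond the automatic inclusion is established.

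To repair this you would have to run the argument the other way. First, from $E_{\mf{m}}\otimes X\neq 0$ deduce $L\otimes_R X\neq 0$. This needs $\ker(L\otimes_R -)\subseteq\ker(E_{\mf{m}}\otimes -)$. Your observation that $L$ splits off the nonzero weak ring $L\otimes E_{\mf{m}}$ in the tt-field only gives the opposite containment. Also, \Cref{lem:NSianttField} only says the homological spectrum is a point; it does not identify $E_{\mf{m}}$ with anything built from $L$. You need a genuine comparison between $E_{\mf{m}}$ and $F_*(\bb{1})=L$, of the kind established in \cite{BalmerCameron2021}. Second, from $L\otimes_R X\neq 0$ produce a nonzero map $c\otimes X\to E_{\mf{m}}$. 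Here you must use that every object of $\Ind(\mo{Perf}_{\cat C}(L))$ is a sum of $\otimes$-faithful compacts to get a map into $E_{\mf{m}}$ itself, not merely into $L$ or some $F_*(W)$. The paper does not redo any of this. Your first paragraph correctly reconstructs \Cref{cor:EnoughTTFields}, and the paper then cites \cite[Proposition~1.16]{BHSZ2024pp}, which supplies exactly the implication ``enough tt-fields $\Rightarrow$ $\Supp^{n}=\supph$''.
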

\begin{proof}
This follows by \Cref{cor:EnoughTTFields} and \cite[Proposition~1.16]{BHSZ2024pp}.
\end{proof}
\begin{corollary}\label{cor:Qttresidues}
If $R$ is a rational $\bb{E}_{\infty}$-ring, then $\mo{Perf}(R)$ has enough tt-fields, and points in the homological spectrum of $\mo{Perf}(R)$ are all witnessed by maps from $R$ into rational 2-periodic fields.  In particular, for any module over a rational $\bb{E}_{\infty}$-ring $R$, its naive homological support agrees with its genuine homological support.
\end{corollary}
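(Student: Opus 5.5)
We will deduce the statement from \Cref{cor:EnoughTTFields} and \Cref{cor:NaiveEqualsGenuineEnoughTT}, applied with $\cat{C}=\mo{Perf}(\bb{Q})$, so that $\mo{CAlg}(\mo{Ind}(\cat{C}))$ is rational $\bb{E}_\infty$-rings. The only input to check is the hypothesis of those corollaries: for every Nullstellensatzian rational $\bb{E}_\infty$-ring $L$, the category $\mo{Perf}(L)$ is a tt-field in the sense of \cite{BKS2019}. Once this holds, \Cref{thm:consequalshom} says every homological prime of $\mo{Perf}(R)$ is witnessed by a map $R\to L$ with $L$ Nullstellensatzian. The two corollaries then give enough tt-fields and the equality $\supph=\Supp^n$ for all $R$-modules.

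The key step is to identify Nullstellensatzian rational $\bb{E}_\infty$-rings. We claim that such an $L$ is even $2$-periodic with $\pi_0(L)$ an algebraically closed field and $\pi_{\mathrm{odd}}(L)=0$. The plan for this is as follows.

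\emph{Step 1: $\pi_0(L)$ is a field.} Take a nonzero non-unit $a\in\pi_0L$. The compact algebra $L[a^{-1}]$ cannot be zero, since otherwise $a$ would be nilpotent. Suppose first that $a$ is not nilpotent. Then $L[a^{-1}]$ is nonzero, so it admits a retraction to $L$, forcing $a$ to be invertible. Suppose instead that $a$ is nilpotent. Then $L/\!\!/a$, the free $\bb{E}_\infty$-$L$-algebra killing $a$, is compact and nonzero because $a$ is nilpotent, hence not a unit. A retraction $L/\!\!/a\to L$ then forces $a=0$.

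\emph{Step 2: algebraic closure.} The same argument applied to $L[x]/(p(x))$, for $p$ monic of positive degree, gives roots of polynomials.

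\emph{Step 3: homotopy in nonzero degrees.} An odd class $y\in\pi_{2k+1}L$ satisfies $y^2=0$ rationally. Killing $y$ yields a compact algebra $L/\!\!/y$, which is nonzero because $y$ is nilpotent. The resulting retraction forces $y=0$. For even degrees $2k\neq 0$, we use the compact algebras $L[u^{\pm1}]$ with $|u|=2k$, adjoining an invertible class, and $L/\!\!/z$, killing a class $z$. Their retractions show that every nonzero even class is invertible. Applying this to the compact algebra $L[u^{\pm 1}]$ with $|u|=2$, which is nonzero since $L\neq 0$, shows that $L$ receives a unit in degree $2$ via the retraction, so $L$ is $2$-periodic.

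I expect the main obstacle to lie in Steps 1--3. Rigorously, one must show that each auxiliary free algebra is both compact and nonzero; rationally this follows from the explicit description of free commutative algebras as symmetric algebras. One must also handle the ``weakly spectral'' bookkeeping so that the arguments apply to $L$ itself. Alternatively, these facts may already be recorded in \cite[Appendix A]{2022arXiv220709929B}, which we would cite if available.

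Given this description, $\pi_*L\cong\kappa[u^{\pm1}]$ is a graded field, and $L$ is formal. Every $L$-module is then a sum of shifts of $L$, and every nonzero perfect module $M$ has coevaluation split, because $M$ is a nonzero finite sum of shifts of $L$. Thus $\mo{Perf}(L)$ satisfies both conditions of \Cref{def:ttfield}. This shows that $\mo{Perf}(L)$ is a tt-field, and also that the witnessing maps go into rational $2$-periodic fields, which completes the deduction described in the first paragraph.
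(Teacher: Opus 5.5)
Your proposal is correct and is essentially the paper's proof. The paper likewise deduces the statement from \Cref{cor:EnoughTTFields} and \Cref{cor:NaiveEqualsGenuineEnoughTT}, but it simply cites \cite[Theorem~A]{2022arXiv220709929B} for the description of Nullstellensatzian rational $\bb{E}_\infty$-rings as even $2$-periodic fields with algebraically closed $\pi_0$, which you re-derive in Steps 1--3.

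If you keep your own derivation, one claim needs an actual argument: that $L/\!\!/y\neq 0$ for odd $y$. Nilpotence of $y$ is not the reason; the fact follows by base-changing the fiber sequence $\Sigma^{|y|}\bb{Q}\to\Lambda(e)\to\bb{Q}$ of modules over the free algebra $\Lambda(e)$ along $e\mapsto y$, which yields a cofiber sequence $L\to L/\!\!/y\to\Sigma^{|y|+1}L/\!\!/y$.
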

\begin{proof}
This follows from \Cref{cor:EnoughTTFields}, using the description (\cite[Theorem~A]{2022arXiv220709929B}) of Nullstellensatzian rational $\bb{E}_{\infty}$-rings as even 2-periodic fields with $\pi_0$ algebraically closed.  The final claim follows by \Cref{cor:NaiveEqualsGenuineEnoughTT}.
\end{proof}

\subsection{The c-topology}  In the context of the general theory of constructible spectra, a certain canonical topology was introduced in \cite{2022arXiv220709929B}, which we now recall.
\begin{definition}[{\cite[Definition A.44]{2022arXiv220709929B}}]  A map $\cat{C}\to\cat{D}$ of rational rigid 2-rings will be said to be a \textit{c-cover} if it induces a surjection on constructible spectra, and the topology generated by c-covers is called the \textit{c-topology}.
\end{definition}
\begin{remark}
The \textit{c-topology} has another more classical name, appearing in \cite[pg. 115, (e)]{maclane2012sheaves} under the name of the \textit{dense topology} (or later as the \textit{double negation topology}).
\end{remark}
\begin{remark}
Combining \Cref{thm:consequalshom} and \Cref{thm:surjspech} we find that c-covers of rational rigid 2-rings are exactly nil-conservative functors.
\end{remark}
In our specific case, we are able to use \Cref{thm:consequalshom} to deduce some very nice properties about covers for the c-topology, at least locally.
\begin{proposition}\label{Prop:NicecCov}  Let $\cat C$ be a rational rigid 2-ring, $I$ a set, and $L_i$ an $I$-indexed collection of Nullstellensatzian algebras $L_i\in\mo{CAlg}\left(\mo{Ind}\left(\cat{C}\right)\right).$  Any c-cover $\prod_{I}L_i\to \prod_J L^{\prime}_j$ by a product of Nullstellensatzians refines to a c-cover \[\prod_{I}L_i\to \prod_{J}  L^{\prime}_j\to \prod_{I} L^{\prime\prime}_i,\] with the composite induced by maps of Nullstellensatzian algebras $L_i\to L^{\prime\prime}_i$ indexed by the original set $I$.
\end{proposition}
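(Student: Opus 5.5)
Write $A\coloneqq\prod_I L_i$ and $B\coloneqq\prod_J L'_j$. The idea is to refine the given c-cover $A\to B$ one index $i\in I$ at a time. For each $i$, base change along the projection $A\to L_i$ to get the $L_i$-algebra $B_i\coloneqq L_i\otimes_A B$. Then choose a single Nullstellensatzian algebra $L''_i$ receiving a map from $B_i$, and take the composite $A\to B\to\prod_I B_i\to\prod_I L''_i$. Two things must be checked: that each $B_i$ is nonzero, so that $L''_i$ exists, and that the final composite is still a c-cover.

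For the first point, since the unit of $\mo{Ind}(\cat C)$ is compact, the projection $A\to L_i$ is classified by an idempotent $e_i\in\pi_0(A)$, and $L_i\simeq A[e_i^{-1}]$ is a localization of $A$. Hence $B_i\simeq B[e_i^{-1}]$. The map $A\to L_i$ defines a point of $\Speccons(A)$. Because $A\to B$ is a c-cover, this point is hit by some Nullstellensatzian $S$ under $B$ with $S\otimes_A L_i\neq 0$. It follows that $B\otimes_A L_i\neq0$, i.e.\ $B_i\neq 0$. By \cite[Proposition A.31]{2022arXiv220709929B}, or simply because $\mo{CAlg}(\mo{Ind}(\cat C))$ is weakly spectral, there is then a map $B_i\to L''_i$ with $L''_i$ Nullstellensatzian. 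Since $L_i$ is itself Nullstellensatzian, \cite[Proposition A.31]{2022arXiv220709929B} gives that $\Speccons(L_i)$ is a point. So $L_i\to L''_i$ is a c-cover of $L_i$, and the composite $L_i\to L''_i$ factors through $B_i$.

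For the second point, I would verify surjectivity on constructible spectra via \Cref{thm:consequalshom}, which identifies $\Speccons$ with $\Spc^h$ of $\mo{Perf}_{\cat C}(-)$. Through \Cref{thm:surjspech}, c-covers are then exactly the nil-conservative maps. It therefore suffices to show that the family $\{A\to L''_i\}_{i\in I}$ is jointly surjective on $\Speccons(A)$, and that this implies $A\to\prod_I L''_i$ is surjective.

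The main obstacle is the case of infinite $I$. The points of $\Speccons(\prod_I L_i)$ are not only the projections to the $L_i$: there are also ultrafilter-type points. So I cannot argue purely index by index, and I would argue as follows instead. Let $x\in\Speccons(A)$ be represented by $A\to S$. Since $A\to B$ is a c-cover, lift $x$ to a Nullstellensatzian $T$ under $B$. I then want $T$ to be compatible with $\prod_I L''_i$, meaning $T\otimes_A\prod_I L''_i\neq 0$. Because $L''_i$ lies over $B_i$, I would compare $\prod_I L''_i$ with $B\simeq\prod_J L'_j$ through $B\to\prod_I B_i$. Concretely, I would show that $A\to\prod_I L''_i$ is nil-conservative by a detection argument. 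If a weak ring $W$ over $A$ becomes zero after base change to $\prod_I L''_i$, then its image in each $L''_i$ vanishes. Since $\mo{Perf}(L_i)$ has a single homological point by \Cref{lem:NSianttField}, and $L_i\to L''_i$ is nonzero, \Cref{Cor:NSianVeryFieldIsh} gives $W\otimes_A L_i=0$ for every $i$. The remaining step is to pass from vanishing on every factor of the product to vanishing over $A$; I would try to deduce this from the analogous statement for $B$, which holds because $A\to B$ is a c-cover.

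If that direct passage fails, my fallback is to use the c-topology formalism of \cite[Appendix A]{2022arXiv220709929B}: a map is a c-cover if and only if it is so after base change to each Nullstellensatzian point. I would then check this after base change along each $A\to S$.
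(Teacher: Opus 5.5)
Your construction of the $L''_i$ is the same as the paper's. Your argument that $L_i\otimes_A B\neq 0$, so that $L''_i$ exists, is fine. The gap is in the one substantive step, showing that $A\to\prod_I L''_i$ is a c-cover, and the route you propose for it cannot work.

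Your detection argument passes from $W\otimes_A\prod_I L''_i=0$ to $W\otimes_A L''_i=0$, and then to $W\otimes_A L_i=0$ for each $i$. At that point the needed information is already lost. For infinite $I$, the implication ``$W\otimes_A L_i=0$ for all $i$ $\Rightarrow$ $W=0$'' is false. For a non-principal ultrafilter $\mc{U}$ on $I$, the ultraproduct $\prod_{\mc{U}}L_i$ is a nonzero commutative algebra over $A$, hence a weak ring, and it is killed by every $L_i$. These are exactly the non-principal points you flagged.

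Neither of your proposed ways forward supplies this missing information:
\begin{itemize}
\item Appealing to nil-conservativity of $A\to B$ would require knowing $W\otimes_A B=0$. That is again information at the non-principal points of $\Speccons(B)$, which factorwise vanishing does not give.
\item Your fallback, checking $S\otimes_A\prod_I L''_i\neq 0$ for every Nullstellensatzian $S$ under $A$, is only a restatement of surjectivity on constructible spectra.
\end{itemize}
In fact $A\to B$ plays no role in this step. The map $\prod_I L_i\to\prod_I L''_i$ is a c-cover for any maps $L_i\to L''_i$ to nonzero algebras.

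The missing idea is to use \Cref{Cor:NSianVeryFieldIsh} at the level of compact objects rather than big weak rings. Each base change $\mo{Perf}_{\cat C}(L_i)\to\mo{Perf}_{\cat C}(L''_i)$ is conservative on morphisms between compact objects. Unlike detection of big weak rings, this property survives arbitrary products, because a morphism in $\prod_I\mo{Perf}_{\cat C}(L_i)$ vanishes if and only if it vanishes componentwise. Composing with the fully faithful embedding $\mo{Perf}_{\cat C}(\prod_I L_i)\hookrightarrow\prod_I\mo{Perf}_{\cat C}(L_i)$ shows that base change along $A\to\prod_I L''_i$ is conservative on morphisms between compacts.

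Nil-conservativity then follows, and hence the c-cover property by \Cref{thm:consequalshom} and \Cref{thm:surjspech}. Write a weak ring $W$ over $A$ as a filtered colimit of compact pointed objects $\bb{1}\to W_\alpha$. If $W\otimes_A\prod_I L''_i=0$, compactness of the unit forces some $\bb{1}\to W_\alpha$ to vanish after base change. By conservativity it already vanishes in $\mo{Perf}_{\cat C}(A)$, so the unit of $W$ is zero and $W=0$. This compact-level argument handles the ultrafilter points automatically, and it is how the paper concludes.
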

\begin{proof}
First, note that since $\prod_I L_i\to \prod_J L^{\prime}_j$ is a c-cover, it is nil-conservative, so that \[L_i\otimes_{\prod_{I} L_i}\prod_J L^{\prime}_j\neq 0.\]  Define $L_i^{\prime\prime}$ as some Nullstellensatzian commutative algebra in $\cat C$ under this tensor product $L_i\otimes_{\prod_{I} L_i}\prod_J L^{\prime}_j$.

We find that these maps $L_i\to L_i^{\prime\prime}$ assemble to give a factorization 
\[\prod_I L_i\to \prod_J L_j^{\prime} \to \prod_{I}L_i^{\prime\prime},\]
and we are left to show that the composite is a c-cover.

For this, recall from \Cref{Cor:NSianVeryFieldIsh} that any functor out of a Nullstellensatzian rational rigid 2-ring is conservative on morphisms between compacts, and in particular this is true for basechange along $L_i\to L_i^{\prime\prime}$.  As this property is stable under taking products of rigid 2-rings (which the categories of perfect modules over the products of algebras embed fully faithfully into), basechange along 
\[\prod_I L_i\to \prod_{I} L_i^{\prime\prime}\]
is conservative on morphisms between compact objects.  Finally, using that a 2-ring map which is conservative on morphisms between compacts is in particular nil-conservative (e.g. by writing any weak ring as a filtered colimit of compact $\bb{E}_0$-algebras and using that $\bb{1}\to 0$ is compact), we find that $\prod_I L_i \to \prod_{I} L_i^{\prime\prime}$ is a c-cover, as desired.
\end{proof}
Before deducing more consequences of \Cref{thm:consequalshom}, we make a small digression to compute the Balmer spectrum of a product of Nullstellensatzian objects.
\begin{lemma}\label{lem:NSianProductSpc}  Let $\cat{C}$ be a rational rigid 2-ring, $I$ a set, and $L_i$ an $I$-indexed collection of Nullstellensatzian algebras $L_i\in\mo{CAlg}\left(\mo{Ind}\left(\cat{C}\right)\right).$  Then the Balmer spectrum $\Spc(\mo{Perf}_{\cat{C}}(\prod_{I}L_i))$ is isomorphic to the Stone--\v{C}ech compactification $\beta I$ of the set $I$.
\end{lemma}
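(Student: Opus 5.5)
Write $L\coloneqq \prod_I L_i$, computed in $\mo{CAlg}(\mo{Ind}(\cat C))$, and $\cat K\coloneqq\mo{Perf}_{\cat C}(L)$. The plan is to produce a continuous map $\beta I\to \Spc(\cat K)$ from idempotents and show it is a homeomorphism. The idempotents of $\pi_0\End_{\cat K}(\bb 1)=\prod_I\pi_0 L_i$ (the product of units, as an algebra over $\pi_0\End(\bb{1}_{\cat C})$) contain the Boolean algebra $\mc{P}(I)$ of characteristic functions $e_J$, $J\subseteq I$. Each $e_J$ gives a splitting $\cat K\simeq \mo{Perf}_{\cat C}(\prod_J L_i)\times\mo{Perf}_{\cat C}(\prod_{I\setminus J}L_i)$. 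Hence $\Spc(\cat K)=U_J\sqcup U_{I\setminus J}$ with both pieces clopen. The assignment $\mc P\mapsto \{J : \mc P\in U_J\}$ then defines an ultrafilter on $I$, and so a continuous map $\pi\colon\Spc(\cat K)\to\beta I$; continuity holds because the preimage of each basic clopen set $\{\mc U : J\in\mc U\}$ is $U_J$.

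\emph{Surjectivity} of $\pi$: given an ultrafilter $\mc U$, form the ultraproduct algebra $L_{\mc U}=\colim_{J\in\mc U}\prod_J L_i$, a filtered colimit of nonzero algebras. I would check that it is nonzero using compactness of the unit, since $1=0$ would already hold in some $\prod_J L_i$, forcing $J=\emptyset$. A nonzero rigid 2-ring has nonempty Balmer spectrum, and any prime of $\mo{Perf}_{\cat C}(L_{\mc U})$ pulls back to a prime lying in every $U_J$ with $J\in\mc U$, which gives surjectivity.

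\emph{Injectivity}, the main step: I would show that each fiber $\pi^{-1}(\mc U)$ is a single point. The fiber is $\Spc$ of the localization $\colim_{J\in\mc U}\mo{Perf}_{\cat C}(\prod_J L_i)\simeq\mo{Perf}_{\cat C}(L_{\mc U})$, using that $\Spc$ converts such filtered colimits of clopen restrictions into intersections. It therefore suffices, by \Cref{lem:equivtensorfaithful}, to show that every nonzero object $X$ of $\mo{Perf}_{\cat C}(L_{\mc U})$ is $\otimes$-faithful, i.e.\ that $\mo{coev}_X$ splits. Since $X$ is compact, it descends to some $X_J\in\mo{Perf}_{\cat C}(\prod_J L_i)$, and its components $X_i\in\mo{Perf}_{\cat C}(L_i)$ are nonzero for $i$ in a set of $\mc U$ after shrinking $J$. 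By \Cref{cor:NSianFieldIsh}, each $\mo{coev}_{X_i}$ splits.

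The obstacle is \emph{assembling these splittings} over the product. Mapping spectra in $\mo{Perf}_{\cat C}(\prod_J L_i)$ between compact objects are products of the componentwise mapping spectra, because compact objects commute with the product of modules (rigidity lets us dualize down to $\bb 1$ and use $\Hom(\bb{1},\prod -)=\prod\Hom(\bb 1,-)$). Hence the family of retractions $r_i\colon X_i\otimes X_i^\vee\to L_i$ defines a single map $r$ with $r\circ\mo{coev}=\id$, which holds componentwise and therefore globally. The same componentwise argument handles the reduction to the indices where $X_i\neq0$: the components with $X_i=0$ are killed by an idempotent that becomes $0$ in $L_{\mc U}$. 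This gives a bijective continuous map from a compact space to the Hausdorff space $\beta I$. To get a homeomorphism it then suffices to note that the sets $U_J$ form a basis of $\Spc(\cat K)$: $\supp(x)$ is closed, and by the splitting of each $\mo{coev}_{X_i}$, the support $\supp(x)$ equals $U_{J_x}$ with $J_x=\{i: x_i\neq 0\}$. So the map is closed as well, and hence a homeomorphism.
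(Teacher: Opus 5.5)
Your proof is correct, but it takes a different route from the paper's. The paper does not analyze $\mo{Perf}_{\cat C}(\prod_I L_i)$ directly. It uses the fully faithful, hence nil-conservative, embedding $\mo{Perf}_{\cat C}(\prod_I L_i)\hookrightarrow\prod_I\mo{Perf}_{\cat C}(L_i)$ to get a surjection on Balmer spectra. It then compares both sides via $\rho$ with $\Spech$ of their common graded endomorphism ring of the unit. That ring is a product of even $2$-periodic algebraically closed fields by \cite[Theorem 6.3]{2022arXiv220709929B}, so its $\Spech$ is $\beta I$. Finally it computes $\Spc(\prod_I\mo{Perf}_{\cat C}(L_i))$ by classifying all thick ideals by filters on $I$; the key point is that $(x_i)_i$ generates the same ideal as the idempotent $(\delta_{x_i\neq 0})_i$.

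You instead build $\pi\colon\Spc(\cat K)\to\beta I$ by Stone duality from the idempotents $e_J$. You get surjectivity from the nonvanishing of the ultraproduct algebras $L_{\mc U}$. You get injectivity by identifying each fibre with $\Spc(\mo{Perf}_{\cat C}(L_{\mc U}))$ and assembling the componentwise splittings of $\mo{coev}$ from \Cref{cor:NSianFieldIsh}.

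What your route buys: it bypasses \cite{BCHS2024}, the comparison map $\rho$, and the structure theorem for Nullstellensatzian rational $\bb{E}_\infty$-rings. It also directly identifies the local category at $\mc U$ as $\mo{Perf}_{\cat C}(L_{\mc U})$, which is what \Cref{thm:NoSandENCNSian} later uses. The price is the standard but nontrivial fact that $\Spc$ sends filtered colimits of 2-rings to limits of spaces. The paper's route, in turn, yields the full classification of thick ideals and the agreement with $\Spech(\pi_*\End(\bb{1}))$ that feeds into \Cref{cor:BalmerSpccSheafification}.

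Your final step is redundant: a continuous bijection from the quasi-compact space $\Spc(\cat K)$ to the Hausdorff space $\beta I$ is automatically a homeomorphism. Your observation $\supp(x)=U_{J_x}$ is nonetheless the paper's idempotent argument transplanted to $\mo{Perf}_{\cat C}(\prod_I L_i)$. It would give the filter classification there without passing to the product of categories.
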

\begin{proof}
Note that there is a fully faithful inclusion \[\mo{Perf}_{\cat{C}}(\prod_{I}L_i)\hookrightarrow \prod_{I}\mo{Perf}_{\cat{C}}(L_i),\]
which is therefore nil-conservative and hence induces a surjective map on Balmer spectra by \cite[Theorem 1.4]{BCHS2024}.  By naturality of the comparison map from \Cref{prop:comparisonUnit}, and the fact that the endomorphism ring of the unit in the two categories agree, it suffices to show that \[\beta I\cong \Spc(\prod_{I}\mo{Perf}_{\cat{C}}(L_i))\cong \Spech(\pi_*(\mo{End}_{\prod_{I}\mo{Perf}_{\cat{C}}(L_i)}(\bb{1}))).\]
Since each $L_i$ is a Nullstellensatzian commutative algebra in $\cat{C}$, its endomorphism ring is also Nullstellensatzian, and in particular by \cite[Theorem 6.3]{2022arXiv220709929B} is an even 2-periodic algebraically closed field.  Since taking endomorphism rings of the unit commutes with products, we have that \[\mo{End}_{\prod_{I}\mo{Perf}_{\cat{C}}(L_i)}(\bb{1})\simeq \prod_{I}\mo{End}_{\mo{Perf}_{\cat{C}}(L_i)}(\bb{1})\]
is an $I$-indexed product of even 2-periodic algebraically closed fields, and in particular has graded homogeneous spectrum exactly the Stone--\v{C}ech compactification $\beta I$ of the set $I$.

The Balmer spectrum of $\prod_{I}\mo{Perf}_{\cat{C}}(L_i)$ clearly surjects onto $\beta I$, since for any ultrafilter $\mc{U}$ on $I$, the ultraproduct $\prod_{\mc{U}}\mo{Perf}_{\cat{C}}(L_i)$ is nonzero, living over the point $\mc{U}\in\beta I$.  Note that this localization also has Balmer spectrum consisting of a single point by \Cref{lem:equivtensorfaithful} and \Cref{cor:NSianFieldIsh} since every nonzero object is $\otimes$-faithful, and the localization corresponds to the prime \[\mc{P}_\mc{U}=\{(x_i)_{i\in I}\in \prod_{I}\mo{Perf}_{\cat{C}}(L_i)\colon\{i\colon x_i\simeq 0\}\in\mc{U}\}.\]  

We claim that these are the only prime ideals.  Indeed, note first that for any $(x_i)_{i\in I}\in \prod_{I}\mo{Perf}_{\cat{C}}(L_i)$, the ideal generated by $x_i$ agrees with the ideal generated by the object $(\delta_{x_i\neq 0})_{i\in I}$ with a $\bb{1}$ in the position of every nonzero $x_i$ and zero elsewhere.  To see this, note that this second object (which is in fact idempotent) tensors with $(x_i)_{i\in I}$ to itself, and also splits off of $(x_i)_{i\in I}\otimes (x_i^{\vee})_{i\in I}$, again by \Cref{cor:NSianFieldIsh}.  Therefore any ideal $\mc{I}\subseteq \prod_{I}\mo{Perf}_{\cat{C}}(L_i)$ is determined by the set
\[\mc{F}(\mc{I})\coloneqq\{S\subseteq I\colon (\delta_{i\in S})_{i\in I}\notin \mc{I}\}.\]
In general, one can check that for any proper ideal $\mc{I}$, the set $\mc{F}(\mc{I})$ is a filter on the set $I$, and this gives a bijective correspondence between filters on $I$ and proper ideals in $\prod_{I}\mo{Perf}_{\cat{C}}(L_i).$  

Under this correspondence, an ideal $\mc{I}$ is prime if and only if, given any two sets $S_1,S_2\subseteq I$, $S_1\cup S_2\in \mc{F}(\mc{I})$ implies that $S_1\in \mc{F}(\mc{I})$ or $S_2\in\mc{F}(\mc{I})$, which is exactly the condition that $\mc{F}(\mc{I})$ is an ultrafilter, in which case the prime ideal $\mc{I}$ is nothing but $\mc{P}_{\mc{F}(\mc{I})}$, which must therefore be the only primes, as claimed.
\end{proof}
\begin{corollary}\label{cor:BalmerSpcSheafyProd}  Let $\cat{C}$ be a rational rigid 2-ring.  When restricted to the full sub-category $\mo{NS}^{\Pi}\left(\mo{CAlg}\left(\mo{Ind}\left(\cat{C}\right)\right)\right)$ of products of Nullstellensatzian commutative algebras in $\cat{C}$, the Balmer spectrum functor $R\mapsto \Spc(\mo{Perf}_{\cat{C}}(R))$, considered as a presheaf valued in $\mo{Set}^{op}$, is a sheaf for the c-topology.
\end{corollary}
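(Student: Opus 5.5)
We have to show: for a c-cover $R=\prod_I L_i\to R'=\prod_J L'_j$ between products of Nullstellensatzian algebras, the diagram
\[
\Spc(\mo{Perf}_{\cat C}(R'\otimes_R R'))\rightrightarrows \Spc(\mo{Perf}_{\cat C}(R'))\to \Spc(\mo{Perf}_{\cat C}(R))
\]
is a coequalizer of sets. Here $R'\otimes_R R'$ need not be a product of Nullstellensatzians, so it must be handled via refinement. Sheaf conditions are inherited by refinements of covers, so by \Cref{Prop:NicecCov} we may refine any c-cover to one of the form $\prod_I L_i\to\prod_I L''_i$, induced by maps $L_i\to L''_i$ indexed by the same set $I$. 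The plan is therefore: first treat these ``diagonal'' covers, then deduce the general case.

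\textbf{Step 1 (diagonal covers).} For a diagonal cover, \Cref{lem:NSianProductSpc} identifies both $\Spc(\mo{Perf}_{\cat C}(\prod_I L_i))$ and $\Spc(\mo{Perf}_{\cat C}(\prod_I L''_i))$ with $\beta I$. By naturality of the comparison with $\Spech$ of the unit (\Cref{prop:comparisonUnit}), the induced map is the identity of $\beta I$: on unit rings it is a product of maps of graded fields indexed by $I$, so it induces the identity on ultrafilters. Since this map is a bijection, the sheaf condition reduces to the two maps out of the double overlap agreeing as maps to $\beta I$. They do agree, because both composites to $\Spc(\mo{Perf}_{\cat C}(\prod_I L_i))$ coincide, and the maps to $\beta I$ factor through that space compatibly.

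\textbf{Step 2 (general covers).} The remaining task is to show that $\Spc(\mo{Perf}_{\cat C}(R'))\to\Spc(\mo{Perf}_{\cat C}(R))$ is surjective and that its fibers are exactly the equivalence classes generated by the double overlap. The strategy is to compare with the composite cover $R\to R'\to R''=\prod_I L''_i$.
\begin{itemize}
\item \emph{Surjectivity:} it follows from the fact that the composite $\beta I\to\beta I$ is the identity.
\item \emph{Fibers:} take $\mc{U},\mc{V}\in\beta J$ with the same image in $\beta I$. Push both to $R'\otimes_R R''$ and further to $R''\otimes_R R''$. Using Step 1 for $R''$ and the fact that $R''\otimes_R R''$ receives maps from $R'\otimes_R R'$, we produce a prime of the double overlap of $R'$ lying over $(\mc{U},\mc{V})$.
\end{itemize}

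The main obstacle is this last step: producing a point of $\Spc(\mo{Perf}_{\cat C}(R'\otimes_R R'))$ over a given pair of ultrafilters with common image. My first attempt would be to localize at the ultraproducts. The ultraproducts $\prod_{\mc U}L'_j$ and $\prod_{\mc V}L'_j$ are algebras over $\prod_{\mc W}L_i$ (with $\mc W$ the common image). The latter category has every nonzero object $\otimes$-faithful, by \Cref{cor:NSianFieldIsh} and the proof of \Cref{lem:NSianProductSpc}, and I would use this to argue that the tensor product
\[
\textstyle\prod_{\mc U}L'_j\otimes_{\prod_{\mc W}L_i}\prod_{\mc V}L'_j
\]
is nonzero. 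By \Cref{Cor:NSianVeryFieldIsh} applied to this field-like base, the basechange functors are faithful, so the units do not die and the tensor product is indeed nonzero. Any prime of this nonzero tensor product then maps to the required pair, because the Balmer spectrum of a nonzero rigid 2-ring is nonempty.
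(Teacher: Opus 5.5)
\textbf{There is a genuine gap in your fiber step.} The earlier parts are fine: refining via \Cref{Prop:NicecCov}, surjectivity, and Step 1. Step 1 is even slightly cleaner than the paper's version, because injectivity of $\Spc(\mo{Perf}_{\cat C}(\prod_I L''_i))\to\Spc(\mo{Perf}_{\cat C}(\prod_I L_i))$ alone forces the two maps out of the double overlap to agree. One caveat: sheaf conditions do not in general pass from a refinement to the coarser cover. You do, rightly, go on to check fibers directly.

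The failing step is where you apply \Cref{Cor:NSianVeryFieldIsh} to the base $\prod_{\mathcal W}L_i$. That corollary needs a Nullstellensatzian base: its proof uses that a nonzero compact commutative algebra over the base admits a section. An ultraproduct of Nullstellensatzian algebras is not known to be Nullstellensatzian. What you do have, from \Cref{cor:NSianFieldIsh}, is that every nonzero object of the ultraproduct is $\otimes$-faithful, but that does not make basechange along algebras faithful on maps. Your justification would prove $A\otimes_{\prod_{\mathcal W}L_i}B\neq 0$ for all nonzero algebras $A,B$. That is exactly the statement that $\prod_{\mathcal W}L_i$ has a one-point constructible spectrum, i.e.\ condition (b) of \Cref{thm:NoSandENCNSian}. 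The paper obtains (b) only under the uniform exact-nilpotence hypothesis (a). The obstruction is concrete: a fiber sequence $Y\xrightarrow{g}\bb{1}\xrightarrow{f}X$ with neither map $\otimes$-nilpotent would give nonzero algebras $\FrEinfz(f)$ and $\FrEinfz(g^{\vee})$ whose tensor product vanishes. Your alternative route through $R''\otimes_R R''$ does not help either. Points of that double overlap only lie over pairs of points of $\Spc(\mo{Perf}_{\cat C}(R'))$ in the image of $\beta I$, not over an arbitrary pair $(\mathcal U,\mathcal V)$.

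\textbf{How to close the gap (this is the paper's argument).} You never need to link two arbitrary points of a fiber directly. It suffices to link each of them to one distinguished point and then use transitivity of the generated equivalence relation. The refinement supplies that point: for $\mathcal W\in\beta I$, let $\mathcal W'\in\beta J$ be the image of $\mathcal W\in\Spc(\mo{Perf}_{\cat C}(\prod_I L''_i))$.
\begin{enumerate}
\item The map $\prod_{\mathcal W}L_i\to\prod_{\mathcal W}L''_i$ is faithful on maps between compacts. Here \Cref{Cor:NSianVeryFieldIsh} legitimately applies index by index to the genuinely Nullstellensatzian $L_i$, and the property passes to the ultraproduct. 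Hence this map is nil-conservative.
\item It factors through $\prod_{\mathcal W}L_i\to\prod_{\mathcal W'}L'_j$, so that map is nil-conservative as well.
\item Therefore $\prod_{\mathcal V}L'_j\otimes_{\prod_{\mathcal W}L_i}\prod_{\mathcal W'}L'_j\neq 0$ for every $\mathcal V\in\beta J$ lying over $\mathcal W$.
\item Any prime of this nonzero 2-ring gives a point of the double overlap lying over $(\mathcal V,\mathcal W')$, which links $\mathcal V$ to $\mathcal W'$.
\end{enumerate}
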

\begin{proof}
To begin, note that if we have a cover of the form $\prod_{I}L_i\to \prod_{I}L_i^{\prime}$ for $L_i$, $L_i^{\prime}$ Nullstellensatzian commutative algebras and maps $L_i \to L_i^{\prime}$, then given distinct ultrafilters $\mc{U}_1$ and $\mc{U}_2$ on $I$, we have
\[\prod_{\mc{U}_1}L_i^{\prime}\otimes_{\prod_{I}L_i}\prod_{\mc{U}_2}L_i^{\prime}\simeq 0.\]
This implies that the two induced maps on Balmer spectra under 
\begin{center}
\begin{tikzcd}
\prod_{I}L_i^{\prime} \ar[r,shift left=.75ex]
  \ar[r,shift right=.75ex,swap]
&
\prod_{I}L_i^{\prime}\otimes_{\prod_{I}L_i}\prod_{I}L_i^{\prime}
\end{tikzcd}
\end{center}
actually agree, which forces $\Spc(\mo{Perf}_{\cat{C}}(-))$ to satisfy the sheaf condition on these covers.  

We must still check, given a cover of the form 
\[\prod_{I}L_i\to \prod_{J}L_j^{\prime},\]
that $\Spc(\mo{Perf}_{\cat{C}}(\prod_{I}L_i))$ is the quotient of $\Spc(\mo{Perf}_{\cat{C}}(\prod_{J}L_j^{\prime}))$ under the two maps from $\Spc(\mo{Perf}_{\cat{C}}(\prod_{J}L_j^{\prime}\otimes_{\prod_{I}L_i}\prod_{J}L_j^{\prime}))$.
Using \Cref{Prop:NicecCov}, fix some refinement of this cover of the form $\prod_{I}L_i\to \prod_{J}L_j^{\prime}\to \prod_{I}L_i^{\prime\prime}$.  

Fix some ultrafilter $\mc{U}$ on $I$, and denote by $\mc{U}^{\prime}$ the ultrafilter on $J$ such that $\prod_{J}L_j^{\prime}\to \prod_{\mc{U}}L_i^{\prime\prime}$ factors over $\prod_{\mc{U}^{\prime}}L_j^{\prime}$, that is, the image of $\mc{U}$ in $\Spc(\mo{Perf}_{\cat{C}}(\prod_{J}L_j^{\prime}))$.  Take any other ultrafilter $\mc{V}$ on $J$ which maps to $\mc{U}$ in $\Spc(\mo{Perf}_{\cat{C}}(\prod_{I}L_i))$, we claim that there is a point mapping to both $\mc{V}$ and $\mc{U}^{\prime}$ under the two maps we wish to co-equalize.  Using once again (the proof of) \Cref{Prop:NicecCov}, we find that the map 
\[\prod_{\mc{U}}L_i\to \prod_{\mc{U}}L_i^{\prime\prime}\]
is nil-conservative, and by construction, this factors over the map
\begin{equation}\label{eq:nilcons}
\prod_{\mc{U}}L_i\to \prod_{\mc{U}^{\prime}}L_j^{\prime},
\end{equation}
which must therefore be nil-conservative as well.  Finally, we note that to say $\mc{V}$ maps to $\mc{U}$, is to say that the map
\[\prod_{I}L_i\to \prod_{\mc{V}}L_j^{\prime}\]
factors over $\prod_{\mc{U}}L_i$, and by nil-conservativity of \ref{eq:nilcons}, we have that
\[\prod_{\mc{V}}L_j^{\prime}\otimes_{\prod_{\mc{U}}L_i}\prod_{\mc{U}^{\prime}}L_j^{\prime}\neq 0,\]
and this gives a point of $\Spc(\mo{Perf}_{\cat{C}}(\prod_{J}L_j^{\prime}\otimes_{\prod_{I}L_i}\prod_{J}L_j^{\prime}))$ which co-equalizes $\mc{U}^{\prime}$ and $\mc{V}$, as desired.
\end{proof}
This allows us to construct a sheafification for the Balmer spectrum functor in the c-topology, which takes values in a category which is not presentable, but that we can nevertheless work with. For this, we first recall some basic facts about sheaves in not necessarily presentable $1$-categories. 
\begin{proposition}\label{prop:sheavesbasis}
Let $C$ be a Grothendieck site, and $i:B\subset C$ a subbasis for the topology: that is, a full subcategory such that each $c\in C$ admits a cover by an object in $B$.

In this case, restriction along the inclusion $i$ induces an equivalence on categories of sheaves with values in any complete $1$-category, with inverse given by right Kan extension. 
\end{proposition}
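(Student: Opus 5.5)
Here $C$ is a site, $B\subset C$ a full subcategory such that every object of $C$ admits a cover by an object of $B$, and $\mathcal V$ is a complete $1$-category. I will show that $i^*\colon \mathrm{Sh}(C,\mathcal V)\to \mathrm{Sh}(B,\mathcal V)$ and right Kan extension $i_*$ are mutually inverse. The topology on $B$ is the induced one: a sieve on $b\in B$ covers if it generates a covering sieve in $C$. The argument follows the classical comparison lemma (SGA4 III.4.1), but only uses limits, which exist since $\mathcal V$ is complete. No presentability or sheafification is needed.

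\textbf{Step 1: restriction is well defined.} First I check that $i^*$ sends sheaves to sheaves. For a covering sieve $S$ on $b\in B$, each $s\colon c\to b$ in $S$ is covered by some $b'\to c$ with $b'\in B$, so the $B$-part $S_B$ of $S$ generates a covering sieve in $C$. A standard cofinality argument then identifies $\lim_{S}F$ with $\lim_{S_B}F$ for a sheaf $F$ on $C$. Concretely, the limit over $S$ is computed as an iterated limit over $B$-covers of each $c$, using the sheaf condition at each $c$.

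\textbf{Step 2: formula for $i_*$.} Next I show that $i_*G(c)=\lim_{(b\to c)\in (B_{/c})^{op}} G(b)$ is a sheaf on $C$ and that $i^*i_*G\simeq G$. The counit at $b\in B$ is an equivalence because $B$ is full, so $\mathrm{id}_b$ is terminal in $B_{/b}$. For the sheaf condition at a covering sieve $R$ on $c$, I rewrite $\lim_{R}i_*G$ as a limit of $G$ over the category of pairs $(b\to c'\to c)$ with $c'\to c$ in $R$. This is a limit over the sieve $R_B=\{b\to c \text{ factoring through } R\}$ viewed inside $B_{/c}$. It remains to show that restricting from all of $B_{/c}$ to $R_B$ does not change the limit. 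For any $b\to c$, the pullback sieve of $R$ on $b$ covers, and its $B$-part covers $b$ in the induced topology. The sheaf property of $G$ at $b$ then lets me replace $G(b)$ by the limit over that $B$-cover. Assembling these identifications gives $\lim_{B_{/c}}G\simeq\lim_{R_B}G$. This is the main obstacle, since the required identification is a right Kan extension along a non-full subdiagram. To handle it, I would write the limit as an end and use the sheaf condition at each $b$ to see that the inclusion of $R_B$ is ``limit-cofinal relative to $G$''.

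\textbf{Step 3: the unit.} Finally I show that the unit $F\to i_*i^*F$ is an equivalence for every sheaf $F$ on $C$. Choose a cover $b\to c$ with $b\in B$, or more generally take the full sieve $B_{/c}$, which generates a covering sieve of $c$. By the sheaf condition, $F(c)=\lim_{B_{/c}\text{-generated sieve}}F$, and by the same cofinality as in Step 1 this equals $\lim_{B_{/c}}F=i_*i^*F(c)$. Combining Steps 1–3, $i^*$ and $i_*$ are inverse equivalences.
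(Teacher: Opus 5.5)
Your Step 2 is sound, and it is not where the difficulty lies. There you only use the sheaf condition of $G$ at objects $b\in B$ for $B$-sieves. So at $\beta\colon b\to c$, the right Kan extension of $G|_{R_B}$ to $B_{/c}$ is $\lim_{(\beta^*R)_B}G\cong G(b)$. Your rewriting of $\lim_R i_*G$ over pairs $(b\to c'\to c)$ is an honest cofinality, since the projection onto $R_B$ has an adjoint section.

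The gap is in Steps 1 and 3. It sits in the identification $\lim_{\bar B_c}F\simeq\lim_{B_{/c}}F$ for a sheaf $F$ on $C$, where $\bar B_c$ is the covering sieve generated by $B_{/c}$; Step 1 needs the same thing for $S$ versus $S_B$. This is not a cofinality statement. The inclusion $B_{/c}\subset\bar B_c$ is initial exactly when, for every $c'\to c$ in $\bar B_c$, the category of its factorizations through objects of $B$ is connected, and nothing forces that. For instance, take the basis of the discrete space $\{1,2,3,4\}$ consisting of the singletons, $\{1,2,3\}$ and $\{1,2,4\}$. The inclusion $\{1,2\}\subset\{1,2,3,4\}$ then factors through exactly two basic opens, and they are incomparable.

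So the sheaf condition at the intermediate objects $c'$ has to be used. Your iterated limit does this by replacing $F(c')$ with its limit over $B$-objects over $c'$, i.e.\ by invoking the Step 3 isomorphism $F(c')\simeq\lim_{B_{/c'}}F$. But Step 3 is itself justified by ``the same cofinality as in Step 1''. This is circular. The pointwise right Kan extension of $F|_{B_{/c}}$ to $\bar B_c$ at $c'\to c$ is exactly $\lim_{B_{/c'}}F$, so the claim at $c$ has only been reduced to the same claim at every $c'$. The sheaf condition of $F$ at $c'$ concerns a $C$-sieve, which again contains objects outside $B$.

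What is missing is the separation step that breaks this circle in SGA4.

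First show, for every $c$, that $F(c)\to\lim_{B_{/c}}F$ is a monomorphism. It is the isomorphism $F(c)\to\lim_{\bar B_c}F$ followed by the restriction $\lim_{\bar B_c}F\to\lim_{B_{/c}}F$. That restriction is mono because every element of $\bar B_c$ factors through some $b\to c$.

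Then take a compatible family $(x_\beta)_{\beta\in B_{/c}}$ and put $x_\gamma:=g^*x_\beta$ for $\gamma=\beta g\colon c'\to b\to c$. Two such choices for the same $\gamma$ agree after restriction along every $b''\to c'$ with $b''\in B$. Hence they agree by the monomorphism property at $c'$. So the family extends uniquely to $\bar B_c$ and glues to an element of $F(c)$.

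This is cleanest after applying $\hom_{\mathcal V}(v,-)$ for all $v\in\mathcal V$, which preserves limits, jointly detects isomorphisms and detects sheaves, so that you argue with elements of sets. That is precisely the Yoneda reduction to $\mathrm{Set}$ which the paper performs before citing SGA4 III~4.1 and III~2.3. Once $F\simeq i_*i^*F$ is established this way, your iterated-limit argument does give Step 1, and the rest of your proposal goes through.
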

\begin{proof}
For the target category being $\mo{Set}$, this is \cite[Exposé III, Théorème 4.1]{SGA4}. For a general complete target category, use the Yoneda lemma to reduce to $\mo{Set}$. 

The claim about the inverse follows \cite[Exposé III, Proposition 2.3, 3)]{SGA4}, with the same reduction to the case of sets. 
\end{proof}
\begin{remark}
For $\infty$-categories of coefficients, one would have to restrict to hypersheaves. 
\end{remark}
\begin{corollary}\label{cor:sheafifification}
Let $C$ be a Grothendieck site and $B$ a subbasis on $C$. Let $D$ be a complete $1$-category and $F:C\op\to D$ be a presheaf. 

If the restriction of $F$ to $B$ is a sheaf, then the sheaf $\tilde F$ on $C$ corresponding to $F_{\mid B}$ under the equivalence from the previous Proposition is the sheafification of $F$; in particular $F$ admits a sheafification. 
\end{corollary}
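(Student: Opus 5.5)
We must show: if $F_{\mid B}$ is a sheaf, then $\tilde F$ (the sheaf on $C$ extending $F_{\mid B}$ via \Cref{prop:sheavesbasis}, i.e.\ $\tilde F = i_* (F_{\mid B})$, the right Kan extension along $i$) is the sheafification of $F$. That is, we need a map $\eta\colon F\to \tilde F$ such that every map $F\to G$ into a sheaf $G$ factors uniquely through $\eta$. The plan is to reduce to the adjunction $i^*\dashv i_*$ on presheaves, combined with the equivalence of sheaf categories.

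First, construct $\eta$. Restriction $i^*$ from presheaves on $C$ to presheaves on $B$ has right adjoint the right Kan extension $i_*$; this exists because $D$ is complete and $B$ is small (or essentially small, as in the Grothendieck site setting). The unit of the adjunction gives a natural map $\eta\colon F\to i_*i^*F = i_*(F_{\mid B}) = \tilde F$. Since $F_{\mid B}$ is a sheaf, \Cref{prop:sheavesbasis} shows that $\tilde F$ is a sheaf on $C$.

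Next, verify the universal property. For a sheaf $G$ on $C$, the adjunction gives
\[
\Hom(F,i_*i^*G)\cong\Hom(i^*F,i^*G).
\]
By \Cref{prop:sheavesbasis}, $G\to i_*i^*G$ is an isomorphism, since $i_*$ and $i^*$ are inverse equivalences on sheaves. Also, $i^*$ is fully faithful on sheaves, so
\[
\Hom(i^*F,i^*G)=\Hom_{\mathrm{Sh}(B)}(i^*F,i^*G)\cong\Hom_{\mathrm{Sh}(C)}(i_*i^*F,G)=\Hom(\tilde F,G).
\]
Composing these identifications gives a bijection $\Hom(F,G)\cong\Hom(\tilde F,G)$. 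It remains to check that this bijection is exactly precomposition with $\eta$. This follows from the triangle identities: a map $\phi\colon F\to G$ corresponds to $i_*i^*\phi$ composed with the inverse of $G\cong i_*i^*G$, and naturality of $\eta$ gives $i_*i^*\phi\circ\eta_F=\eta_G\circ\phi$.

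The main subtle point is bookkeeping rather than substance. We need to confirm that the right Kan extension exists in a complete $1$-category $D$, and that the inverse equivalence in \Cref{prop:sheavesbasis} is indeed the restriction of the presheaf-level right adjoint $i_*$. Both are given by that proposition, which explicitly identifies the inverse as right Kan extension.
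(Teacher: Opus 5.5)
Your proposal is correct and is essentially the paper's proof: both use that a sheaf $G$ is the right Kan extension of $G_{\mid B}$ to get $\hom(F,G)\cong\hom(F_{\mid B},G_{\mid B})$, and then apply the equivalence of \Cref{prop:sheavesbasis} to identify this with $\hom(\tilde F,G)$. Your extra check, via the triangle identities, that the bijection is precomposition with the unit $F\to\tilde F$ is a detail the paper leaves implicit; also note that you only need $i_*$ on sheaves, which the proposition already supplies, so no smallness assumption on $B$ is required.
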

\begin{proof}
Let $G$ be a sheaf. By the right Kan extension property of \Cref{prop:sheavesbasis} applied to $G$, restriction to $B$ induces an isomorphism 
\[
\hom(F,G) \cong \hom(F_{\mid B}, G_{\mid B})
\]
Now since both are sheaves, \Cref{prop:sheavesbasis} implies that this is isomorphic (again via restriction to $B$) to $\hom(\tilde F,G)$, as was to be shown. 
\end{proof}
In particular, concretely, for any object $x\in C$, the value of the sheafification of $F$ at $x$ is given by finding a hypercover $y_1 \rightrightarrows y_0 \to x$, with $y_0,y_1 \in B$, and taking $\mo{eq}(F(y_0) \rightrightarrows F(y_1))$. This implies in particular that these results remain valid for large sites. 
\begin{corollary}\label{cor:localsheafification}
Let $C$ be a Grothendieck site and $B$ a subbasis on $C$. Let $D$ be a complete $1$-category and $F:C\op\to D$ be a presheaf. 

If the restriction of $F$ to $B$ is a sheaf, and $F\to G$ is a map to a sheaf which is an isomorphism locally on $B$, then it is a sheafification. 
\end{corollary}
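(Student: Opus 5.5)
The plan is to reduce to \Cref{cor:sheafifification}. By that corollary, $F_{\mid B}$ being a sheaf already gives a sheafification $\tilde F$: the unique sheaf on $C$ whose restriction to $B$ is $F_{\mid B}$, namely the right Kan extension of $F_{\mid B}$. It therefore suffices to show that the given map $\alpha\colon F\to G$ is isomorphic, under $F$, to the unit map $F\to\tilde F$.

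\textbf{Step 1: interpret the hypothesis.} I read ``isomorphism locally on $B$'' as: the restriction $\alpha_{\mid B}\colon F_{\mid B}\to G_{\mid B}$ is an isomorphism of presheaves on $B$. If only the weaker condition is meant (for each $b\in B$ there is a cover on which $\alpha$ becomes an isomorphism), first reduce to the strong form. Both $F_{\mid B}$ and $G_{\mid B}$ are sheaves on $B$ with the induced topology, so a map of sheaves that is locally an isomorphism is an isomorphism. To see this, compare the equalizer diagrams for a cover $y_1\rightrightarrows y_0\to b$ with $y_0,y_1\in B$; such hypercovers exist by the remark following \Cref{cor:sheafifification}. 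Since $\alpha$ is an isomorphism on $F(y_0)\to G(y_0)$ and on $F(y_1)\to G(y_1)$, it induces an isomorphism on the equalizers $F(b)\to G(b)$.

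\textbf{Step 2: produce the comparison.} Since $G$ is a sheaf, the universal property in \Cref{cor:sheafifification} gives a unique factorization $F\to\tilde F\xrightarrow{\beta}G$ of $\alpha$. Restricting this factorization to $B$:
\begin{itemize}
\item the unit $F_{\mid B}\to\tilde F_{\mid B}$ is an isomorphism, since $\tilde F$ is defined so that $\tilde F_{\mid B}=F_{\mid B}$;
\item $\alpha_{\mid B}$ is an isomorphism by Step 1.
\end{itemize}
Hence $\beta_{\mid B}$ is an isomorphism.

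\textbf{Step 3: conclude.} Restriction to $B$ is an equivalence on categories of $D$-valued sheaves by \Cref{prop:sheavesbasis}, and both $\tilde F$ and $G$ are sheaves. An equivalence reflects isomorphisms, so $\beta$ is an isomorphism. Therefore $\alpha$ is identified with the sheafification map, which is what we wanted.

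\textbf{Main obstacle.} The only delicate point is Step 1: making precise what ``locally on $B$'' means and checking that it upgrades to an objectwise isomorphism on $B$. I expect this to be routine via the equalizer description of sections along hypercovers in $B$, which is available because $D$ is complete. The remaining steps are formal consequences of \Cref{prop:sheavesbasis} and \Cref{cor:sheafifification}.
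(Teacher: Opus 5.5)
Your argument is correct, and it is essentially the deduction the paper intends: the paper states the corollary without a separate proof, as an immediate consequence of \Cref{cor:sheafifification} and \Cref{prop:sheavesbasis}, and your reading of ``isomorphism locally on $B$'' as $\alpha_{\mid B}$ being an isomorphism is exactly how the corollary is applied in the proof of \Cref{thm:NoSandENCNSian}, (b)$\implies$(c). The only loose end is the optional weak-form reduction in Step~1, where you tacitly need $\alpha_{y_1}$ to be an isomorphism too; a monomorphism suffices, and this follows by applying the local hypothesis at $y_1$ and using that sections of a sheaf embed into sections over a cover.
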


One can look at the comparison map from the Balmer spectrum to the spectrum of graded homogeneous prime ideals in the graded endomorphism ring of the unit, rationally.  Although these functors will often differ, as a corollary of what we have seen thus far, we find that when we sheafify them with respect to the c-topology, the sheafifications will in particular exist, and they will become equal.
\begin{corollary}\label{cor:BalmerSpccSheafification}
Let $\cat{C}$ be a rational rigid 2-ring.  The sheafification of the Balmer spectrum functor 
\[\Spc(\mo{Perf}_{\cat{C}}(-))\colon \mo{CAlg}\left(\mo{Ind}\left(\cat{C}\right)\right)\to \mo{Set}^{op}\]
with respect to the c-topology exists, and agrees with its value on products of Nullstellensatzian objects.  Furthermore, this sheafification is equal to the c-sheafification of the functor sending $R$ to the spectrum of graded homogeneous prime ideals in the underlying $\bb{E}_{\infty}$-ring of $R$:
\[R\mapsto \mo{Spec}^{hom}(\pi_*(\mo{Hom}_{\cat{C}}(\bb{1},R))).\]
\end{corollary}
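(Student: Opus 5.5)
The plan is to use the sheaf-theoretic machinery just established, with the products of Nullstellensatzian algebras $\mo{NS}^{\Pi}(\mo{CAlg}(\mo{Ind}(\cat{C})))$ as a subbasis $B$ for the c-topology on $\mo{CAlg}(\mo{Ind}(\cat{C}))$. First, $B$ is indeed a subbasis. Given any $R$, I take the product $\prod_{x} L_x$ over representatives $R\to L_x$ of all points $x\in\Speccons(R)$; the index set is a set, since points of the constructible spectrum form a set. The map $R\to\prod_x L_x$ is a c-cover. To see this, I would check surjectivity on constructible spectra, equivalently (by \Cref{thm:consequalshom} together with \Cref{thm:surjspech}) nil-conservativity of $\mo{Perf}_{\cat C}(R)\to\mo{Perf}_{\cat C}(\prod_x L_x)$. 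This follows because the family $\{\mo{Perf}_{\cat C}(R)\to\mo{Perf}_{\cat C}(L_x)\}$ is jointly nil-conservative, and it factors through the product via the fully faithful embedding into $\prod_x\mo{Perf}_{\cat C}(L_x)$.

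Next, by \Cref{cor:BalmerSpcSheafyProd}, the restriction of $\Spc(\mo{Perf}_{\cat C}(-))$ to $B$ is a sheaf valued in $\mo{Set}^{op}$. Since $\mo{Set}^{op}$ is complete, \Cref{cor:sheafifification} applies: the sheafification exists and is the right Kan extension of the restriction to $B$. In particular it agrees with the original functor on products of Nullstellensatzians. By the remark following that corollary, the construction via covers $y_1\rightrightarrows y_0\to x$ with $y_i\in B$ makes sense even though the site is large.

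For the second claim, I use the comparison map $\rho$ of \Cref{prop:comparisonUnit}, which gives a natural transformation $\Spc(\mo{Perf}_{\cat C}(R))\to\mo{Spec}^{hom}(\pi_*\mo{Hom}_{\cat C}(\bb{1},R))$. Here the endomorphisms of the unit of $\mo{Perf}_{\cat C}(R)$ are $\mo{Hom}_{\cat C}(\bb 1,R)$. On objects of $B$, the proof of \Cref{lem:NSianProductSpc} shows that $\rho$ is a bijection, with both sides equal to $\beta I$. Consequently the restriction of the target functor to $B$ is isomorphic to a sheaf, so it is a sheaf as well. The map $\rho$ is then an isomorphism on $B$, hence an isomorphism locally on $B$. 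Applying \Cref{cor:localsheafification} (or directly \Cref{cor:sheafifification} to both presheaves, whose restrictions to $B$ agree), the two c-sheafifications coincide.

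I expect the main obstacle to be the subbasis step. That step requires making rigorous both the set-theoretic issue of the product indexed by $\Speccons(R)$ and the claim that $R\to\prod_x L_x$ is a c-cover, i.e.\ that every Nullstellensatzian point of $R$ factors through some ultraproduct. I would handle this through nil-conservativity as outlined above, instead of analysing points of the product directly. A secondary point is checking naturality of the identification $\Spech(\pi_*\prod L_i)\cong\beta I$ compatibly with $\rho$, which is formal from naturality of $\rho$.
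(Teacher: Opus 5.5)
Your proposal is correct and follows the paper's own route. The paper restricts to the subbasis of products of Nullstellensatzian algebras, gets existence of the sheafification from \Cref{cor:BalmerSpcSheafyProd} together with \Cref{cor:sheafifification}, and identifies the two functors on that subbasis via \Cref{lem:NSianProductSpc}. You also supply two details the paper leaves implicit: that $R\to\prod_x L_x$ is a c-cover, which justifies the subbasis claim, and that the identification on the subbasis is natural because it is induced by the comparison map $\rho$.
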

\begin{proof}
When restricted to the sub-category of products of Nullstellensatzian objects, these two functors agree by \Cref{lem:NSianProductSpc}.  Now, since this restriction is a c-sheaf by \Cref{cor:BalmerSpcSheafyProd}, and this full sub-category forms a sub-basis for the c-topology, the sheafifications of both functors exist and agree by \Cref{cor:sheafifification}.
\end{proof}

In nice cases, we can say more about this constructible topology.
\begin{theorem}\label{thm:NoSandENCNSian}
Let $\cat{C}$ be a rational rigid 2-ring, and consider the following statements.
\begin{enumerate}
\item  There exists some $n$ such that for all Nullstellensatzian commutative algebras $L\in\mo{CAlg}\left(\mo{Ind}\left(\cat{C}\right)\right)$, the category $\mo{Perf}_{\cat{C}}(L)$ satisfies the exact-nilpotence condition to order $n$.
\item  Ultraproducts of Nullstellensatzian objects in $\mo{CAlg}\left(\mo{Ind}\left(\cat{C}\right)\right)$ have constructible spectrum consisting of a single point, which is to say that ultraproducts are pointlike.  In particular, in this case the category $\mo{CAlg}\left(\mo{Ind}\left(\cat{C}\right)\right)$ is spectral in the sense of \cite[Definition A.63]{2022arXiv220709929B}.
\item  For all $R\in \mo{CAlg}\left(\mo{Ind}\left(\cat{C}\right)\right)$, the sheafification of the Balmer spectrum functor $R\mapsto \Spc(\mo{Perf}_{\cat{C}}(R))$ landing in $\left(\mo{Set}\right)^{op}$ with respect to the c-topology agrees with $\Speccons(\mo{Perf}_{\cat{C}}(R))$. 

\item  For all $R\in \mo{CAlg}\left(\mo{Ind}\left(\cat{C}\right)\right)$, the constructible topology on $\Speccons(\mo{Perf}_{\cat{C}}(R))$ is Hausdorff.
\end{enumerate}
Then, (a)$\implies$(b)$\iff$(c)$\implies$(d).
\end{theorem}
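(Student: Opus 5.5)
We prove the three implications in turn. Throughout, we use the description from \Cref{lem:NSianProductSpc}: for an ultrafilter $\mc{U}$ on a set $I$ and Nullstellensatzian $L_i$, the ultraproduct $L_{\mc{U}}\coloneqq\prod_{\mc{U}}L_i$ has $\Spc(\mo{Perf}_{\cat C}(L_{\mc U}))=*$, and every nonzero object of $\mo{Perf}_{\cat C}(L_{\mc U})$ is $\otimes$-faithful (by \Cref{cor:NSianFieldIsh}, checked coordinatewise). By \Cref{thm:consequalshom}, $\Speccons(L_{\mc U})\simeq\Spc^h(\mo{Perf}_{\cat C}(L_{\mc U}))$. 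Since this is a local category, NoS for it amounts to ENC via \cite[Theorem A.1]{balmerhomological}; so ``pointlike'' means exactly that the local category $\mo{Perf}_{\cat C}(L_{\mc U})$ satisfies the nerves of steel condition.

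\emph{(a)$\Rightarrow$(b).} We check ENC for $\mo{Perf}_{\cat C}(L_{\mc U})$, and its localizations (which are again of this local form, with one point). Take a fiber sequence $y\xrightarrow{g}\bb{1}\xrightarrow{f}x$ in the ultraproduct. Represent it coordinatewise by fiber sequences $y_i\to\bb{1}\to x_i$. By (a), for each $i$ there is a nonzero $z_i$ with $z_i\otimes g_i^{\otimes n}\simeq0$ or $z_i\otimes f_i^{\otimes n}\simeq0$, with the \emph{uniform} $n$. One of the two index sets lies in $\mc U$. The object $(z_i)$ is then nonzero in the ultraproduct, and the corresponding vanishing holds $\mc U$-almost everywhere, hence in the ultraproduct. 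The uniformity in $n$ is exactly what lets the nullhomotopies survive the ultraproduct. We then need that ENC for this local category and its localizations gives NoS; this is the cited equivalence, and the only localization is trivial since the Balmer spectrum is a point. So $\Spc^h=*$, i.e.\ $L_{\mc U}$ is pointlike. Spectrality then follows from \cite[Definition A.63]{2022arXiv220709929B}, which by design is the condition that ultraproducts of Nullstellensatzians are pointlike.

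\emph{(b)$\Leftrightarrow$(c).} By \Cref{cor:BalmerSpccSheafification}, the c-sheafification of $\Spc(\mo{Perf}_{\cat C}(-))$ is computed on the subbasis of products of Nullstellensatzians, where its value is $\beta I$. The functor $\Speccons$ is always a c-sheaf, being defined by Nullstellensatzian points with c-covers surjective. So by \Cref{cor:localsheafification}, (c) is equivalent to the natural map $\Speccons(\prod_I L_i)\to\beta I$ being a bijection for all such products. The fibers of this map over $\mc U$ are $\Speccons(L_{\mc U})$, using that ultraproducts are filtered colimits of the corresponding products localized at idempotents. Hence bijectivity is exactly (b).

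\emph{(c)$\Rightarrow$(d).} Under (c), $\Speccons(R)$ is computed by an equalizer of values $\beta I\rightrightarrows\beta I'$ from a hypercover by products of Nullstellensatzians. Equivalently, $\Speccons(R)$ is the quotient of the compact Hausdorff space $\beta I$ by a closed equivalence relation: the image of $\beta I'$ in $\beta I\times\beta I$, which is compact. A quotient of a compact Hausdorff space by a closed equivalence relation is Hausdorff, and one checks that the quotient topology agrees with the constructible topology since both are compact and one is finer.

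The main obstacle is (a)$\Rightarrow$(b): we must ensure that ENC in the ultraproduct really forces $\Spc^h$ to be a point. If the cited equivalence only applies to all local quotients, we would instead argue directly. Given two homological primes, \Cref{lem:NSianttField}-style arguments produce Nullstellensatzian refinements $S_1,S_2$ with $S_1\otimes S_2=0$. A compactness argument then yields a fiber sequence witnessing the failure of a uniform ENC. We would try this direct route first, as a fallback.
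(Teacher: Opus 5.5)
Your arguments for (a)$\Rightarrow$(b) and (b)$\Leftrightarrow$(c) follow the paper's proof. The uniform $n$ lets ENC($n$) pass to the ultraproduct. \cite[Theorem A.1]{balmerhomological} is a statement about a single local category, and the paper uses it exactly that way, so your fallback is unnecessary. Your fiberwise analysis over $\beta I$ replaces the paper's appeal to \cite[Corollary A.46, Lemma A.61]{2022arXiv220709929B}.

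Two points need repair there. First, the object $(z_i)$ lives in $\prod_{\mc U}\mo{Perf}_{\cat C}(L_i)$, of which $\mo{Perf}_{\cat C}(\prod_{\mc U}L_i)$ is only a fully faithful subcategory. So it does not witness ENC in the category you are testing. You can either argue as the paper does: prove ENC($n$), hence $\Spc^h=*$, for the ultraproduct of categories, then pull back along the fully faithful and therefore nil-conservative inclusion using \Cref{thm:surjspech}. Or use \Cref{cor:NSianFieldIsh} to take $z_i=\bb{1}$, so that $f^{\otimes n}=0$ or $g^{\otimes n}=0$ in the ultraproduct, and hence in $\mo{Perf}_{\cat C}(\prod_{\mc U}L_i)$ by full faithfulness. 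Second, spectrality in the sense of \cite[Definition A.63]{2022arXiv220709929B} also requires op-disjunctivity, which is \cite[Lemma A.58]{2022arXiv220709929B}.

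The genuine gap is in your last implication. The paper obtains (d) from (b) by citing \cite[Proposition A.62]{2022arXiv220709929B}; you argue (c)$\Rightarrow$(d) by hand, and your final sentence does not hold up. A compact topology finer than a compact $T_1$ topology need not equal it: compare the usual topology on $[0,1]$ with the cofinite one. The compact-to-Hausdorff argument would need the coarser, constructible topology to be Hausdorff already, which is the statement being proved.

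The missing input is that $\Speccons$ lands in compact $T_1$ spaces with \emph{closed} maps. So for a c-cover $R\to P=\prod_I L_i$, the map $\pi\colon\Speccons(P)\to\Speccons(R)$ is a closed continuous surjection, hence a quotient map. You also need $\Speccons(P)\cong\beta I$ as spaces, not just as sets. Under (b) this is \cite[Lemma A.61]{2022arXiv220709929B}. Alternatively, argue directly: the preimage of the clopen set $\{\mc U\colon S\in\mc U\}$ is the image of $\Speccons(\prod_S L_i)$, whose complement is the image of $\Speccons(\prod_{I\smallsetminus S}L_i)$. Hence the bijection onto $\beta I$ is a continuous map from a compact space to a Hausdorff one, and so a homeomorphism.

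With these two facts your closed-equivalence-relation argument goes through. More simply, the image of the normal space $\beta I$ under the closed surjection $\pi$ is normal, and a normal $T_1$ space is Hausdorff.
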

\begin{proof}
(a)$\implies$(b) Consider an ultrafilter $\mc{U}$ on a set $I$, and an $I$-indexed family $L_i$ of Nullstellensatzian algebras in $\cat{C}$.  We wish to show that the constructible spectrum of the ultraproduct $\prod_{\mc{U}}L_i$ is a single point.  By \Cref{thm:consequalshom}, it suffices to show that the homological spectrum of $\mo{Perf}_{\cat{C}}(\prod_{\mc{U}}L_i)$ is a single point.  Using full faithfulness (and hence nil-conservativity) of the inclusion \[\mo{Perf}_{\cat{C}}(\prod_{\mc{U}}L_i)\to \prod_{\mc{U}}\mo{Perf}_{\cat{C}}(L_i),\] it suffices to show that the homological spectrum of $\prod_{\mc{U}}\mo{Perf}_{\cat{C}}(L_i)$ is a single point.  Now, by assumption, the categories $\mo{Perf}_{\cat{C}}(L_i)$ satisfy ENC($n$), and so too does their ultraproduct, whose homological spectrum therefore agrees with its Balmer spectrum.
\Cref{lem:NSianProductSpc} further shows that the Balmer spectrum of the ultraproduct is a single point.

(b)$\implies$(c)  By \cite[Corollary A.46]{2022arXiv220709929B}, the target of the natural comparison map
\[\Spc(\mo{Perf}_{\cat{C}}(R))\to \Speccons(\mo{Perf}_{\cat{C}}(R))\]
of presheaves valued in $\left(\mo{Set}\right)^{op}$ is such that the target is actually a sheaf. By \Cref{cor:localsheafification}, in order to check that the target is a sheafification of the source, it suffices to check that this map is an isomorphism locally for the c-topology.

Taking a c-cover \[R\to \prod_{q\in \Speccons(R)}(L_q)\] given by a product of Nullstellensatzian representatives $L_q$ for each point in the constructible spectrum of $R$, we may assume that $R=\prod_{I}L_i$ is a product of Nullstellensatzian $\cat{C}$-algebras.  Using that ultraproducts are pointlike and $\mo{CAlg}\left(\mo{Ind}\left(\cat{C}\right)\right)$ is op-disjunctive by \cite[Lemma A.58]{2022arXiv220709929B}, \cite[Lemma A.61]{2022arXiv220709929B} implies that the constructible spectrum of a product $\Speccons(\prod_{I} L_q)$ is given by the Stone--\v{C}ech compactification $\beta I$ of the set $I$.  Similarly, \Cref{lem:NSianProductSpc} shows that the Balmer spectrum of $\mo{Perf}_{\cat{C}}(\prod_{I}L_i)$ is also the Stone--\v{C}ech compactification of $I$, with the local category at the prime $\mc{U}\in \beta I$ given by the ultraproduct $\mo{Perf}_{\cat{C}}(\prod_{\mc{U}}L_i)$, which tells us that the comparison map \[\Speccons(\prod_{I}L_i)\to \Spc(\mo{Perf}_{\cat{C}}(\prod_{I}L_i))\]
is an equivalence, and the claim is shown.

(c)$\implies$(b)  Using \Cref{cor:BalmerSpccSheafification}, the sheafification of the Balmer spectrum functor with respect to the c-topology agrees with the Balmer spectrum functor on products of Nullstellensatzian commutative algebras.  To say that this sheafification is the constructible spectrum implies that, for such a product $\prod_{I}L_i$, the comparison map induces an equivalence
\[\Speccons(\mo{Perf}_{\cat{C}}(\prod_{I}L_i))\simeq \Spc(\mo{Perf}_{\cat{C}}(\prod_{I}L_i)). \]
That is to say, the nerves of steel condition holds for the category $\mo{Perf}_{\cat{C}}(\prod_{I}L_i)$.  Translating to the local rings over a point in the Balmer spectrum, we find that, for any ultrafilter $\mc{U}$ on a set $I$, and an $I$-indexed set of Nullstellensatzians, we have
\[\Speccons(\mo{Perf}_{\cat{C}}(\prod_{\mc{U}}L_i))\simeq \Spc(\mo{Perf}_{\cat{C}}(\prod_{\mc{U}}L_i))\simeq *,\]
which is to say, ultraproducts in $\mo{CAlg}\left(\mo{Ind}\left(\cat{C}\right)\right)$ are pointlike.

(b)$\implies$(d)  This follows from \cite[Proposition~A.62]{2022arXiv220709929B}.
\end{proof}
\begin{corollary}  Let $\cat{C}$ be a rational rigid 2-ring.  Then the following are equivalent:
\begin{enumerate}
\item  The weakly spectral category $\mo{CAlg}\left(\mo{Ind}\left(\cat{C}\right)\right)$ is spectral.
\item  Given an ultrafilter $\mc{U}$ on a set $I$, and an $I$-indexed collection $L_i\in \mo{CAlg}\left(\mo{Ind}\left(\cat{C}\right)\right)$ of Nullstellensatzian objects, the nerves of steel condition holds for the category $\mo{Perf}_{\cat{C}}(\prod_{\mc{U}}L_i)$.
\item  Given a set $I$, and an $I$-indexed collection $L_i\in \mo{CAlg}\left(\mo{Ind}\left(\cat{C}\right)\right)$ of Nullstellensatzian objects, the nerves of steel condition holds for the category $\mo{Perf}_{\cat{C}}(\prod_{I}L_i)$.
\end{enumerate}
\end{corollary}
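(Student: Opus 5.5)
Everything here rests on \Cref{thm:NoSandENCNSian}, which gives (b)$\iff$(c), and on the fact that spectrality in the sense of \cite[Definition A.63]{2022arXiv220709929B} is the condition that ultraproducts of Nullstellensatzian objects are pointlike. I will first record that (a) is equivalent to statement (b) of \Cref{thm:NoSandENCNSian}. The corollary then reduces to comparing its items (b) and (c) with that statement, via the identification
\[\Speccons(\mo{Perf}_{\cat{C}}(R))\simeq \Spc^h(\mo{Perf}_{\cat{C}}(R))\]
for rational $\cat{C}$ given by \Cref{thm:consequalshom}.

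(a)$\iff$(b). Let $\mc{U}$ be an ultrafilter on $I$ and $L_i$ Nullstellensatzian. By \Cref{lem:NSianProductSpc}, the Balmer spectrum of $\mo{Perf}_{\cat{C}}(\prod_{\mc{U}}L_i)$ is a single point. To justify this, observe that the ultraproduct is the localization of $\prod_I L_i$ at the prime $\mc{U}\in\beta I$, and its spectrum is the fiber over $\mc{U}$; equivalently, every nonzero object is $\otimes$-faithful by \Cref{cor:NSianFieldIsh} and \Cref{lem:equivtensorfaithful}. Hence NoS for the ultraproduct holds if and only if its homological spectrum is a point. By \Cref{thm:consequalshom}, this happens if and only if its constructible spectrum is a point, i.e.\ the ultraproduct is pointlike. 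Ranging over all choices of $\mc{U}$ and $L_i$ gives exactly spectrality.

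(b)$\iff$(c). For (c)$\Rightarrow$(b), localize. In the proof of \Cref{lem:NSianProductSpc}, the ultraproducts $\prod_{\mc{U}}\mo{Perf}_{\cat{C}}(L_i)$ appear as the local categories of $\prod_I L_i$ at its primes. Homological spectra are compatible with such localizations: the fiber of $\phi$ over $\mc{P}$ is the homological spectrum of the local category, using the finite-localization compatibility of \cite{balmerhomological}. So if $\phi$ is bijective for the product, each fiber is a point, which is NoS for $\mo{Perf}_{\cat{C}}(\prod_{\mc{U}}L_i)$. One point needs care here: \Cref{lem:NSianProductSpc} describes the local category as the ultraproduct of perfect categories, whereas (b) concerns $\mo{Perf}_{\cat{C}}(\prod_{\mc{U}}L_i)$. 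This is handled as in the proof of (a)$\Rightarrow$(b) of \Cref{thm:NoSandENCNSian}: the inclusion is fully faithful, hence nil-conservative, so it suffices to argue with one of the two categories.

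For (b)$\Rightarrow$(c), I run the same fiberwise argument in reverse. Since $\phi$ is surjective, it is bijective exactly when every fiber is a singleton. Each fiber is the homological spectrum of a local category, namely an ultraproduct, which is a point by (b). Alternatively, one can route through (c) of \Cref{thm:NoSandENCNSian} by evaluating the sheafification on products of Nullstellensatzians, as in the proof of (c)$\Rightarrow$(b) there.

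The main obstacle is justifying rigorously that the fibers of $\Spc^h\to\Spc$ over $\mc{U}$ are computed by the ultraproduct localization. I would argue this via the Verdier quotient by the idempotent objects $(\delta_{i\in S})$ with $S\notin\mc{U}$: these are split idempotents, so the localization is a smashing localization by a product factor, and the compatibility of homological spectra with such localizations follows from Balmer's results.
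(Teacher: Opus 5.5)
Your proposal is correct and follows essentially the same route as the paper. For (a)$\iff$(b), every nonzero object of the ultraproduct is $\otimes$-faithful, so its Balmer spectrum is a point; NoS there therefore says its homological spectrum, and hence by \Cref{thm:consequalshom} its constructible spectrum, is a point. For (b)$\iff$(c), the paper uses the fiberwise criterion for NoS over $\beta I$, as you do.

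The detour through $\prod_{\mc{U}}\mo{Perf}_{\cat{C}}(L_i)$ is unnecessary, and your claim that ``it suffices to argue with one of the two categories'' is not right in both directions. Nil-conservativity of the fully faithful inclusion only lets you deduce that $\Spc^h$ of the source is a point from the same fact for the target, not conversely. What you actually need is your closing idempotent argument, which shows directly that the local category of $\mo{Perf}_{\cat{C}}(\prod_I L_i)$ at $\mc{U}$ is $\mo{Perf}_{\cat{C}}(\prod_{\mc{U}}L_i)$; this is exactly the identification the paper uses.
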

\begin{proof}
(a)$\iff$(b)  Note that $\mo{CAlg}\left(\mo{Ind}\left(\cat{C}\right)\right)$ is spectral if and only if ultraproducts are pointlike, which is to say, given an ultrafilter $\mc{U}$ on a set $I$, and an $I$-indexed collection $L_i$ of Nullstellensatzian objects, the spectrum $\Speccons_{\cat{C}}(\prod_{\mc{U}}L_i)$ is a single point.  Once again, by \Cref{cor:NSianFieldIsh}, every non-zero object in $\mo{Perf}_{\cat{C}}(\prod_{\mc{U}}L_i)$ is $\otimes$-faithful, and in particular the Balmer spectrum has only a single point.  In particular, the nerves of steel condition holds for this category if and only if $\Speccons_{\cat{C}}(\prod_{\mc{U}}L_i)$ is a single point, if and only if $\mo{CAlg}\left(\mo{Ind}\left(\cat{C}\right)\right)$ is spectral.

(b)$\iff$(c)  Again, it follows from \Cref{cor:NSianFieldIsh} that the Balmer spectrum of $\mo{Perf}_{\cat{C}}(\prod_{I}L_i)$ is given by the Stone--\v{C}ech compactification $\beta I$ of $I$.  In particular, by the fiber-wise criterion for the nerves of steel condition, this category satisfies NoS if and only if all of its localizations to closed points do.  Since the points of $\beta I$ are given by ultrafilters $\mc{U}$ on $I$, with corresponding local rigid 2-ring $\mo{Perf}_{\cat{C}}(\prod_{\mc{U}}L_i)$ at the prime represented by $\mc{U}$, the claim follows.
\end{proof}

If we are allowed to work with all rational rigid 2-rings, the conditions of \Cref{thm:NoSandENCNSian} strengthen to all be equivalent, and in fact one can add a few more:
\begin{proposition}\label{prop:CHausmaybe}
The following are equivalent
\begin{enumerate}
\item  For all rational rigid 2-rings $\cat{C}$, the space $\Speccons(\cat{C})$ is Hausdorff.
\item  The space $\Speccons(\Apoi_{\eta})$ is Hausdorff.
\item  There exists some $n\gg 0$ such that the algebras $\FrEinfz(\cofib(f^{\vee,\otimes n}))$ and $\FrEinfz(\cofib(g^{\otimes n}))$ in $\Apoi_{\eta}$ are jointly nil-conservative, with $f$ the free map from the unit to the free pointed object, and $g$ its fiber.
\item  There exists $n\gg 0$ such that for all Nullstellensatzian rational rigid 2-rings $\cat{L}$ which contain an object $Z$ such that $\A\xrightarrow{Z}\cat{L}$ factors over $\A_{\eta}$, $\cat{L}$ satisfies ENC(n).
\item  There exists some $n\gg 0$ such that all Nullstellensatzian rational rigid 2-rings satisfy ENC(n).
\item  The category $\ttCat_{\bb{Q}}$ is spectral.
\end{enumerate}
\end{proposition}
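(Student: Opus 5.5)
The plan is to prove the cycle (f)$\Rightarrow$(a)$\Rightarrow$(b)$\Rightarrow$(c)$\Rightarrow$(d)$\Rightarrow$(e)$\Rightarrow$(f), translating throughout between constructible and homological spectra via \Cref{thm:consequalshom}.

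\emph{The easy implications.}
\begin{itemize}
\item (f)$\Rightarrow$(a): spectrality of $\ttCat_{\bb{Q}}$ passes to all slices $(\ttCat_{\bb{Q}})_{\cat C/}$. Then \cite[Proposition~A.62]{2022arXiv220709929B} gives Hausdorffness of every $\Speccons(\cat C)$.
\item (a)$\Rightarrow$(b) is a special case.
\item (e)$\Rightarrow$(f): a uniform ENC($n$) for all Nullstellensatzian rational rigid 2-rings passes to their ultraproducts. An ultraproduct of such categories has all non-zero objects $\otimes$-faithful, by \Cref{cor:NSianFieldIsh} applied coordinatewise, so its Balmer spectrum is a point. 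ENC($n$) then forces the homological spectrum to be a point as well. By \Cref{thm:consequalshom} the constructible spectrum is a point, i.e.\ ultraproducts are pointlike, which is spectrality. This is the argument of \Cref{thm:NoSandENCNSian}(a)$\Rightarrow$(b), run directly in $\ttCat_{\bb{Q}}$ rather than in $\mo{CAlg}(\mo{Ind}(\cat C))$, using \Cref{prop:consequalscons} to pass between the two settings.
\end{itemize}

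\emph{(c)$\Rightarrow$(d) and (d)$\Rightarrow$(e).}
For (c)$\Rightarrow$(d), take $\cat L$ Nullstellensatzian and a fiber sequence $y\xrightarrow{g'}\bb{1}\xrightarrow{f'}x$ in $\cat L$. This classifies a 2-ring map $\Apoi\to\cat L$. The hypothesis on $Z=x$ says it factors over $\Apoi_\eta=\Apoi\otimes_{\A}\A_\eta$. By \Cref{prop:nilconsbasechange}, base change along $\Apoi_\eta\to\cat L$ keeps the pair in (c) jointly nil-conservative. Since $\cat L\neq 0$, one of the two base-changed algebras is non-zero. Being compact over a Nullstellensatzian object, it admits a section. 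That section makes $f'^{\vee,\otimes n}$, hence $f'^{\otimes n}$, or else $g'^{\otimes n}$, null, giving ENC($n$) with $z=\bb{1}$.

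For (d)$\Rightarrow$(e), start from an arbitrary fiber sequence in a Nullstellensatzian $\cat L$. I would reduce to the generic case using \Cref{rmk:NoSfailsgeneral}, whose statement covers any graded field $K$ under $\End(\bb{1}_\A)$ with $t\neq n$ for every integer $n$. The reduction combines that remark with two further inputs:
\begin{itemize}
\item $\pi_*\End(\bb{1}_{\cat L})$ is an even periodic algebraically closed field;
\item the graded fields of this kind are all equivalent, so the relevant ENC bounds depend only on the "generic versus integral dimension" dichotomy.
\end{itemize}
The remaining cases, where $\dim x$ is an integer, I expect to handle by twisting with an auxiliary object such as $x\oplus\bb{1}^{\oplus m}$, which moves the dimension off the integers. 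This step is the part of (d)$\Rightarrow$(e) I am least sure of.

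\emph{The main obstacle: (b)$\Rightarrow$(c).}
By \Cref{thm:consequalshom}, $\Speccons(\Apoi_\eta)\cong\Spc^h(\Apoi_\eta)$. Let $C_f^n$ be the closed image of $\Speccons$ of $\FrEinfz(\cofib(f^{\vee,\otimes n}))$, and define $C_g^n$ analogously using $\FrEinfz(\cofib(g^{\otimes n}))$.
\begin{itemize}
\item At a homological prime, the image of the exact triangle kills $f$ or $g$ in the residue field. Balmer's nilpotence theorem applied to the Nullstellensatzian representative, whose homological spectrum is a point by \Cref{lem:NSianttField}, then gives $\otimes$-nilpotence there.
\item Hence $\bigcup_n (C_f^n\cup C_g^n)$ is the whole space.
\item The two unions $C_f=\bigcup_n C_f^n$ and $C_g=\bigcup_n C_g^n$ are disjoint, since $f$ and $g$ cannot both be nilpotent.
\end{itemize}
One must show that Hausdorffness forces $C_f=C_f^n$ and $C_g=C_g^n$ for some finite $n$. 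Joint nil-conservativity then follows from \Cref{thm:surjspech}. I would first try to prove $C_f$ and $C_g$ closed, using compactness, the $T_1$ property and the involution $\iota$ exchanging them. From closedness they are clopen, hence compact, and an increasing closed cover of a compact Hausdorff space with clopen union should stabilize after rewriting the $C_f^n$ as closures of images of ultraproducts. If this fails, the fallback is to use \Cref{thm:NoSandENCNSian}: Hausdorffness identifies the Balmer-spectrum sheafification with $\Speccons$, and one extracts a uniform $n$ from the ultraproduct of the would-be counterexamples.
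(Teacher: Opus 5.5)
Your treatment of (a)$\Rightarrow$(b), (f)$\Rightarrow$(a) and (e)$\Rightarrow$(f) matches the paper. In (e)$\Rightarrow$(f) you also need that $\ttCat_{\bb{Q}}$ is op-disjunctive, since spectrality requires more than pointlike ultraproducts. The two middle steps, however, have genuine gaps.

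\textbf{(b)$\Rightarrow$(c).} Your claim that $f$ and $g$ cannot both be $\otimes$-nilpotent is unjustified, and it proves far too much. In a Nullstellensatzian rational rigid 2-ring, every map $\bb{1}\to x$ is either split or $\otimes$-nilpotent (this is the argument of \Cref{Cor:NSianVeryFieldIsh}), and dually for maps $y\to\bb{1}$. So your claim would give $f=0$ or $g=0$ at every point, i.e.\ ENC(1), and hence all six conditions outright. Fiber sequences in which both maps are nilpotent are exactly where uniformity is in question.

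Your stabilization principle is also false: in $\bb{N}\cup\{\infty\}$ the closed sets $\{0,\dots,n,\infty\}$ increase to the whole space without stabilizing.

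What is missing is that a uniform exponent comes from a \emph{single} 2-ring.
\begin{itemize}
\item Each of the two points $x_1,x_2$ given by $p$ and $p\circ\iota$ is cut out by one of $\FrEinfz(f)$, $\FrEinfz(g^{\vee})$. For instance, $\mo{Perf}_{\Apoi}(\FrEinfz(f))$ corepresents a split pointed object, so it is $\A$; its base change to $\Apoi_\eta$ is $\A_\eta$, which has one point.
\item Hausdorffness lets you separate $x_1,x_2$ by disjoint opens. Their complements lie in images of maps $\Apoi_\eta\to\cat{C}$ and $\Apoi_\eta\to\cat{D}$ which jointly cover (hence are jointly nil-conservative) and each miss one of the two points.
\item Disjointness of images means, say, $\FrEinfz(f)=0$ in $\cat{C}$. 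By \Cref{prop:CAlgDetection} this gives $f^{\otimes m}=0$ in $\cat{C}$ for one $m$; likewise $g^{\otimes k}=0$ in $\cat{D}$.
\item Then $\cat{C}$ and $\cat{D}$ factor through $\mo{Perf}$ of $\FrEinfz(\cofib(f^{\vee,\otimes n}))$ and of $\FrEinfz(\cofib(g^{\otimes n}))$, with $n=\max(m,k)$. These two are therefore jointly nil-conservative.
\end{itemize}

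\textbf{(c)$\Rightarrow$(d)$\Rightarrow$(e).} In (d), $Z$ is an auxiliary object: ENC($n$) must hold for \emph{every} fiber sequence in $\cat{L}$. By setting $Z=x$ you prove a weaker (d) and shift the burden to (d)$\Rightarrow$(e), where your reduction fails.
\begin{itemize}
\item Adding $\bb{1}^{\oplus m}$ changes $\dim x$ by an integer, so it cannot move it off $\bb{Z}$.
\item More seriously, factoring over $\A_\eta$ requires every nonzero homogeneous polynomial in $t,t_1,t_2,\dots$ to map to a unit. This is a condition on $\cat{L}$, not on $x$: no modification of $x$ inside a fixed $\cat{L}$ achieves it unless $\pi_0\End(\bb{1}_{\cat{L}})$ has infinite transcendence degree over $\bb{Q}$.
\item These even periodic fields are also not all equivalent.
\item \Cref{rmk:NoSfailsgeneral} records failures of ENC; it is not a reduction.
\end{itemize}

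The paper instead pads in (c)$\Rightarrow$(d). It replaces $f$ by $f\oplus 0\oplus 0\colon\bb{1}\to X\oplus\Sigma X\oplus Z$. This target induces the same map on $\End(\bb{1}_{\A})$ as $Z$, because dimension and its higher analogues are additive and cancel on $X\oplus\Sigma X$. The nilpotence orders of $f$ and of its fiber are unchanged.

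For (d)$\Rightarrow$(e), the paper changes $\cat{L}$ rather than $x$. Since $\cat{D}^b(\bb{Q})\to\A_\eta$ is nil-conservative, $\cat{L}\otimes\A_\eta\neq 0$, so it maps to a Nullstellensatzian $\cat{L}'$ containing a generic $Z$. A failure of ENC($n$) in $\cat{L}$, witnessed with $z=\bb{1}$ by \Cref{cor:NSianFieldIsh}, persists in $\cat{L}'$. This is because $\cat{L}\to\cat{L}'$ is conservative on maps between compacts (\Cref{Cor:NSianVeryFieldIsh}).
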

\begin{proof}
(a)$\implies$(b) is clear.

(b)$\implies$(c)  The points in $\Apoi_{\eta}$ determined by $p\colon\Apoi_{\eta}\to \A_{\eta}$ and $p\circ \iota$ have residue fields picked out by the $\bb{E}_{\infty}$-algebras $\FrEinfz(f)$ and $\FrEinfz(g^{\vee})$, respectively.  If $\Speccons(\Apoi_{\eta})$ is Hausdorff, then there exists disjoint open neighborhoods of these two points.  These open subsets are represented by functors $\Apoi_{\eta}\to \cat{C}$ and $\Apoi_{\eta}\to \cat{D}$, such that $\FrEinfz(f)=0$ in $\cat{C}$, $\FrEinfz(g^{\vee})=0$ in $\cat{D}$, and $\Apoi_{\eta}\to \cat{C}\times \cat{D}$ is nil-conservative.  Since $\FrEinfz(f)\simeq 0$ in $\cat{C}$, \Cref{prop:CAlgDetection} implies that $f^{\otimes m}\simeq 0$ in $\cat{C}$ for some $m\gg 0$.  This means in particular that $\FrEinfz(\cofib(f^{\vee,\otimes m}))$ admits a splitting back to the unit after base-changing it into $\cat{C}$, which in turn implies that $\Apoi_\eta\to \cat{C}$ factors over \[\Apoi_{\eta}\to \mo{Perf}_{\Apoi_{\eta}}(\FrEinfz(\cofib(f^{\vee,\otimes m})))\to \cat{C}.\]
Similarly, there exists some $k$ such that $\Apoi_{\eta}\to\cat{D}$ factors as
\[\Apoi_{\eta}\to\mo{Perf}_{\Apoi_{\eta}}(\FrEinfz(\cofib(g^{\otimes k})))\to \cat{D}.\]
Up to increasing $k$ or $m$ as needed, we may assume without loss of generality that $k=m=n$, for some $n\gg 0$.  Now, since $\Apoi_{\eta}\to \cat{C}$ and $\Apoi_{\eta}\to \cat{D}$ are jointly nil-conservative, so too are the functors \[\Apoi_{\eta}\to\mo{Perf}_{\Apoi_{\eta}}(\FrEinfz(\cofib(g^{\otimes n})))\] and 
\[\Apoi_{\eta}\to\mo{Perf}_{\Apoi_{\eta}}(\FrEinfz(\cofib(f^{\vee,\otimes n}))),\]
as claimed.

(c)$\implies$(d)  Let $\cat{L}$ be a rational Nullstellensatzian rigid 2-ring containing such an object $Z$, and consider any fiber sequence 
\[Y\xrightarrow{g}\bb{1}\xrightarrow{f}X\]
in $\cat{L}$.  Up to replacing $f\colon \bb{1}\to X$ by $f\oplus 0\oplus 0\colon \bb{1}\to X\oplus \Sigma X\oplus Z$ if needed, we may assume without loss of generality that the functor $\Apoi\xrightarrow{f}\cat{L}$ factors over $\Apoi_{\eta}$.  

By (c), there exists some $n$ such that the algebras $\FrEinfz(\cofib(f^{\vee,\otimes n}))$ and $\FrEinfz(\cofib(g^{\otimes n}))$ are jointly nil-conservative on $\Apoi_{\eta}$, and so too then will be their basechange to $\cat{L}$.  Since these are free $\bb{E}_{\infty}$-algebras in $\cat{L}$ on \textit{compact} $\bb{E}_0$-algebras, and $\cat{L}$ is Nullstellensatzian, the map from the unit splits as soon as one of them is nonzero- and one must be nonzero because they are jointly nil-conservative.  

By symmetry of cases, assume that, in $\cat{L}$, $\FrEinfz(\cofib(g^{\otimes n}))\neq 0$, such that this algebra splits back to $\bb{1}_{\cat{L}}$.  Then the map $\cofib(g^{\otimes n})\colon \bb{1}\to W$ splits, which implies that $g^{\otimes n}\simeq 0$, and $\cat{L}$ satisfies ENC($n$).

(d)$\implies$(e)  Using \Cref{Cor:NSianVeryFieldIsh} (applied e.g. to $\cat{C}=\cat{L}$ and $A=\bb{1}$), we find that any functor out of a Nullstellensatzian rational rigid 2-ring $\cat{L}$ is conservative for morphisms between compact objects.  Taken together with \Cref{cor:NSianFieldIsh}, which tells us that we are free to always take $Z=\bb{1}$ when checking the exact-nilpotence condition in Nullstellensatzian rational rigid 2-ring, we find that any failure of ENC($n$) in such a category $\cat{L}$ will provide a similar failure for every Nullstellensatzian rational rigid 2-ring $\cat{L}^{\prime}$ admitting a map from $\cat{L}$.

It remains to show that any Nullstellensatzian $\cat{L}$ admits a map to another Nullstellensatzian rigid 2-ring $\cat{L}^{\prime}$ satisfying the hypothesis in (d).  For this, use that $\cat{D}^{b}(\bb{Q})\to \A_{\eta}$ is nil-conservative, and so too then is $\cat{L}\to \cat{L}\otimes \A_{\eta}$, and in particular the target is nonzero.  For any choice of map $\cat{L}\otimes \A_{\eta}\to \cat{L}^{\prime}$ to a Nullstellensatzian rigid 2-ring $\cat{L}^{\prime}$ yields a map $\cat{L}\to\cat{L}^{\prime}$ with $\cat{L}^{\prime}$ satisfying the hypothesis in (d).

(e)$\implies$(f)  Since $\ttCat_{\bb{Q}}$ can be shown to be op-disjunctive, it remains to show that ultraproducts are pointlike.  Using \Cref{thm:consequalshom}, this is equivalent to showing that for any ultrafilter $\mc{U}$ on a set $I$, and $I$-indexed collection $\cat{L}_i$ of Nullstellensatzian rigid 2-rings, the ultraproduct $\prod_{\mc{U}}\cat{L}_i$ has homological spectrum consisting of a single point.  Since its Balmer spectrum is a single point (as once again every object is $\otimes$-faithful), this is equivalent to the Nerves of steel condition holding for this product, which is itself, by locality, equivalent to the exact-nilpotence condition in this category.  Since the ultraproduct of local categories satisfying ENC($n$) is still local, and still satisfies ENC($n$), the claim follows.

(f)$\implies$(a)  This follows by \cite[Theorem A.65]{2022arXiv220709929B}.
\end{proof}
\begin{remark}  Since it seems likely that Nullstellensatzian rational rigid 2-rings \textit{should} be called tt-fields, in the case that ultraproducts in rational rigid 2-rings are not pointlike, it would mean that one does not expect ultraproducts of a good notion of ``tt-fields'' to remain tt-fields.  This would be a point of divergence from classical algebraic geometry, where of course, ultraproducts of fields remain fields.
\end{remark}

\newpage
\section{The $\bb{E}_n$-Constructible Spectrum}\label{sec:enconsspec}  
\subsection{$\bb{E}_n$-constructible spectra} We have seen that the $\bb{E}_{\infty}$-constructible spectrum of the unit in a rigid 2-ring may not necessarily even surject onto the Balmer spectrum.  In contrast, it turns out that fixing any finite $n\geq 1$ and working with the $\bb{E}_n$-constructible spectrum, this problem can essentially be fixed because of Burklund's work \cite{burklund2022multiplicativestructuresmoorespectra}.  The natural generality to work in for the purposes of this section is with rigid $\bb{E}_n$-2-rings, $n\geq 3$. We make the following definition. 
\begin{definition}
Let $\mo{Ind}(\cat C)$ be the Ind category of a rigid $\bb{E}_m$-2-ring.  For any $1\leq n\leq m$, and any $A\in\Algn(\mo{Ind}(\cat C))$, let 
\[\Speccons_{\bb{E}_n}(A)\coloneqq \Speccons_{\Algn(\mo{Ind}(\cat C))}(A)\]
be the constructible spectrum of $A$ in the weakly spectral\footnote{See \cite[Remark A.10]{2022arXiv220709929B}.} category of $\bb{E}_n$-algebra objects in $\mo{Ind}(\cat C)$.
\end{definition}

The main goal of this subsection is to prove the following theorem.

\begin{theorem}\label{thm:nconsequalshom}
Let $\mo{Ind}(\cat C)$ be the Ind category of a rigid $\bb{E}_m$-2-ring.  Then for all $1\leq n < m$, there is a natural equivalence of sets
\[\Spc^h(\cat C)\simeq \Speccons_{\bb{E}_n}(\bb{1}_{\cat C}).\]
\end{theorem}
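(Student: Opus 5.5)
We will mirror the proof of \Cref{thm:consequalshom}, with \Cref{prop:CAlgDetection} replaced by an $\bb{E}_n$ analogue that holds integrally. Throughout, $\mo{Perf}_{\cat C}(A)$ for $A\in\Algn(\mo{Ind}(\cat C))$ with $n<m$ is a rigid $\bb{E}_{n'}$-2-ring for suitable $n'\geq 0$. Homological primes and weak rings make sense for it as long as the relevant tensor products and base changes along $\cat C\to\mo{Perf}_{\cat C}(A)$ exist. Since $\cat C$ is $\bb{E}_m$ with $m>n$, left $A$-modules form a left $\cat C$-module, and base change $-\otimes A$ gives a functor $\mo{Ind}(\cat C)\to\mo{LMod}_A$ under which weak rings and their tensor products can be compared.

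\textbf{Step 1: the $\bb{E}_n$ detection lemma.} The key input is the following. Let $E_{\mf{m}}$ be the weak ring of \Cref{rem:weakringprimes} attached to $\mf{m}\in\Spc^h(\cat C)$. We want a nonzero $\bb{E}_n$-algebra $E^n_{\mf m}$ with a map of pointed objects relating it to $E_{\mf m}$ and with the same tensor-kernel on weak rings. Concretely, we need $E^n_{\mf m}\otimes E_{\mf p}=0$ for $\mf p\neq\mf m$ and $E^n_{\mf m}\neq0$.

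The idea follows Burklund: $E_{\mf m}$ is a filtered colimit of compact pointed objects $\bb{1}\to x_\alpha$ whose pointings are not $\otimes$-nilpotent. For each such $x$, write $\bb{1}\to x$ with fiber $y\xrightarrow{g}\bb{1}$. Burklund's multiplicative structure theorem says that the quotient $\bb{1}/g^{\otimes k}=\cofib(g^{\otimes k})$ admits an $\bb{E}_n$-algebra structure once $k$ is large relative to $n$. This requires $m>n$, so that the ambient category is $\bb{E}_{n+1}$ and $y^{\otimes k}$ carries enough symmetry.

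We then take a colimit, or rather work with the system of such quotients, to build $E^n_{\mf m}$. Its support agrees with that of $E_{\mf m}$, since $\bb{1}/g^{\otimes k}$ and $\bb{1}/g$ generate the same thick ideal. We check nonvanishing via \Cref{prop:suppdetection} and \Cref{prop:weakRingtenszero}. This is the step I expect to be the main obstacle: Burklund's theorem must be made compatible with colimits, and the degree $k$ required grows with $n$ but must be uniform along the system. I would first try to work one compact quotient at a time, since Nullstellensatzian-ness only needs compact test algebras.

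\textbf{Step 2: the map and its well-definedness.} For a Nullstellensatzian $\bb{E}_n$-algebra $S$, we show that exactly one homological prime $\mf m$ has $S\otimes E_{\mf m}\neq0$.

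Existence follows because $\cat C\to\mo{Perf}(S)$ is nonzero, so some weak ring survives; this is the argument of \Cref{thm:surjspech}.

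For uniqueness, suppose $S\otimes E_{\mf m_1}$ and $S\otimes E_{\mf m_2}$ are both nonzero. Form the compact $\bb{E}_n$-$S$-algebras $S\otimes\bb{1}/g_i^{\otimes k}$ from Step 1. Since $S$ is Nullstellensatzian, each of these that is nonzero receives a splitting back to $S$. Splittings for $i=1,2$ together force $S\otimes E_{\mf m_1}\otimes E_{\mf m_2}\neq 0$, hence $\mf m_1=\mf m_2$ by \Cref{prop:weakRingtenszero}.

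Equivalent points have a common refinement $S_1\to S_3\leftarrow S_2$ by \cite[Lemma A.35]{2022arXiv220709929B}, so the map is well defined.

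\textbf{Step 3: bijectivity.} For surjectivity, given $\mf m$, take a Nullstellensatzian $S$ under the nonzero $\bb{E}_n$-algebra $E^n_{\mf m}$. Then $S\otimes E_{\mf p}=0$ for all $\mf p\neq\mf m$, so $S$ maps to $\mf m$.

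For injectivity, suppose $S$ and $S'$ both map to $\mf m$. Pick compact quotient algebras $\bb{1}/g^{\otimes k}$ witnessing $\mf m$. Both $S$ and $S'$ split them, and we use these splittings to show that the pushout $S\amalg S'$ in $\Algn$ is nonzero. Alternatively, we argue as in the proof of \Cref{thm:consequalshom}: the family $\{S_{\mf m}\}$ is jointly nil-conservative, and its basechange to $S$ stays jointly nil-conservative by a module-level version of \Cref{prop:nilconsbasechange}. If $S$ were disjoint from all $S_{\mf m}$, this would force $S=0$. Naturality in $\cat C$ is clear from the construction.
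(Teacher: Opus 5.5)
\textbf{There is a genuine gap: the comparison between $\bb{E}_n$-coproducts and tensor products is missing.} You never relate the equivalence relation that defines $\Speccons_{\bb{E}_n}$, namely nonvanishing of coproducts $S\amalg S'$ in $\Algn(\Ind(\cat C))$, to tensor products. Tensor products are all that homological support and nil-conservativity can see. In the rational proof you are mirroring, the two coincide because the coproduct in $\mo{CAlg}$ is $\otimes$. For $\bb{E}_n$-algebras they do not, and this is exactly where $n<m$ is used. It is not needed for Burklund's theorem, which already gives $\bb{E}_n$-structures in an $\bb{E}_n$-monoidal category; \Cref{prop:EnHomFields} is stated for $n\le m$.

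The missing lemma is \Cref{prop:pushoutsupportprop}. Since $n<m$, $A\otimes B$ is an $\bb{E}_n$-algebra receiving $\bb{E}_n$-maps from $A$ and $B$, hence there is a map $A\amalg B\to A\otimes B$. Combine this with:
\begin{itemize}
\item the maps $A,B\to A\amalg B$;
\item \Cref{lem:MappingSupport}: a pointed map of weak rings $A\to B$ gives $\supph(B)\subseteq\supph(A)$;
\item the tensor product property.
\end{itemize}
This yields $\supph(A\amalg B)=\supph(A)\cap\supph(B)$, so $A\amalg B\neq 0$ if and only if $A\otimes B\neq 0$.

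Without this lemma, both versions of your injectivity argument stop short. They only produce $S\otimes S'\neq 0$, or $S\otimes S_{\mf m}\neq 0$ from nil-conservativity. But ``disjoint'' in $\Speccons_{\bb{E}_n}$ means $S\amalg S'\simeq 0$. \Cref{prop:pushoutEn-1optimal} and \Cref{thm:nconsequalshomoptimal} show that at $n=m$ one can have $A\otimes B\neq 0$ with $A\amalg B\simeq 0$, and the theorem then fails. With the lemma, injectivity is immediate: equal singleton supports give $\supph(S\amalg S')\neq\emptyset$.

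\textbf{Step 1 misapplies Burklund's theorem.} His theorem requires $\bb{1}/v$ to admit a right unital multiplication. The compact pieces $\bb{1}\to x_\alpha$ of $E_{\mf m}$ are in general not weak rings. Even where quotient algebras exist, their supports are closed sets typically much larger than $\{\mf m\}$, so working ``one compact quotient at a time'' cannot isolate $\mf m$.

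No approximation is needed. $E_{\mf m}=\cofib(v\colon I_{\mf m}\to\bb{1})$ is itself a weak ring in the stably $\bb{E}_m$-monoidal category $\Ind(\cat C)$. So Burklund applies directly and gives the $\bb{E}_n$-algebra $E^n_{\mf m}=\cofib(v^{n+1})$. Its support is $\{\mf m\}$ because $E_{\mf m}\otimes v$ is null, while $E_{\mf p}\otimes v$ is an equivalence for $\mf p\neq\mf m$.

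\textbf{Step 2 relies on an unjustified compactness claim.} You apply the Nullstellensatz property to $S\otimes\bb{1}/g_i^{\otimes k}$. Nothing makes these compact as $\bb{E}_n$-$S$-algebras: a compact underlying object does not make an algebra compact. Moreover, splittings through compact quotients would not give $E_{\mf m_1}\otimes E_{\mf m_2}\neq 0$ anyway.

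The paper instead uses that $\Speccons_{\bb{E}_n}(L)$ is a point for Nullstellensatzian $L$. Suppose $\mf m\neq\mf m'$ both lie in $\supph(L)$.
\begin{itemize}
\item By the lemma above, $E^n_{\mf m}\amalg L$ and $E^n_{\mf m'}\amalg L$ are nonzero.
\item Their coproduct vanishes, since its support lies in $\{\mf m\}\cap\{\mf m'\}=\emptyset$; hence their coproduct over $L$ vanishes too.
\item This produces two distinct points of $\Speccons_{\bb{E}_n}(L)$, a contradiction.
\end{itemize}
No compactness is needed.

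\textbf{Minor point.} $\mo{Perf}(S)$ is not a tt-category for small $n$, so you cannot appeal to its homological spectrum. Existence of a point in $\supph(S)$ follows directly from support detection for the nonzero weak ring $S$.
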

As a consequence of this theorem, we can give a new topology to the homological spectrum: the constructible topology.  In its usual topology, the homological spectrum is not necessarily even $T_0$, but in the constructible topology it becomes $T_1$!  
\begin{remark}
The topology defined via the constructible topology likely agrees with the one definable from \cite{BirdWilliamson2025} through the identification of the homological spectrum with their ``closed Zeigler spectrum'' $\mo{KZg}_{\mo{Cl}}^{\otimes}(-)$, which naturally carries the topology of a $T_1$-space.
\end{remark}
Before giving the proof, we require a few generalities on homological spectra.

\begin{proposition}\label{prop:EnHomFields}  Let $\mo{Ind}(\cat C)$ be the Ind category of a rigid $\bb{E}_m$-2-ring, and take any $1\leq n\leq m$.  Let $\mf{m}\in\Spc^h(\cat C)$ be a homological prime, and let $E_{\mf{m}}$ be the corresponding homological residue field.  Write $E_{\mf{m}}=\mo{cofib}(v\colon I_{\mf{m}}\to \bb{1})$.  
	
Then the object 
\[
    E_{\mf{m}}^{n}\coloneqq \mo{cofib}(v^{n+1}\colon I_{\mf{m}}^{\otimes n+1}\to \bb{1})
\]
is an $\bb{E}_n$-algebra with homological support $\supph(E_{\mf{m}}^{n})=\{\mf{m}\}$.
\end{proposition}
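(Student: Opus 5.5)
We split the statement into two parts: constructing the $\bb{E}_n$-structure on $E_{\mf{m}}^{n}$, and computing its homological support.

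\emph{The $\bb{E}_n$-structure.} We follow Burklund \cite{burklund2022multiplicativestructuresmoorespectra}. View $v\colon I_{\mf{m}}\to\bb{1}$ as an object of the filtered category $\mo{Fun}(\bb{Z}_{\geq 0}^{op},\mo{Ind}(\cat C))$, placing $I_{\mf{m}}$ in filtration $1$. Day convolution then makes the ``$v$-adic filtration'' $\bb{1}\leftarrow I_{\mf{m}}\leftarrow I_{\mf{m}}^{\otimes 2}\leftarrow\cdots$ an $\bb{E}_m$-algebra, since it is essentially the free one on $v$. Its associated graded is the free algebra on the shifted object $I_{\mf{m}}$ placed in weight $1$. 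Burklund's obstruction-theoretic argument builds an $\bb{E}_n$-algebra structure on the truncation to weights $\leq n$, i.e.\ on the quotient $\bb{1}/v^{n+1}$. It uses the $\bb{E}_{n+1}$-structure on the ambient category: because $n<m$ we have room to take $\bb{E}_n$-algebras in an $\bb{E}_{n+1}$-monoidal category, and hence to form the relevant quotients of algebras. The key input is that truncating the weight filtration is compatible with $\bb{E}_n$-structures once the weight-$(n+1)$ pieces are killed.

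\emph{This is the step I expect to be the main obstacle.} Burklund's theorem is stated for $\bb{S}/p^k$ and for $\bb{E}_{n+1}$-rings, so we must rerun his argument in a general rigid $\bb{E}_m$-monoidal stable category. For that we need the quotient to be taken of the unit by a map out of an arbitrary object $I_{\mf{m}}$, rather than by a self-map of $\bb{1}$. I would first try to mimic his proof verbatim, replacing $\bb{S}[t]$ by the free filtered algebra on $I_{\mf{m}}$ in filtration $1$. In the setting $n=1$, where we only need $\bb{E}_1$ with $m\ge 2$, everything should go through directly.

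\emph{The support.} This should be formal. First, $E_{\mf{m}}^{n}$ carries a unit map, so it is a weak ring. By \Cref{prop:naiveequalsgenuine} we may therefore compute its naive support, i.e.\ determine for which $\mf{p}$ we have $E_{\mf{p}}\otimes E_{\mf{m}}^{n}\neq 0$.

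For $\mf{p}\neq\mf{m}$ we have $E_{\mf{p}}\otimes E_{\mf{m}}=0$ by \Cref{prop:weakRingtenszero}. Hence $E_{\mf{p}}\otimes v$ is an equivalence, and so is $E_{\mf{p}}\otimes v^{\otimes(n+1)}$. It follows that $E_{\mf{p}}\otimes E_{\mf{m}}^{n}=0$.

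For $\mf{p}=\mf{m}$, the map $v^{\otimes(n+1)}$ factors as the composite $I_{\mf{m}}^{\otimes n+1}\to I_{\mf{m}}\xrightarrow{v}\bb{1}$. The octahedral axiom then gives a map $E_{\mf{m}}^{n}\to E_{\mf{m}}$ compatible with the units. If $E_{\mf{m}}\otimes E_{\mf{m}}^{n}$ were $0$, then $E_{\mf{m}}\otimes(\bb{1}\to E_{\mf{m}})$ would factor through zero. But this map is split injective and $E_{\mf{m}}\neq 0$, a contradiction. Hence $\supph(E_{\mf{m}}^{n})=\{\mf{m}\}$.
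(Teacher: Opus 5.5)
Your support computation is correct and essentially matches the paper. For $\mf{p}\neq\mf{m}$ you argue exactly as the paper does. For $\mf{m}$ itself, your octahedral map $E_{\mf{m}}^{n}\to E_{\mf{m}}$ under $\bb{1}$ is a harmless variant of the paper's route: $E_{\mf{m}}\otimes v\simeq 0$, hence $E_{\mf{m}}\otimes v^{n+1}\simeq 0$ and $E_{\mf{m}}\otimes E_{\mf{m}}^{n}\neq 0$. One minor correction: $E_{\mf{m}}^{n}$ is a weak ring because its multiplication splits $E_{\mf{m}}^{n}\otimes(\bb{1}\to E_{\mf{m}}^{n})$, not merely because it has a unit map.

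The gap is in the $\bb{E}_n$-structure, which is the only non-formal part of the statement.

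\emph{Your filtered picture is false as stated.} The tower $\bb{1}\leftarrow I_{\mf{m}}\leftarrow I_{\mf{m}}^{\otimes 2}\leftarrow\cdots$ is not in general an $\bb{E}_m$-algebra for Day convolution. It is also not the free one: the free $\bb{E}_m$-algebra on $I_{\mf{m}}$ in filtration $1$ has underlying object $\mo{Free}_{\bb{E}_m}(I_{\mf{m}})$, not $\bb{1}$. To see the claim fails, note that killing filtrations $\geq n+1$ is a monoidal localization, since objects concentrated in filtration $\geq n+1$ form a $\otimes$-ideal. So if such a tower were an $\bb{E}_m$-algebra, then $\bb{1}/v^{n+1}$ would be $\bb{E}_m$ for \emph{every} $v$ and every $n\geq 0$. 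Already $v=2$ on $\bb{S}$ with $n=0$ would make $\bb{S}/2$ an $\bb{E}_\infty$-ring, yet $\bb{S}/2$ admits no unital multiplication.

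\emph{You never use the hypothesis that makes the statement true.} Because $E_{\mf{m}}=\bb{1}/v$ is a weak ring, a retraction of $E_{\mf{m}}\otimes(\bb{1}\to E_{\mf{m}})$ is a right unital multiplication on $\bb{1}/v$. The example above shows some such input is necessary.

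The fix is short, and it is exactly what the paper does. Burklund's Theorem 1.5 in \cite{burklund2022multiplicativestructuresmoorespectra} is not restricted to $\bb{S}/p^k$. It is formulated for an arbitrary stably $\bb{E}_m$-monoidal $\infty$-category and an arbitrary map $v\colon I\to\bb{1}$ whose cofiber admits a right unital multiplication. It yields an $\bb{E}_n$-structure on $\bb{1}/v^{n+1}$, so nothing needs to be re-run.

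It also covers $n=m$, which your ``$n<m$'' framing would miss; compare \Cref{thm:RobertThm}, which only needs the ambient category to be $\bb{E}_n$-monoidal.

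Its proof runs through the deformation $\mc{D}(\cat{C},\cat{E}(v))$ recalled in \Cref{sec:geompts}, not through a $v$-adic filtration. The weak-ring hypothesis is what places $\bb{1}/v\to\Sigma I$ in the epimorphism class $\cat{E}(v)$, which produces the lift $\tilde v$. The obstructions of \Cref{thm:obstruction-theory} for $\bb{1}/\tilde v^{n+1}$ then vanish for degree reasons, and one finishes by inverting $\tau$.
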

\begin{proof}
Since $E_{\mf{m}}$ is a weak ring, it admits a right unital multiplication.  Fixing a choice of right unital multiplication, we find that $E_{\mf{m}}^n$ inherits an $\bb{E}_n$-algebra structure by \cite[Theorem~1.5]{burklund2022multiplicativestructuresmoorespectra}.  Since $E_{\mf{m}}\otimes v$ is nullhomotopic, we have that $E_{\mf{m}}\otimes E_{\mf{m}}^n\neq 0$, so $\mf{m}\in\supph(E_{\mf{m}}^n)$.

On the other hand, if we had any $\mf{m}^{\prime}\in\Spc^h(\cat C)$ different from $\mf{m}$, $E_{\mf{m}^{\prime}}\otimes E_{\mf{m}}\simeq 0$ implies that $E_{\mf{m}^{\prime}}\otimes v$ is an equivalence, and then so too is $E_{\mf{m}^{\prime}}\otimes v^{n+1}$, and hence $E_{\mf{m}^{\prime}}\otimes E_{\mf{m}}^{n}\simeq 0$ as well.
\end{proof}
We make use of the following simple observation.
\begin{lemma}\label{lem:MappingSupport}
If $A,B$ are weak rings in $\mo{Ind}(\cat C)$, and there is a map \[A\to B\] of pointed objects, then 
\[\supph(B)\subseteq \supph(A).\]
In particular, if $B\neq 0$ and $\supph(A)$ is a single point, then $\supph(B)=\supph(A)$.
\end{lemma}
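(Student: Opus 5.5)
The plan is to reduce the inclusion to a statement about a single homological prime, using the tensor product property of the genuine support together with the fact that for weak rings the naive and genuine supports agree. Since $A$ and $B$ are weak rings, \Cref{prop:naiveequalsgenuine} lets us write $\supph(A)=\{\mf{m}\colon A\otimes E_{\mf{m}}\neq 0\}$, and similarly for $B$. So it suffices to fix $\mf{m}\notin\supph(A)$, that is, a prime with $A\otimes E_{\mf{m}}\simeq 0$, and show that $B\otimes E_{\mf{m}}\simeq 0$.

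For this, the key is to use the weak ring structure of $B$ to transfer the vanishing along the map of pointed objects $h\colon A\to B$. Write $\eta_A\colon\bb{1}\to A$ and $\eta_B=h\circ\eta_A$ for the pointings. Because $B$ is a weak ring, $B\otimes\eta_B\colon B\to B\otimes B$ admits a retraction $\mu$. The map $B\otimes\eta_B$ factors as
\[
B\xrightarrow{B\otimes\eta_A}B\otimes A\xrightarrow{B\otimes h}B\otimes B,
\]
so $\mathrm{id}_B=\mu\circ(B\otimes h)\circ(B\otimes\eta_A)$. This exhibits $B$ as a retract of $B\otimes A$. Tensoring with $E_{\mf{m}}$, we get that $B\otimes E_{\mf{m}}$ is a retract of $B\otimes A\otimes E_{\mf{m}}\simeq 0$, and hence $B\otimes E_{\mf{m}}\simeq 0$. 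This gives $\supph(B)\subseteq\supph(A)$. Alternatively, one could note that $B$ being a retract of $B\otimes A$ gives $\supph(B)\subseteq\supph(B)\cap\supph(A)$ directly by \Cref{prop:tensorproductprop}.

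For the final claim, suppose $B\neq 0$. Since $B$ is a weak ring, \Cref{prop:suppdetection} shows $\supph(B)\neq\emptyset$. Combined with the inclusion into the singleton $\supph(A)$, this forces equality.

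The only delicate point is the retraction argument, namely checking that the retraction of $B\otimes\eta_B$ yields a splitting through $B\otimes A$. This is a formal manipulation rather than a genuine obstacle: it only uses that $h$ is compatible with the pointings, so no multiplicative compatibility of $h$ is needed.
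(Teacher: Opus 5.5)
Your proof is correct and is essentially the paper's argument. The paper observes that $E_{\mf{m}}\otimes B$ is a weak ring whose unit factors through $E_{\mf{m}}\otimes A\simeq 0$, hence vanishes; you first use the splitting of $B\otimes\eta_B$ to exhibit $B$ as a retract of $B\otimes A$ and then tensor with $E_{\mf{m}}$, which is the same idea packaged so that you do not need tensor products of weak rings to be weak rings, and your appeal to \Cref{prop:suppdetection} supplies the justification for the final claim that the paper leaves implicit.
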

\begin{proof}
Suppose that we are given $\mf{m}\notin \supph(A)$, so that $E_{\mf{m}}\otimes A\simeq 0$.  Note that $E_{\mf{m}}\otimes B$ is a tensor product of weak rings, so is itself a weak ring.  Using that the map \[\bb{1}\to E_{\mf{m}}\otimes B\] factors over \[\bb{1}\to E_{\mf{m}}\otimes A\simeq 0\to E_{\mf{m}}\otimes B\] 
the zero map, we see that we must have $E_{\mf{m}}\otimes B\simeq 0$ as well.
\end{proof}

\begin{proposition}\label{prop:pushoutsupportprop}  Let $A, B$ be $\bb{E}_n$-algebras in the Ind category of a rigid $\bb{E}_m$-2-ring $\mo{Ind}(\cat C)$, with $1\leq n < m$.  Then, we have that 
	\[\supph(A\coprod B)=\supph(A)\cap \supph(B)=\supph(A\otimes B),\]
and in particular, $A\coprod B\neq 0$ if and only if $A\otimes B\neq 0$.	
\end{proposition}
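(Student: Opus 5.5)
We prove the chain of inclusions $\supph(A\otimes B)\subseteq\supph(A)\cap\supph(B)\subseteq\supph(A\coprod B)\subseteq\supph(A\otimes B)$; the final claim then follows. For the last step we use \Cref{prop:suppdetection} on weak rings, together with the observation that nonzero unital algebras are weak rings: an algebra's multiplication splits $R\otimes(\bb{1}\to R)$.

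First, $\supph(A\otimes B)=\supph(A)\cap\supph(B)$ is the tensor product property, \Cref{prop:tensorproductprop}. Note also that $A\otimes B$ is at least a weak ring, since it is a tensor product of weak rings.

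For $\supph(A\coprod B)\subseteq\supph(A)\cap\supph(B)$, we use the canonical $\bb{E}_n$-maps $A\to A\coprod B$ and $B\to A\coprod B$. These are maps of pointed objects between weak rings, so \Cref{lem:MappingSupport} gives $\supph(A\coprod B)\subseteq\supph(A)$, and likewise for $B$.

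The reverse inclusion is the main obstacle, since the coproduct of $\bb{E}_n$-algebras is not the tensor product. Since $n<m$, the tensor product $A\otimes B$ does not itself carry an $\bb{E}_n$-structure. There is, however, a canonical map of pointed objects, indeed of left $A$- and right $B$-modules, $A\otimes B\to A\coprod B$, given by multiplying the two structure maps inside $A\coprod B$; this uses the $\bb{E}_m$-structure with $m\geq n+1\ge 2$ so that $\otimes$ is monoidal enough. That map only yields $\supph(A\coprod B)\subseteq\supph(A\otimes B)$ again, which is the direction we already have.

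We therefore argue pointwise. Fix $\mf{m}\in\supph(A)\cap\supph(B)$; we must show $E_{\mf{m}}\otimes(A\coprod B)\neq0$. The plan is to replace $E_{\mf{m}}$ by the $\bb{E}_n$-algebra $E^{n}_{\mf{m}}$ of \Cref{prop:EnHomFields}, which has support $\{\mf{m}\}$. Since $\mf m\in\supph(A)$, the object $E^n_{\mf m}\otimes A$ is nonzero. Since $n<m$, the $\bb{E}_n$-algebra $E^n_{\mf m}\otimes A$ has a natural $\bb{E}_n$-structure, and it is the coproduct-like object over which we can base change. Concretely, $A\coprod E^n_{\mf m}$ maps to $A\otimes E^n_{\mf m}$ via an $\bb{E}_n$-map, because $\bb{E}_{n}$-algebras in an $\bb{E}_{n+1}$-monoidal category form a monoidal category and $\otimes$ receives a map from the coproduct. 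Hence $A\coprod E^n_{\mf m}\neq 0$, and by \Cref{lem:MappingSupport} its support is $\{\mf m\}$.

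Next we reduce to algebras supported at the single point $\mf m$ and use the pointlike nature there. Set $A'=A\coprod E^n_{\mf m}$ and $B'=B\coprod E^n_{\mf m}$. Both are nonzero with support $\{\mf m\}$, and $A\coprod B\to A'\coprod_{E^n_{\mf m}}B'$ is an $\bb{E}_n$-map. Applying the same reasoning to $\Mod_{E^n_{\mf m}}$ reduces us to showing that two nonzero $\bb{E}_n$-algebras with support exactly $\{\mf m\}$ have nonzero coproduct. For this we compare with $A'\otimes B'$, which is nonzero by \Cref{prop:weakRingtenszero} and \Cref{prop:suppdetection}, since its support is $\{\mf m\}\neq\emptyset$. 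We then exhibit $A'\otimes B'$ as receiving a map from, or being a retract of a module over, $A'\coprod B'$, using the fact that $\otimes$ on $\bb{E}_n$-algebras with $n<m$ makes $A'\otimes B'$ an $\bb{E}_n$-algebra receiving $\bb{E}_n$-maps from both $A'$ and $B'$. By the universal property of the coproduct we obtain a map $A'\coprod B'\to A'\otimes B'\neq0$, so $A'\coprod B'\neq 0$. Hence $A\coprod B\neq0$ after tensoring with $E^n_{\mf m}$, and therefore $\mf m\in\supph(A\coprod B)$.

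This last observation makes the detour unnecessary in general. Whenever $n<m$, $A\otimes B$ is an $\bb{E}_n$-algebra with $\bb{E}_n$-maps from $A$ and $B$, so the universal property gives an $\bb{E}_n$-map $A\coprod B\to A\otimes B$. Then \Cref{lem:MappingSupport} gives $\supph(A\otimes B)\subseteq\supph(A\coprod B)$ directly. The step to double-check is precisely that $\bb{E}_n$-algebras in an $\bb{E}_m$-monoidal category with $m>n$ are closed under $\otimes$ with unital structure maps $A\to A\otimes B\leftarrow B$; this is the standard Dunn additivity statement, and it is where the hypothesis $n<m$ enters.
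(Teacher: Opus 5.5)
Your closing paragraph is exactly the paper's proof: for $n<m$, $A\otimes B$ is an $\bb{E}_n$-algebra receiving $\bb{E}_n$-maps from $A$ and $B$, so the universal property gives an $\bb{E}_n$-map $A\coprod B\to A\otimes B$; combined with \Cref{lem:MappingSupport}, the coproduct inclusions and the tensor product property, this yields the chain of equalities. You should delete the earlier sentence claiming that $A\otimes B$ carries no $\bb{E}_n$-structure when $n<m$, since it is false and contradicts your own conclusion; also remove the detour through $E^n_{\mf m}$, which is unnecessary and not cleanly justified as written (e.g.\ modules over the mere $\bb{E}_n$-algebra $E^n_{\mf m}$ are only $\bb{E}_{n-1}$-monoidal).
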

\begin{proof}
Since $n<m$, $A\otimes B$ can be given a canonical $\bb{E}_n$-algebra structure, and the maps $A\to A\otimes B, B\to A\otimes B$ can be made canonically $\bb{E}_n$. 

Thus, there exists an $\bb{E}_n$-algebra map \[A\coprod B\to A\otimes B\] by the universal property, so \Cref{lem:MappingSupport} ensures that
\[\supph(A\otimes B)\subseteq \supph(A\coprod B).\]
Similarly, the $\bb{E}_n$-algebra maps
\[A\to A\coprod B \quad \text{ and } \quad B\to A \coprod B\]
show that 
\[\supph(A\coprod B)\subseteq \supph(A)\cap \supph(B).\]
Since $\supph(A\otimes B)= \supph(A)\cap\supph(B)$ by \Cref{prop:tensorproductprop}, we get the desired equality. 

The final claim follows from \Cref{prop:suppdetection}.
\end{proof}

Now we can finally begin to introduce the constructible spectrum to the picture, by way of describing homological support of Nullstellensatzian $\bb{E}_n$-algebras:
\begin{proposition}\label{prop:NSianPointlikeHomSupport}
Let $L$ be a Nullstellensatzian $\bb{E}_n$-algebra in $\mo{Ind}(\cat C)$, with $n<m$.  Then the homological support $\supph(L)$ of $L$ consists of a single point in the homological spectrum.
\end{proposition}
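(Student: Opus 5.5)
We show separately that $\supph(L)$ is nonempty and that it contains at most one point.

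\emph{Nonemptiness.} Since $L$ is Nullstellensatzian, $L\neq 0$. The underlying pointed object $\bb{1}\to L$ of an $\bb{E}_n$-algebra with $n\geq 1$ is a weak ring: the multiplication $L\otimes L\to L$ retracts $L\otimes(\bb{1}\to L)$. Hence \Cref{prop:suppdetection} applies and gives $\supph(L)\neq\emptyset$.

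\emph{At most one point.} Suppose $\mf{m}\in\supph(L)$, and let $E_{\mf{m}}^{n}$ be the $\bb{E}_n$-algebra of \Cref{prop:EnHomFields}, which satisfies $\supph(E_{\mf{m}}^n)=\{\mf{m}\}$.
\begin{enumerate}
\item By \Cref{prop:pushoutsupportprop}, $\supph(L\coprod E_{\mf{m}}^n)=\supph(L)\cap\{\mf{m}\}=\{\mf{m}\}$. So the pushout $L\coprod E^n_{\mf{m}}$ in $\Algn$ is nonzero by \Cref{prop:suppdetection}.
\item We want to use the Nullstellensatzian property of $L$, which only applies to compact $L$-algebras. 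So we write $E_{\mf{m}}^n$ as a filtered colimit of compact $\bb{E}_n$-algebras $C_\alpha$ in $\Algn(\mo{Ind}(\cat C))$. Then $L\coprod E_{\mf{m}}^n\simeq\colim_\alpha L\coprod C_\alpha$, and each $L\coprod C_\alpha$ is a compact object of $\Algn(\mo{Ind}(\cat C))_{L/}$.
\item Each $L\coprod C_\alpha$ is nonzero, since otherwise the colimit would vanish; this uses that the zero algebra is compact, as in weak spectrality. By the Nullstellensatzian property, each $L\coprod C_\alpha$ therefore admits an $L$-algebra retraction back to $L$.
\item Fix one index $\alpha$. Composing $E_{\mf{m}}^n\leftarrow C_\alpha\to L\coprod C_\alpha\to L$ gives a map of pointed objects $C_\alpha\to L$, but not yet from $E^n_{\mf{m}}$ itself. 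To fix this, I would instead choose the compact algebra $C_\alpha$ so that $\supph(C_\alpha)$ already excludes a given prime $\mf{m}'\neq\mf{m}$. Concretely, $E_{\mf{m}'}\otimes E_{\mf{m}}^n=0$, and this vanishing is witnessed at a compact stage: the free $\bb{E}_n$-algebra on a compact pointed subobject through which the unit of $E_{\mf{m}'}\otimes E^n_{\mf{m}}$ factors to zero. So there is a compact $C_\alpha$ mapping to $E^n_{\mf{m}}$ with $E_{\mf{m}'}\otimes C_\alpha=0$.
\item We then get pointed maps $C_\alpha\to L$ between weak rings, and \Cref{lem:MappingSupport} gives $\supph(L)\subseteq\supph(C_\alpha)\not\ni\mf{m}'$.
\end{enumerate}
Since $\mf{m}'\neq\mf{m}$ was arbitrary, $\supph(L)\subseteq\{\mf{m}\}$, and together with nonemptiness this proves the claim.

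The main obstacle is step (d), the compactness bookkeeping: finding, for each $\mf{m}'\neq\mf{m}$, a compact $\bb{E}_n$-algebra $C$ with maps $C\to E^n_{\mf{m}}$ and $C\coprod L\neq 0$, such that $E_{\mf{m}'}\otimes C=0$. I would first try this as follows. Write $E_{\mf{m}'}\otimes v^{n+1}$ as an equivalence, and use rigidity to approximate the cofiber $E^n_{\mf{m}}$ by finite stages $\mo{cofib}(x\to\bb{1})$ with $x$ compact mapping to $I_{\mf{m}}^{\otimes n+1}$. Then take free $\bb{E}_n$-algebras on these compact $\bb{E}_0$-algebras, and check that tensoring with $E_{\mf{m}'}$ kills them once the stage is large enough. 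If this fails, the fallback is to argue directly with the colimit and the closedness of images in the constructible spectrum.
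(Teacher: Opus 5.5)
Your proof is correct, and it takes a different route from the paper's. The step you flagged in (d) is in fact immediate, so you need neither the approximation by $\mo{cofib}(x\to\bb{1})$ nor the fallback. Write $E^n_{\mf{m}}\simeq\colim_\alpha C_\alpha$ with each $C_\alpha$ compact in $\Algn(\mo{Ind}(\cat C))$. The forgetful functor preserves filtered colimits, so $0\simeq E_{\mf{m}'}\otimes E^n_{\mf{m}}\simeq\colim_\alpha E_{\mf{m}'}\otimes C_\alpha$. Since $\bb{1}$ is compact, the unit of $E_{\mf{m}'}\otimes C_\alpha$ is null for some $\alpha$. This object is a tensor product of weak rings, hence a weak ring, and a weak ring with null unit is zero. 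For this $C_\alpha$, the nonvanishing of $L\coprod C_\alpha$ follows from the map $L\coprod C_\alpha\to L\coprod E^n_{\mf{m}}\neq 0$. So the compactness of the zero algebra you invoke in (c) is not needed; that would only matter for the converse implication.

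The paper argues differently, by contradiction with two distinct points $\mf{m},\mf{m}'\in\supph(L)$. By \Cref{prop:pushoutsupportprop}, the $L$-algebras $E^n_{\mf{m}}\coprod L$ and $E^n_{\mf{m}'}\coprod L$ are nonzero, while their pushout over $L$ vanishes. They therefore cut out disjoint nonempty closed subsets of $\Speccons_{\bb{E}_n}(L)$, which is impossible because this space is a single point for Nullstellensatzian $L$ \cite[Proposition~A.31(2)]{2022arXiv220709929B}.

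Your version instead unpacks the Nullstellensatz condition by hand: a compact approximation, a retraction $L\coprod C\to L$, and \Cref{lem:MappingSupport} to transport supports along $C\to L$. It needs only one auxiliary algebra, $E^n_{\mf{m}}$. The cost is the compactness bookkeeping, which the paper hides inside the general pointlikeness result; that result is proved by essentially the same compact-approximation argument. You also make explicit the nonemptiness of $\supph(L)$ via \Cref{prop:suppdetection}, which the paper's proof leaves implicit.
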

\begin{proof}
Indeed, suppose that $\mf{m}$ and $\mf{m}^{\prime}$ are two distinct points in $\supph(L)$. By \Cref{prop:pushoutsupportprop}, the following two $\bb{E}_n$-algebras under $L$
\[E_{\mf{m}}^n\coprod L, E_{\mf{m}^{\prime}}^n\coprod L\]
are nonzero, but
\[(E_{\mf{m}}^n\coprod L)\coprod (E_{\mf{m}^{\prime}}^n\coprod L)\simeq 0.\]
This in turn implies that
\[(E_{\mf{m}}^n\coprod L)\coprod_{L} (E_{\mf{m}^{\prime}}^n\coprod L)\simeq 0\]
as well.  

This tells us that the images of the two maps on constructible spectra of induced by these algebras are disjoint.  However, we assumed that $L$ was Nullstellensatzian, so that $\Speccons_{\bb{E}_n}(L)$ is a single point by \cite[Proposition A.31(2)]{2022arXiv220709929B}. Therefore, the constructible spectrum of one of these algebras must actually be empty, and that algebra must then be zero, a contradiction.
\end{proof}
Finally, we can prove the theorem.
\begin{proof}[Proof of \Cref{thm:nconsequalshom}]
We claim that the map
\[\Speccons_{\bb{E}_n}(\bb{1}_{\cat C})\to \Spc^h(\cat C)\]
taking any point of the source, represented by a Nullstellensatzian $\bb{E}_n$-algebra $L$, to the point $\supph(L)$, is a bijection.  

For injectivity, note that if $L_1$ and $L_2$ are two Nullstellensatzian $\bb{E}_n$-algebras with $\supph(L_1)=\supph(L_2)$, then \Cref{prop:pushoutsupportprop} implies 
\[\supph(L_1\coprod L_2)=\supph(L_1)\cap \supph(L_2)\neq \emptyset,\]
and in particular $L_1\coprod L_2\neq 0$, so that $L_1, L_2$ represent the same point in the constructible spectrum.

For surjectivity, fix an arbitrary $\mf{m}\in\Spc^h(\cat C)$, take the $\bb{E}_n$-ring $E_{\mf{m}}^n$ from \Cref{prop:EnHomFields}, and choose some Nullstellensatzian $\bb{E}_n$-algebra $L$ under it.  Since $L$ is non-zero, and \[\supph(E_{\mf{m}}^n)=\{\mf{m}\}\] is a singleton, \Cref{lem:MappingSupport} shows that \[\supph(L)=\{\mf{m}\}\] as well.  Since the prime $\mf{m}$ was arbitrary, surjectivity follows. 
\end{proof}

\begin{remark}
Given an $\bb{E}_n$-algebra $R$, one can also consider $R$ as an $\bb{E}_k$-algebra for any $1\leq k\leq n$, and define a constructible spectrum that way.  This will in general be different from the homological spectrum as sets.  For example, consider a field $K$, and take the polynomial ring on $K$ in one variable, $K[t]$.  The map 
\[
    K[t]\to \mo{End}_{K}\left(\bigoplus_{\bb{N}}K\right)
\]
sending $t$ to the endomorphism $e_i\mapsto e_{2i}$ induces a map the other way on $\bb{E}_1$-constructible spectra.  We claim that the image of this map misses the usual (commutative) spectrum of $K[t]$.

First, note that since $t$ maps to an element which has a left inverse, we cannot find a common factorization of the map to this algebra with the residue field where $t=0$.

In the same vein, we can choose two left inverses, e.g. $f\colon e_{2i}\mapsto e_{i}$, $e_{2i+1}\mapsto 0$, and $g\colon e_{2i}\mapsto e_i$, $e_{2i+1}\mapsto e_{i}$, such that $g-f$ has a right inverse, and so there is no algebra map out of the target where $f=g$.  This tells us that there is no map to a non-zero algebra where $t$ obtains a right inverse.  In particular, we cannot have a common refinement of this associative ring map together with a residue field of $K[t]$ where $t\neq 0$, so the constructible spectrum of $K[t]$ considered as an associative algebra is strictly larger than its constructible spectrum in commutative rings (which agrees with its homological spectrum).
\end{remark}
\subsection{\Cref{thm:nconsequalshom} is sharp}  We will now include a specific example to show that the requirement of $n<m$ in \Cref{prop:pushoutsupportprop} is sharp, and we will use this to show the same is true in \Cref{thm:nconsequalshom}.  For the purpose of this section, fix some $n\geq 4$, and consider the free $\bb{E}_n$-$\bb{F}_2$-algebra $R\coloneqq \bb{F}_2\{x,y\}_n$ on two generators $x,y$ in degree 0.  The category $\mo{Perf}(R)$ is naturally a rigid $\bb{E}_{n-1}$-2-ring, which we now proceed to analyze.
\begin{notation}
Fix some basis $E$ of the free $\bb{F}_2$-Lie algebra on two generators $x,y$, such that 
\begin{enumerate}
\item  The generators $x,y$ are in $E$.
\item  The bracket between the generators, $[x,y]$, is in $E$.
\item  Every basis vector in $E$ can be written as an expression involving only brackets, $x$s and $y$s.
\end{enumerate}
\end{notation}
The exact choice of basis itself isn't so important, as long as it satisfies the properties above. 
\begin{proposition}\label{prop:F2Enbasis2elmts}
The homotopy ring of $R$ is isomorphic to a graded polynomial ring:
\[\pi_*(R)\simeq \bb{F}_2[Q_I(e)\colon e\in E]\]
for admissible sequences $I=(a_1,\ldots,a_{n-1})$ of non-negative integers, and basis vectors $e\in E$.  The grading on a basis vector $e$ is determined by the rules $|x|=|y|=0$, and $|[-,-]|=n-1$.
\end{proposition}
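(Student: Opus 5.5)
The plan is to compute $\pi_*(R)$ by splitting it into two known inputs. The first is the homology of free $\bb{E}_n$-algebras in spectra, and the second is the classical description of free $\bb{E}_n$-algebras over $\bb{F}_2$ via Cohen's computation of $H_*(\Omega^n\Sigma^n X;\bb{F}_2)$. The free $\bb{E}_n$-$\bb{F}_2$-algebra on a module $V$ is
\[
\bigoplus_k (\mathcal{E}_n(k)\otimes V^{\otimes k})_{h\Sigma_k}.
\]
Take $V=\bb{F}_2 x\oplus \bb{F}_2 y$, so that $V=\bb{F}_2\otimes \Sigma^\infty_+(\ast\sqcup\ast)$ away from the unit. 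Then
\[
R\simeq \bb{F}_2\otimes \Sigma^\infty_+ C_n(S^0\vee S^0)=\bb{F}_2\otimes \Sigma^\infty_+ \Big(\coprod_k \mathrm{Conf}_k(\bb{R}^n)\times_{\Sigma_k}\{x,y\}^k\Big),
\]
which is the free $\bb{E}_n$-space on two points. The homotopy of $R$ is therefore the mod $2$ homology of this free $\bb{E}_n$-space. By the group completion theorem, this homology embeds into $H_*(\Omega^n\Sigma^n(S^0\vee S^0);\bb{F}_2)$ as the part with nonnegative components, i.e.\ before inverting the classes of $x$ and $y$.

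Next I would invoke Cohen's theorem (Cohen--Lada--May, \emph{The homology of iterated loop spaces}, III). For a space $X$, $H_*(C_nX;\bb{F}_2)$ is the free commutative algebra, subject to the allowable-$R$-module relations, generated by the Dyer--Lashof operations $Q_I$ (the operations $Q^{s}$ restricted to the range below the top operation, i.e.\ $I=(a_1,\dots,a_{n-1})$ admissible in the paper's indexing) applied to a basis of the free restricted-free $(n-1)$-Poisson/Browder Lie algebra on $\tilde H_*(X)$. Over $\bb{F}_2$, the restriction (the ``top'' operation $\xi_{n-1}$, which squares for the Browder bracket) is absorbed into the $Q_I$ indexing. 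The result is therefore the polynomial algebra on $Q_I(e)$ for $e$ running through a basis of the free (unrestricted) Lie algebra in the bracket of degree $n-1$. Here $\tilde H_*(S^0\vee S^0)$ has basis $x,y$ in degree $0$, and since $x,y$ are honest elements we are in the connected-free $C_n$, not the group completion. So $\pi_*(R)\cong \bb{F}_2[Q_I(e)\colon e\in E]$. The degree of an iterated bracket $e$ with $\ell$ letters is $(\ell-1)(n-1)$, matching the stated rule $|[-,-]|=n-1$.

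The main obstacle is bookkeeping rather than new mathematics. First, Cohen's statement is usually phrased for $\Omega^n\Sigma^n X$ with $X$ connected, or for the group completion. I would check that for $C_n$ of a discrete pointed set, the approximation $C_n X$ has homology given by the same formula without inverting the component classes; the polynomial description survives since $x,y$ are not inverted. Second, the basis $E$ must be taken for the free Lie algebra on $x,y$ with the Browder bracket, with the top operation over $\bb{F}_2$ matched to the admissibility convention for $I=(a_1,\dots,a_{n-1})$. The conditions (a)--(c) on $E$ are only needed downstream to name $x,y,[x,y]$ as generators. I would cite Cohen for the bracket/operation relations, and check the $n\geq 4$ assumption only insofar as it places $[x,y]$ in positive degree $n-1$ distinct from the $Q_I$ degrees used later.
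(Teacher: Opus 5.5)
Your proposal is correct and is essentially the paper's argument: the paper only cites \cite[Theorem~5.5]{lawson2020enringspectradyerlashof}, which is the module-level restatement of Cohen's computation of $H_*(C_nX;\bb{F}_2)$ that you reach through $R\simeq \bb{F}_2\otimes\Sigma^\infty_+ C_n(S^0\vee S^0)$. One aside is inaccurate but unused: the image of $H_*(C_nX)$ in the group completion is not ``the part with nonnegative components'', since the group completion is a localization of the polynomial ring, and you do not need this because Cohen's theorem applies to $C_nX$ directly for every $X$.
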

\begin{proof}
This follows from \cite[Theorem~5.5]{lawson2020enringspectradyerlashof}.
\end{proof}
\begin{construction}\label{cons:En-1Cofibers}
Consider the $\bb{E}_{n-1}$-cofiber of $x\colon\bb{F}_2\to R$, denoted by $R{\sslash}^{n-1}x$.  By \cite[Lemma~2.2]{riedel2025reducedpointsmathbbeinftyringspositive}, we have that 
\[R{\sslash}^{n-1}x\simeq R\otimes_{\bb{F}_2\{x\}_n}\bb{F}_2,\]
and in particular, this algebra has homotopy groups given by the quotient of $R$ by the regular sequence $\{Q_{I}(x)\}$ over all admissible sequences $I$.
\end{construction}
\begin{proposition}\label{prop:pushoutEn-1optimal}
There exists $\bb{E}_{n-1}$-$R$-algebras $A,B$ such that $A\otimes_{R}B\neq 0$, but $A\coprod_{R}B\simeq 0$ in the category of $\bb{E}_{n-1}$-$R$-algebras.  In particular, the requirement that $n<m$ in the statement of \Cref{prop:pushoutsupportprop} is necessary.
\end{proposition}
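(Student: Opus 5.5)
We take $A\coloneqq R{\sslash}^{n-1}x$ and $B\coloneqq R{\sslash}^{n-1}y$, the $\bb{E}_{n-1}$-cofibers of \Cref{cons:En-1Cofibers}. These are the universal $\bb{E}_{n-1}$-$R$-algebras in which $x$, respectively $y$, is nullhomotopic. It then suffices to establish two facts: the tensor product $A\otimes_R B$ is nonzero, and any $\bb{E}_{n-1}$-$R$-algebra receiving maps from both is zero.

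For the tensor product we use \Cref{cons:En-1Cofibers}, which gives $A\simeq R\otimes_{\bb{F}_2\{x\}_n}\bb{F}_2$, and similarly for $B$. Hence
\[
A\otimes_R B\simeq R\otimes_{\bb{F}_2\{x\}_n\otimes\bb{F}_2\{y\}_n}\bb{F}_2 .
\]
By \Cref{prop:F2Enbasis2elmts}, $\pi_*R$ is polynomial on the classes $Q_I(e)$. The classes $Q_I(x)$ and $Q_I(y)$ are among the polynomial generators, so together they form a regular sequence. Therefore
\[
\pi_*(A\otimes_R B)\cong \bb{F}_2[Q_I(e)\colon e\in E\setminus\{x,y\}]\neq 0 .
\]
Here we need that the map $\bb{F}_2\{x\}_n\otimes\bb{F}_2\{y\}_n\to R$ is flat on homotopy, which follows because it is a polynomial inclusion; a Künneth or Tor computation then finishes this step.

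For the pushout, let $C$ be a nonzero $\bb{E}_{n-1}$-$R$-algebra receiving $\bb{E}_{n-1}$-maps from $A$ and from $B$. In $C$ both $x$ and $y$ are then nullhomotopic, compatibly with their $\bb{E}_{n-1}$-structure. The key input is that the bracket $[x,y]$ in $R$ is the $\bb{E}_n$-Browder bracket of degree $n-1$. This bracket is an $\bb{E}_n$-operation, not an $\bb{E}_{n-1}$-operation, so it is not forced to vanish in $C$. The argument we intend is this. The bracket $[x,y]$ is the obstruction to the two nullhomotopies of $x$ and $y$ commuting in the $\bb{E}_n$ sense. In an $\bb{E}_{n-1}$-algebra under $R$ in which $x$ and $y$ are both killed, the two chosen nullhomotopies produce an $(n-1)$-dimensional "commutator" class. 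This class exhibits the image of $[x,y]$ (or of a Dyer–Lashof operation on it) as both a unit and zero.

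Concretely, we would compute in the $\bb{E}_{n-1}$-centralizer $Z_{\bb{E}_{n-1}}(C)$, which is an $\bb{E}_n$-algebra receiving a map from $R$ as an $\bb{E}_n$-algebra. There the nullhomotopies of $x$ and $y$ give nullhomotopies in an $\bb{E}_n$-ring, and then $[x,y]$ maps to the Browder bracket of the two nullhomotopies. By the Jacobi/Leibniz rule for Browder brackets with $x=0$, this yields a relation that forces the unit to be zero, in the spirit of the computation with $\mathbb{Z}/2$-Moore objects in Burklund's work. This step is the main obstacle: making precise that the mixed nullhomotopies force $1=0$ in $C$. It is exactly the dual of \Cref{prop:pushoutsupportprop}'s use of the $\bb{E}_{n}$-structure on $A\otimes B$. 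As a fallback, we would choose the algebras so that $[x,y]$ becomes invertible, for instance replacing $R$ by its localization inverting $[x,y]$. This would also serve \Cref{thm:nconsequalshom}, since there one only needs $A\otimes_R B\neq0$ together with the vanishing of the coproduct.

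Given both facts, $\supph(A\otimes_R B)\neq\emptyset$ by \Cref{prop:suppdetection}, while $A\coprod_R B\simeq 0$. This shows that \Cref{prop:pushoutsupportprop} fails for $n=m$.
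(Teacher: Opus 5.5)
\textbf{Gap: your main choice of $A,B$ fails, because the coproduct of $R{\sslash}^{n-1}x$ and $R{\sslash}^{n-1}y$ is not zero.}

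The augmentation $R=\bb{F}_2\{x,y\}_n\to\bb{F}_2$, sending $x,y\mapsto 0$, is an $\bb{E}_n$-map. So $\bb{F}_2$ is a nonzero $\bb{E}_{n-1}$-$R$-algebra that receives $\bb{E}_{n-1}$-$R$-algebra maps from both $A=R{\sslash}^{n-1}x$ and $B=R{\sslash}^{n-1}y$, hence from $A\coprod_R B$. In fact, the lemma of Riedel used in \Cref{cons:En-1Cofibers} gives
\[
A\coprod_R B\simeq R{\sslash}^{n-1}(x,y)\simeq R\otimes_{\bb{F}_2\{x,y\}_n}\bb{F}_2\simeq\bb{F}_2 .
\]
So your heuristic is backwards. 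The two $\bb{E}_{n-1}$-nullhomotopies of $x$ and $y$ do not force $1=0$ through the Browder bracket. What they force is that $[x,y]$ maps to zero in every $\bb{E}_{n-1}$-$R$-algebra under both $A$ and $B$, since its image factors through $\pi_{n-1}\bb{F}_2=0$. Nothing in your setup makes $[x,y]$ a unit, so there is no contradiction to extract. The centralizer/Jacobi argument you sketch therefore cannot be completed.

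\textbf{The fix is your fallback, and it is the paper's construction.} Take $A=R{\sslash}^{n-1}x[[x,y]^{-1}]$ and $B=R{\sslash}^{n-1}y$. Inverting $[x,y]$ in $R$ instead would serve equally well.
\begin{itemize}
\item By the identification above, $A\coprod_R B\simeq R{\sslash}^{n-1}(x,y)[[x,y]^{-1}]\simeq\bb{F}_2[0^{-1}]\simeq 0$.
\item On the other hand, $\pi_*(A\otimes_R B)\cong\bb{F}_2[Q_I(e)\colon e\in E\setminus\{x,y\}][[x,y]^{-1}]$. This is nonzero because $[x,y]\in E\setminus\{x,y\}$ is itself one of the polynomial generators; this is where property (b) of the basis $E$ enters.
\end{itemize}
Your regular-sequence computation of the tensor product is fine and survives the localization.

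What your proposal lacks is the computation of the $\bb{E}_{n-1}$-pushout via $R\otimes_{\bb{F}_2\{x,y\}_n}\bb{F}_2$. That computation is what shows $[x,y]$ dies in the coproduct while surviving in the tensor product, and so it is the reason the localization separates the two.
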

\begin{proof}
Take $A\coloneqq R{\sslash}^{n-1}x[([x,y]^{-1})]$ as a localization of the algebra from \Cref{cons:En-1Cofibers}, and let $B\coloneqq R{\sslash}^{n-1}y$.  Then, by the explicit description from \Cref{prop:F2Enbasis2elmts}, we learn that 
\[A\otimes_R B\simeq A/(Q_{I}(y))_{I\text{ admissible}}\neq 0.\]

On the other hand, taking the pushout in $\bb{E}_{n-1}$-$R$-algebras, and once again using \cite[Lemma~2.2]{riedel2025reducedpointsmathbbeinftyringspositive}, we learn that
\begin{align*}
A\coprod_{R}B \simeq R{\sslash}^{n-1}(x,y)[([x,y])^{-1}] &\simeq R[([x,y])^{-1}]\otimes_{\bb{F}_2\{x,y\}_n}\bb{F}_2\\
&\simeq \bb{F}_2[0^{-1}]\simeq 0,
\end{align*}
as desired.
\end{proof}
Our goal now is to extend this into showing that \Cref{thm:nconsequalshom} fails for $\mo{Perf}(R)$ if we were to look at the $\bb{E}_{n-1}$ constructible spectrum of this rigid $\bb{E}_{n-1}$-2-ring.  To do this, we will have to analyze a little bit about certain residue fields for $R$.

We will assume for now that $n\geq 6$ to make the details of the example easier, though with more care one should be able to make it work for $n=4,5$ as well.
\begin{proposition}\label{prop:SomeHomResidueField}
The $\bb{E}_{n-2}$-$R$-algebra
\[k\coloneqq \mo{Frac}(R{\sslash}^{n-1}x\otimes_{R} R{\sslash}^{n-1}y)\]
defined by inverting all non-zero homogeneous polynomials in the $\bb{E}_{n-2}$-$R$-algebra given by the tensor product of the $\bb{E}_{n-1}$-quotients by $x$ and $y$, respectively, is a residue field for some point in the homological spectrum of $\mo{Perf}(R)$.
\end{proposition}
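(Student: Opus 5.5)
The plan is to show three things in turn: $\pi_*k$ is a graded field, $\mo{Mod}_k$ is a tt-field in the sense of \Cref{def:ttfield}, and then to transport this to a homological prime of $\mo{Perf}(R)$ using the weak-ring machinery recalled in \Cref{sec:prelim}.

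\emph{Step 1: $\pi_*k$ is a graded field.} By \Cref{prop:F2Enbasis2elmts}, $\pi_*(R)$ is the graded polynomial ring on the classes $Q_I(e)$ with $e\in E$ and $I$ admissible. By \Cref{cons:En-1Cofibers}, $\pi_*(R{\sslash}^{n-1}x)$ is the quotient by the regular sequence $\{Q_I(x)\}_I$, and likewise for $y$. These two families are disjoint sets of polynomial generators, so the Künneth spectral sequence for $R{\sslash}^{n-1}x\otimes_R R{\sslash}^{n-1}y$ collapses. Hence
\[\pi_*\bigl(R{\sslash}^{n-1}x\otimes_{R} R{\sslash}^{n-1}y\bigr)\cong \bb{F}_2[Q_I(e)\colon e\in E\setminus\{x,y\}],\]
a graded polynomial domain. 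It is nonzero because $[x,y]$ survives. The tensor product is an $\bb{E}_{n-2}$-$R$-algebra, since the category of $\bb{E}_{n-1}$-$R$-algebras is only $\bb{E}_{n-2}$-monoidal. Its homotopy ring is graded commutative, so inverting the multiplicative set of nonzero homogeneous elements is a localization satisfying the Ore condition. This localization exists in $\bb{E}_{n-2}$-algebras and has $\pi_*k=\pi_*(\text{tensor product})[S^{-1}]$, which is a graded field.

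\emph{Step 2: $\mo{Mod}_k$ is a tt-field.} Since $n\geq 6$, we have $n-2\geq 4$. Therefore $\mo{Mod}_k(\mo{Ind}\,\mo{Perf}(R))$ is at least $\bb{E}_3$-monoidal, and its homotopy category is a rigidly-compactly generated tt-category receiving a tt-functor $-\otimes_R k$ from $\mo{Perf}(R)$. Because $\pi_*k$ is a graded field, the usual argument for $\bb{E}_1$-rings with graded-field homotopy applies: choose a $\pi_*k$-basis of $\pi_*M$ and map in the corresponding free module. This shows that every $k$-module is a sum of shifts of $k$. Hence every nonzero compact object has $k$ as a retract up to shift and is $\otimes$-faithful, and every object is a coproduct of compacts. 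So the target is a tt-field, and by \Cref{lem:equivtensorfaithful} its homological spectrum is a single point.

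\emph{Step 3: identify the homological prime.} Let $\mf{m}$ be the image of this point under the induced map on homological spectra. The remaining task is to show that $k$ detects the same objects as $E_{\mf m}$, that is, $k\otimes t=0$ if and only if $E_{\mf m}\otimes t=0$ for $t\in\mo{Ind}(\mo{Perf}(R))$.
\begin{itemize}
\item The object $k$ is a weak ring because it is an algebra.
\item Pulling back along the tt-field shows $\supph(k)=\{\mf m\}$, using \Cref{prop:naiveequalsgenuine} and \Cref{prop:suppdetection}. In particular $E_{\mf m}\otimes k\neq 0$.
\item The object $E_{\mf m}\otimes k$ is a nonzero $k$-module, hence free, so $k$ is a retract of a shift of $E_{\mf m}\otimes k$. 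This shows that $E_{\mf m}\otimes t=0$ implies $k\otimes t=0$.
\item For the converse, the kernel of $k\otimes-$ is the kernel of a homological functor to a tt-field. By \cite[Proposition 3.9]{BKS2019} (see \Cref{rem:weakringprimes}), such a kernel is the kernel of tensoring with the weak ring of the unique homological prime that functor defines, namely $\mf{m}$.
\end{itemize}
This exhibits $k$ as a residue field at $\mf m$.

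\emph{Main obstacle.} I expect Step 1 to be the hardest part. It requires verifying that the two Dyer--Lashof regular sequences stay jointly regular in $\pi_*R$, so that the Künneth computation really gives the displayed polynomial ring. It also requires that localizing an $\bb{E}_{n-2}$-algebra at all nonzero homogeneous elements is legitimate with the expected homotopy. Keeping enough structure on $k$ for Step 2, namely at least $\bb{E}_4$ so that modules are $\bb{E}_3$-monoidal, is exactly where the hypothesis $n\geq 6$ is used.
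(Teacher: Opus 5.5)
Your Steps~1 and~2 supply details that the paper only asserts: that $\pi_*k$ is a graded field, so $\mo{Perf}(k)$ is a tt-field generated by its unit. Your fill-in there is fine. The two routes diverge at Step~3. The paper does not compare kernels. It applies \cite[Theorem~3.1]{BalmerCameron2021}: for a tt-functor into a tt-field, the weak ring $E_{\mf m}$ at the induced homological prime is a direct summand of the image of the unit under the right adjoint, here $k$ viewed as an $R$-module. Since $\pi_*k$ is indecomposable over $\pi_*R$, so is $k$, and therefore $k\simeq E_{\mf m}$.

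Your argument instead proves $\ker(k\otimes_R -)=\ker(E_{\mf m}\otimes -)$, equivalently $\supph(k)=\{\mf m\}$. This avoids Balmer--Cameron and is all that is used later, in \Cref{prop:finalcover} and \Cref{thm:nconsequalshomoptimal}. It is, however, strictly weaker than what the paper obtains. Equal tensor-kernels do not determine a weak ring up to isomorphism: $k$ and the trivial square-zero extension $k\oplus\Sigma k$ have the same kernel. You never use indecomposability of $k$, and that is exactly the input that upgrades ``summand'' to ``isomorphism''. So if ``residue field'' means $E_{\mf m}$ itself, which is what the paper's proof delivers, you still need the Balmer--Cameron summand statement.

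Your last bullet carries all the weight, but \cite[Proposition 3.9]{BKS2019} only describes the kernel of the canonical functor $\mo{Ind}(\cat C)\to\mo{Ind}(\mo{mod}(\cat C)/\mf m)$. To transfer it to $k\otimes_R-$, argue as follows.
\begin{itemize}
\item The category $\ho\mo{Perf}(k)$ is semisimple abelian, so its $\mo{mod}$ is itself, and every nonzero object has a shift of the unit as a summand. Hence the zero Serre ideal is its unique homological prime. This, and not \Cref{lem:equivtensorfaithful} (which concerns the Balmer spectrum), is why $\Spc^h$ is a point.
\item It follows that $\mf m=\ker(\hat F)$, so the induced functor $\mo{mod}(\mo{Perf}(R))/\mf m\to\mo{mod}(\mo{Perf}(k))$ is exact and faithful.
\item Its Ind-extension is conservative on objects and intertwines restricted Yoneda with $k\otimes_R-$. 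This gives both inclusions of kernels at once.
\end{itemize}
That makes your second and third bullets redundant. The second, as stated, would also need a base-change formula for $\supph$ that is not among the results you cite.

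Finally, your ``main obstacle'' is misplaced. Step~1 is a routine Tor computation, because the $Q_I(x)$ and $Q_I(y)$ are distinct polynomial generators. The substance of the proposition is in Step~3.
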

\begin{proof}
Since $n\geq 6$, this algebra $k$ is at least $\bb{E}_4$, so that $\mo{Perf}(k)$ is a rigid $\bb{E}_{n-3}$-2-ring (with $n-3\geq 3$), generated by the unit, which has $\pi_*(k)$ is a graded field, is in fact a tt-field.  Since $\pi_*(k)$ is indecomposable as a $\pi_*(R)$-module, the result follows by \cite[Theorem~3.1]{BalmerCameron2021}.
\end{proof}
Our goal now is to construct an $\bb{E}_{n-1}$-algebra under $R{\sslash}^{n-1}x[([x,y])^{-1}]$ which has homological support precisely the point picked out by this residue field $k$.  By symmetry, this will also construct such an $\bb{E}_{n-1}$-algebra under $R{\sslash}^{n-1}y[([x,y])^{-1}]$, which will necessarily have coproduct with the former algebra being zero.  This will show that there are at least two distinct points in the $\bb{E}_{n-1}$ constructible spectrum of $R$ whose homological support is the point $k$, which will provide the desired failure of \Cref{thm:nconsequalshom}.

Towards this end, we need to find a way to tell when an algebra has homological support exactly $k$, which we shall do by finding explicit jointly nil-conservative covers of $R$.  To begin, we note the following.
\begin{proposition}\label{prop:quotientbyx}
Let $S$ be any $\bb{E}_n$-$\bb{F}_2$-algebra, and consider a class $x\in \pi_0(S)$.  Then, the family of maps
\[\{S\to S[Q_I(x)^{-1}]\colon I \textnormal{ admissible}\}\cup\{S\to S{\sslash}^{n-1}x\}\]
is jointly nil-conservative.
\end{proposition}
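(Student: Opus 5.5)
We verify the criterion for joint nil-conservativity directly: a weak ring that vanishes after base change along every map in the family must already be zero. (Equivalently, by \Cref{thm:surjspech}, the family hits every homological prime of $\mo{Perf}(S)$.) Here $\mo{Perf}(S)$ is a rigid $\bb{E}_{n-1}$-2-ring.

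So let $W$ be a weak ring in $\mo{Mod}_S$ with $W[Q_I(x)^{-1}]\simeq 0$ for all admissible $I$ and $W\otimes_S S{\sslash}^{n-1}x\simeq 0$. We want to show $W\simeq 0$.

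\emph{Step 1: each $Q_I(x)$ is nilpotent on $W$.} Write $a=Q_I(x)\in\pi_*(S)$; since $S$ is at least $\bb{E}_2$, $a$ is central. The localization $W[a^{-1}]$ is the colimit of $W\xrightarrow{a}\Sigma^{-|a|}W\xrightarrow{a}\cdots$, and $\bb{1}$ is compact, so vanishing of this colimit forces $a^N\cdot 1_W=0$ for some $N$. Using a right unital multiplication $W\otimes W\to W$, we get $a^N\cdot\mathrm{id}_W=\mu\circ(\mathrm{id}\otimes a^N1_W)=0$. Hence $a^N$ acts by zero on $W$.

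\emph{Step 2: rewrite the quotient.} Set $A=\bb{F}_2\{x\}_n$, mapping to $S$. By \cite[Lemma~2.2]{riedel2025reducedpointsmathbbeinftyringspositive} we have $S{\sslash}^{n-1}x\simeq S\otimes_A\bb{F}_2$, so $W\otimes_S S{\sslash}^{n-1}x\simeq W\otimes_A\bb{F}_2$.

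Enumerate the classes $Q_I(x)$ as $a_1,a_2,\dots$. These form a regular sequence in the polynomial ring $\pi_*(A)$. Therefore $\bb{F}_2\simeq\varinjlim_k A/(a_1,\dots,a_k)$ as $A$-modules, where the terms are iterated Koszul cofibers. Consequently
\[
W\otimes_A\bb{F}_2\simeq\varinjlim_k W_k,\qquad W_k\coloneqq W/(a_1,\dots,a_k).
\]

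\emph{Step 3: induction.} We show that every $W_k$ is a nonzero weak ring on which each $a_j$ acts nilpotently.
\begin{itemize}
\item \emph{Weak ring.} Each $A/a_j$ admits a right unital multiplication by Burklund \cite{burklund2022multiplicativestructuresmoorespectra}, since $A$ is at least $\bb{E}_2$. Tensor products of weak rings are weak rings, so each $W_k$ is one.
\item \emph{Nilpotence persists.} Nilpotence of the action of $a_j$ passes to $W_k=W\otimes_A(\cdots)$ because $a_j$ is central.
\item \emph{Nonvanishing.} Suppose $W_k\neq 0$ and $a_{k+1}^N$ acts by zero on it. Then $W_k\otimes_A A/a_{k+1}^N\simeq W_k\oplus\Sigma^{*}W_k$ is nonzero. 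Now $A/a_{k+1}^N$ lies in the thick subcategory generated by $A/a_{k+1}$, via the octahedral filtration by powers. Hence $W_{k+1}=W_k\otimes_A A/a_{k+1}\neq 0$.
\end{itemize}

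\emph{Step 4: conclusion.} For a nonzero weak ring the unit map is nonzero, so $\bb{1}\to W_k$ is nonzero for every $k$. On the other hand $\varinjlim_k W_k\simeq 0$, and $\bb{1}$ is compact, so the unit map $\bb{1}\to W_k$ vanishes for some $k$. This is a contradiction unless $W=0$.

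\emph{Expected main obstacle.} The delicate step is Step 2 together with the weak-ring part of Step 3. One must make precise the identification $\bb{F}_2\simeq\varinjlim_k A/(a_1,\dots,a_k)$ as $A$-modules when there are infinitely many generators of varying degrees. One must also check that the iterated quotients $W_k$ genuinely carry compatible right unital multiplications, so that the weak-ring argument in Step 4 applies at each stage. Here I would lean on Burklund's results for $\bb{E}_n$ quotients and on the regularity statement from \cite{lawson2020enringspectradyerlashof}.
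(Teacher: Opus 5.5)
Your outline matches the paper's: nilpotence of each $Q_I(x)$ on the test object, then $S{\sslash}^{n-1}x\simeq S\otimes_{\bb{F}_2\{x\}_n}\bb{F}_2$ written as a filtered colimit of iterated Koszul quotients, then compactness of the unit. Steps~1 and~2 are fine. The gap is the ``weak ring'' bullet of Step~3, and Step~4 rests entirely on it.

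Burklund's results do not give right unital multiplications on $A/a_j$. \Cref{thm:RobertThm} \emph{starts} from a $v$ for which $\bb{1}/v$ is already a weak ring and only produces structures on $\bb{1}/v^q$. The lemma \cite[Lemma~5.4]{burklund2022multiplicativestructuresmoorespectra}, as the paper uses it, gives weak rings only after passing to a power of the element. That is why the paper uses $R/x^2$ in \Cref{ex:nosurj} and quotients by fourth powers in \Cref{prop:finalcover}. Nor should you expect $A/a$ to be a weak ring. The obstruction to $a\cdot\mathrm{id}_{A/a}=0$ is a class in $\pi_{2|a|+1}(A/a)$ modulo $a$, and for $a=x$ this group contains $Q_1(x)\neq 0$. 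This is the same phenomenon by which $2\cdot\mathrm{id}_{\bb{S}/2}\neq 0$ keeps $\bb{S}/2$ from being a weak ring. Your nonvanishing bullet does not compensate: $W_k\neq 0$ for all $k$ does not imply $\varinjlim_k W_k\neq 0$. What you need is that the compatible unit maps $S\to W_k$ are all nonzero.

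The paper's argument supplies exactly this, without any multiplicative structure on the quotients. By your Step~1, each fiber $\Sigma^{|a_j|}S\xrightarrow{a_j}S$ becomes $\otimes$-nilpotent after tensoring with $W$. Unit maps with this property are closed under finite tensor products. So the fiber $\xi_k\colon F_k\to S$ of $S\to T_k\coloneqq S/(a_1,\dots,a_k)$ satisfies $W\otimes\xi_k^{\otimes N}=0$ for some $N$.

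Now suppose the unit $S\to W_k=W\otimes T_k$ vanished. Right unitality of $W$ would force $W\otimes(S\to T_k)=0$. Then $W\otimes\xi_k$, and hence $W\otimes\xi_k^{\otimes N}$, would be split surjective onto $W$, so $W=0$. Compactness of $S$ then gives $\varinjlim_k W_k\neq 0$.

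The paper runs this argument after first reducing, via \Cref{thm:nconsequalshom}, to nonzero $\bb{E}_{n-2}$-$S$-algebras $B$. It phrases it as: $B\to\bigotimes_I B/Q_I(x)$ is a filtered colimit of $\bb{E}_0$-$B$-algebra maps with $\otimes$-nilpotent fiber. Testing directly against weak rings, as you do, works equally well once Step~4 is repaired this way, and it avoids that reduction.
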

\begin{proof}
Using \Cref{thm:nconsequalshom}, it suffices to test nil-conservativity with respect to $\bb{E}_{n-2}$-$S$-algebras $A$, which helps us to work in a case where we can actually talk about categories of $A$-modules as rigid $\bb{E}_{n-2}$-2-rings (recall we are assuming $n\geq 6$ for the moment).

Suppose that $A[Q_I(x)^{-1}]\simeq 0$ for all admissible sequences $I$.  Then we must have that $Q_I(x)$ acts nilpotently on $A$ for all such $I$.  In particular, the fiber of
\[A\to A/Q_I(x)\]
is $\otimes$-nilpotent for all $I$.

Using once again that 
\[S{\sslash}^{n-1}x\simeq S\otimes_{\bb{F}_2\{x\}_n}\bb{F}_2\simeq \bigotimes_I S/Q_I(x),\]
we find that we can write the $\bb{E}_{n-2}$-algebra map
\[A\to A\otimes_{S} S{\sslash}^{n-1}x\simeq \bigotimes_{I}A/Q_I(x)\]
as a filtered colimit of $\bb{E}_0$-$A$-algebra maps with $\otimes$-nilpotent fiber.  In particular, $A\to A\otimes_{S} S{\sslash}^{n-1}x$ is nil-conservative, and so if $A$ was non-zero, the target is non-zero as well.
\end{proof}
Applying this to our case with $S=R$ twice, taking both $x$ and $y$, and finding the common refinement of the resulting nil-conservative covers, we find,
\begin{corollary}\label{cor:cover1}
The family of maps
\[\{R\to R[Q_{I}(x)^{-1}]\}\cup\{R\to R[Q_{I}(y)^{-1}]\}\cup\{R\to R{\sslash}^{n-1}x \otimes_{R} R{\sslash}^{n-1} y\}\]
is jointly nil-conservative.
\end{corollary}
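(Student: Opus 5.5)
We apply \Cref{prop:quotientbyx} twice and then compose the two covers. The key fact is that jointly nil-conservative families compose: if $\{R\to S_\alpha\}$ is jointly nil-conservative, and each $S_\alpha$ has a jointly nil-conservative family $\{S_\alpha\to T_{\alpha\beta}\}$, then the composite family $\{R\to T_{\alpha\beta}\}$ is jointly nil-conservative. For weak rings this is immediate from the definition. Alternatively, by \Cref{thm:surjspech} it amounts to composing surjections on homological spectra.

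\textbf{First application.} Apply \Cref{prop:quotientbyx} to $S=R$ and the class $x\in\pi_0(R)$. This gives the jointly nil-conservative family
\[\{R\to R[Q_I(x)^{-1}]\}_I\cup\{R\to R{\sslash}^{n-1}x\}.\]

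\textbf{Second application.} Apply \Cref{prop:quotientbyx} again to $S=R{\sslash}^{n-1}x$ and the image of $y$. Since $n\geq 6$, this $S$ is still an $\bb{E}_{n-1}$-algebra. If the proposition is wanted literally for $\bb{E}_n$-algebras, I would instead redo its short proof: it only uses the identification $S{\sslash}^{n-1}y\simeq\bigotimes_I S/Q_I(y)$ from \cite[Lemma~2.2]{riedel2025reducedpointsmathbbeinftyringspositive} together with nilpotence arguments. This yields the family
\[\{S\to S[Q_I(y)^{-1}]\}_I\cup\{S\to S{\sslash}^{n-1}y\}.\]

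\textbf{Identifying the targets.} By \Cref{cons:En-1Cofibers}, $S{\sslash}^{n-1}y\simeq R{\sslash}^{n-1}x\otimes_R R{\sslash}^{n-1}y$. Each $R\to S[Q_I(y)^{-1}]$ factors through $R\to R[Q_I(y)^{-1}]$. So the composite cover is refined by the family in the statement, in the following sense: every member of the composite family receives a map from some member of the stated family.

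\textbf{Passing to the coarser family.} Nil-conservativity passes from the refined family to the stated one. Suppose a weak ring $W$ over $R$ dies in every member of the stated family. Then it dies in every member of the composite family, because each of those receives a map from a member of the stated family and the basechanges factor. By the composite cover, $W$ is therefore zero.

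\textbf{Expected obstacle.} The main difficulty is the bookkeeping of $\bb{E}_k$-levels. We need enough structure that the algebras involved are rigid $\bb{E}_{\ge 3}$-2-rings, so that homological spectra and \Cref{thm:surjspech} apply, and the iterated quotient must be computed correctly. I expect the hypothesis $n\geq 6$ to give enough room for both.
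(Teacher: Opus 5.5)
Your architecture (compose two covers, then coarsen) is sound, and for weak rings the composition step is indeed immediate. The step that fails is the second application of \Cref{prop:quotientbyx}.

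\textbf{Why the second application fails.} That proposition is stated for $\bb{E}_n$-algebras, but $S=R{\sslash}^{n-1}x$ is only $\bb{E}_{n-1}$. Likewise, \Cref{cons:En-1Cofibers} (Riedel's Lemma~2.2) identifies the $\bb{E}_{n-1}$-quotient of an $\bb{E}_n$-algebra with a relative tensor product, and says nothing about quotients of $S$.

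The identification you make under ``Identifying the targets'' is in fact false. The $\bb{E}_{n-1}$-quotient of $S$ by $y$ is the $\bb{E}_{n-1}$-pushout
$R{\sslash}^{n-1}x\coprod_R R{\sslash}^{n-1}y\simeq R{\sslash}^{n-1}(x,y)\simeq R\otimes_{\bb{F}_2\{x,y\}_n}\bb{F}_2$.
In this algebra $[x,y]$ dies. By contrast, $[x,y]$ survives as a polynomial generator of $\pi_*(R{\sslash}^{n-1}x\otimes_R R{\sslash}^{n-1}y)$. This is exactly the discrepancy exhibited in \Cref{prop:pushoutEn-1optimal}, so it is not a matter of having enough $\bb{E}_k$-room.

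With that target, your second ``cover'' is not even jointly nil-conservative. The residue field $k$ of \Cref{prop:SomeHomResidueField}, viewed over $S$, is killed by every member: all $Q_I(y)$ vanish in $k$, while $[x,y]$ is a unit in $k$ and is zero in the pushout.

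If you instead apply the proposition to $S$ with $n-1$ in place of $n$, you get an $\bb{E}_{n-2}$-quotient that kills only the $\bb{E}_{n-1}$-operations on $y$. That is again not $R{\sslash}^{n-1}x\otimes_R R{\sslash}^{n-1}y$, and it receives no $R$-algebra map from any member of the stated family, so your coarsening step no longer applies. Your fallback of redoing the proof for $S$ invokes Riedel's lemma for $S$, which is unavailable for the same reason.

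\textbf{The repair.} Apply \Cref{prop:quotientbyx} only to the $\bb{E}_n$-algebra $R$, once for $x$ and once for $y$, and combine the two covers. This is the paper's ``common refinement''.

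Concretely, let $W$ be a weak ring over $R$ killed by every member of the stated family. Then $W'\coloneqq W\otimes_R R{\sslash}^{n-1}x$ is again a weak ring over $R$, since $\mo{Mod}_R$ is braided. Moreover,
$W'\otimes_R R[Q_I(y)^{-1}]\simeq (W\otimes_R R[Q_I(y)^{-1}])\otimes_R R{\sslash}^{n-1}x\simeq 0$
and
$W'\otimes_R R{\sslash}^{n-1}y\simeq W\otimes_R(R{\sslash}^{n-1}x\otimes_R R{\sslash}^{n-1}y)\simeq 0$.
So $W'\simeq 0$ by the $y$-cover of $R$. Hence $W$ is killed by every member of the $x$-cover of $R$, and $W\simeq 0$.

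In your language: the composition argument goes through if the second cover is the base change of the $y$-cover of $R$ along $R\to R{\sslash}^{n-1}x$, so that its last target is $R{\sslash}^{n-1}x\otimes_R R{\sslash}^{n-1}y$ by definition. Its nil-conservativity must come from the cover of $R$, not from an intrinsic quotient of $S$.
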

Finally, we will require one last little claim.
\begin{proposition}\label{prop:finalcover}
The family of weak rings under $R$ given by 
\[\{R\to R[Q_{I}(x)^{-1}]\}\cup\{R\to R[Q_{I}(y)^{-1}]\}\cup\{R\to (R{\sslash}^{n-1}x \otimes_{R} R{\sslash}^{n-1} y)/p(\bold{e})^4\}\cup \{R\to k\},\]
where $p(\bold{e})$ ranges over homogeneous polynomials in the variables $Q_{I}(e)$ with $e\in E\backslash\{x,y\}$, is jointly nil-conservative.
\end{proposition}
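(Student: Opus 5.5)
Starting from \Cref{cor:cover1}, it suffices to show that the single map $R\to T$, with $T\coloneqq R{\sslash}^{n-1}x\otimes_R R{\sslash}^{n-1}y$, is refined by the family $\{T\to T/p(\mathbf{e})^4\}_{p}\cup\{T\to k\}$. Concretely, we show that this family is jointly nil-conservative over $T$; composing jointly nil-conservative families then gives the claim. Here $T$ is an $\bb{E}_{n-2}$-algebra, hence at least $\bb{E}_4$ since $n\geq 6$. So $\mo{Perf}(T)$ is a rigid tt-category, and we may argue with weak rings over $T$ (or, via \Cref{thm:nconsequalshom}, test against $\bb{E}_{n-3}$-$T$-algebras $A$, exactly as in the proof of \Cref{prop:quotientbyx}).

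By \Cref{prop:F2Enbasis2elmts} and \Cref{cons:En-1Cofibers}, $\pi_*(T)$ is the graded polynomial ring $\bb{F}_2[Q_I(e)\colon e\in E\sminus\{x,y\}]$, and $k=\mo{Frac}(T)$ inverts all nonzero homogeneous elements. The key step is the following analogue of the argument in \Cref{prop:quotientbyx}. Let $A$ be a nonzero $T$-algebra (or weak ring) with $A\otimes_T k\simeq 0$. Since $k$ is the filtered colimit of the localizations $T[p^{-1}]$, compactness of the unit forces the unit of $A[p^{-1}]$ to vanish for some single nonzero homogeneous $p$. Equivalently, $p$ acts nilpotently on $A$, say $p^N\cdot 1_A=0$.

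We then want to conclude that $A\otimes_T T/p^4\neq 0$. The map $A\to A\otimes_T T/p^4$ has fiber $\Sigma^{|p^4|}A\xrightarrow{p^4}A$, which is $\otimes$-nilpotent in the sense that its iterated tensor powers are multiplication by powers of $p$. Hence $A\to A/p^4$ is nil-conservative, just as $A\to A/Q_I(x)$ was in \Cref{prop:quotientbyx}, and so $A/p^4\neq 0$. The exponent $4$ enters only to ensure that $T/p^4$ is a weak ring (indeed carries enough multiplicative structure) in $\mo{Ind}(\mo{Perf}(R))$, by Burklund's \cite[Lemma 5.4, Remark 5.5]{burklund2022multiplicativestructuresmoorespectra}, as in \Cref{ex:nosurj}. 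Since our coefficients are $\bb{F}_2$ and the relevant multiplications are at least $\bb{E}_2$, a fourth power suffices for a right unital multiplication.

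The main obstacle is to pass correctly between nil-conservativity over $T$ and nil-conservativity over $R$. A weak ring $W$ over $R$ that is killed by every member of the family must be shown to vanish. We would first use \Cref{cor:cover1} to reduce to the case where $W\otimes_R T$ is a nonzero weak ring over $T$. We would then apply the dichotomy above to the weak ring $W\otimes_R T$, viewed in $\mo{Perf}(T)$: either it survives base change to $k$, or some $p$ is nilpotent on its unit, so that it survives modulo $p^4$. Either way we reach a contradiction. The delicate points are, first, that the "compactness of the unit" argument must be run for weak rings rather than genuine algebras, which we would handle as in the proof of \Cref{Prop:NicecCov} by writing the weak ring as a filtered colimit of compact $\bb{E}_0$-algebras; and second, that $T/p^4$ must indeed be a weak ring in $\mo{Ind}(\mo{Perf}(R))$.
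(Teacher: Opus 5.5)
Your proposal is correct and follows essentially the same route as the paper: reduce via \Cref{cor:cover1} to the case where the test object survives tensoring with $R{\sslash}^{n-1}x\otimes_R R{\sslash}^{n-1}y$. From there, either it survives base change to $k$, or some homogeneous $p(\mathbf{e})$ acts nilpotently; in the latter case the map to the quotient by $p(\mathbf{e})^4$ (a weak ring by Burklund's Lemma~5.4) has $\otimes$-nilpotent fiber, so that quotient is nonzero.

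The only difference is that the paper tests against nonzero $\bb{E}_{n-2}$-algebras under $R$ rather than arbitrary weak rings, which sidesteps your ``delicate point''. In any case the filtered-colimit detour is unnecessary for weak rings: compactness of the unit gives $p^N\eta=0$ for some $N$, and this already forces $p^N\cdot\mathrm{id}_W=\mu\circ(W\otimes p^N\eta)=0$, where $\mu$ is the retraction of $W\otimes\eta$.
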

\begin{proof}
Take a non-zero $\bb{E}_{n-2}$-algebra $A$ under $R$.  Using \Cref{cor:cover1}, we may assume without loss of generality that 
\[A\otimes_{R} (R{\sslash}^{n-1}x \otimes_{R} R{\sslash}^{n-1}y)\neq 0.\]
Since the residue field $k$ is a localization of the algebra we are tensoring with at its non-zero homogeneous elements, we find that
\[A\otimes_{R} k\simeq 0\]
if and only if there exists some homogeneous polynomial 
\[p(\bold{e})\in \pi_*(R{\sslash}^{n-1}x \otimes_{R} R{\sslash}^{n-1}y)\]
which acts nilpotently on $A\otimes_{R} (R{\sslash}^{n-1}x \otimes_{R} R{\sslash}^{n-1}y)$.  In this case, we must then have that the fiber of 
\[A\otimes_{R}(R{\sslash}^{n-1}x \otimes_{R} R{\sslash}^{n-1} y)\to A\otimes_{R}(R{\sslash}^{n-1}x \otimes_{R} R{\sslash}^{n-1} y)/p(\bold{e})^4\]
is $\otimes$-nilpotent.  

In particular, since $A\otimes_{R}(R{\sslash}^{n-1}x \otimes_{R} R{\sslash}^{n-1} y)$ is a nonzero $\bb{E}_{n-2}$-algebra, and the target is a weak ring by \cite[Lemma~5.4]{burklund2022multiplicativestructuresmoorespectra}, the target must be non-zero as well, and the claim is shown.
\end{proof}

We are now in the position to provide our counterexample.
\begin{theorem}\label{thm:nconsequalshomoptimal}
Let $R\coloneqq \bb{F}_2\{x,y\}_n$ be the free $\bb{E}_n$-$\bb{F}_2$-algebra on two degree zero generators $x$ and $y$, for any $n\geq 6$.  Consider the $\bb{E}_{n-1}$-$R$-algebras defined as
\[C\coloneqq R{\sslash}^{n-1}(x,Q_I(y)^2)[p(\bold{e})^{-1}]\qquad \text{ and } \qquad D\coloneqq R{\sslash}^{n-1}(y,Q_I(x)^2)[p(\bold{e})^{-1}]\]
as $I$ ranges across admissible sequences and $p(\bold{e})$ ranges across homogeneous polynomials in the variables $Q_{I}(e)\in E\backslash\{x,y\}$.  These algebras are both non-zero with homological support exactly $\{k\}$, yet $C\coprod_{R}D\simeq 0$.

In particular, for any choice of Nullstellensatzian $\bb{E}_{n-1}$-$R$-algebras $L_1$ under $C$ and $L_2$ under $D$, $L_i$ has homological support $\{k\}$, but $L_1$ and $L_2$ determine different points in $\Speccons_{\bb{E}_{n-1}}(R)$.  This is all to say that the requirement $n<m$ in \Cref{thm:nconsequalshom} is necessary.
\end{theorem}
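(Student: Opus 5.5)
We verify three things: $C$ and $D$ are nonzero, their homological supports are exactly $\{k\}$, and $C\coprod_R D\simeq 0$. The statement about Nullstellensatzian $L_1,L_2$ then follows formally.

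\emph{Vanishing of the coproduct.} We argue as in \Cref{prop:pushoutEn-1optimal}. By \cite[Lemma~2.2]{riedel2025reducedpointsmathbbeinftyringspositive}, the pushout of $\bb{E}_{n-1}$-quotients is the $\bb{E}_{n-1}$-quotient by the union of the generators, and localizations commute with pushouts. Hence $C\coprod_R D$ is a localization of $R{\sslash}^{n-1}(x,y,\ldots)$ at the elements $p(\bold{e})$. Taking $p(\bold{e})=[x,y]$, this is a localization of $R[[x,y]^{-1}]\otimes_{\bb{F}_2\{x,y\}_n}(\cdots)$, in which $[x,y]$ maps to $0$. So the result vanishes.

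\emph{Nonvanishing.} For $C$, we use \Cref{prop:F2Enbasis2elmts} and the regular-sequence description of \Cref{cons:En-1Cofibers}. Quotienting by the $\bb{E}_{n-1}$-ideal generated by $x$ and the $Q_I(y)^2$ kills only classes built from $x$; since $y$ is not killed, $[x,y]$ and the other $Q_I(e)$ survive. On homotopy we obtain
\[
\bb{F}_2[Q_I(y)]/(Q_I(y)^2)\otimes\bb{F}_2[Q_I(e)\colon e\neq x,y].
\]
Here the Dyer--Lashof operations on $Q_I(y)^2$ should generate nothing new beyond squares, and this needs checking via the Cartan formula: $Q(a^2)$ is a square, or vanishes in low degrees. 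Inverting the nonzero homogeneous polynomials $p(\bold{e})$ in the polynomial factor then gives a nonzero ring. The argument for $D$ is symmetric.

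\emph{Support.} We use the jointly nil-conservative family of \Cref{prop:finalcover} together with \Cref{thm:surjspech}: every homological prime is detected by one member of the family, so it suffices to show that $C$ tensors to zero with every member except $k$. First, $C\otimes_R R[Q_I(x)^{-1}]=0$, since $Q_I(x)=0$ in $C$. Next, $C\otimes_R R[Q_I(y)^{-1}]=0$, since $Q_I(y)$ is nilpotent in $C$. Finally, $C\otimes_R(\cdots)/p(\bold{e})^4=0$, since $p(\bold{e})$ is invertible in $C$.

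To make this formal, note that these members are weak rings, and \Cref{prop:pushoutsupportprop} or a Balmer-type argument shows that $\supph(C)$ lies in the union of the supports of the surviving members. Concretely, a prime $\mf{m}\in\supph(C)$ is hit by some member $W$ with $W\otimes C\neq 0$, so $W=k$; and $k$ corresponds to a single point by \Cref{prop:SomeHomResidueField}. Nonvanishing together with \Cref{prop:suppdetection} gives $\supph(C)=\{k\}$.

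\emph{Conclusion.} By \Cref{lem:MappingSupport}, any nonzero $L_1$ under $C$ has $\supph(L_1)=\{k\}$, and likewise $\supph(L_2)=\{k\}$. Moreover,
\[
L_1\coprod_R L_2 \text{ receives a map from } C\coprod_R D\simeq 0,
\]
so $L_1\coprod_R L_2\simeq 0$, and $L_1,L_2$ are distinct points of $\Speccons_{\bb{E}_{n-1}}(R)$.

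\emph{Main obstacle.} The delicate step is the homotopy computation for the iterated $\bb{E}_{n-1}$-quotient by the squares $Q_I(y)^2$. One must confirm that this behaves like a quotient by a regular sequence, with no extra Dyer--Lashof classes, so that $C\neq 0$. The identification of the family's members as weak rings is also needed, and there we rely on \cite[Lemma~5.4]{burklund2022multiplicativestructuresmoorespectra}.
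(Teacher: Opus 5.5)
Your overall architecture is the paper's. The coproduct vanishes because $C$ and $D$ receive maps from the two algebras of \Cref{prop:pushoutEn-1optimal}, and your direct appeal to Riedel's lemma is the same computation. The containment $\supph(C)\subseteq\{k\}$ comes from the cover of \Cref{prop:finalcover}, exactly as you argue it. One slip there: you cannot invoke \Cref{prop:pushoutsupportprop}. Here $C$ is an $\bb{E}_{n-1}$-algebra in an $\bb{E}_{n-1}$-2-ring, which is precisely the regime in which that proposition fails. Your fallback via the tensor-product property of $\supph$ is the right argument.

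The genuine gap is the nonvanishing of $C$, the only step with real content, and your route through it does not work.

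\emph{The homotopy computation is unjustified.} By Riedel's lemma, $R{\sslash}^{n-1}(x,Q_I(y)^2)\simeq R\otimes_F\bb{F}_2$ with $F=\bb{F}_2\{x,z_I\}_n$ and $z_I\mapsto Q_I(y)^2$. Now $\pi_*F$ has polynomial generators coming from Browder brackets, e.g.\ $[x,z_I]$. The image of $[x,z_I]$ is $[x,Q_I(y)^2]=0$, because the bracket is a derivation in each variable and we are in characteristic $2$. So this is \emph{not} a regular-sequence quotient. The Künneth $E_2$-term has classes such as $\sigma[x,z_I]$ in positive Tor-degree, and your formula for $\pi_*C$ is unjustified. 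The problem is not Dyer--Lashof operations on the squares, which is what you flagged, but the brackets, which you never mention.

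\emph{The brackets are the whole mechanism.} An $\bb{E}_{n-1}$-quotient is a base change along a free $\bb{E}_n$-algebra. So what it kills includes the images of all brackets of $x$ with the $Q_I(y)^2$. If any of these were a nonzero bracket class, inverting the $p(\mathbf{e})$ would give $0$; this is exactly what happens to $[x,y]$ when one kills $x$ and $y$. Your heuristic ``$y$ is not killed, so $[x,y]$ survives'' does not address this, and it would not explain why squares are used at all.

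\emph{The missing input} is the paper's check that the image of $\pi_*\bb{F}_2\{x,Q_I(y)^2\}_n\to\pi_*R$ contains no bracket classes. This uses three facts: brackets with squares vanish, $[x,Q_iw]=0$ for $i<n-1$, and $[x,Q_{n-1}w]=[w,[w,x]]$ by \cite[Theorem 5.2]{lawson2020enringspectradyerlashof}. Hence the image lies in the ideal generated by the $Q_J(x)$ and the $Q_I(y)^2$, which misses the multiplicative set generated by the $p(\mathbf{e})$. This is the fact the paper uses to conclude $C\neq 0$. It is also what your $\mathrm{Tor}_0$-level formula silently presupposes.
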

\begin{proof}
Since the bracket $[x,z]=0$ vanishes when $z=w^2$ is a square of a class, $z=Q_i(w)$ is a Dyer--Lashof operation applied to a class with $i<n-1$, and satisfies $[x,Q_{n-1}w]=[w,[w,x]]$ (see \cite[Theorem 5.2]{lawson2020enringspectradyerlashof}), it follows that the image of 
\[\bb{F}_2\{x,Q_I(y)^2\}_n\to R\]
misses the multiplicative set generated by homogeneous polynomials $p(\bold{e})$ in $Q_I(e)$ for $e\in E\backslash\{x,y\}$, and in particular $C\neq 0$.  

Since $C$ admits an algebra map from $R{\sslash}^{n-1}x [([x,y])^{-1}]$, which has pushout with $R{\sslash}^{n-1}y$ (which admits an algebra map to $D$) equal to zero, it suffices to show that $\supph(C)=\{k\}$.  Note that, by construction,
\[C[Q_I(x)^{-1}]=0, \quad C[Q_I(y)^{-1}]=0, \quad \text{ and }\quad C/p(\bold{e})\simeq 0.\]
Using \Cref{prop:finalcover}, we necessarily must have that $C\otimes k\neq 0$, and since $C\otimes L=0$ for all other weak rings $L$ in the nil-conservative cover from \Cref{prop:finalcover}, $\supph(C)\subseteq \supph(k)=\{k\}$, as desired.
\end{proof}

\newpage
\section{Geometric Points for Homological Spectra}\label{sec:geompts}
In general, we have seen one cannot expect there to be a rigid 2-ring map from a rigid 2-ring $\cat C$ to a rigid 2-ring $\cat D$ whose Balmer spectrum is a point, which picks out any given point in $\Spc(\cat C)$. 

\Cref{thm:nconsequalshom} almost provides this at the level of constructible spectra, at least if one only asks for rigid $\bb{E}_n$-2-ring maps, but suffers from the following defect: if $\cat C$ is a rigid $\bb{E}_n$-2-ring, it makes points in its homological spectrum correspond to Nullstellensatzian $\bb{E}_n$-algebras in $\Ind(\cat C)$. However, if $L$ is such a Nullstellensatzian $\bb{E}_n$-algebra, it does not have any reason to be Nullstellensatzian in $\mo{Alg}_{\bb{E}_{n-1}}(\Ind(\mo{Perf}_{\cat C}(L)))$, and so we cannot guarantee via \Cref{thm:nconsequalshom} that its homological spectrum is a point. 

As it turns out, using the methods that went into the proof of \Cref{thm:nconsequalshom}, we can fix this ``defect" by proving that, at least for $k$ large enough, the constructible spectra of $E^k_\mathfrak m$ are points, thus obtaining a sufficient supply of ``highly structured geometric points".

\subsection{Geometric points of the homological spectrum for rigid $\bb{E}_n$-2-rings}
The main theorem of this section is:
\begin{theorem}\label{thm:spechEn} Let $\cat C$ be a rigid $\bb{E}_m$-2-ring for some $m\geq 4$ (possibly $m=\infty$), and let \[\mf{m}\in \Spc^h(\cat C)\] be a homological prime.  Then, for any $3\leq n<m$, there exists a rigid $\bb{E}_n$-2-ring $\cat K$, equipped with an $\bb{E}_n$-2-ring map $\cat C\to \cat K$, such that \[\Spc^h(\cat K)=*\] is a single point, and such that the map \[\Spc^h(\cat K)\to \Spc^h(\cat C)\]
has image exactly $\{\mf{m}\}$.
\end{theorem}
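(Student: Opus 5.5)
We take $\cat K\coloneqq\mo{Perf}_{\cat C}(E_{\mf m}^k)$ for $k$ large, with $E^k_{\mf m}=\mo{cofib}(v^{k+1}\colon I_{\mf m}^{\otimes k+1}\to\bb{1})$ as in \Cref{prop:EnHomFields}. The first task is to choose the structure level so that $\cat K$ is an honest rigid $\bb{E}_n$-2-ring. We need $E^k_{\mf m}$ to be an $\bb{E}_{n+1}$-algebra in $\mo{Ind}(\cat C)$, since then its modules form an $\bb{E}_n$-monoidal category. For this we take $k\geq n+1$ and apply \Cref{prop:EnHomFields}, which uses Burklund's theorem and needs $n+1\le m$. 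Compact modules are then dualizable because $\cat C$ is rigid. The functor $-\otimes E^k_{\mf m}\colon\cat C\to\cat K$ is an $\bb{E}_n$-2-ring map. Its image on homological spectra is contained in $\supph(E^k_{\mf m})=\{\mf m\}$ and is nonempty because $E^k_{\mf m}\neq0$. So the remaining content is that $\Spc^h(\cat K)$ is a single point.

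To prove this, we use \Cref{thm:nconsequalshom} for $\cat K$, which is a rigid $\bb{E}_n$-2-ring. It identifies $\Spc^h(\cat K)$ with $\Speccons_{\bb{E}_{j}}(\bb{1}_{\cat K})$ for any $j<n$; we fix $j=n-1\geq 2$. The latter space is the set of points of $\Speccons$ of $E^k_{\mf m}$ computed in $\bb{E}_j$-algebras of $\mo{Ind}(\cat C)$ under $E^k_{\mf m}$. It therefore suffices to show the following: any two nonzero $\bb{E}_j$-$E^k_{\mf m}$-algebras $A,B$ have nonzero pushout over $E^k_{\mf m}$, or at least admit nonzero $\bb{E}_j$-algebras under them with nonzero pushout.

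The key idea is to exploit that $E^k_{\mf m}$ is a much more highly structured quotient than the weak ring $E_{\mf m}$. Both $A$ and $B$ have homological support $\{\mf m\}$ in $\Spc^h(\cat C)$ by \Cref{lem:MappingSupport}. Hence by \Cref{prop:pushoutsupportprop}, applied in $\mo{Ind}(\cat C)$ with $j<m$, we get $A\otimes B\ne0$ and therefore $A\amalg B\neq0$ as $\bb{E}_j$-algebras. The issue is passing from the absolute coproduct $A\amalg B$ to the relative one $A\amalg_{E^k_{\mf m}}B$. We plan to compare $A\amalg_{E^k_{\mf m}}B$ with $A\amalg B$ via the bar-construction formula: the relative coproduct is the geometric realization of $A\amalg (E^k_{\mf m})^{\amalg\bullet}\amalg B$. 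Showing nonvanishing then amounts to showing that the two maps $E^k_{\mf m}\rightrightarrows A\amalg B$ become equal after passing to some nonzero $\bb{E}_j$-algebra under $A\amalg B$.

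For this we use Burklund's multiplicative structure theory, more precisely its \emph{uniqueness} part. For $k$ large relative to $j$, the space of $\bb{E}_j$-maps $E^k_{\mf m}\to T$ should be controlled by the ways of nullifying $v^{k+1}$ in $T$. Two such nullifications agree after further quotienting by a power of $v$. This power of $v$ is already zero on any algebra receiving $E^k_{\mf m}$, so the two nullifications become homotopic. Concretely, we try to show that for $k\gg j$ any $\bb{E}_j$-algebra $T$ receiving two $\bb{E}_{j}$-maps from $E^k_{\mf m}$ admits a nonzero $\bb{E}_j$-algebra map $T\to T'$ (for instance $T\amalg E^{k'}_{\mf m}$ with suitable $k'$, nonzero by \Cref{prop:pushoutsupportprop}) equalizing them. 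This forces the relative coproduct to be nonzero.

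We expect this last step to be the main obstacle: obtaining a precise equalization or uniqueness statement for $\bb{E}_j$-maps out of $E^k_{\mf m}$, including the bookkeeping relating $k$, $n$ and $j$ (hence ``$k$ sufficiently large''). If it proves too delicate, a fallback is to work with Nullstellensatzian $L_1,L_2$ under $A,B$. We would then show directly that $L_1\amalg L_2$ receives two maps from $E^k_{\mf m}$ which are homotopic after base change to a nonzero algebra, using that $v$ is $\otimes$-nilpotent on the target.
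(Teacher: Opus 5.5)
\textbf{Verdict: genuine gap.} The step you flag as the ``main obstacle'' is the entire content of the theorem, and the proposal does not prove it.

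\textbf{What is right.} Your setup agrees with the paper's. You take $\cat K=\mo{Perf}_{\cat C}(E^k_{\mf m})$ with $k\geq n+1$, so that $E^k_{\mf m}$ is $\bb{E}_{n+1}$. The image of $\Spc^h(\cat K)\to\Spc^h(\cat C)$ is $\{\mf m\}$ by \Cref{lem:MappingSupport}. Everything then reduces to showing that any two nonzero weak rings (or $\bb{E}_j$-algebras) $A,B$ over $E\coloneqq E^k_{\mf m}$ satisfy $A\otimes_E B\neq 0$. By \Cref{prop:pushoutsupportprop} applied inside $\cat K$, this is equivalent to your nonvanishing of the relative coproduct. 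Via \Cref{prop:weakRingtenszero} it gives $\Spc^h(\cat K)=*$ directly, so the detour through \Cref{thm:nconsequalshom} is unnecessary.

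\textbf{Why your heuristic fails.}
\begin{enumerate}
\item The $v$-compatible $\bb{E}_j$-structure on $E=\bb{1}/v^{k+1}$ is not the free $\bb{E}_j$-algebra in which $v^{k+1}$ is null; that object has a much larger underlying object. So $\bb{E}_j$-maps $E\to T$ are not parametrized by nullhomotopies of $v^{k+1}$ in $T$.
\item Burklund's uniqueness theorem concerns $v$-compatible structures on $\bb{1}/v^{q}$ itself, not maps out of it.
\item Even for nullhomotopies, two of them differ by a class $\Sigma I_{\mf m}^{\otimes k+1}\to T$. Knowing $v^{k+1}\otimes T\simeq 0$ gives no reason for this class to die in some nonzero algebra under $T$.
\item Passing to $T\amalg E^{k'}_{\mf m}$ only adds another map; it identifies nothing. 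The Nullstellensatzian fallback has the same hole.
\end{enumerate}
A secondary point: for finite $j$, an $\bb{E}_j$-algebra in $\Ind(\cat K)$ carries more data than an $\bb{E}_j$-algebra in $\Ind(\cat C)$ under $E$, since the structure map must be suitably central. So your bar construction does not compute the relevant coproduct. This is harmless once you work with $A\otimes_E B$ instead.

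\textbf{The missing ingredient} is \Cref{thm:technicalSqZero}. For $q\geq\max\{n+1,7\}$, the left-factor inclusion $E^{q-3}_{\mf m}\to E^{q-3}_{\mf m}\otimes E^{q}_{\mf m}$ is the splitting of a split square-zero $\bb{E}_3$-extension. This is not a formal consequence of Burklund's existence and uniqueness theorem; it is a fresh use of his obstruction theory. In the deformation $\mc{D}(\cat C,\cat{E}(v))$, the obstructions to uniqueness of an $\bb{E}_3$-$\bb{1}/\tilde v^{a}$-algebra structure on $\bb{1}/\tilde v^{a}\otimes\bb{1}/\tilde v^{b}$ (for suitable exponents with $b\geq a+3$) lie in groups that vanish by \Cref{lem:DeformationRequiredBounds}. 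Hence that algebra must be the split square-zero one.

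Given this, the paper's argument runs as follows.
\begin{enumerate}
\item By \Cref{lm:sqzspech}, the multiplication $E^{q-3}_{\mf m}\otimes E^q_{\mf m}\to E^{q-3}_{\mf m}$ is a bijection on $\Spc^h$, hence nil-conservative.
\item It follows that $E^q_{\mf m}\to E^{q-3}_{\mf m}$ is nil-conservative, so one may assume $B$ comes from an $E^{q-3}_{\mf m}$-module.
\item Then
\[(A\otimes B)\otimes_{E^q_{\mf m}\otimes E^{q-3}_{\mf m}}E^{q-3}_{\mf m}\simeq A\otimes_{E^q_{\mf m}}B .\]
Here $A\otimes B\neq 0$ because both factors have support $\{\mf m\}$, so nil-conservativity of the multiplication forces $A\otimes_{E^q_{\mf m}}B\neq 0$.
\end{enumerate}

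\textbf{How this relates to your idea.} Base change along the multiplication map is the rigorous form of your ``equalize the two maps $E\rightrightarrows A\amalg B$'': it identifies the two actions. The square-zero statement is exactly what guarantees this cannot kill a nonzero weak ring. Note that the argument genuinely uses two different truncation levels, $E^{q-3}_{\mf m}$ and $E^{q}_{\mf m}$. That exponent gap of $3$, dictated by the obstruction-group bound, is where ``$k$ sufficiently large'' really comes from, not from a uniqueness statement for maps out of $E$.
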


To prove this theorem, we will need the following, which relies on inputs that we will delay until after proving the main theorem of this section. The term ``$v$-compatible" comes from \cite{burklund2022multiplicativestructuresmoorespectra} and will be recalled later. 
\begin{theorem}\label{thm:technicalSqZero}
Let $\cat C$ be an $\bb{E}_m$-monoidal stable $\infty$-category, for some $m\geq 3$, and suppose we are given a map $v\colon\mc{I}\to\bb{1}$ in $\cat C$ such that the cofiber $\bb{1}/v$ admits a right unital multiplication.

Then, for any, $3\leq n \leq m$, $1\leq k \leq n-1$, $q\geq n+1$, and any $w\geq q+k$ the unique $v$-compatible $\bb{E}_{n}$-algebra structures on $\bb{1}/v^{q}$ and $\bb{1}/v^{w}$ are such that the $\bb{E}_k$-algebra map given by inclusion on the left factor
\[\bb{1}/v^{q}\to \bb{1}/v^{q}\otimes \bb{1}/v^{w}\]
is a splitting of a split square-zero $\bb{E}_k$-algebra extension.
\end{theorem}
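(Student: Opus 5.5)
Write $A\coloneqq \bb{1}/v^{q}$ and $B\coloneqq\bb{1}/v^{w}$, each with its $v$-compatible $\bb{E}_n$-structure from \cite{burklund2022multiplicativestructuresmoorespectra}. The goal is an $\bb{E}_k$-algebra map $r\colon A\otimes B\to A$ retracting the left inclusion, together with an identification of its fiber as a square-zero ideal, i.e.\ a trivial $A$-bimodule in $\bb{E}_k$-$A$-algebras.

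First I would reduce to an $\bb{E}_k$-map $B\to A$. The left inclusion $A\to A\otimes B$ is $\bb{E}_k$ because $k\le n-1<m$, so $A\otimes B$ is the $\bb{E}_k$-coproduct-type tensor. Given an $\bb{E}_k$-map $\rho\colon B\to A$ under $\bb{1}$, the composite
\[A\otimes B\xrightarrow{A\otimes\rho}A\otimes A\xrightarrow{\mu}A\]
is $\bb{E}_k$ provided $\mu$ is $\bb{E}_k$. Since $A$ is $\bb{E}_n$ and $k\le n-1$, the multiplication $A\otimes A\to A$ is indeed $\bb{E}_{n-1}$, and hence $\bb{E}_k$, by Dunn additivity. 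This composite retracts the left inclusion.

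For $\rho$, I would use Burklund's tower $\bb{1}/v^{w}\to\bb{1}/v^{w-1}\to\cdots$. Burklund shows that the reduction maps $\bb{1}/v^{a}\to\bb{1}/v^{b}$ between $v$-compatible structures carry $\bb{E}_j$-structures in a range controlled by $a-b$ and $j$: his obstruction theory lifts the $\bb{E}_j$ part of the structure one $v$-adic step at a time, and the obstructions vanish once the gap is large. The hypothesis $w\ge q+k$ is exactly what should guarantee that $\bb{1}/v^{w}\to\bb{1}/v^{q}$ is $\bb{E}_k$, since each extra $\bb{E}$-degree costs one power of $v$. Uniqueness of the $v$-compatible structures then makes this map compatible with the given structures.

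The square-zero claim is the main obstacle. The fiber of $r$ is $A\otimes\mc{I}'$, where $\mc{I}'\coloneqq\mathrm{fib}(\bb{1}\to B)\simeq\mc{I}^{\otimes w}$, modulo the retraction. I would show that the multiplication on this fiber vanishes coherently as an $\bb{E}_k$-$A$-algebra, so that $A\otimes B\simeq A\oplus M$ is the trivial square-zero extension. The plan is to filter $A\otimes B$ by the $v$-adic filtration on $B$. In the associated graded, the ideal sits in filtration $\ge 1$. The product of two ideal elements lands in filtration $\ge 2$, and it becomes null in $A\otimes B$ because it is multiplied by $v^{w}$ against $A$, on which $v^{q}$ already acts trivially via the $\bb{E}_n$ (and hence $\bb{E}_1$-bimodule) structure. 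I would phrase this through the Lurie classification of square-zero extensions by derivations: the extension is classified by a map $L_A\to\Sigma M$, and I would show this map is null because it factors through $v^{w-q}$, which kills $A$-modules as $\bb{E}_k$-$A$-modules when $w-q\ge k$. I expect making this nullhomotopy coherent as an $\bb{E}_k$-derivation, rather than merely at the homotopy level, to be the hardest step. I would try to import Burklund's obstruction-vanishing argument, which handles exactly the relevant $\bb{E}_k$-cotangent complex layers indexed by $k$, to carry out this step.
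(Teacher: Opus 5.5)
Your Steps 1--2 can at best produce an $\bb{E}_k$-retraction $r=\mu\circ(A\otimes\rho)$ of the left inclusion. Even the input there, an $\bb{E}_k$-map $\rho\colon\bb{1}/v^{w}\to\bb{1}/v^{q}$ compatible with the $v$-compatible structures, rests on the heuristic that ``each extra $\bb{E}$-degree costs a power of $v$'' rather than on an actual obstruction computation.

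More seriously, an $\bb{E}_k$-algebra retracting onto $A$ is only an augmented $A$-algebra. The whole content of the theorem is that the augmentation ideal $\Sigma A\otimes\mc{I}^{\otimes w}$ carries the \emph{trivial} non-unital $\bb{E}_k$-$A$-algebra structure, and your plan for this step does not work:
\begin{itemize}
\item Classifying square-zero extensions by derivations $L_A\to\Sigma M$ presupposes that $A\otimes B\to A$ is already a square-zero extension. Once it is, the section you already have forces the classifying derivation to be null. So this step begs the question.
\item Since $w\geq q+k$ allows $w-q=k<q$, the map $v^{w-q}$ need not act trivially on $A=\bb{1}/v^{q}$.
\item The filtration remark produces no nullhomotopy, let alone a coherent one.
\item In a general $\cat C$, the obstructions to uniqueness of an $\bb{E}_k$-$A$-algebra structure on $A\otimes B$ live in $\pi_0\mo{Hom}_{\cat C}(\Sigma^{-1-k-c}(\Sigma^{k+1}\mc{I}^{\otimes w})^{\otimes r},\bb{1}/v^{q}\otimes\bb{1}/v^{w})$ for $r\geq 2$. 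These groups simply need not vanish; already for spectra and $v=p$ they are built out of stable stems.
\end{itemize}
This is exactly why Burklund only has uniqueness among $v$-compatible structures, and why the statement is phrased in terms of them. Your square-zero argument never uses $v$-compatibility.

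The missing idea is to pass to the deformation $\mc{D}(\cat C,\mc{E}(v))$ and argue by \emph{uniqueness} instead of constructing a retraction. There, regard $\bb{1}/\tilde{v}^{q}\otimes\bb{1}/\tilde{v}^{w}$ as the quotient of the unit of $\bb{1}/\tilde{v}^{q}$-modules by $\bb{1}/\tilde{v}^{q}\otimes\widetilde{\mc{I}}^{\otimes w}$. Then apply the uniqueness half of \Cref{thm:obstruction-theory} in that module category. After tensor--hom adjunction, the obstruction groups for $r\geq 2$ have the form $\pi_0\mo{Hom}(\Sigma^{-s}\nu(X),\bb{1}/\tilde{v}^{q}\otimes\bb{1}/\tilde{v}^{w})$ with $s\geq q+w-1$; this inequality is exactly where $w\geq q+k$ is used. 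These groups vanish by \Cref{lem:DeformationRequiredBounds}, which comes from three inputs: the module splitting $\bb{1}/\tilde{v}^{q}\otimes\bb{1}/\tilde{v}^{w}\simeq\bb{1}/\tilde{v}^{q}\oplus\Sigma\widetilde{\mc{I}}^{\otimes w}\otimes\bb{1}/\tilde{v}^{q}$, \cite[Lemma~4.8]{burklund2022multiplicativestructuresmoorespectra}, and $\mc{E}$-injectivity (\Cref{prop:deformations}(f)). The $r=1$ obstruction is absorbed by the choice of nullhomotopy.

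Hence the $\bb{E}_k$-$\bb{1}/\tilde{v}^{q}$-algebra structure is unique. So the tensor-product structure coincides with the trivial split square-zero structure on $\bb{1}/\tilde{v}^{q}\oplus\Sigma\bb{1}/\tilde{v}^{q}\otimes\widetilde{\mc{I}}^{\otimes w}$, and inverting $\tau$ transports this back to $\cat C$. This gives the retraction and the square-zero structure simultaneously, so no $\bb{E}_k$-reduction map $\rho$ is needed.
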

We will postpone the proof of the above, opting to first prove the main theorem of this section assuming the result.  Before this though, we note the following quick lemma.
\begin{lemma}\label{lm:sqzspech}
Let $\cat C$ be a rigid $\bb{E}_m$-ring, and let $f: R\to S$ be a map of $\bb{E}_n$-algebras in $\Ind(\cat C)$, $3\leq n\leq m$. If $f$ is a square zero extension, $\Spec^h(f)\colon\Spec^h(\mo{Perf}_{\cat C}(S))\to \Spec^h(\mo{Perf}_{\cat C}(R))$ is surjective. If $f$ is a split square zero extension, it is a bijection. 
\end{lemma}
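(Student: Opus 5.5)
The plan is to prove surjectivity via nil-conservativity and \Cref{thm:surjspech}, and injectivity in the split case by exhibiting a retraction on homological spectra.

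\textbf{Surjectivity.} Let $f\colon R\to S$ be a square-zero extension of $\bb{E}_n$-algebras, so that $R\simeq S\times_{S\oplus M}S$ for some $S$-bimodule $M$. By \Cref{thm:surjspech} it suffices to show that base change $\mo{Perf}_{\cat C}(R)\to\mo{Perf}_{\cat C}(S)$ is nil-conservative. Let $W$ be a weak ring in $\Mod_R(\Ind(\cat C))$ with $S\otimes_R W\simeq 0$. The fiber $I=\mo{fib}(R\to S)\simeq\Sigma^{-1}M$ satisfies that the multiplication $I\otimes_R I\to I$ is null, which is the defining square-zero condition.

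We then argue as in \Cref{lemma:nilNakayama}. Since $S\otimes_R W=0$, the map $I\otimes_R W\to W$ is an equivalence. Iterating gives $I\otimes_R I\otimes_R W\to W$, which is also an equivalence. This composite factors through the product map $I\otimes_R I\to R$, and that product map is null because it factors through $I\otimes_R I\to I$ followed by $I\to R$. Hence $W\simeq 0$.

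The point requiring care is that $I\otimes_R W\to W$ is a map of left $R$-modules. Since $n\geq 3$, $\mo{Perf}_{\cat C}(R)$ is a rigid tt-category, so we may work with ordinary modules. If the bimodule bookkeeping proves awkward, we instead use the weak ring structure: $W\to S\otimes_R W$ factors the unit, so $W$ is a retract-type summand, and we conclude the same way.

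\textbf{Injectivity in the split case.} Suppose there is a section $s\colon S\to R$ of $\bb{E}_n$-algebras with $s\circ f\simeq\id$. Then $\Spc^h(s)$ is a section of $\Spc^h(f)$, using the contravariant functoriality of $\Spc^h$ along $\mo{Perf}_{\cat C}(R)\to\mo{Perf}_{\cat C}(S)\to\mo{Perf}_{\cat C}(R)$. This gives $\Spc^h(f)\circ\Spc^h(s)=\id$, so $\Spc^h(f)$ is surjective. To conclude bijectivity we need $\Spc^h(s)$ to be surjective too. Because $s$ is itself a square-zero extension, namely $S\simeq R\oplus M\to R$ is the projection killing the square-zero ideal, the first part applies to $s$. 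Hence $\Spc^h(s)$ is surjective, and a surjective section of $\Spc^h(f)$ forces $\Spc^h(f)$ to be bijective.

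One point needs checking: the projection $S\oplus M\to S$ is again a square-zero extension in the sense used for surjectivity, with ideal $M$. Here the direction conventions should be fixed so that "$f$ split" means $f$ is the inclusion of the base into the split extension, matching the usage in \Cref{thm:technicalSqZero}.

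The main obstacle is the surjectivity step, specifically verifying the nilpotence argument in the noncommutative $\bb{E}_n$ setting, where $I$ is only a bimodule.
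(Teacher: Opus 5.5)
Your proposal is correct and follows the paper's route. You reduce surjectivity via \Cref{thm:surjspech} to conservativity of $S\otimes_R-$, and you get bijectivity in the split case from a one-sided inverse on $\Spc^h$ combined with surjectivity. You read the split $f$ as the base inclusion, which is how the lemma is used in the proof of \Cref{thm:spechEn}, whereas the paper reads it as the projection; your argument covers both.

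The paper's conservativity step is shorter than your iterated-nilpotence argument. Since $\mathrm{fib}(f)$ is an $S$-module, $\mathrm{fib}(f)\otimes_R W\simeq \mathrm{fib}(f)\otimes_S(S\otimes_R W)\simeq 0$, so $W$ is an extension of zeros. This sidesteps your bimodule bookkeeping concern and makes the vague weak-ring fallback unnecessary.
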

\begin{proof}
In the case of a split square zero extension, $\Spec^h(f)$ admits a retraction and hence is injective. Thus, the second claim follows from the first. 

For the first claim, we apply \Cref{thm:surjspech}: it suffices to prove that $S\otimes_R -$ is nil-conservative. For this we note that it is conservative: if $S\otimes_R M = 0$, then since $\mo{fib}(f)$ is an $S$-module in $R$-modules, $\mo{fib}(f)\otimes_R M = 0$ and so $M=R\otimes_R M = 0$ as an extension of two $0$ modules. 
\end{proof}
\begin{remark}
The keen-eyed reader will note that in the $n=3$ case of \Cref{lm:sqzspech}, we actually only had rigid $\bb{E}_2$-rings which we applied \Cref{thm:surjspech} to, meaning our homotopy category was merely a ``braided tt-category,'' as opposed to a symmetric monoidal one.  Nevertheless, one can prove the analogue of \Cref{thm:surjspech} with essentially the same arguments as the symmetric monoidal case.  One can alternatively proceed without ever leaving the symmetric monoidal world, were it so desired, but at the cost of requiring $m\geq 5$ in the following theorem, with minimal modifications to the proof.
\end{remark}
With these preliminaries, we can prove the main theorem of this section:
\begin{proof}[Proof of \Cref{thm:spechEn}]
Fix a residue field $E_{\mf{m}}$ at the point $\mf{m}$.  A natural candidate for the category $\cat{K}$ we want to consider will be the category of modules over $E_{\mf{m}}^q$ for some $q\geq n+1$, which, as a category of modules over an $\bb{E}_{n+1}$-algebra object, inherits an $\bb{E}_n$-monoidal structure.

Towards this end, let 
\[\cat{K}_q\coloneqq \mo{Perf}_{\cat{C}}(E_{\mf{m}}^q)\]
for $q\geq n+1$.  It is clear that the image of 
\[\Spc^h(\cat{K}_q)\to \Spc^h(\cat C)\]
is exactly $\mf{m}$, by construction.  Our goal now is to show that for some $q\gg 0$, $\Spc^h(\cat{K}_q)$ is a single point.  In fact, we claim that taking $q\geq \mo{max}\{n+1,7\}$ is enough.

First, we note that using \Cref{thm:technicalSqZero}, the $\bb{E}_3$-algebra map 
\[E_{\mf{m}}^{q-3}\to E_{\mf{m}}^{q-3}\otimes E_{\mf{m}}^q\]
is a split square-zero extension of the source, and hence induces an equivalence on homological spectra by \Cref{lm:sqzspech}.  In particular, the multiplication map (which serves as a retraction for this map), given by the composite
\[E_{\mf{m}}^{q-3}\otimes E_{\mf{m}}^q\to E_{\mf{m}}^{q-3}\otimes E_{\mf{m}}^{q-3}\to E_{\mf{m}}^{q-3}\]
also induces an equivalence on homological spectra, and is thus nil-conservative.

Using this, we note also that $E_{\mf{m}}^q\to E_{\mf{m}}^{q-3}$ is nil-conservative- since any non-zero weak ring $A$ under $E_{\mf{m}}^q$ has homological support $\{\mf{m}\}$ in $\cat C$, which implies that $A\otimes E_{\mf{m}}^{q-3}$ a non-zero weak ring in $E_{\mf{m}}^q\otimes E_{\mf{m}}^{q-3}$-modules, and 
\[A\otimes_{E_{\mf{m}}^q}E_{\mf{m}}^{q-3}\simeq (A\otimes E_{\mf{m}}^{q-3})\otimes_{E_{\mf{m}}^q\otimes E_{\mf{m}}^{q-3}}E_{\mf{m}}^{q-3} \neq 0.\]

Now, fix some given $q\geq \mo{max}\{n+1,7\}$, and suppose that we had two non-zero weak rings $A,B$ in $\cat{K}_q$ such that $A\otimes_{E_{\mf{m}}^q} B\simeq 0$.  Without loss of generality, we may replace $B$ by $B\otimes_{E_{\mf{m}}^q}E_{\mf{m}}^{q-3}$ to assume that the weak ring structure on $B$ is induced from a weak $E_{\mf{m}}^{q-3}$-ring structure on an object which we abusively also denote by $B$.  Now, again since $A$ and $B$, considered as objects of $\cat{C}$, have homological support $\{\mf{m}\}$, $A\otimes B\neq 0$ in $\cat{C}$.  In particular, $A\otimes B$ is a non-zero weak ring in the category of $E_{\mf{m}}^q\otimes E_{\mf{m}}^{q-3}$-modules.  We have 
\begin{align*}
(A\otimes B)\otimes_{E_{\mf{m}}^q\otimes E_{\mf{m}}^{q-3}}E_{\mf{m}}^{q-3} & \simeq ((A\otimes_{E_{\mf{m}}^q}E_{\mf{m}}^{q-3})\otimes B)\otimes_{E_{\mf{m}}^{q-3}\otimes E_{\mf{m}}^{q-3}}E_{\mf{m}}^{q-3}\\
&\simeq (A\otimes_{E_{\mf{m}}^q}E_{\mf{m}}^{q-3})\otimes_{E_{\mf{m}}^{q-3}}B\\
&\simeq A\otimes_{E_{\mf{m}}^q}B\simeq 0,
\end{align*}
which, as $A\otimes B$ was non-zero, contradicts nil-conservativity of the multiplication map.
\end{proof}

\subsection{Recollections on deformations}
Before proving the technical lemmas used in the previous subsection, we recall a bit of background from \cite{patchkoria2025adams}, \cite{BURKLUND2025110270} and \cite{burklund2022multiplicativestructuresmoorespectra}.

\begin{proposition}[{\cite[5.34,5.37,5.47,5.60]{patchkoria2025adams}}]\label{prop:deformations}  Let $\cat E$ be an epimorphism class in an essentially small stable $\infty$-category $\cat C$.  Then there exists a stable $\infty$-category $\mc{D}(\cat C,\cat E)$, a (non-exact) functor 
\[\nu\colon\cat C\to \mc{D}(\cat C,\cat E)\]
and a natural transformation 
\[\tau\colon \Sigma\nu(\Omega-)\to \nu(-)\]
such that 
\begin{enumerate}
\item The functor $\nu$ is fully faithful.
\item The image of $\nu$ generates $\mc{D}(\cat C,\cat E)$.
\item The functor $\nu$ preserves fiber sequences $x\to y\to z$ such that the map $y\to z$ is in the epimorphism class $\cat E$.
\item  There is an exact automorphism $(-)[1]$ on $\mc{D}(\cat C,\cat E)$ determined by the property that $\nu(X)[1]\simeq \nu(\Sigma X)$.
\item  Inverting the natural transformation $\tau$ yields an (exact) functor \[(-)[\tau^{-1}]\colon\mc{D}(\cat C,\cat E)\to \cat C\] such that $(\nu(-))[\tau^{-1}]\simeq \mo{id}_{\cat C}$.
\item  If $\mc{I}$ is an $\cat{E}$-injective object of $\cat C$, then for all objects $X\in \cat C$, 
\[ [\Sigma^{-s}\nu(X),\nu(\mc{I})]\simeq 0\]
for all $s>0$.
\end{enumerate}
\end{proposition}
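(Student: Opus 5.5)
This proposition is a recollection, so the plan is to rebuild the construction of \cite{patchkoria2025adams} in the form we need. The approach is the ``synthetic'' one: turn $\cat E$ into a Grothendieck topology on $\cat C$ and take sheaves of spectra on the resulting site.

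\textbf{Construction.} Declare a single map $y\to z$ in $\cat E$ to be a covering. Since $\cat E$ is an epimorphism class, it contains the equivalences and is stable under composition and base change, so these coverings form a pretopology. Let $\widehat{\mc{D}}$ be the $\infty$-category of hypercomplete, finite-product-preserving sheaves of spectra on this site. Let $Y(X)$ be the sheafification of the spectral Yoneda presheaf $\mo{map}_{\cat C}(-,X)$, and set
\[
\nu(X)\coloneqq\tau_{\geq 0}Y(X),
\]
where $\tau_{\geq 0}$ is taken in the sheaf $t$-structure. Finally, let $\mc{D}(\cat C,\cat E)$ be the thick subcategory of $\widehat{\mc{D}}$ generated by the image of $\nu$. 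This makes item (b) hold by definition and keeps $\mc{D}(\cat C,\cat E)$ essentially small.

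\textbf{Items (a)--(d).}
\begin{itemize}
\item[(a)] For a fiber sequence $x\to y\to z$ with $y\to z$ in $\cat E$, the sequence $\pi_0\mo{map}(-,x)\to\pi_0\mo{map}(-,y)\to\pi_0\mo{map}(-,z)$ is exact on sheaves; applying this in each degree shows that $\mo{map}(-,X)$ already satisfies descent. The Yoneda lemma then gives $\mo{Map}(\nu X,\nu X')\simeq\tau_{\geq0}\mo{map}_{\cat C}(X,X')$, so $\nu$ is fully faithful on the underlying $\infty$-category.
\item[(c)] Under the same hypothesis, $\nu(y)\to\nu(z)$ is an effective epimorphism of connective sheaves, so its fiber is connective. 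Hence the fiber sequence of $Y$'s survives $\tau_{\geq 0}$.
\item[(d)] The autoequivalence $\Sigma$ of $\cat C$ preserves $\cat E$, so it induces an automorphism of the site. Pulling sheaves back along it gives the exact automorphism $(-)[1]$ on $\widehat{\mc{D}}$, and $Y(\Sigma X)\simeq Y(X)[1]$ yields $\nu(X)[1]\simeq\nu(\Sigma X)$.
\end{itemize}

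\textbf{Items (e) and (f).}
\begin{itemize}
\item[(e)] Since $Y(\Omega X)\simeq\Omega Y(X)$, we have $\Sigma\nu(\Omega X)\simeq\tau_{\geq1}Y(X)$. The truncation map $\tau_{\geq 1}Y(X)\to\tau_{\geq 0}Y(X)$ is the transformation $\tau$. Inverting $\tau$ takes the colimit along these maps, which recovers $Y(X)$. I would then show that inverting $\tau$ on $\mc{D}(\cat C,\cat E)$ is an exact localization whose target is the thick subcategory generated by the sheafified representables $Y(X)$; since these are the representables themselves, that subcategory is $\cat C$. The identity $(\nu X)[\tau^{-1}]\simeq X$ follows.
\item[(f)] Maps $\Sigma^{-s}\nu(X)\to\nu(\mc{I})$ are computed by a descent spectral sequence with $E_2$-term $H^{p}(X;\pi_q\nu\mc{I})$ in total degree $p-q=s$. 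The homotopy sheaves of $\nu(\mc{I})$ are the presheaves $\pi_0\mo{map}(-,\Sigma^{-q}\mc{I})$. These are $\cat E$-exact and take $\cat E$-monomorphisms to epimorphisms because $\mc{I}$ is $\cat E$-injective, so their higher sheaf cohomology vanishes. Only $p=0,\ q\geq0$ remains, which contributes nothing in degree $s>0$.
\end{itemize}

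\textbf{Main obstacle.} I expect the hardest steps to be (e) and (f). For (e) one must check that $\tau$-inversion really lands on $\cat C$ and not on some Ind- or completed version of it. For (f), the vanishing of higher cohomology of injective-valued sheaves uses hypercompleteness and needs care. For (f) I would first try to reduce to a $\check{\mathrm{C}}$ech computation along a single $\cat E$-cover, using that $\cat E$-injectives make every such $\check{\mathrm{C}}$ech complex split exact.
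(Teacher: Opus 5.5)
\textbf{The gap is hypercompletion.} Because your $\widehat{\mc{D}}$ consists of hypercomplete sheaves, items (a) and (e) fail for general $\cat E$. They fail in particular for the classes $\cat E(v)$ to which the paper applies this proposition.

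Suppose $0\to z$ lies in $\cat E$ for some $z\not\simeq 0$. For $\cat E(v)$ this just means $z\otimes\bb{1}/v\simeq 0$, e.g.\ $\cat C=\mo{Perf}(\bb{Z})$, $v=p$, $z=\bb{Z}/\ell$ with $\ell\neq p$. A section of $\pi_k\mo{map}(-,z)$ over $c$ is a map $f\colon c\to\Sigma^{-k}z$. It restricts to zero along $c\times_{\Sigma^{-k}z}0\to c$, which is a base change of $0\to\Sigma^{-k}z\in\cat E$ (you already use shift-stability of $\cat E$ in (d)). Hence all homotopy sheaves of $Y(z)$ and of $\nu(z)=\tau_{\geq0}Y(z)$ vanish. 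These objects are therefore $\infty$-connective and become $0$ after hypercompletion. So $\mo{Map}(\nu z,\nu z)\simeq *\neq\mo{Map}(z,z)$, and $\nu(z)[\tau^{-1}]\simeq 0\neq z$. This is exactly the ``completed version'' you worried about in (e), and it cannot be repaired inside the hypercomplete model.

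Relatedly, your justification of (a) does not prove descent. Exactness of $\pi_0\mo{map}(-,x)\to\pi_0\mo{map}(-,y)\to\pi_0\mo{map}(-,z)$ concerns the covariant variable and is the content of (c). The correct reason representables are sheaves is this: in a stable $\cat C$, the \v{C}ech nerve of any $y\to z$ with fiber $x$ has terms $y\oplus x^{\oplus n}$ and realizes in $\Ind(\cat C)$ to $\mathrm{cofib}(x\to y)\simeq z$. So $\mo{map}(-,X)$ satisfies \v{C}ech descent for every map, and hypercompletion is the only place where $\cat E$ can kill a representable.

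\textbf{The fix} is to drop hypercompletion and work with ordinary finite-product-preserving sheaves on the $\cat E$-epimorphism site. The paper's model is built from perfect sheaves there and then passed to the Spanier--Whitehead category. Your (b), (c), (d) are fine as written.

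\begin{itemize}
\item[(a)] Now $\nu(X)$ corresponds to the space-valued representable $\mo{Map}(-,X)$, which is already a sheaf, so (a) is the Yoneda lemma. More generally, $\mo{map}(\nu X,G)\simeq G(X)$ for every sheaf of spectra $G$.
\item[(e)] Your argument goes through. Since $\nu X$ is compact, $\mo{map}(\nu X,\nu(X')[\tau^{-1}])\simeq\colim_n\Sigma^{-n}(\nu\Sigma^nX')(X)$, and its $\pi_k$ is $[\Sigma^kX,X']$. This gives full faithfulness of $X\mapsto Y(X)$ onto the $\tau$-local part.
\item[(f)] This must be redone, since without hypercompleteness you cannot invoke a convergent descent spectral sequence for the unbounded object $\nu(\mc I)$. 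Argue directly instead. For a cover $y\to z$ with fiber $x$, the map $x\to y$ is an $\cat E$-monomorphism, so $[y,\mc I]\to[x,\mc I]$ is onto. Hence $\tau_{\geq0}$ preserves the fiber sequence $\mo{map}(z,\mc I)\to\mo{map}(y,\mc I)\to\mo{map}(x,\mc I)$. So the presheaf $\tau_{\geq0}\mo{map}(-,\mc I)$ is already a connective sheaf, it equals $\nu(\mc I)$, and $[\Sigma^{-s}\nu X,\nu\mc I]\simeq\pi_{-s}\tau_{\geq0}\mo{map}(X,\mc I)=0$ for $s>0$.
\end{itemize}
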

\begin{remark}
In contrast to \cite{patchkoria2025adams}, where they deal primarily with prestable deformations which they call $\mc{D}(\cat C, \cat E)$, we will never have reason to leave the stable world.  In particular, we implicitly work with the stable envelope (that is, the Spanier-Whitehead $\infty$-category, see \cite[Construction~C.1.1.1, Proposition~C.1.1.7, Proposition~C.1.2.2]{SAG}) of the prestable categories constructed by Patchkoria-Pstr\c{a}gowski.
\end{remark}

\begin{proposition}[{\cite[Proposition~A.11]{BURKLUND2025110270}}]\label{prop:functorialityDeformation}
Given two stable $\infty$-categories $\cat C$, $\cat D$, with epimorphism classes $\cat E$, $\cat F$, in $\cat C$ and $\cat D$ respectively, as well as an exact functor 
\[\cat C\to \cat D\]
which sends $\cat E$ to $\cat F$, we obtain an exact functor 
\[\mc{D}(\cat C,\cat E)\to \mc{D}(\cat D,\cat F)\]
compatible with $\nu$ and $(-)[\tau^{-1}]$.
\end{proposition}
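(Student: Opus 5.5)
The plan is to work directly with the model for $\mc{D}(\cat C,\cat E)$ of Patchkoria--Pstr\c{a}gowski. There, one takes the site whose underlying category is $\cat C$ and whose covering families are the single maps in the epimorphism class $\cat E$. The big deformation $\mc{D}^{\omega}$-completion is the category of spectrum-valued (additive) sheaves on this site, stabilized as in the preceding remark. The small $\mc{D}(\cat C,\cat E)$ is the thick subcategory generated by the objects $\nu(X)$, where $\nu(X)$ is the sheafification of the connective cover of the spectral Yoneda presheaf $\tau_{\geq 0}\mo{map}(-,X)$. Under this description, properties (a)--(f) of \Cref{prop:deformations} are statements about this sheaf category. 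Everything will then be reduced to showing that an exact functor $F\colon \cat C\to\cat D$ with $F(\cat E)\subseteq\cat F$ is a morphism of sites.

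\textbf{Construction of the functor.} First, restriction $F^*\colon \mo{Fun}(\cat D^{op},\mo{Sp})\to\mo{Fun}(\cat C^{op},\mo{Sp})$ carries $\cat F$-sheaves to $\cat E$-sheaves. The sheaf condition for a cover $y\to z$ in $\cat E$ with fiber $x$ is that the presheaf turns the fiber sequence $x\to y\to z$ into a fiber sequence (equivalently, the Čech descent condition for $y\to z$). Since $F$ is exact and sends $\cat E$ into $\cat F$, it carries such a sequence to a fiber sequence whose second map lies in $\cat F$, so restriction preserves the condition. Next, the left adjoint of $F^*$ on sheaves is $L_{\cat F}\circ F_!$: left Kan extension followed by $\cat F$-sheafification. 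This is a colimit-preserving exact functor. Because $F_!$ sends representables to representables and commutes with connective covers, and $L$ commutes with $F_!$ on representables, we get
\[L_{\cat F}F_!\,\nu(X)\simeq \nu(FX).\]
Hence the functor carries the thick subcategory generated by the image of $\nu$ into the corresponding one for $\cat D$. This yields the desired exact functor $\mc{D}(\cat C,\cat E)\to\mc{D}(\cat D,\cat F)$, compatible with $\nu$.

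\textbf{Compatibility with $\tau$ and $(-)[\tau^{-1}]$.} The transformation $\tau_X\colon\Sigma\nu(\Omega X)\to\nu(X)$ is induced by the natural map $\Sigma\tau_{\geq 0}\mo{map}(-,\Omega X)\to\tau_{\geq0}\mo{map}(-,X)$. Since $F$ is exact, it commutes with $\Omega$, and the functor on presheaves commutes with $\Sigma$. So the image of $\tau_X$ is identified with $\tau_{FX}$, naturally in $X$; the same holds for the shift $[1]$. Inverting $\tau$ is therefore preserved, giving a commutative square with the localizations $(-)[\tau^{-1}]$. Under $(\nu(-))[\tau^{-1}]\simeq\mo{id}$, the induced functor $\cat C\to\cat D$ agrees with $F$ on the objects $\nu(X)[\tau^{-1}]\simeq X$. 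Both composites are exact and agree on the generating image of $\nu$ by (b), so they agree.

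\textbf{Main obstacle.} I expect the main obstacle to be the identification $L_{\cat F}F_!\,\nu(X)\simeq\nu(FX)$ in the stabilized setting. Connective covers and sheafification do not obviously commute with left Kan extension. I would handle this by working first in the prestable (connective) sheaf categories, where $\nu(X)$ is literally the sheafified representable and the statement is formal from the adjunction $L_{\cat F}F_!\dashv F^*$ together with Yoneda. I would then pass to Spanier--Whitehead stabilizations, which are functorial for exact functors between prestable categories.
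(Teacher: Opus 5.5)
Your proposal is correct and is the standard argument: the paper gives no proof of its own here (it only cites \cite[Proposition~A.11]{BURKLUND2025110270}), but your route is exactly the template the paper itself uses in the proof of \Cref{prop:functorialityEkCompatibleEpiClass}, namely that $F$ is a morphism of sites, left Kan extension followed by sheafification sends $\nu(X)$ to $\nu(FX)$, and one then passes from the prestable categories to their Spanier--Whitehead stabilizations. The one step to tighten is the last: identify the induced functor $\cat C\to\cat D$ with $F$ through the natural equivalences $\widetilde{F}\circ\nu\simeq\nu\circ F$ (where $\widetilde{F}$ denotes the induced functor on deformations) and $(\nu(-))[\tau^{-1}]\simeq\mathrm{id}$, because two exact functors that merely agree objectwise on a generating set need not be equivalent as functors.
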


\begin{definition}[{\cite[Definition~4.2]{burklund2022multiplicativestructuresmoorespectra}}]\label{def:CompatibleEpiClass}
An epimorphism class $\cat E$ on a stably $\bb{E}_n$-monoidal $\infty$-category $\cat C$ is said to be \textit{$\otimes$-compatible} if $\cat E$ is preserved under tensoring with arbitrary objects of $\cat C$.
\end{definition}

\begin{proposition}[{\cite[Proposition~4.3]{burklund2022multiplicativestructuresmoorespectra}}]\label{prop:EkCompatibleEpiClass}
If $\cat C$ is a stably $\bb{E}_n$-monoidal $\infty$-category with $\otimes$-compatible epimorphism class $\cat E$, then the deformation $\mc{D}(\cat C,\cat E)$ inherits the structure of a stably $\bb{E}_n$-monoidal category in such a way that $\nu$ and $(-)[\tau^{-1}]$ are $\bb{E}_n$-monoidal functors.
\end{proposition}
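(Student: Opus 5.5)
The plan is to realize $\mc{D}(\cat C,\cat E)$ as a symmetric-monoidal-style localization of a presheaf category carrying Day convolution. Then the $\bb{E}_n$-structure descends through Lurie's criterion for monoidal localizations \cite[Proposition~2.2.1.9]{HA}, and the statements about $\nu$ and $(-)[\tau^{-1}]$ follow from the universal properties of Day convolution and of localization.

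\emph{Step 1: the presheaf model.} Recall from the construction in \cite{patchkoria2025adams} that $\mc{D}(\cat C,\cat E)$ is obtained from sheaves of spectra on $\cat C$. The Grothendieck topology is the one whose covers are the maps in $\cat E$. We pass to hypercomplete sheaves, restrict to the subcategory generated by the sheafified Yoneda images $\nu(X)$, and finally take the stable envelope as in the preceding remark. Concretely:
\begin{itemize}
\item $\nu(X)$ is the sheafification of the spectral Yoneda presheaf $\Sigma^\infty_+\Hom_{\cat C}(-,X)$, or equivalently of the connective cover of the mapping spectrum presheaf;
\item $\tau$ comes from the comparison of $\Sigma\nu(\Omega X)$ with $\nu(X)$.
\end{itemize}
Since $\cat C$ is $\bb{E}_n$-monoidal, the presheaf category $\mo{Fun}(\cat C^{op},\mo{Sp})$ inherits an $\bb{E}_n$-monoidal Day convolution structure, and the Yoneda embedding is $\bb{E}_n$-monoidal for it.

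\emph{Step 2: the localization is compatible with Day convolution.} By \cite[Proposition~2.2.1.9]{HA}, it suffices to show that local equivalences are stable under Day convolution with representables, since representables generate under colimits. The sheafification $L$, together with hypercompletion, is the localization at two classes of maps:
\begin{itemize}
\item the Čech nerves, or hypercovers, of covering maps $y(x)\to y(z)$ with $x\to z\in\cat E$;
\item the maps encoding that $\nu$ carries fibre sequences with second map in $\cat E$ to fibre sequences.
\end{itemize}
Day convolution with $y(c)$ sends $y(x)\to y(z)$ to $y(c\otimes x)\to y(c\otimes z)$. By $\otimes$-compatibility (\Cref{def:CompatibleEpiClass}), this is again a cover. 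Since $y(c)\otimes-$ commutes with colimits and is computed objectwise on representables, it sends Čech nerves, hypercovers, and the exactness maps to maps of the same kind.

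Hence $L$ is compatible, and the local category acquires an $\bb{E}_n$-monoidal structure for which $L\circ y=\nu$ is $\bb{E}_n$-monoidal. The full subcategory generated by the $\nu(X)$ is closed under the tensor product because $\nu(X)\otimes\nu(Y)\simeq\nu(X\otimes Y)$. Passing to the Spanier--Whitehead stable envelope is itself symmetric monoidally functorial, so the structure extends to the stable $\mc{D}(\cat C,\cat E)$.

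\emph{Step 3: $\tau$ and inverting it.} Because $\nu$ is monoidal, $\tau_X\otimes\nu(Y)$ identifies with $\tau_{X\otimes Y}$; this uses that $\Omega(X)\otimes Y\simeq\Omega(X\otimes Y)$ by exactness of $\otimes$ in each variable. Therefore the class of maps generated by $\tau$ is a tensor ideal of morphisms, and the localization $(-)[\tau^{-1}]$ is again compatible with the $\bb{E}_n$-structure by \cite[Proposition~2.2.1.9]{HA}.

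The identification $\nu(-)[\tau^{-1}]\simeq\mo{id}_{\cat C}$ from \Cref{prop:deformations}(e) is then an identification of $\bb{E}_n$-monoidal functors. To see this, note that both sides are left Kan extended from the $\bb{E}_n$-monoidal Yoneda image, and there they agree as monoidal functors.

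\emph{Main obstacle.} I expect the main difficulty to be Step 2, specifically in the hypercomplete, connective-then-stabilized setting. One must check that Day convolution with a representable preserves hypercover-local equivalences, not just Čech-local ones, and that the stable envelope construction is compatible with the monoidal structure. My first approach would be to verify everything at the level of the prestable category of additive sheaves. There, hypercover equivalences are detected on homotopy sheaves, and the $\otimes$-compatibility of $\cat E$ shows that tensoring with $y(c)$ preserves both epimorphisms of sheaves and $\pi_*$-isomorphisms of sheaves. After that, stabilization is formal.
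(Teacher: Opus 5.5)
This is essentially the route the paper itself indicates in the proof of \Cref{prop:functorialityEkCompatibleEpiClass}. You put Day convolution on spherical presheaves, use $\otimes$-compatibility of $\cat E$ to make the sheafification an $\bb{E}_n$-monoidal localization, and then pass to perfect objects and the Spanier--Whitehead envelope.

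One correction. Day convolution with $y(c)$ is left Kan extension along $c\otimes -$, and it need not preserve the fibre products in a \v{C}ech nerve, so your claim that it sends \v{C}ech nerves to \v{C}ech nerves is not justified. Instead, generate the localization only by the maps $\cofib(y(a)\to y(b))\to y(c)$ for cofibre sequences $a\to b\to c$ with $b\to c\in\cat E$. In this additive setting these maps already characterize the sheaves, and they are visibly stable under $y(d)\otimes-$, so neither \v{C}ech nerves nor hypercompletion are needed.
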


\begin{proposition}\label{prop:functorialityEkCompatibleEpiClass}
If $\cat C$ and $\cat D$ are two stably $\bb{E}_n$-monoidal $\infty$-categories equipped with $\otimes$-compatible epimorphism classes $\cat E$ and $\cat F$, together with an $\bb{E}_n$-monoidal functor 
\[\cat C \to \cat D\]
mapping $\cat E$ to $\cat F$, then the induced functor 
\[\mc{D}(\cat C,\cat E)\to \mc{D}(\cat D,\cat F)\]
can naturally be promoted to an $\bb{E}_n$-monoidal functor.
\end{proposition}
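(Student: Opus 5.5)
We promote the functor $\mc{D}(\cat C,\cat E)\to \mc{D}(\cat D,\cat F)$ of \Cref{prop:functorialityDeformation} to an $\bb{E}_n$-monoidal functor by redoing the construction of \Cref{prop:EkCompatibleEpiClass} functorially. The idea is to present that construction as the value on one object of a functor defined on a category whose objects are stably $\bb{E}_n$-monoidal $\infty$-categories with $\otimes$-compatible epimorphism class. Applying this functor to the morphism $\cat C\to\cat D$ then gives the required $\bb{E}_n$-monoidal structure.

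First, we recall how the monoidal structure is built in \cite{burklund2022multiplicativestructuresmoorespectra}, following \cite{patchkoria2025adams}. The prestable deformation is (the Spanier--Whitehead stabilization of) a category of additive sheaves of spectra on $\cat C$ for the Grothendieck topology whose covers are the maps in $\cat E$. The $\bb{E}_n$-monoidal structure is obtained from the Day convolution on spectral presheaves of $\cat C$, localized to sheaves. The condition that $\cat E$ is $\otimes$-compatible guarantees that this localization is compatible with the Day convolution, since tensoring with a fixed object preserves covers. Finally one restricts to the subcategory generated by the image of $\nu$, which is the spectral Yoneda embedding followed by sheafification.

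Second, we make each step functorial. The spectral Yoneda embedding with the Day convolution structure is functorial in $\bb{E}_n$-monoidal functors. Left Kan extension $F_!$ along $F\colon\cat C\to\cat D$ is $\bb{E}_n$-monoidal for the Day convolution, because Day convolution is characterized by a universal property: it is the free presentably $\bb{E}_n$-monoidal category under the source. Since $F$ sends $\cat E$ into $\cat F$, $F_!$ sends $\cat E$-local equivalences to $\cat F$-local equivalences. Hence $F_!$ descends to a functor on sheaves. This functor is $\bb{E}_n$-monoidal because both localizations are monoidal localizations, and by the universal property of monoidal localization [HA, 4.1.7.4] the descended functor acquires a canonical $\bb{E}_n$-monoidal structure. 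We then pass to stable envelopes (Spanier--Whitehead categories), which is a symmetric monoidal construction and therefore preserves $\bb{E}_n$-monoidal functors. The result agrees with the functor of \Cref{prop:functorialityDeformation}, since both are characterized by compatibility with $\nu$ on generators by (a) and (b) of \Cref{prop:deformations}. Compatibility with $\nu$ and with $(-)[\tau^{-1}]$ as $\bb{E}_n$-monoidal functors follows in the same way from the universal properties.

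The main obstacle is the identification of the localized Day convolution functor with the specific model of $\mc{D}(\cat C,\cat E)$ used in \cite{patchkoria2025adams} and \cite[Proposition~A.11]{BURKLUND2025110270}. In particular we have to check that sheafification commutes with $F_!$ on representables, i.e. that $F_!$ of an $\cat E$-cover is an $\cat F$-cover. For this we would first reduce to the generating covers $y\to z$ in $\cat E$ with fibre $x$, whose image under $F$ is a cover because $F$ is exact and $F(y\to z)\in\cat F$.
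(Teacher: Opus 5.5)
Your proposal is correct and follows essentially the same route as the paper. In both, left Kan extension on presheaves with Day convolution is $\bb{E}_n$-monoidal, and $\otimes$-compatibility of the epimorphism classes makes sheafification an $\bb{E}_n$-monoidal localization whose local equivalences are preserved by $F_!$. Hence the induced functor on (perfect) sheaves is $\bb{E}_n$-monoidal, and one then passes to Spanier--Whitehead stabilizations. The only difference in setup is that the paper uses spherical presheaves of spaces $\mo{PSh}_{\Sigma}(\cat C,\mc{S})$ rather than spectral ones, which changes nothing essential.

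One small caveat concerns your side claim that the result agrees with the functor of \Cref{prop:functorialityDeformation} because both are determined on the generators $\nu(X)$. Properties (a) and (b) alone do not justify this, since generation does not determine an exact functor. The agreement does hold, but because both functors arise from the same left Kan extension and localization, which is what the paper tacitly uses.
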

\begin{proof}
The $\bb{E}_n$-monoidal functor $\cat C \to \cat D$ yields an $\bb{E}_n$-monoidal functor of prestable $\infty$-categories
\[\mo{PSh}_{\Sigma}(\cat C,\mc{S})\to\mo{PSh}_{\Sigma}(\cat D,\mc{S})\]
between categories of spherical presheaves via left Kan extension.  The prestable deformation category $\mc{D}_{\geq 0}(\cat C,\cat E)$ of $\cat C$ with respect to $\cat E$ is given by the category of perfect sheaves on $\cat C$ with respect to the $\cat E$-epimorphism topology (see \cite[Definition~5.32]{patchkoria2025adams}).  In particular, this arises as a localization of $\mo{PSh}_{\Sigma}(\cat C,\mc{S})$, where the $\otimes$-compatibility of $\cat E$ is used to give the localization the structure of an $\bb{E}_n$-localization.

The diagram
\begin{center} 
\begin{tikzcd}
\mo{PSh}_{\Sigma}(\cat C,\mc{S})\rar\dar &\mo{PSh}_{\Sigma}(\cat D,\mc{S})\dar\\
\mc{D}_{\geq 0}(\cat C,\cat E)\rar & \mc{D}_{\geq 0}(\cat D,\cat F)
\end{tikzcd}
\end{center}
is induced from a monoidal localization of an $\bb{E}_n$-monoidal map of $\bb{E}_n$-monoidal categories, and so the bottom map is itself $\bb{E}_n$-monoidal.  The result now follows by passing to stabilizations.
\end{proof}

We now specialize to the main case of interest.  Let $\cat C$ be a stably $\bb{E}_n$-monoidal $\infty$-category, let $v\colon\mc{I}\to \bb{1}$ be a map from some object to the unit such that the cofiber $\bb{1}/v$ admits a weak ring structure.  In this case, there is a $\otimes$-compatible epimorphism class on $\cat C$, denoted $\cat{E}(v)$, consisting of those morphisms which are split surjective after tensoring with $\bb{1}/v$.  In this case, the fiber sequence
\[\bb{1}\to \bb{1}/v\to \Sigma \mc{I}\]
has the second map in the epimorphism class.  In particular, this gives rise to a fiber sequence 
\[\nu(\bb{1})\to \nu(\bb{1}/v)\to \nu(\mc{I})[1].\]
Following the conventions set in \cite{burklund2022multiplicativestructuresmoorespectra}, we name the fiber of the first map here 
\[\tilde{v}\colon \Sigma^{-1}\nu(\mc{I})[1]\to \nu(\bb{1}).\]

Before restating the main result of \cite{burklund2022multiplicativestructuresmoorespectra}, we recall one final definition.
\begin{definition}[{\cite[Definition~5.1]{burklund2022multiplicativestructuresmoorespectra}}]\label{def:vcompatibleLift}
If we are given a weak ring $\bb{1}/v$ in a stably $\bb{E}_n$-monoidal $\infty$-category $\cat C$, then for $1\leq k\leq n$, an $\bb{E}_k$-algebra structure on $\bb{1}/v^{q}$ is said to be \textit{$v$-compatible} if it arises as the $\tau$-inversion of an $\bb{E}_k$-algebra structure on $\nu(\bb{1})/\tilde{v}^q$ in $\mc{D}(\cat C, \cat{E}(v))$.
\end{definition}
\begin{theorem}[{\cite[Theorem~5.2]{burklund2022multiplicativestructuresmoorespectra}}]\label{thm:RobertThm}
Suppose we are given a weak ring $\bb{1}/v$ in a stably $\bb{E}_n$-monoidal $\infty$-category $\cat C$.  Then for any $1\leq k\leq n$, and any $q\geq k+1$, there is a unique $v$-compatible $\bb{E}_k$-algebra structure on $\bb{1}/v^q$.
\end{theorem}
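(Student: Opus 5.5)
This result is \cite[Theorem~5.2]{burklund2022multiplicativestructuresmoorespectra}, and I would follow Burklund's strategy: construct the $\bb{E}_k$-structure first in the deformation $\mc{D}(\cat C,\cat{E}(v))$ on $\nu(\bb{1})/\tilde v^{q}$, and then invert $\tau$. The existence of $\tau$-inversion is \Cref{prop:deformations}(e), and it is $\bb{E}_n$-monoidal by \Cref{prop:EkCompatibleEpiClass}. This reduces the theorem to showing that, in the deformation, the space of $\bb{E}_k$-algebra structures on $\nu(\bb{1})/\tilde v^{q}$ (refining its unit map) is nonempty and connected. For $\tau$-inversion to produce structures on $\bb{1}/v^q$, I also need to identify $(\nu(\bb 1)/\tilde v^q)[\tau^{-1}]\simeq \bb{1}/v^q$. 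This follows since $\tilde v[\tau^{-1}]\simeq v$, which holds by construction of $\tilde v$ from the fiber sequence $\nu(\bb 1)\to\nu(\bb 1/v)\to \nu(\mc I)[1]$.

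The key structural input is that $\tilde v$ lives in ``positive Adams filtration'': up to the $[1]$-shift, its source is $\Sigma^{-1}\nu(\mc I)[1]$, so the map is a $\tau$-divided version of $v$. To exploit this, I would work in a filtered/graded setting. Let $A$ be the free $\bb{E}_k$-algebra on the pointed object $\Sigma^{-1}(\nu(\mc I)[1])^{\otimes q}\xrightarrow{\tilde v^q}\nu(\bb 1)$, equivalently the $\bb{E}_k$-quotient $\nu(\bb 1)/\!\!/_{\bb{E}_k}\tilde v^q$, graded by weight in $\tilde v$. Its weight-$0$ and weight-$1$ parts recover $\nu(\bb 1)/\tilde v^q$. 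The higher weight-$m$ cells come from $\bb{E}_k$-operations, which only introduce suspensions up to $(k-1)(m-1)$. Each such cell therefore maps to the Moore object through roughly $qm$ copies of the $\tau$-positive map $\tilde v$, while carrying only $(k-1)(m-1)$ extra suspensions.

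The main step is then an obstruction-theoretic count. The obstructions to retracting $A$ onto its weight-$\le 1$ truncation $\nu(\bb 1)/\tilde v^q$, compatibly with the $\bb{E}_k$-structure, lie in groups of the form
\[
[\Sigma^{-s}\nu(X),\nu(\bb 1)/\tilde v^{q}]
\]
for $s>0$, after using that $\nu(\bb 1/v)$, and its relatives built from $\bb 1/v$-modules, are $\cat{E}(v)$-injective. These groups vanish by \Cref{prop:deformations}(f). The inequality $q\ge k+1$ is what should make the filtration gain $q(m-1)$ strictly exceed the suspension loss $(k-1)(m-1)$ for every $m\ge2$. Uniqueness would come from the same vanishing one degree lower, giving connectivity of the space of lifts.

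I expect the hardest step to be this bookkeeping: setting up the weight filtration on the $\bb{E}_k$-quotient precisely, and verifying that every obstruction group really has a target to which \Cref{prop:deformations}(f) applies. The target $\nu(\bb 1)/\tilde v^q$ is not itself injective, so I would first try to reduce by dévissage along the $\tilde v$-adic filtration to layers of the form $\nu(\mc{I}^{\otimes j}\otimes \bb 1/v)$, which are $\cat E(v)$-injective.
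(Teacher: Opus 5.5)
Your overall architecture is the one Burklund uses. The paper itself only cites \cite[Theorem~5.2]{burklund2022multiplicativestructuresmoorespectra} and records its obstruction theory as \Cref{thm:obstruction-theory}: build the structure on $\nu(\bb{1})/\tilde v^{q}$ in $\mc{D}(\cat C,\cat{E}(v))$, kill the obstructions using that $\tilde v$ has positive filtration, and invert $\tau$. The step you call bookkeeping, however, is where the proof actually lives, and as you set it up it fails.

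You count one obstruction per weight-$m$ cell $\Sigma^{c}(\Sigma\widetilde{\mc{I}}^{\otimes q})^{\otimes m}$, $0\le c\le (m-1)(k-1)$, of the $\bb{E}_k$-quotient, and ask for a retraction onto the first filtration stage. This is not an obstruction theory for $\bb{E}_k$-structures. That stage carries no $\bb{E}_k$-structure yet, since that is exactly what you are trying to build. Producing an $\bb{E}_k$-algebra with underlying object $\nu(\bb{1})/\tilde v^{q}$ requires attaching $\bb{E}_k$-cells to kill the higher weights, and those cells create new decomposables. The genuine obstructions are those of \Cref{thm:obstruction-theory} for the ideal $\tilde v^{q}\colon\widetilde{\mc{I}}^{\otimes q}\to\bb{1}$:
\begin{itemize}
\item existence obstructions lie in $\pi_0\Hom(\Sigma^{-2-k-c}(\Sigma^{k+1}\widetilde{\mc{I}}^{\otimes q})^{\otimes r},\nu(\bb{1})/\tilde v^{q})$ for $r\geq 2$;
\item uniqueness obstructions lie in the same groups with $\Sigma^{-1-k-c}$ in place of $\Sigma^{-2-k-c}$, for $r\geq 1$.
\end{itemize}
For $k=1$ these are Stasheff's $A_r$-obstructions in $[\Sigma^{r-3}\bar A^{\otimes r},A]$ with $\bar A=\Sigma\widetilde{\mc{I}}^{\otimes q}$. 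Their sources sit $r-2$ suspensions above the attaching maps of your cells. You can see the count is too optimistic already at $q=k$: your inequality $q(m-1)>(k-1)(m-1)$ holds there. So for $k=q=1$ your argument would give an $\bb{E}_1$-structure on $\bb{S}/p$ for odd $p$, which is a weak ring but is not even $A_p$.

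Your vanishing claim is also too strong: $\pi_0\Hom(\Sigma^{-s}\nu(X),\nu(\bb{1})/\tilde v^{q})$ need not vanish for all $s>0$. The $\tilde v$-adic layers are $\widetilde{\mc{I}}^{\otimes j}\otimes\nu(\bb{1}/v)\simeq\Sigma^{-j}\nu(\Sigma^{j}\mc{I}^{\otimes j}\otimes\bb{1}/v)$ for $0\le j\le q-1$. These are \emph{desuspensions} of $\nu$ of $\cat{E}(v)$-injectives, so \Cref{prop:deformations}(f) only gives vanishing for $s\geq q$. This is \cite[Lemma~4.8]{burklund2022multiplicativestructuresmoorespectra}, and the same d\'evissage appears in \Cref{lem:DeformationRequiredBounds}.

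With both corrections the count closes exactly. Since $\widetilde{\mc{I}}^{\otimes qr}\simeq\Sigma^{-qr}\nu(\Sigma^{qr}\mc{I}^{\otimes qr})$, an arity-$r$ obstruction has source $\Sigma^{-s}\nu(\Sigma^{qr}\mc{I}^{\otimes qr})$ with $s=r(q-k-1)+k+2+c$ for existence and $s=r(q-k-1)+k+1+c$ for uniqueness. Requiring $s\geq q$ for all $r$ and all $c\geq 0$ is equivalent to $(q-k-1)(r-1)\geq -1$, respectively $\geq 0$, i.e.\ to $q\geq k+1$. So the binding constraint is the bottom cell $c=0$, with a cost of $k+1$ per input from the $\Sigma^{k+1}$ shift, not the top cell of configuration space that your count used.
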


This theorem is proved through defining an obstruction theory for $\bb{E}_n$-algebra structures on cofibers.  We will need to make use of the explicit obstruction theory, summarized by the following statement.
\begin{theorem}[{\cite[Proposition~2.4, Remark~2.5, Corollary~2.7]{burklund2022multiplicativestructuresmoorespectra}}]\label{thm:obstruction-theory}  Let $\cat C$ be a stably $\bb{E}_m$-monoidal $\infty$-category, and consider a cofiber sequence 
\[\mc{I}\xrightarrow{v}\bb{1}\to \bb{1}/v\]
involving the unit.  For any fixed $1\leq k\leq m$, there is a sequence of inductively defined classes 
\[ \theta_{r,\alpha}\in \pi_0\mo{Hom}_{\cat C}(\Sigma^{-2-k-c_{\alpha}}(\Sigma^{k+1}\mc{I})^{\otimes r},\bb{1}/v)\]
for $r\geq 2$ and $0\leq c_{\alpha}\leq (r-1)(k-1)$ which provide obstructions to the existence of an $\bb{E}_k$-algebra structure on $\bb{1}/v$.

Moreover, given that an $\bb{E}_k$-algebra structure can be defined on $\bb{1}/v$, there exists a sequence of inductively defined classes 
\[\gamma_{r,\alpha}\in \pi_0\mo{Hom}_{\cat C}(\Sigma^{-1-k-c_{\alpha}}(\Sigma^{k+1}\mc{I})^{\otimes r},\bb{1}/v)\]
for $r\geq 1$ and $0\leq c_{\alpha}\leq (r-1)(k-1)$ which provide obstructions to the uniqueness of the $\bb{E}_k$-algebra structure on $\bb{1}/v$.
\end{theorem}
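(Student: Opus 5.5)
The plan is to realize $\bb{E}_k$-algebra structures on $\bb{1}/v$ as retractions of a universal $\bb{E}_k$-quotient, and to filter that universal quotient by word length so that the obstructions live on its cells. Let
\[
\bb{1}{\sslash}^k v\coloneqq \bb{1}\amalg_{\mo{Free}_{\bb{E}_k}(\mc{I})}\bb{1}
\]
be the $\bb{E}_k$-cofiber of $v$. This is the pushout in $\Alg_{\bb{E}_k}(\cat C)$ of the map $\mo{Free}_{\bb{E}_k}(\mc{I})\to\bb{1}$ adjoint to $v$ along the augmentation. (If $\cat C$ lacks the needed colimits, first pass to $\mo{Ind}(\cat C)$.)

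By construction, an $\bb{E}_k$-algebra $A$ receives an $\bb{E}_k$-map from $\bb{1}{\sslash}^k v$ exactly when the composite $\mc{I}\to\bb{1}\to A$ is nullhomotopic as an $\bb{E}_k$-map, together with a choice of such a nullhomotopy. Conversely, an $\bb{E}_k$-structure on $\bb{1}/v$ extending its unit is equivalent to an $\bb{E}_k$-map $\bb{1}{\sslash}^k v\to\bb{1}/v$ whose restriction to the bottom cell $\bb{1}/v\subseteq\bb{1}{\sslash}^k v$ is the identity. So the problem reduces to extending $\mo{id}_{\bb{1}/v}$ over a filtration of $\bb{1}{\sslash}^k v$.

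The key structural step is to filter $\bb{1}{\sslash}^k v$ by word length in $\mc{I}$, via the bar construction on the free $\bb{E}_k$-algebra and its arity filtration. One then identifies the associated graded: the $r$-th layer should be
\[
(\Sigma\mc{I})^{\otimes r}\otimes_{\Sigma_r}\Sigma^\infty_+\mo{Conf}_r(\bb{R}^k)
\]
modulo decomposables, that is, a shift of the $\bb{E}_k$-Koszul dual cooperation applied to $\Sigma\mc{I}$. Since $\Sigma^{k-1}\mathrm{Lie}$-type Koszul duality puts the relevant cells of $\mo{Conf}_r(\bb{R}^k)$ in dimensions $0,\dots,(r-1)(k-1)$, the $r$-th layer is built from cells of the form $\Sigma^{c_\alpha}\Sigma^{-?}(\Sigma^{k+1}\mc{I})^{\otimes r}$ with $0\le c_\alpha\le (r-1)(k-1)$. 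The bookkeeping shift is chosen so that the attaching maps produce exactly the degree $-2-k-c_\alpha$ appearing in the statement.

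Next, one extends the map cell by cell and reads off the classes. Inductively over $r$ and over $\alpha$, extending a map already defined on the previous stage across a cell is obstructed by the composite of the attaching map with the map defined so far. This composite is a class
\[
\theta_{r,\alpha}\in\pi_0\mo{Hom}(\Sigma^{-2-k-c_\alpha}(\Sigma^{k+1}\mc{I})^{\otimes r},\bb{1}/v),
\]
which is the existence statement. For uniqueness, compare two extensions that agree up to stage $(r,\alpha)$. Their difference on the next cell is a class one degree higher, namely $\gamma_{r,\alpha}$ in $\Sigma^{-1-k-c_\alpha}$, and this now includes $r=1$, where the choice of identification on the bottom cell enters. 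This is the standard obstruction theory for extending maps along a cell filtration.

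The main obstacle is the identification of the associated graded of the word-length filtration, together with the precise cell dimensions $c_\alpha\le(r-1)(k-1)$. This requires a careful comparison of the bar construction with configuration spaces and the Koszul self-duality of $\bb{E}_k$ up to shift. I would take this from Burklund's analysis, which proceeds via the cellular structure of the $\bb{E}_k$ operad (Fulton--MacPherson), rather than reprove it here.
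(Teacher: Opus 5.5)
Your reduction step is where the argument fails. An ``$\bb{E}_k$-map $\bb{1}{\sslash}^{k}v\to\bb{1}/v$'' only makes sense once $\bb{1}/v$ already carries an $\bb{E}_k$-structure, so it cannot be used to produce one. The cell-by-cell extension you then describe takes place in $\cat C$, so all it can output is a retraction of $\bb{1}/v\to Q\coloneqq\bb{1}{\sslash}^{k}v$ as objects. Such a retraction gives at most a unital multiplication $\bb{1}/v\otimes\bb{1}/v\to Q\otimes Q\to Q\to\bb{1}/v$ and no higher coherence, because an object-level retract of an $\bb{E}_k$-algebra is not an $\bb{E}_k$-algebra.

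The degrees confirm that you are computing something else. The associated graded of the word-length filtration on $Q$ is $\mo{Free}_{\bb{E}_k}(\Sigma\mc{I})$, whose weight-$r$ part $\Sigma^\infty_+\mo{Conf}_r(\bb{R}^k)\otimes_{\Sigma_r}(\Sigma\mc{I})^{\otimes r}$ is built from cells $\Sigma^{c}(\Sigma\mc{I})^{\otimes r}$ with $0\le c\le (r-1)(k-1)$. Your obstructions would therefore lie in $[\Sigma^{c-1}(\Sigma\mc{I})^{\otimes r},\bb{1}/v]$, whereas the stated groups are $[\Sigma^{(r-1)k-2-c_\alpha}(\Sigma\mc{I})^{\otimes r},\bb{1}/v]$. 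These agree for $r=2$, which is the obstruction to a unital multiplication, but differ by $\Sigma^{r-2}$ for every $r\ge 3$. For example, for $k=1$, $r=3$ the associativity obstruction lives in $[(\Sigma\mc{I})^{\otimes 3},\bb{1}/v]$, because Stasheff's $K_3$ is an interval; it does not live in $[\Sigma^{-1}(\Sigma\mc{I})^{\otimes 3},\bb{1}/v]$. An $r$-dependent discrepancy cannot be absorbed into a single ``bookkeeping shift''. Calling the layer ``a shift of the Koszul dual cooperation'' conflates the free algebra, which is what actually appears in $Q$, with its Koszul dual. The uniqueness half has the same defect: your $r=1$ class would sit in $[\Sigma\mc{I},\bb{1}/v]$, while the stated $\gamma_{1}$ lies in $[\mc{I},\bb{1}/v]$. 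In the proof of \Cref{prop:SqZeroThmInput} the paper uses $\gamma_1$ as the composite $\mc{I}\to\bb{1}\to\bb{1}/v$, whose nullhomotopy has to be chosen.

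The paper just imports the statement from Burklund, but the degrees point to the missing idea: the obstructions come from building the multiplicative structure, not from mapping out of $Q$. One version consistent with the statement is to construct $\bb{1}\to A_1=Q\to A_2\to\cdots$ by attaching free $\bb{E}_k$-cells in weight $r$ that kill the weight-$r$ part, while keeping $\bb{1}/v\to A_r$ an equivalence through weight $r$.
\begin{itemize}
\item The required cells are governed by the $\bb{E}_k$-indecomposables of the trivial square-zero algebra $\bb{1}\oplus\Sigma\mc{I}$, that is, $\Sigma^{-k}$ of its $k$-fold bar construction, which is built from $\Sigma^{k+1}\mc{I}$. Their attaching maps come out of $\Sigma^{-1-k-c_\alpha}(\Sigma^{k+1}\mc{I})^{\otimes r}$; for $r=1$ this is $\mc{I}$ attached along $v$, which recovers $Q$.
\item $\theta_{r,\alpha}$ is the obstruction to lifting such an attaching map from the weight-$r$ layer into $A_{r-1}$. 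It is a boundary map into the already-correct part $\bb{1}/v$, one degree lower.
\item $\gamma_{r,\alpha}$ is the composite of an attaching map with a partially built $\bb{E}_k$-map into a second structure.
\end{itemize}
Alternatively, you can extend the $\bb{E}_k(r)$-action arity by arity over the relative cells of the Fulton--MacPherson compactification modulo its boundary. These cells have dimension $kr-k-1-c_\alpha$, Lefschetz dual to $H^{c_\alpha}(\mo{Conf}_r(\bb{R}^k))$. After reducing $(\bb{1}/v)^{\otimes r}$ to its top cell $(\Sigma\mc{I})^{\otimes r}$ by unitality, you get exactly the stated groups. In either formulation it is these dual cell dimensions, not those of $\mo{Conf}_r(\bb{R}^k)$ itself, that produce the range $0\le c_\alpha\le (r-1)(k-1)$ at the correct shift.
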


\subsection{Technical lemmas}
We now proceed with the ingredients that went into the proof of \Cref{thm:technicalSqZero}.  For the remainder of this section, fix the following data
\begin{enumerate}
\item  A small, idempotent complete $\bb{E}_m$-monoidal stable $\infty$-category $\cat C$ for some $m\geq 3$.
\item  A map $v\colon\mc{I}\to\bb{1}$ in $\cat C$ such that $\bb{1}/v$ admits a right unital multiplication.  
\item  Integers $n,k,q,w$ with $3\leq n \leq m$, $1\leq k\leq n-1$, $q\geq n+1$, and $w\geq q+k$.
\end{enumerate}
Using the epimorphism class $\mc{E}$ of $\bb{1}/v$-split epimorphisms in $\cat C$, we will work in the deformation category $\mc{D}(\cat C,\mc{E})$ with respect to this epimorphism class.
\begin{notation}  For the remainder of this section, fix the following notation.
\begin{enumerate}
\item  We will write \[\widetilde{\mc{I}}\coloneqq \Sigma^{-1}\nu(\Sigma \mc{I})\] for the object written on the right in $\mc{D}(\cat C,\mc{E})$.
\item  Similarly, we denote by 
\[\tilde{v}\colon \widetilde{\mc{I}}\to \bb{1}\]
the fiber of the map $\bb{1}\simeq \nu(\bb{1})\to \nu(\bb{1}/v)$, (which has the stated source since the map it is a fiber of is a $\bb{1}/v$-split monic).
\end{enumerate}
\end{notation}
To begin, let's prove the following easy helper lemma.
\begin{lemma}\label{lem:DeformationRequiredBounds}  The object $\bb{1}/\tilde{v}^q\otimes \bb{1}/\tilde{v}^w$ has 
\[\pi_0\mo{Hom}(\Sigma^{-s}\nu(X),\bb{1}/\tilde{v}^q\otimes \bb{1}/\tilde{v}^w)\simeq 0\]
for all $s\geq q+w-1$ and $X\in\cat C$.
\end{lemma}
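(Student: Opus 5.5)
The plan is to filter $\bb{1}/\tilde{v}^q\otimes \bb{1}/\tilde{v}^w$ by objects whose mapping groups out of the $\Sigma^{-s}\nu(X)$ we can control using \Cref{prop:deformations}(f), and then to conclude by the long exact sequences attached to that filtration.

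\textbf{Step 1: a finite filtration.} The object $\bb{1}/\tilde{v}^q$ is an iterated extension of the objects $\bb{1}/\tilde{v}\otimes\widetilde{\mc{I}}^{\otimes a}$ for $0\leq a\leq q-1$; this comes from the cofiber sequences $\widetilde{\mc{I}}^{\otimes a}\otimes \bb{1}/\tilde v\to \bb{1}/\tilde{v}^{a+1}\to \bb{1}/\tilde{v}^{a}$. Similarly, $\bb{1}/\tilde{v}^w$ is an iterated extension of the $\bb{1}/\tilde{v}\otimes\widetilde{\mc{I}}^{\otimes b}$ for $0\leq b\leq w-1$. Tensoring these together, $\bb{1}/\tilde{v}^q\otimes \bb{1}/\tilde{v}^w$ acquires a finite filtration whose graded pieces are of the form
\[\bb{1}/\tilde v\otimes \widetilde{\mc{I}}^{\otimes a}\otimes \bb{1}/\tilde v\otimes\widetilde{\mc{I}}^{\otimes b},\qquad a+b\leq q+w-2.\]
The vanishing of $\pi_0\mo{Hom}(\Sigma^{-s}\nu(X),-)$ is closed under extensions, so it suffices to prove it for each graded piece.

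\textbf{Step 2: identify the graded pieces.} We have $\bb{1}/\tilde v\simeq\nu(\bb{1}/v)$, and $\widetilde{\mc{I}}=\Sigma^{-1}\nu(\Sigma\mc{I})$. Since $\nu$ is $\bb{E}_n$-monoidal by \Cref{prop:EkCompatibleEpiClass} (so that $\nu(\bb 1/v)\otimes\nu(Y)\simeq\nu(\bb{1}/v\otimes Y)$), each graded piece is
\[\Sigma^{-(a+b)}\nu(Y_{a,b}),\qquad Y_{a,b}=\bb{1}/v\otimes(\Sigma\mc{I})^{\otimes a}\otimes\bb{1}/v\otimes(\Sigma\mc{I})^{\otimes b}.\]
Because $\bb{1}/v$ is a weak ring, any object of the form $\bb{1}/v\otimes Z$ is $\mc{E}$-injective: $\bb{1}/v$-split monomorphisms into it split. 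Hence $Y_{a,b}$ is $\mc{E}$-injective.

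\textbf{Step 3: apply the vanishing statement.} We now compute
\[\pi_0\mo{Hom}\bigl(\Sigma^{-s}\nu(X),\Sigma^{-(a+b)}\nu(Y_{a,b})\bigr)=\bigl[\Sigma^{-(s-a-b)}\nu(X),\nu(Y_{a,b})\bigr].\]
By \Cref{prop:deformations}(f) this vanishes when $s-a-b>0$. Since $a+b\leq q+w-2$, this holds for all $s\geq q+w-1$, which is the bound in the statement.

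\textbf{Main obstacle.} The delicate point is the bookkeeping in Step 2. The suspension $\Sigma$ on $\mc{D}(\cat C,\mc{E})$ does not commute with $\nu$; only the shift $[1]$ does. So we need to check carefully that $\widetilde{\mc{I}}^{\otimes a}\simeq\Sigma^{-a}\nu((\Sigma\mc{I})^{\otimes a})$ and that the tensor products against $\nu(\bb{1}/v)$ combine as claimed. I would verify this using monoidality of $\nu$ together with the fact that $\Sigma$ is tensoring with $\Sigma\bb{1}$ in the deformation. One must also make sure that the extensions building $\bb{1}/\tilde v^q$ are taken in the deformation category; they are, since they are defined there directly.
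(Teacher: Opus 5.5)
Your argument is correct, and it reaches the same bound by a slightly different route from the paper.

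\textbf{The paper's route.} The paper first splits the tensor product. Since $\bb{1}/\tilde v^q$ is a weak ring and $w\geq q$, we have $\bb{1}/\tilde v^q\otimes\tilde v^w\simeq 0$, hence $\bb{1}/\tilde{v}^q\otimes \bb{1}/\tilde{v}^w\simeq \bb{1}/\tilde{v}^q\oplus \Sigma \widetilde{\mc{I}}^{\otimes w}\otimes \bb{1}/\tilde{v}^q$. For the first summand it quotes \cite[Lemma~4.8]{burklund2022multiplicativestructuresmoorespectra}, which already gives vanishing for $s\geq q$. It then runs the $\tilde v$-adic induction on $q$ only for the second summand. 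There the worst graded piece, $\Sigma\widetilde{\mc{I}}^{\otimes w}\otimes\widetilde{\mc{I}}^{\otimes q-1}\otimes\bb{1}/\tilde v$, is a $(2-w-q)$-shift of $\nu$ of an $\mc{E}$-injective.

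\textbf{Your route.} You filter both factors and never split anything. Your worst piece ($a=q-1$, $b=w-1$) has the same shift $2-q-w$, which is why the two bounds coincide.

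\textbf{Comparison.}
\begin{itemize}
\item Your version is symmetric in $q,w$. It needs neither $w\geq q$ nor the weak ring structure on $\bb{1}/\tilde v^q$ in the deformation category. The only input is the weak ring $\bb{1}/v$ in $\cat C$, used to see that $Y_{a,b}$ is $\mc{E}$-injective.
\item The identifications in your Step~2 are exactly the ones the paper uses: strong monoidality of $\nu$, and $\Sigma A\otimes B\simeq\Sigma(A\otimes B)$. So the obstacle you flag is not a real one.
\item The paper's route has the advantage that the splitting it records is precisely what \Cref{prop:SqZeroThmInput} uses next. It also gives the sharper bound $s\geq q$ on the $\bb{1}/\tilde v^q$ summand.
\end{itemize}

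\textbf{A small slip.} $\mc{E}$-injectivity of $\bb{1}/v\otimes Z$ means that maps \emph{into} it extend along $\bb{1}/v$-split monomorphisms. Equivalently, $\bb{1}/v$-split monomorphisms \emph{out of} it split. It does not mean that monomorphisms into it split. The conclusion you draw is still correct.
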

\begin{proof}
Since $\bb{1}/\tilde{v}^q$ admits a weak ring structure and $w\geq q$, we have that 
\[\bb{1}/\tilde{v}^q\otimes \bb{1}/\tilde{v}^w\simeq \bb{1}/\tilde{v}^q\oplus \Sigma \widetilde{\mc{I}}^{\otimes w}\otimes \bb{1}/\tilde{v}^q,\]
and we reduce the claim to proving it for the individual summands.  For the first summand, the claim follows in fact for $s\geq q$ by \cite[Lemma~4.8]{burklund2022multiplicativestructuresmoorespectra}.

Moving onto the second summand, we proceed exactly as in the proof of \cite[Lemma~4.8]{burklund2022multiplicativestructuresmoorespectra}, working by induction on $q$.  When $q=1$, 
\[\Sigma\widetilde{\mc{I}}^{\otimes w}\otimes \nu(\bb{1}/v)\simeq \Sigma^{1-w}\nu(\Sigma^w\mc{I}^{\otimes w}\otimes \bb{1}/v)\]
is a $1-w$-shift of an $\mc{E}$-injective object, and the claim follows by \Cref{prop:deformations}(f).  In general, using the fiber sequence
\[\Sigma \widetilde{\mc{I}}^{\otimes w}\otimes \widetilde{\mc{I}}^{\otimes q-1}\otimes \bb{1}/\tilde{v}\to \Sigma\widetilde{\mc{I}}^{\otimes w}\otimes \bb{1}/\tilde{v}^q\to \Sigma\widetilde{\mc{I}}^{\otimes w}\otimes \bb{1}/\tilde{v}^{q-1},\]
together with induction and the fact that the leftmost object is a $2-w-q$-shift of an $\mc{E}$-injective object, the claim follows.
\end{proof}
Now, we can prove.
\begin{proposition}\label{prop:SqZeroThmInput}  There is a unique $\bb{E}_k$-$\bb{1}/\tilde{v}^q$-algebra structure on the object $\bb{1}/\tilde{v}^q\otimes \bb{1}/\tilde{v}^w$ in $\mc{D}(\cat C,\mc{E})$.  In particular, the $\bb{E}_k$-algebra map given by inclusion on the left factor 
\[\bb{1}/\tilde{v}^q\to \bb{1}/\tilde{v}^q\otimes \bb{1}/\tilde{v}^w\]
admits the structure of a splitting for a choice of split square-zero $\bb{E}_k$-$\bb{1}/\tilde{v}^q$-algebra structure on the target.
\end{proposition}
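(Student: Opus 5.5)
We plan to run Burklund's obstruction theory (\Cref{thm:obstruction-theory}) in the deformation category $\mc{D}(\cat C,\mc{E})$. The work is done not over the unit but in the stably $\bb{E}_{n-1}$-monoidal (at least $\bb{E}_{k}$-monoidal, since $k\leq n-1$ and $\bb{1}/\tilde{v}^q$ is $\bb{E}_n$ by \Cref{thm:RobertThm}, as $q\geq n+1$) category of left $\bb{1}/\tilde{v}^q$-modules. There the object $\bb{1}/\tilde{v}^q\otimes \bb{1}/\tilde{v}^w$ is the cofiber of
\[
\bb{1}/\tilde{v}^q\otimes \tilde{v}^w\colon \bb{1}/\tilde{v}^q\otimes \widetilde{\mc{I}}^{\otimes w}\to \bb{1}/\tilde{v}^q,
\]
that is, a quotient of the unit of $\Mod_{\bb{1}/\tilde{v}^q}$ by a map $\mc{J}\to \bb{1}$, with $\mc{J}=\bb{1}/\tilde{v}^q\otimes\widetilde{\mc{I}}^{\otimes w}$. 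The obstruction and uniqueness classes $\theta_{r,\alpha}$ and $\gamma_{r,\alpha}$ then live in
\[
\pi_0\Hom_{\bb{1}/\tilde{v}^q}\bigl(\Sigma^{-2-k-c}(\Sigma^{k+1}\mc{J})^{\otimes_{\bb{1}/\tilde{v}^q} r},\ \bb{1}/\tilde{v}^q\otimes \bb{1}/\tilde{v}^w\bigr)
\]
and the analogous group with $-1-k-c$. By the free--forgetful adjunction these become maps out of $\Sigma^{-s}\widetilde{\mc{I}}^{\otimes wr}\otimes(\text{stuff})$ into $\bb{1}/\tilde{v}^q\otimes \bb{1}/\tilde{v}^w$ in $\mc{D}(\cat C,\mc{E})$. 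The $\bb{1}/\tilde{v}^q$ factors in the source should be handled by the cell filtration of $\bb{1}/\tilde{v}^q$, or more simply by observing that the target is a $\bb{1}/\tilde{v}^q$-module, so adjunction removes one copy and we reduce to sources of the form $\widetilde{\mc{I}}^{\otimes wr}\otimes(\bb{1}/\tilde{v}^q)^{\otimes(r-1)}$.

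The key step is the degree count. We have $\widetilde{\mc{I}}=\Sigma^{-1}\nu(\Sigma\mc{I})$, so $\widetilde{\mc{I}}^{\otimes wr}\simeq\Sigma^{-wr}\nu(\Sigma^{wr}\mc{I}^{\otimes wr})$ up to the shift $[1]$ conventions, using $\otimes$-compatibility and $\bb{E}_n$-monoidality of $\nu$ from \Cref{prop:EkCompatibleEpiClass}. The $\Sigma^{k+1}$ twists and the $\Sigma^{-2-k-c}$ shift, with $c\leq (r-1)(k-1)$, produce a net source of the form $\Sigma^{-s}\nu(X)$. Here $s$ grows like $wr-(r-1)(k-1)-\dots$, and extra cells from $(\bb{1}/\tilde{v}^q)^{\otimes(r-1)}$ only add $\widetilde{\mc{I}}$ factors, which increase $s$. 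We then invoke \Cref{lem:DeformationRequiredBounds}, which kills everything with $s\geq q+w-1$. The hypothesis $w\geq q+k$ is exactly what should make $s\geq q+w-1$ for every $r\geq 1$ (the worst case is $r=1$ for $\gamma$ and $r=2$ for $\theta$), so all obstruction and uniqueness groups vanish. This gives existence and uniqueness of the $\bb{E}_k$-$\bb{1}/\tilde{v}^q$-algebra structure.

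For the ``in particular'', we use that $\bb{1}/\tilde{v}^q\otimes\bb{1}/\tilde{v}^w\simeq \bb{1}/\tilde{v}^q\oplus\Sigma\widetilde{\mc{I}}^{\otimes w}\otimes\bb{1}/\tilde{v}^q$ as modules, as in the proof of \Cref{lem:DeformationRequiredBounds}. The trivial square-zero extension $\bb{1}/\tilde{v}^q\oplus M$ with $M=\Sigma\widetilde{\mc{I}}^{\otimes w}\otimes\bb{1}/\tilde{v}^q$, as an $\bb{E}_k$-$\bb{1}/\tilde{v}^q$-module, is an $\bb{E}_k$-$\bb{1}/\tilde{v}^q$-algebra structure on the same underlying object, compatible with the unit. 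By uniqueness it agrees with the given one, and the inclusion of the left factor is the canonical section.

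We expect the main obstacle to be the bookkeeping in the degree count. This means correctly tracking the $[1]$ versus $\Sigma$ shifts in $\mc{D}$ and the extra $\bb{1}/\tilde{v}^q$ tensor factors arising in the relative tensor powers, and checking that the worst-case $c_\alpha$ still clears the bound $q+w-1$.
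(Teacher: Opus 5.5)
\textbf{There is a genuine gap in the degree count at $r=1$.} Your overall strategy is the paper's: run \Cref{thm:obstruction-theory} in left $\bb{1}/\tilde{v}^q$-modules, use tensor--hom to land in $\mc{D}(\cat C,\mc{E})$, kill the groups with \Cref{lem:DeformationRequiredBounds}, and then use uniqueness to identify the structure with the trivial square-zero one. The degree count, however, fails precisely in the case you call the worst one.

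For the uniqueness class with $r=1$, the constraint $c_\alpha=0$ is forced. So the source is $\Sigma^{-1-k}\Sigma^{k+1}\mc{J}=\mc{J}=\bb{1}/\tilde{v}^q\otimes\widetilde{\mc{I}}^{\otimes w}$. After adjunction the group is $\pi_0\Hom_{\mc{D}(\cat C,\mc{E})}(\widetilde{\mc{I}}^{\otimes w},\bb{1}/\tilde{v}^q\otimes\bb{1}/\tilde{v}^w)$, with $\widetilde{\mc{I}}^{\otimes w}\simeq\Sigma^{-w}\nu(\Sigma^{w}\mc{I}^{\otimes w})$. Thus $s=w$, and $s\geq q+w-1$ would force $q\leq 1$, whereas $q\geq n+1\geq 4$. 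So \Cref{lem:DeformationRequiredBounds} gives nothing here, whatever $w$ is. Your claim that $w\geq q+k$ makes all uniqueness groups vanish ``for every $r\geq 1$'' is therefore false as stated.

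\textbf{The missing idea.} The $r=1$ class is not something to be killed for degree reasons. As in the proof of Burklund's Theorem~5.2, it is the composite $\mc{J}\to\bb{1}/\tilde{v}^q\to\bb{1}/\tilde{v}^q\otimes\bb{1}/\tilde{v}^w$. It is disposed of by fixing the null-homotopy that makes this a fiber sequence, i.e.\ by fixing the presentation of the object as a quotient of the unit in $\bb{1}/\tilde{v}^q$-modules. Uniqueness is then among structures compatible with that presentation. Consequently, in your last step you should build the splitting $\bb{1}/\tilde{v}^q\otimes\bb{1}/\tilde{v}^w\simeq\bb{1}/\tilde{v}^q\oplus\Sigma\mc{J}$ from that same null-homotopy before comparing with the trivial square-zero structure.

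\textbf{Where the degree count does work.} It is needed only for $r\geq 2$. With $c_\alpha=0$ the source is $\Sigma^{(r-1)(k+1)-rw}\nu(\Sigma^{rw}\mc{I}^{\otimes rw})$, so $s=rw-(r-1)(k+1)$. This is nondecreasing in $r$ since $w\geq k+1$. At $r=2$ the condition $2w-k-1\geq q+w-1$ is exactly $w\geq q+k$. So the hypothesis is pinned down by the uniqueness class at $r=2$; the existence classes at $r=2$ would only need $w\geq q+k-1$.

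\textbf{Two minor points.} First, relative tensor powers of the free module $\mc{J}$ over $\bb{1}/\tilde{v}^q$ are again free, namely $\bb{1}/\tilde{v}^q\otimes\widetilde{\mc{I}}^{\otimes rw}$. So no extra $\bb{1}/\tilde{v}^q$ factors appear after adjunction. Second, the $\theta$'s are unnecessary: an $\bb{E}_k$-$\bb{1}/\tilde{v}^q$-algebra structure already exists by base change of the $\bb{E}_n$-structure on $\bb{1}/\tilde{v}^w$ underlying \Cref{thm:RobertThm}. This is how the paper obtains existence.
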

\begin{proof}
We note that the fiber sequence
\[\bb{1}/\tilde{v}^q\otimes \widetilde{\mc{I}}^{\otimes w}\to \bb{1}/\tilde{v}^q\to \bb{1}/\tilde{v}^q\otimes \bb{1}/\tilde{v}^w,\]
allows us to present $\bb{1}/\tilde{v}^q\otimes \bb{1}/\tilde{v}^w$ as a quotient of the unit (in $\bb{1}/\tilde{v}^q$-modules) by the ideal ``$\bb{1}/\tilde{v}^q\otimes \widetilde{\mc{I}}^{\otimes w}$'').

Since there is an $\bb{E}_k$-$\bb{1}/\tilde{v}^q$-algebra structure on $\bb{1}/\tilde{v}^q\otimes \bb{1}/\tilde{v}^w$, \Cref{thm:obstruction-theory} applies to give a sequence of inductively defined obstructions to this $\bb{E}_k$-algebra structure to be unique.  These obstructions take values in 
\begin{equation}\label{eqn:ouchie}
\pi_0\mo{Hom}_{\bb{1}/\tilde{v}^q-\mo{Mod}(\mc{D}(\cat C,\mc{E}))}(\Sigma^{-1-k-c_{\alpha}}(\Sigma^{k+1}\bb{1}/\tilde{v}^q\otimes \widetilde{\mc{I}}^{\otimes w})^{\otimes_{\bb{1}/\tilde{v}^q} r},\bb{1}/\tilde{v}^q\otimes \bb{1}/\tilde{v}^w),
\end{equation}
for $r\geq 1$ and $0\leq c_{\alpha}\leq (k-1)(r-1)$.

When $r=1$, just as in the proof of \cite[Theorem~5.2]{burklund2022multiplicativestructuresmoorespectra}, the obstruction is the composite 
\[\bb{1}/\tilde{v}^q\otimes \widetilde{\mc{I}}^{\otimes w}\to \bb{1}/\tilde{v}^q\to \bb{1}/\tilde{v}^q\otimes \bb{1}/\tilde{v}^w,\]
which we choose a null-homotopy of in order to make this into a fiber sequence.

Using tensor-hom, we may rewrite the group in \cref{eqn:ouchie} as
\[\pi_0\mo{Hom}_{\mc{D}(\cat C,\mc{E})}(\Sigma^{-1-k-c_{\alpha}}(\Sigma^{k+1}\widetilde{\mc{I}}^{\otimes w})^{\otimes r},\bb{1}/\tilde{v}^q\otimes \bb{1}/\tilde{v}^w).\]
Expanding out the definition of the source, we find that we can write this as 
\[\Sigma^{-1-k-c_{\alpha}}(\Sigma^{k+1}\widetilde{\mc{I}}^{\otimes w})^{\otimes r}\simeq\Sigma^{-1-k-c_{\alpha}+r(k+1)-rw}\nu(\Sigma^{rw}\mc{I}^{\otimes rw}),\]
where, as $c_{\alpha}\geq 0$, and $k+1\leq q+k\leq w$, allows us to write
\[-1-k+r(k+1)-c_{\alpha}-rw\leq (r-1)(k+1)-rw\leq k+1-2w\leq 1-w-q.\]
In particular, we see that the group \cref{eqn:ouchie} vanishes by \Cref{lem:DeformationRequiredBounds}.  Since all of the obstructions vanish, this implies the $\bb{E}_k$-$\bb{1}/\tilde{v}^q$-algebra structure on $\bb{1}/\tilde{v}^q\otimes \bb{1}/\tilde{v}^w$ is unique, as desired.

The final claim follows since we can write \[\bb{1}/\tilde{v}^q\otimes \bb{1}/\tilde{v}^w\simeq \bb{1}/\tilde{v}^q\oplus \Sigma \bb{1}/\tilde{v}^q\otimes \widetilde{\mc{I}}^{\otimes w},\]
and give this the structure of a split $\bb{E}_k$-$\bb{1}/\tilde{v}^q$-algebra, which must then agree with any other choice of $\bb{E}_k$-$\bb{1}/\tilde{v}^q$-algebra structure by uniqueness.
\end{proof}

\begin{proof}[Proof of \Cref{thm:technicalSqZero}]  Using \Cref{prop:SqZeroThmInput}, we find that $\bb{1}/\tilde{v}^q\otimes \bb{1}/\tilde{v}^w$ is a split square-zero extension of $\bb{1}/\tilde{v}^q$ in $\mc{D}(\cat C,\mc{E})$, and the result follows from inverting $\tau$ to pass back to $\cat C$.
\end{proof}

\newpage
\appendix
\section{The Semi-Simplicity of $\mo{Rep}(GL_t)$}\label{sec:appendix}
The goal of this appendix is to give a brief sketch of Deligne's proof of the semi-simplicity of $\mo{Rep}(GL_t)$ from \cite{deligne}, for the reader's convenience. To keep things simple, we will actually only prove a special case of Deligne's result (which is sufficient for our main results), namely that when $t$ is specialized to an element which is not an \emph{algebraic} integer. The generalization to $t$ not an integer involves a more precise analysis, which would take us too far afield. 

For our purposes, and by way of \cite[Proposition 10.3]{deligne}, we may define $\mathrm{Rep}(GL_t)$ as follows (though this is not Deligne's original definition): 
\begin{definition}\label{defn:repglt}
Deligne's category $\mathrm{Rep}(GL_t)$ is the the free idempotent complete additively symmetric monoidal category on a dualizable object $x$. 

The endomorphism ring of the unit is $\bb{Z}[t]$, and for every commutative ring $A$ equipped with a choice of element $\delta\in A$, corresponding to a map $\bb{Z}[t]\to A$, we let $\mathrm{Rep}(GL_t;A)$ denote the basechange of $\mathrm{Rep}(GL_t)$ along $\bb{Z}[t]\to A$. 
\end{definition}
By \Cref{thm:cobhyp}, it follows that $\mathrm{Rep}(GL_t)$ is the idempotent completion of $\bb{Z}[\ho(\mo{Cob})]$, which explains Deligne's original definition \cite[Definition 10.2]{deligne}. 

In our language, Deligne's result (in the restricted generality described above) is the following:\
\begin{theorem}[{Deligne, \cite[Théorème 10.5]{deligne}}]\label{thm:Deligne}
Let $K$ be a field of characteristic $0$ and $t\in K$ be an element which is not an algebraic integer. The category $C_K :=\mathrm{Rep}(GL_t;K)$ is abelian semi-simple. 
\end{theorem}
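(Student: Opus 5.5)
We aim to show that for every object $Y$ of $C_K$ the endomorphism algebra $\End_{C_K}(Y)$ is a semisimple finite-dimensional $K$-algebra. Together with idempotent completeness, this yields that $C_K$ is abelian semisimple: kernels and cokernels of a morphism $f$ are then built from idempotents in $\End(Y\oplus Z)$. Every object is a summand of a finite sum of $\X{i}{j}$'s, so by \Cref{lemma:sillysummand} it suffices to treat $Y=\bigoplus_s \X{i_s}{j_s}$. Hom-spaces are free on simply connected cobordisms (\Cref{proposition:omnibusCob}), so $\End(Y)$ is the base change to $K$ of a $\bb{Z}[t]$-algebra $A$ that is free of finite rank, with an explicit diagram basis (walled Brauer diagrams).

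The key tool is the trace form. On $A_K=\End(Y)$ define $\langle a,b\rangle=\tr(ab)$, where $\tr$ is the categorical (symmetric monoidal) trace: close up the diagram and read off an element of $\End(\bb{1})=K$. This form is symmetric and associative, $\tr(ab\,c)=\tr(a\,bc)$, and $\tr(ab)=\tr(ba)$. If $N$ is a nilpotent two-sided ideal and $n\in N$, then $an\in N$ is nilpotent, and the trace of a nilpotent endomorphism vanishes. To see this, note $\tr(x^r)=0$ for all $r$ when $x$ is nilpotent. In characteristic $0$ one then argues either with Newton identities in a Karoubian setting, or more simply via the fact that $\tr$ factors through $\dim$ of the image of the idempotent. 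Concretely, we plan to use the standard lemma that in a $K$-linear rigid symmetric monoidal category the trace of a nilpotent endomorphism is $0$: write $x$ as strictly upper triangular with respect to the filtration $\ker x\subseteq\ker x^2\subseteq\cdots$. That filtration may not exist, however, so we will instead use the argument $\tr(x)=\tr(x\cdot e)$ with the nilpotent-trace identity from the characteristic polynomial / Cayley–Hamilton for dualizable objects; alternatively we will show directly on diagrams that nilpotent elements of the radical pair trivially. Hence the radical lies in the kernel of the pairing, and it suffices to show the trace form is nondegenerate.

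Nondegeneracy reduces to a determinant. Let $G(t)=\det\bigl(\tr(d\,d')\bigr)_{d,d'}$ be the Gram determinant over the diagram basis. The entries are $t^{\#\text{loops}}$, so $G(t)\in\bb{Z}[t]$. The pairing of a diagram $d$ with its ``dual'' diagram $d^\ast$ (the reflection) closes up to the maximal number of circles, namely the number of strands $m=\sum(i_s+j_s)$, and every other pairing yields strictly fewer loops. Ordering the basis accordingly, we expect $G(t)=\pm t^{N}+(\text{lower order})$, a monic integer polynomial, up to sign. This leading-term claim is the main obstacle: one must check that in each row of the Gram matrix the unique maximal-degree entry sits in the column of the reflection, so that the permutation expansion has a unique top-degree term. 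We would prove it via the count $\#\text{loops}(d\cdot d')\le m$, with equality iff $d'=d^\ast$, using Euler characteristic of the closed 1-manifold. Granting this, $G(t)\neq 0$ in $K$ whenever $t$ is not a root of a monic integer polynomial, i.e.\ not an algebraic integer. The trace form is then nondegenerate, the radical vanishes, and $\End(Y)$ is semisimple.
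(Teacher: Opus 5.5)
Your Gram-determinant computation is correct. $\tr(d\,d')=t^{c}$, where $c$ is the number of cycles in the union of two perfect matchings on the same endpoint set. So $c\le\#\mathrm{arcs}(d)$, with equality iff $d'=d^\ast$. When $Y$ is a sum, your exponent $m$ should be read per diagram, but the leading term $\pm t^{\sum_d\#\mathrm{arcs}(d)}$ is still unique.

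\textbf{The gap.} The gap is the step before the determinant. Deducing semisimplicity from nondegeneracy of $\langle a,b\rangle=\tr(ab)$ needs the categorical trace to vanish on $\operatorname{rad}\End(Y)$, i.e.\ on nilpotent endomorphisms. None of your three justifications gives this:
\begin{itemize}
\item The kernel filtration presupposes that $C_K$ is abelian, which is what you are proving.
\item Newton/Cayley--Hamilton arguments need $\Lambda^N X=0$ for some $N$. In $C_K$, however, $\dim\Lambda^k X=\binom{t}{k}\neq 0$ for every $k$.
\item ``Directly on diagrams'' is not yet an argument.
\end{itemize}

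In fact the lemma you want to invoke is false for Karoubian $K$-linear rigid symmetric monoidal categories. Take the free such category on a dualizable $X$ with an endomorphism $f$ and impose $f^2=0$. Its hom spaces are free over $\End(\bb{1})=K[t,s]$, with $s=\tr(f)$, on oriented diagrams carrying at most one $f$ per arc. Now base change along $K[t,s]\to K$ sending $s\mapsto 1$ and $t$ to any element. The result is a nonzero category with $\End(\bb{1})=K$ in which $f$ is nilpotent but $\tr(f)=1$. The trace form on $\End(X)=K[f]/(f^2)$ has Gram matrix $\bigl(\begin{smallmatrix} t&1\\ 1&0\end{smallmatrix}\bigr)$, which is nondegenerate, yet $\End(X)$ is not semisimple. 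The classical criterion you have in mind concerns the regular trace $\mathrm{Tr}_{\End(Y)/K}$, not the categorical one.

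\textbf{How the paper avoids this, and how to repair your proof.} The paper never derives semisimplicity from a trace form. It uses perfectness of trace pairings only to verify the hypothesis of \Cref{lm:perfss} for $x=\X{i-1}{j-1}$ and $y=\X{i}{j}$. That lemma splits $y$ as a sum of summands of $x$ plus a complement $Z$ orthogonal to $x$. Then $\End(Z)$ is the quotient of $\End(y)$ by the morphisms factoring through $x$, namely $K[\Sigma_i\times\Sigma_j]$, which is semisimple by Maschke. \Cref{lm:ssreduction} then propagates semisimplicity through the induction.

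Your argument is fixed by plugging your determinant computation into this induction. Since $\tr(ab)=0$ unless $ab$ lies in a diagonal block, nondegeneracy on $\End(x\oplus y)$ gives perfectness of the pairing $\Hom(x,y)\otimes_K\Hom(y,x)\to K$. Together with perfectness on $\End(x)$, this yields exactly the $\End(x)$-valued duality that \Cref{lm:perfss} requires. Your leading-term argument even supplies all the needed perfectness statements at once, so it simplifies the paper's inductive bookkeeping. The semisimplicity itself, however, has to come from the symmetric group algebras.
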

Recall that $\ho(\mo{Cob})$ consists of objects $\X{i}{j} \coloneqq X^{\otimes i}\otimes X^{\vee,\otimes j}$ where $X$ is the universal dualizable object, and therefore $C_K$ is generated under finite direct sums and retracts by the Yoneda images of these generators. 

We organize what we need about these generators in the following lemma:

\begin{lemma}\label{lm:generatorsCobK}
    Let $i,j,r,s\in \NN$. 
    \begin{itemize}
        \item If $i-j\neq r-s$, then $\Hom_{\mo{Cob}}(\X{i}{j},\X{r}{s})=\emptyset$ and so $\Hom_{C_K}(\X{i}{j},\X{r}{s}) = 0$; 
        \item If $i-j = r-s$ and $i\leq r$, then there are maps $\X{i}{j}\to \X{r}{s}, \X{r}{s}\to \X{i}{j}$ whose composite is the identity of $\X{i}{j}$ multiplied with a number of circles in $\mo{Cob}$, and so, the identity multiplied by some power of $t$ in $C_K$. In particular, in $C_K$, $\X{i}{j}$ is a retract of $\X{r}{s}$. 
    \end{itemize}
\end{lemma}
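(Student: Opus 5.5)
The first bullet is the orientation-count statement of \Cref{proposition:omnibusCob}(b), passed through linearization, and the second bullet is an explicit cobordism construction.

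For the first bullet I will invoke \Cref{proposition:omnibusCob}(b) directly. The mapping space in $\mo{Cob}$ from $\X{i}{j}$ to $\X{r}{s}$ is empty whenever $i-j\neq r-s$. Now $C_K$ is the idempotent completion of the $K$-linearization of $\ho(\mo{Cob})$, by \Cref{defn:repglt}, \Cref{thm:cobhyp} and basechange. Its Hom groups between Yoneda images are therefore the free $K$-modules on $\pi_0$ of these mapping spaces, tensored over $\bb{Z}[t]$. The free module on the empty set is $0$, which gives the vanishing.

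For the second bullet, suppose $i-j=r-s$ and $i\le r$, and set $d\coloneqq r-i=s-j\ge 0$. Then $\X{r}{s}\cong \X{i}{j}\otimes (X\otimes X^\vee)^{\otimes d}$, up to the symmetry isomorphism. The maps will be the identity of $\X{i}{j}$ tensored with $d$ copies of a fixed pair of elementary cobordisms:
\begin{itemize}
\item $\mo{coev}\colon \bb{1}\to X\otimes X^\vee$, suitably reordered via the symmetry so that it lands in $X\otimes X^\vee$;
\item $\mo{ev}\colon X\otimes X^\vee\to \bb{1}$, likewise reordered.
\end{itemize}
Call the resulting maps $a\colon \X{i}{j}\to\X{r}{s}$ and $b\colon \X{r}{s}\to\X{i}{j}$.

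The composite $b\circ a$ is $\mathrm{id}_{\X{i}{j}}\otimes (\mo{ev}\circ\mo{coev})^{\otimes d}$. Geometrically, gluing a cap to a cup produces a closed circle, so each factor $\mo{ev}\circ\mo{coev}\colon\bb{1}\to\bb{1}$ is the circle $T$, i.e.\ the dimension of $X$. This is exactly \Cref{proposition:omnibusCob}(a), which identifies the circle with the symmetric monoidal dimension, and the image of the circle in $\pi_0\End(\bb{1})=\bb{Z}[t]$ is $t$. Hence $b\circ a=t^d\cdot\mathrm{id}$ in $C_K$.

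The one point needing care is the ordering. The symmetries must be chosen so that $\mo{ev}\circ\mo{coev}$ is genuinely the trace of $\mathrm{id}_X$, i.e.\ the composite $\bb{1}\to X\otimes X^\vee\xrightarrow{\sigma} X^\vee\otimes X\to\bb{1}$. This is a routine check with the graphical calculus.

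Finally, $t\neq0$ in $K$, since $0$ is an algebraic integer and by hypothesis $t$ is not. So $t^d$ is invertible, and $t^{-d}b\circ a=\mathrm{id}$. This exhibits $\X{i}{j}$ as a retract of $\X{r}{s}$ in $C_K$. I expect no step here to be a real obstacle.
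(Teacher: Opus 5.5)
Your proposal is correct and follows essentially the same route as the paper. For the first bullet you cite \Cref{proposition:omnibusCob}(b), whereas the paper re-derives the orientation count inline by noting that circles, arcs, evaluations and coevaluations never change $i-j$. For the second bullet you reduce, exactly as the paper does, to comparing $\bb{1}$ with $\X{k}{k}$ via evaluation and coevaluation, whose composite is $k$ circles, i.e.\ $t^k$, and this is invertible in $K$.
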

\begin{proof}
    The first is visible in the cobordism category: a cobordism out of $\X{i}{j}$ consists of circles which do not change $i-j$, maps from $+$ to $+$ or $-$ to $-$ which also do not change $i-j$, and finally evaluations or coevaluations which also do not change $i-j$.   

    For the second one, it suffices to prove that these maps exist for $\bb{1}$ and $\X{k}{k}$, but then one can simply pick evaluation and coevaluation, where the composite is $k$ circles. 

    Since $t$ is invertible in $K$, the retraction claim follows. 
\end{proof}

We now organize how semi-simplicity behaves with orthogonality and retracts in the following elementary lemma: 

\begin{lemma}\label{lm:ssreduction}
    Let $C$ be an additive $1$-category. 
    \begin{enumerate}
        \item Suppose $x\in C$ is such that $\End_C(x)$ is semi-simple. Then the same holds for $\oplus_n x$ for any $n\geq 0$; 
        \item Suppose $x,y\in C$ are such that $\End_C(x)$ is semi-simple, and $y$ is a retract of $x$. Then $\End_C(y)$ is semi-simple. 
        \item Suppose $x,y\in C$ are such that $\End_C(x),\End_C(y)$ are semi-simple and that $\hom_C(x,y)=\hom_C(y,x) = 0$. Then $\End_C(x\oplus y)$ is semi-simple. 
    \end{enumerate}
\end{lemma}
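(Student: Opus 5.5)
We prove the three items in turn. Throughout we use the Artin--Wedderburn description of semi-simple rings as finite products of matrix rings over division rings. We also use the fact that a ring $R$ is semi-simple if and only if it is Artinian with vanishing Jacobson radical, or equivalently if and only if every left $R$-module is semi-simple.

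For item (a), we identify $\End_C(\oplus_n x)$ with the matrix ring $M_n(\End_C(x))$, using additivity of $C$ and the biproduct structure. Matrix rings over semi-simple rings are semi-simple. Indeed, writing $\End_C(x)\cong\prod_i M_{k_i}(D_i)$, we get
\[
M_n\Big(\prod_i M_{k_i}(D_i)\Big)\cong\prod_i M_{nk_i}(D_i).
\]
Alternatively, one can use Morita invariance of semi-simplicity.

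For item (b), we argue exactly as in the proof of \Cref{lemma:sillysummand}. Write the retraction as $y\xrightarrow{a}x\xrightarrow{b}y$ with $ba=\id_y$. Then $e\coloneqq ab$ is an idempotent in $\End_C(x)$, and $\phi\mapsto a\phi b$ gives a ring isomorphism $\End_C(y)\cong e\End_C(x)e$, with inverse $\psi\mapsto b\psi a$. It remains to see that the corner ring $eRe$ of a semi-simple ring $R$ is semi-simple. Note that $eRe\cong\End_R(Re)^{op}$. Since $Re$ is a finitely generated semi-simple $R$-module, it is a finite direct sum of simple modules, and by Schur's lemma its endomorphism ring is a finite product of matrix rings over division rings. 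This yields the claim. The componentwise Artin--Wedderburn argument from \Cref{lemma:sillysummand} would work equally well.

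For item (c), the vanishing $\hom_C(x,y)=\hom_C(y,x)=0$ kills the off-diagonal entries of the $2\times 2$ matrix description of $\End_C(x\oplus y)$. Hence $\End_C(x\oplus y)\cong\End_C(x)\times\End_C(y)$, and a finite product of semi-simple rings is semi-simple.

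None of the steps is a real obstacle. The only point needing care is (b), where one must check that the identification of $\End_C(y)$ with a corner ring is a ring isomorphism: this uses $ba=\id_y$ in both directions of the composition.
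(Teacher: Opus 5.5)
Your proof is correct and is essentially the paper's: (a) and (c) are the same matrix-ring and product arguments. In (b), your identification $\End_C(y)\cong e\End_C(x)e\cong\End_R(Re)^{\mathrm{op}}$ (with $R=\End_C(x)$) is a concrete form of the paper's reduction to the idempotent completion of the additive subcategory generated by $x$, viewed as finitely generated projective $R$-modules; the one implicit step, passing from $\End_R(Re)$ to its opposite ring, is harmless since $M_k(D)^{\mathrm{op}}\cong M_k(D^{\mathrm{op}})$.
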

\begin{proof}
    1. $\End_C(\oplus_n x)\cong M_n(\End_C(x))$ and semi-simple rings are closed under matrix rings. 

    2. Without loss of generality, $C$ is idempotent-complete and generated under finite direct sums and retracts by $x$. Thus it is equivalent to $\mathrm{Proj}^{\mathrm{ f.g.}}_{\End_C(x)}$ and $\End_C(x)$ is a product of matrix rings over division algebras by Artin--Wedderburn. In such a category, the endomorphism ring of any object is a product of matrix rings, and hence is semi-simple, which proves the claim. 

    3. The conditions guarantee that $\End_C(x\oplus y)\cong \End_C(x)\times\End_C(y)$ and semi-simple rings are closed under finite products. 
\end{proof}

By induction, the above two lemmas reduce us to interrogating when $\End(\X{i}{j})$ is semi-simple for a single $(i,j)$. We need some base case of the induction to prove that certain things are semi-simple. The following is the key lemma: 
\begin{lemma}\label{lm:perfss}
    Suppose $x,y\in C$ are in a rigid additive idempotent-complete category whose unit has a field $L$ of endomorphisms, and $\End_C(x)$ is semi-simple. Assume all hom's in $C$ are finite dimensional over $L$.

    Finally, assume the pairing $\hom_C(x,y)\otimes \hom_C(y,x)\to \End_C(x)$ is a perfect pairing and witnesses $\hom_C(x,y)$ and $\hom_C(y,x)$ as $\End_C(x)$ dual to one another. 

    Then $y$ splits as $\hom_C(x,y)\otimes_{\End_C(x)} x \oplus z$ where $\hom_C(x,z)= 0, \hom_C(z,x) = 0$. 
\end{lemma}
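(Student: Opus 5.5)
Write $E\coloneqq\End_C(x)$, $H\coloneqq\hom_C(x,y)$ and $H'\coloneqq\hom_C(y,x)$. Then $H$ is a left $E$-module via postcomposition and $H'$ is a right $E$-module via precomposition. Since $E$ is semi-simple and finite dimensional over $L$, $H$ is a projective $E$-module. As $C$ is idempotent complete, the object $x_H\coloneqq H\otimes_E x$ makes sense: it is a retract of $x^{\oplus n}$ obtained by choosing a presentation of $H$ as a summand of $E^n$. It comes with a tautological evaluation map
\[
e\colon x_H\to y,
\]
given on $E^n$-generators by the chosen maps $x\to y$. By Yoneda, $\hom_C(x,x_H)\cong H$, and under this identification $e_*\colon\hom_C(x,x_H)\to\hom_C(x,y)$ is the identity of $H$.

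The plan is to produce a map $r\colon y\to x_H$ with $r\circ e=\id_{x_H}$, using the perfect pairing. Concretely, $r$ should correspond to an element of $\hom_C(y,x_H)\cong H'\otimes_E H$; here we use projectivity of $H$ to pull $H\otimes_E(-)$ out of the hom. By the duality hypothesis, $H'\cong\Hom_E(H,E)$ as right $E$-modules, so
\[
H'\otimes_E H\cong\Hom_E(H,E)\otimes_E H\cong\End_E(H),
\]
using projectivity and finite generation of $H$. We take $r$ to be the element corresponding to $\id_H$. This is the usual "dual basis" element $\sum_k \phi_k\otimes h_k$, where the $h_k\in H$ and the $\phi_k\in H'$ are dual bases for the pairing.

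The key verification is that $r\circ e=\id_{x_H}$. It suffices to check this after applying $\hom_C(x,-)$, since $x_H$ is a retract of a sum of copies of $x$ and Yoneda holds on $\mathrm{add}(x)$. On $\hom_C(x,-)$ the composite sends $h\in H$ to $\sum_k\langle \phi_k,h\rangle h_k$, where $\langle\phi,h\rangle=\phi\circ h\in E$. By the dual basis property this sum equals $h$. This step, namely matching the abstract duality hypothesis with composition in $C$ and keeping track of left versus right module structures, is where I expect the main care to be needed, though I do not expect a genuine obstacle.

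Hence $e$ is split injective. Since $C$ is idempotent complete, we can write $y\cong x_H\oplus z$ with $z$ the image of the idempotent $\id_y-e\circ r$. Now $\hom_C(x,y)\cong\hom_C(x,x_H)\oplus\hom_C(x,z)$. The first summand is already all of $H$, because $e_*$ is the identity on $H$, so $\hom_C(x,z)=0$ by finite dimensionality over $L$ (comparing dimensions suffices). Similarly, $\hom_C(y,x)\cong\hom_C(x_H,x)\oplus\hom_C(z,x)$, and $\hom_C(x_H,x)\cong\Hom_E(H,E)\cong H'$. Comparing $L$-dimensions again gives $\hom_C(z,x)=0$. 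This is the claimed decomposition $y\cong\hom_C(x,y)\otimes_{\End_C(x)}x\oplus z$.
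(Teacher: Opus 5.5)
Your proof is correct and follows essentially the paper's route. You split the canonical map $\hom_C(x,y)\otimes_{\End_C(x)}x\to y$ using the perfect pairing, and check the splitting after applying $\hom_C(x,-)$, which is fully faithful on the additive subcategory generated by $x$. The only difference is packaging: the paper takes the canonical map $y\to\hom_{\End_C(x)}(\hom_C(y,x),x)$ and shows the composite is an isomorphism, whereas you write the retraction explicitly through a dual basis and spell out the vanishing of $\hom_C(x,z)$ and $\hom_C(z,x)$, which the paper calls ``automatic''.

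One minor slip, which does not affect the argument: $\End_C(x)$ acts on $\hom_C(x,y)$ by precomposition, hence on the right, and on $\hom_C(y,x)$ by postcomposition, hence on the left. This is the reverse of what you wrote.
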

\begin{proof}
By combining finite dimensionality and semi-simplicity of $\End_C(x)$ with idempotent-completeness of $C$, the tensor product $\hom_C(x,y)\otimes_{\End_C(x)}x$ exists in $C$, and it comes with a canonical map $\hom_C(x,y)\otimes_{\End_C(x)}x\to y$. 

Similarly, we have a map $y\to \hom_{\End_C(x)}(\hom_C(y,x), x)$. If we can prove that the composite 
\[
    \hom_C(x,y)\otimes_{\End_C(x)}x\to y\to \hom_{\End_C(x)}(\hom_C(y,x), x)
\]
is an isomorphism, we will be done since this will be the desired retraction and the properties of $z$ are then automatic.  

Both source and target are in the thick additive category generated by $x$, so this map being an isomorphism can be checked by applying $\hom_C(x,-)$, where the map becomes $\hom_C(x,y)\to \hom_{\End_C(x)}(\hom_C(y,x),\End_C(x))$, which is an isomorphism by assumption. 
\end{proof}

We are now equipped to prove \Cref{thm:Deligne}:
\begin{proof}[Proof of \Cref{thm:Deligne}]
By \Cref{lm:generatorsCobK} and \Cref{lm:ssreduction}, we reduce by induction to proving that $\End_{C_K}(\X{i}{j})$ is semi-simple. We prove that statement by induction as well,

The induction hypothesis is both that $\End_{C_K}(\X{r}{s})$ is semi-simple for smaller $r,s$, and also that the trace pairing $\End_{C_K}(\X{r}{s})\otimes_K\End_{C_K}(\X{r}{s})\to K$ given by multiplication followed by the trace map\footnote{The trace map coming from dualizability in $C_K$, not the usual trace map of a finite dimensional $K$-algebra. } $\End_{C_k}(\X{r}{s})\to K$ is perfect. 

If either $i=0$ or $j=0$, $\End_{C_K}(\X{i}{j})$ is a group algebra $K[\Sigma_j]$ or $K[\Sigma_i]$, and is therefore semi-simple since $K$ is characteristic $0$. 

Furthermore, the trace pairing is perfect in this case: indeed,  by direct calculation, for any $\sigma\in \Sigma_i$, $\tr(\sigma) = t^{n(\sigma)}$ where $n(\sigma)$ is the number of cycles in $\sigma$, see \cite[Lemma 4.7]{EndTHH}, examplified below in a drawing.

\[
\begin{tikzpicture}
\draw[black, very thick] (-4,0.5) rectangle (4,5);
\path (-3.25,2.75)	node	[black]	{$\emptyset$}
	(-1.75,4.5)	node	[black]	{$+$}
	(-1.75,4)	node	[black]	{$-$}
	(-1.75,3.5)	node	[black]	{$+$}
	(-1.75,3)	node	[black]	{$-$}
	(-1.75,2.5)	node	[black]	{$+$}
	(-1.75,2)	node	[black]	{$-$}
	(-1.75,1.5)	node	[black]	{$+$}
	(-1.75,1)	node	[black]	{$-$}
	(3.25,2.75)	node	[black]	{$\emptyset$}
	(1.75,4.5)	node	[black]	{$+$}
	(1.75,4)	node	[black]	{$-$}
	(1.75,3.5)	node	[black]	{$+$}
	(1.75,3)	node	[black]	{$-$}
	(1.75,2.5)	node	[black]	{$+$}
	(1.75,2)	node	[black]	{$-$}
	(1.75,1.5)	node	[black]	{$+$}
	(1.75,1)	node	[black]	{$-$};
\draw[thick, ->] (-1.9,4) arc (270:90:0.25);
\draw[thick, ->] (-1.9,3) arc (270:90:0.25);
\draw[thick, ->] (-1.9,2) arc (270:90:0.25);
\draw[thick, ->] (-1.9,1) arc (270:90:0.25);	
\draw[thick, <-] (1.9,4) arc (90:270:-0.25);
\draw[thick, <-] (1.9,3) arc (90:270:-0.25);
\draw[thick, <-] (1.9,2) arc (90:270:-0.25);
\draw[thick, <-] (1.9,1) arc (90:270:-0.25);	
\draw[thick, ->] (-1.60,4.5) .. controls (-1,4.5) and (1,3.5) .. (1.60,3.5);
\draw[thick, ->] (-1.60,3.5) .. controls (-1,3.5) and (1,2.5) .. (1.60,2.5);
\draw[thick, ->] (-1.60,2.5) .. controls (-1,2.5) and (1,4.5) .. (1.60,4.5);
\draw[thick, ->] (-1.60,1.5) -- (1.60,1.5);
\draw[thick, <-] (-1.60,4) -- (1.60,4);
\draw[thick, <-] (-1.60,3) -- (1.60,3);
\draw[thick, <-] (-1.60,2) -- (1.60,2);
\draw[thick, <-] (-1.60,1) -- (1.60,1);
\path (0,0)	node	[black]	{The trace of the endomorphism in $K[\Sigma_4]$ given by the 3-cycle $\sigma=(123)$.};
\end{tikzpicture}
\]
Up to replacing the pairing $(\sigma,\tau)\mapsto \tr(\sigma\tau)$ by $(\sigma,\tau)\mapsto \tr(\sigma^{-1}\tau)$ (which is degenerate exactly if the other one is) the corresponding matrix therefore has $t^i$'s on the diagonal and $t^k, k <i$ elsewhere. Therefore, expanding out the determinant of this matrix we find a monic polynomial with integer coefficients of degree $ii!$ in the variable $t$, which is nonzero since $t$ is not an algebraic integer.

We now want to check the hypotheses of \Cref{lm:perfss} for $x=\X{i-1}{j-1}, y = \X{i}{j}$. Since the trace pairing for $\End_{C_K}(x)$ is assumed to be perfect by induction, the pairing $\Hom_{C_K}(x,y)\otimes_K\Hom_{C_K}(y,x)\to \End_{C_K}(x)$ is perfect if and only if its composition with the trace pairing is perfect.

But now the trace pairing $\Hom_{C_K}(\X{i-1}{j-1},\X{i}{j})\otimes_K \Hom_{C_K}(\X{i}{j},\X{i-1}{j-1})\to K$ is equal, under the obvious identifications, to the trace pairing on $\End_{C_K}(\X{i-1}{j})$, which is perfect by induction. 

So we can apply the lemma and find that 
\[
    \X{i}{j}= \Hom_{C_K}(\X{i-1}{j-1},\X{i}{j})\otimes_{\End_{C_K}(\X{i-1}{j-1})}\X{i-1}{j-1}\oplus Z
\]
for some $Z$ orthogonal to $\X{i-1}{j-1}$. 

In particular, $\End_{C_K}(Z)$ is the quotient of $\End_{C_K}(\X{i}{j})$ by the ideal of morphisms factoring through $\X{i-1}{j-1}$, which is clearly isomorphic to $K[\Sigma_i \times \Sigma_j]$ and is in particular semi-simple. 

On this term, the trace pairing is $(\sigma, \tau)\mapsto t^{n(\sigma) n(\tau)}$, which is perfect for the same reason as above: up to a change of basis, the corresponding matrix has $t^{ij}$ on the diagonal, and $t^k, k<ij$ away from the diagonal so that again, perfectness is guaranteed by $t$ not being an algebraic integer. 
\end{proof}
\begin{corollary}\label{cor:DeligneParityArgument}
Suppose $K=\bb{Q}(t)$ is a function field in one variable $t$.  For any simple object $Z$ in $C_K$ arising as a summand of an object of the form $\X{i}{j}$, with $i$ minimal with this property.  Then the dimension of $Z$ is a polynomial with degree $i+j$ with positive leading coefficient.  In particular, the parity of the degree of the dimension of $Z$ is equal to the parity of $i-j$.
\end{corollary}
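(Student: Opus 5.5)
The approach is to read $\dim Z$ off as the categorical trace of an explicit idempotent, using the decomposition produced in the proof of \Cref{thm:Deligne}. Assume $t$ is transcendental, so $K=\bb{Q}(t)$ satisfies the hypotheses of \Cref{thm:Deligne} and $C_K$ is semi-simple.

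First I locate $Z$. The proof of \Cref{thm:Deligne} (using \Cref{lm:perfss}) splits
\[
\X{i}{j}\simeq W\oplus Z_{ij},
\]
where $W$ lies in the additive idempotent-complete subcategory generated by $\X{i-1}{j-1}$, $Z_{ij}$ is orthogonal to $\X{i-1}{j-1}$, and $\End_{C_K}(Z_{ij})\cong K[\Sigma_i\times\Sigma_j]$ is the quotient by the maps factoring through $\X{i-1}{j-1}$. Since $i$ is minimal, $Z$ is not a summand of $\X{i-1}{j-1}$. By \Cref{lm:generatorsCobK}, every $\X{r}{s}$ with $r-s=i-j$ and $r<i$ is a retract of $\X{i-1}{j-1}$, so $Z$ must be a summand of $Z_{ij}$. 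Hence $Z\cong e\,Z_{ij}$ for a primitive idempotent
\[
e=\tfrac{d_\lambda d_\mu}{i!\,j!}\sum_{(\sigma,\tau)}\chi_\lambda(\sigma^{-1})\chi_\mu(\tau^{-1})\,(\sigma,\tau)
\]
(or a primitive summand of such a central idempotent), for irreducible characters $\lambda\vdash i$ and $\mu\vdash j$.

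Next I compute the trace. The idempotent $e$ has a canonical lift $\tilde e$ to $\End(\X{i}{j})$: take the same linear combination of permutation cobordisms. The trace of a permutation $(\sigma,\tau)$ on $\X{i}{j}$ is $t^{n(\sigma)+n(\tau)}$, where $n$ counts cycles, as in the appendix computation. So
\[
\operatorname{tr}(\tilde e)=\tfrac{d_\lambda d_\mu}{i!\,j!}\sum_{(\sigma,\tau)}\chi_\lambda(\sigma^{-1})\chi_\mu(\tau^{-1})\,t^{n(\sigma)+n(\tau)}.
\]
Its top-degree term comes only from $\sigma=\tau=\id$, giving leading coefficient $d_\lambda^2d_\mu^2/(i!\,j!)>0$ in degree $i+j$. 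In block form, $\operatorname{tr}(\tilde e)=\operatorname{tr}(\tilde e_{WW})+\operatorname{tr}(e)$, and $\operatorname{tr}(e)=\dim Z$.

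The remaining step, which I expect to be the main obstacle, is to show that the correction $\operatorname{tr}(\tilde e_{WW})$ has degree at most $i+j-2$ in $t$, or at least order at $t=\infty$ strictly below $i+j$. The difficulty is that the splitting maps from \Cref{lm:perfss} involve inverses of the pairing matrices, so they carry rational functions of $t$. I would first argue by the valuation $v_\infty$ at $t=\infty$.
\begin{itemize}
\item Every endomorphism of $W$ factors through sums of $\X{i-1}{j-1}$.
\item A trace of a composite through $\X{i-1}{j-1}$ can be rewritten by cyclicity as a trace on $\X{i-1}{j-1}$.
\item Closed diagrams there produce at most $i+j-2$ circles.
\item Inverting the pairing should not raise the $t$-degree, because the determinant of the pairing has leading term a power of $t$ with unit coefficient, as shown in the appendix proof.
\end{itemize}
If this bound resists a direct argument, the fallback is induction on $i+j$. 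Write $\dim W$ as a sum of dimensions of simples with smaller minimal index; by induction each has degree at most $i+j-2$. Since $\dim\X{i}{j}=t^{i+j}$, a degree count on the isotypic pieces of $Z_{ij}$ then forces each simple in $Z_{ij}$ to contribute degree exactly $i+j$ with leading coefficient $d_\lambda d_\mu/(i!\,j!)$. Here the $K[\Sigma_i\times\Sigma_j]$-module structure is what identifies the multiplicities.

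Finally, $\dim Z$ is a polynomial rather than just a rational function by Deligne's explicit formula, or alternatively because the traces of the rational idempotents involved are integral over $\bb{Z}[t]$. The parity statement follows immediately from $i+j\equiv i-j \pmod 2$.
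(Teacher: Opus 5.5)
The key computation in your proposal is the paper's argument: lift the idempotent to a $\bb{Q}$-combination $\tilde e$ of permutation cobordisms, use $\tr(\sigma,\tau)=t^{n(\sigma)+n(\tau)}$, and read the leading coefficient off the identity term. There is, however, a genuine gap at the step you flag yourself, namely bounding the correction $\tr(\tilde e_{WW})$. Neither of your two routes closes it.

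\textbf{The valuation sketch.} It is false that closed diagrams on $\X{i-1}{j-1}$ contribute at most $t^{i+j-2}$. Composing through $\X{i-1}{j-1}$ can create extra circles. For $i=j=1$, the composite $\bb{1}\to\X{1}{1}\to\bb{1}$ of coevaluation and evaluation is the circle $t$, of degree $1>0=i+j-2$. Only the factor $t^{-1}$ from inverting the pairing brings $\tr(t^{-1}\,\mathrm{coev}\circ\mathrm{ev})$ down to $1=\dim W$. So what you would need is that the inverse pairing \emph{lowers} degrees by a controlled amount, and ``does not raise the degree'' does not give that.

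\textbf{The fallback.} As stated it is a non sequitur. Induction together with $\dim\X{i}{j}=t^{i+j}$ only tells you that $\sum_{\lambda,\mu}d_\lambda d_\mu\dim Z_{\lambda\mu}=\dim Z_{ij}$ is monic of degree $i+j$. That says nothing about the individual $\dim Z_{\lambda\mu}$.

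\textbf{The missing observation: $\tilde e$ is block diagonal.}
\begin{itemize}
\item $Z_{ij}$ is orthogonal to $\X{i-1}{j-1}$, and $W$ is a retract of a finite sum of copies of $\X{i-1}{j-1}$. Hence $\Hom(W,Z_{ij})=0=\Hom(Z_{ij},W)$.
\item $\tilde e$ is itself idempotent, because permutation cobordisms compose without creating circles.
\item Therefore $\tilde e=\tilde e_{WW}\oplus\tilde e_{ZZ}$ with both blocks idempotent.
\item Consequently $\tr(\tilde e_{WW})$ is the dimension of a summand of $W$. This is a non-negative integer combination of dimensions of simples whose minimal index is $<i$.
\item By induction on $i$ with $i-j$ fixed, these are polynomials of degree $\le i+j-2$. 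The base case is $\min(i,j)=0$, where $W=0$.
\item This gives at once polynomiality and the degree and leading-coefficient claims for $\dim Z=\tr(\tilde e)-\tr(\tilde e_{WW})$.
\end{itemize}
This is exactly how the paper handles the lower summands: its remark that summands coming from $\X{r}{s}$ with $r<i$ have dimension of degree $<i+j$.

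\textbf{A smaller bookkeeping slip.} Your displayed $e$ is the \emph{central} idempotent. Its image is $Z^{\oplus d_\lambda d_\mu}$, so $\tr(e)=d_\lambda d_\mu\dim Z$ rather than $\dim Z$. Use a primitive idempotent instead; evaluating the regular character shows its identity coefficient is $d_\lambda d_\mu/(i!\,j!)$. Positivity of the leading coefficient is unaffected.
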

\begin{proof}
The object $\X{i}{j}$ has dimension $t^{i+j}$, and all of its summands of the form $\X{r}{s}$ with $r<i$ have dimension a polynomial of degree strictly less than $i+j$.  In particular, killing off all summands of $\X{i}{j}$ which were summands of $\X{r}{s}$ with $r<s$, the resulting object has dimension a monic polynomial of degree $i+j$.  The proof of \Cref{thm:Deligne} shows that the endomorphism algebra of $\X{i}{j}$ modulo all of these summands agrees with the group algebra $K[\Sigma_{i}\times \Sigma_j]$.  In particular, simple summands correspond to minimal idempotents in this group algebra, and in turn to irreducible representations of $\Sigma_i\times \Sigma_j$.  Taking such an idempotent $\sum_{\sigma\in \Sigma_i\times \Sigma_j}a_{\sigma}\sigma$ (with $\sigma\in\bb{Q}$), one sees that the trace of this endomorphism, which is the dimension of the simple summand it picks out, is a polynomial of degree $i+j$, with coefficient of $t^{i+j}$ equal to the coefficient of the identity element in this expansion for our idempotent.  If the idempotent corresponded to an irreducible representation of dimension $d$, then this leading coefficient is exactly $d/|\Sigma_i\times \Sigma_j| = d/(i! j!)$, which is in particular positive, as claimed.
\end{proof}
\newpage
	\printbibliography
\end{document}